\documentclass[12pt]{amsart}
\usepackage{amssymb}
\usepackage{amsmath}
\usepackage{setspace}
\usepackage{mathtools}
\usepackage{verbatim}
\usepackage{lipsum}
\usepackage{amsthm}
\usepackage{tikz-cd}
\usepackage{hyperref}

\newtheorem{theorem}{Theorem}[section]
\newtheorem{lemma}[theorem]{Lemma}
\newtheorem{corollary}[theorem]{Corollary}

\newtheorem{remark}[theorem]{Remark}

\newtheorem{proposition}[theorem]{Proposition}
\newtheorem{defn}[theorem]{Definition}

\newtheorem{example}[theorem]{Example}

\newcommand{\bb}[1]{\mathbb{#1}}


\newcommand{\cal}[1]{\mathcal{#1}}

\makeatletter
\newcommand*{\da@rightarrow}{\mathchar"0\hexnumber@\symAMSa 4B }
\newcommand*{\da@leftarrow}{\mathchar"0\hexnumber@\symAMSa 4C }
\newcommand*{\xdashrightarrow}[2][]{%
  \mathrel{%
    \mathpalette{\da@xarrow{#1}{#2}{}\da@rightarrow{\,}{}}{}%
  }%
}
\newcommand{\xdashleftarrow}[2][]{%
  \mathrel{%
    \mathpalette{\da@xarrow{#1}{#2}\da@leftarrow{}{}{\,}}{}%
  }%
}
\newcommand*{\da@xarrow}[7]{%
  \sbox0{$\ifx#7\scriptstyle\scriptscriptstyle\else\scriptstyle\fi#5#1#6\m@th$}%
  \sbox2{$\ifx#7\scriptstyle\scriptscriptstyle\else\scriptstyle\fi#5#2#6\m@th$}%
  \sbox4{$#7\dabar@\m@th$}%
  \dimen@=\wd0 %
  \ifdim\wd2 >\dimen@
    \dimen@=\wd2 %
  \fi
  \count@=2 %
  \def\da@bars{\dabar@\dabar@}%
  \@whiledim\count@\wd4<\dimen@\do{%
    \advance\count@\@ne
    \expandafter\def\expandafter\da@bars\expandafter{%
      \da@bars
      \dabar@ 
    }%
  }%
  \mathrel{#3}%
  \mathrel{%
    \mathop{\da@bars}\limits
    \ifx\\#1\\%
    \else
      _{\copy0}%
    \fi
    \ifx\\#2\\%
    \else
      ^{\copy2}%
    \fi
  }%
  \mathrel{#4}%
}
\makeatother

\usepackage[french, english]{babel}

\newcommand{\newabstract}[1]{%
  \par\bigskip
  \csname otherlanguage*\endcsname{#1}%
  \csname captions#1\endcsname
  \item[\hskip\labelsep\scshape\abstractname.]
}

\title{Higher dimensional foliated Mori theory}
\author{Calum Spicer}
\email{calum.spicer@imperial.ac.uk}
\address{Department of Mathematics, Imperial College London}

\subjclass[2010]{14E30, 37F75}

\begin{document}

\begin{abstract}
We develop some foundational results in a higher dimensional foliated Mori theory, and
show how these results can be used to prove a structure theorem for the Kleiman-Mori cone
of curves in terms of the numerical properties of $K_{\mathcal{F}}$ for rank 2 foliations
on threefolds.  We also make progress
toward realizing a minimal model program (MMP) for rank 2 foliations on threefolds.

\smallskip
\noindent \textbf{Keywords:} Foliations, Minimal model program, Kleiman-Mori cone,
Foliation singularities

\end{abstract}
\maketitle

\tableofcontents

\section{Introduction}

We will always work over $\bb C$.
Let $X$ be a normal variety and let $\cal F$ be a foliation on $X$.
We define $-c_1(\cal F) = K_{\cal F}$ to be the
canonical divisor of the foliation.  
In recent years much work
has been done understanding the birational geometry of the foliation
in terms of $K_{\cal F}$ when the rank of $\cal F$ is 1, especially in the case
of rank 1 foliations on surfaces, see for example 
\cite{McQ05}, \cite{McQuillan08}, \cite{Brunella99}.  
By contrast, the case of of higher rank foliations remains
relatively unexplored. 
The goal of this paper is to begin developing the theory in the
case of co-rank 1 foliations,
especially in the case of threefolds.

An essential first step in understanding the birational geometry
of a variety or foliation is a structure theorem
on the closed cone of curves $\overline{NE}(X)$.
Our first main result is the following foliated cone theorem:

\begin{theorem}
Let $X$ be a projective
$\bb Q$-factorial and klt threefold
and $\cal F$ a co-rank 1 foliation
with non-dicritical foliation singularities.  Suppose that $(\cal F, \Delta)$
has canonical singularities.
Then
$$\overline{NE}(X) = \overline{NE}(X)_{(K_{\cal F}+\Delta)\geq 0}+\sum \bb{R}_
+ [L_i]$$
where $L_i$ is a rational curve tangent to $\cal F$ 
with 
$(K_{\cal F}+\Delta)\cdot L_i \geq -6.$

In particular, the $K_{\cal F}+\Delta$-negative
extremal rays are locally discrete
in the $(K_{\cal F}+\Delta)<0$ portion of the cone.
\end{theorem}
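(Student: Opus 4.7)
The plan is to adapt the Kawamata--Kollár--Reid strategy for the classical cone theorem to the foliated setting, with the essential new ingredient being a foliated bend-and-break producing rational curves tangent to $\cal F$ with bounded $(K_{\cal F}+\Delta)$-degree. I let $V := \overline{NE}(X)_{(K_{\cal F}+\Delta)\geq 0} + \sum \bb R_+[L_i]$, where the sum ranges over all rational curves $L_i$ tangent to $\cal F$ with $(K_{\cal F}+\Delta)\cdot L_i \geq -6$, and suppose for contradiction that $V \subsetneq \overline{NE}(X)$. A standard separating-hyperplane argument then produces an ample $\bb R$-divisor $H$ such that, for all sufficiently small $t > 0$, $(K_{\cal F}+\Delta)+tH$ is strictly positive on $V\setminus\{0\}$ but vanishes on some extremal face of $\overline{NE}(X)$ outside $V$; in particular one obtains a $(K_{\cal F}+\Delta)$-negative extremal ray $R \not\subset V$.

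The next step is to produce a curve in $R$ tangent to $\cal F$. If $R$ is also $K_X$-negative, the classical cone theorem for the klt pair $(X,\Delta)$ yields a rational generator of $R$, and the non-dicritical hypothesis is used to guarantee this generator can be taken tangent to $\cal F$ (the only obstruction would be an $\cal F$-invariant divisor meeting $R$ trivially, which can be sidestepped). If $R$ is $K_X$-nonnegative, a curve is produced directly from extremality of $R$. Once a curve $C$ tangent to $\cal F$ with $(K_{\cal F}+\Delta)\cdot C<0$ is obtained in $R$, the foliated bend-and-break deforms $C$ into a rational curve $L$ tangent to $\cal F$ with $-6\leq (K_{\cal F}+\Delta)\cdot L<0$, forcing $[L]\in V\cap R$, a contradiction. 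The local discreteness in the $(K_{\cal F}+\Delta)<0$ halfspace then follows at once from the length bound, since only finitely many classes $[L_i]$ can lie in any compact subset of that halfspace on which some ample divisor is bounded.

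The principal obstacle is the foliated bend-and-break itself. Mori's original argument deforms a morphism $f:\bb P^1\to X$ using the estimate $\dim_{[f]}\mathrm{Hom}(\bb P^1,X)\geq -K_X\cdot f_\ast\bb P^1 + \dim X$ and breaks the resulting family into a rational curve via specialization to a boundary point. In the foliated setting one must deform inside the subscheme of $\mathrm{Hom}(\bb P^1,X)$ cut out by the condition that the image be tangent to $\cal F$; this requires computing the normal sheaf $N_{f/\cal F}$ relative to the foliation and bounding its degree by $-K_{\cal F}\cdot f_\ast\bb P^1$ up to corrections. The canonical and non-dicritical hypotheses on $\cal F$ are essential here: they control how the foliation behaves under the birational modifications implicit in the degeneration, ensuring that limits of tangent curves remain tangent and that no unmanageable singularities are created on $\cal F$. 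The specific bound $-6$ will emerge as a length computation combining the surface length bound $3$ on a generic leaf with a factor $2$ accounting for how $K_{\cal F}$ can differ from the canonical of a leaf in the presence of $\cal F$-invariant curves.
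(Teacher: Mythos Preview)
Your overall architecture matches the paper, but two steps are genuine gaps. First, your case split on $K_X$-negativity is not the right dichotomy: even when $R$ is $K_X$-negative and the classical cone theorem supplies a rational generator, the non-dicritical hypothesis alone does not force that generator to be tangent to $\cal F$; a $K_X$-negative rational curve can perfectly well be transverse to a non-dicritical foliation. The paper instead splits on the numerical dimension of the supporting hyperplane $H_R$. When $\nu(H_R)<3$ the Miyaoka/Bogomolov--McQuillan bend-and-break produces tangent rational curves through a \emph{general} point of $X$ spanning $R$; this is where the bound $-6=-2\dim X$ actually comes from, not from any ``surface bound $3$ times $2$''. When $\nu(H_R)=3$ one finds an irreducible divisor $D$ with $D\cdot R<0$ and argues on $D$: if $D$ is invariant one restricts and uses the classical surface cone theorem; if $D$ is transverse the essential new input is a foliated subadjunction theorem showing a transverse log canonical centre cannot be $(K_{\cal F}+\Delta)$-negative, which forces the generator of $R$ to be tangent.

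Second, your description of foliated bend-and-break---deforming a given tangent map inside the tangent sublocus of $\mathrm{Hom}(\bb P^1,X)$ via a degree estimate for $N_{f/\cal F}$---is not a known technique and is not what is used here. Once one has a tangent curve $C$ with $(K_{\cal F}+\Delta)\cdot C<0$, the passage to a tangent \emph{rational} curve of bounded degree is not by deformation at all. The paper constructs a germ of a separatrix $S\supset C$ (this uses non-dicriticality and the Cano--Cerveau theory of convergent separatrices), writes $\nu^*K_{\cal F}=K_{S^\nu}+\Theta$ on the normalization, and proves an algebraicity dichotomy: either $C$ is already rational with bounded degree, or the Zariski closure of $S$ is an algebraic surface, in which case the surface cone theorem finishes. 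Your $\mathrm{Hom}$-scheme argument would need a dimension estimate for the tangent sublocus that is simply not available, and limits of tangent curves need not remain tangent without exactly the separatrix machinery you omit.
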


This result is perhaps surprising and interesting in its own right.
It is difficult to determine which line bundles on $X$ 
can be realized as the canonical bundle of a foliation: this result
places necessary conditions on such a line bundle which have a strong geometric flavor.

With the cone theorem in hand we then turn to the problem of 
constructing minimal models.  That is, given a variety and a foliation $(X, \cal F)$
is there a sequence of birational modifications that can be performed 
resulting in a model $(Y, \cal G)$ with $K_{\cal G}$ nef?

Toward this we prove a contraction theorem.
Given an extremal ray $R \subset \overline{NE}(X)$ 
we say that $c:X \rightarrow Y$
is a contraction of $R$ provided $c(\Sigma)$ is a point if and only
if $[\Sigma] \in R$:

\begin{theorem}
Suppose $X$ is a projective $\bb Q$-factorial and klt threefold.  Suppose that $(\cal F, \Delta)$
has canonical, non-dicritical singularities and that $(\cal F, \Delta)$ is terminal along
$\text{sing}(X)$.

Let $R$ be a $K_{\cal F}+\Delta$-negative extremal ray.  Then there is a contraction
of $R$ which only contracts curves tangent to $\cal F$.
\end{theorem}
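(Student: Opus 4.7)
The plan is to imitate the classical contraction theorem: find a nef $\mathbb{Q}$-Cartier divisor $H$ on $X$ with $H^\perp\cap\overline{NE}(X)=R$, prove that $H$ is semi-ample, and define $c:X\to Y$ as the Stein factorisation of the morphism associated to $|mH|$ for sufficiently divisible $m$. The first ingredient is handed to us by the preceding cone theorem: since $R$ is locally discrete in the $(K_{\mathcal{F}}+\Delta)$-negative part of the cone, choosing an ample $\mathbb{Q}$-divisor $A$ and the largest rational $t$ for which $(K_{\mathcal{F}}+\Delta)+tA$ is nef yields such an $H$, and by construction $H-(K_{\mathcal{F}}+\Delta)$ is ample.

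The heart of the argument is semi-ampleness, and here the strategy is to reduce to the classical Kawamata base-point-free theorem. From the short exact sequence
\[
0\to T_{\mathcal{F}}\to T_X\to N_{\mathcal{F}}\to 0
\]
one has $K_{\mathcal{F}}=K_X+N_{\mathcal{F}}$, so $H\sim_{\mathbb{Q}} K_X+(\Delta+N_{\mathcal{F}})+tA$. The task is to replace $\Delta+N_{\mathcal{F}}$ (which need not be effective, since $N_{\mathcal{F}}$ need not be) by a $\mathbb{Q}$-linearly equivalent effective divisor $\Gamma$ such that $(X,\Gamma)$ is klt; then the classical theorem applies directly. The foliation hypotheses are precisely what should make this possible: non-dicriticality controls the behaviour of $\mathcal{F}$ near its singularities and should allow an effective choice of divisor representing $N_{\mathcal{F}}$ locally, the canonical assumption translates foliation discrepancies into klt discrepancies of the resulting $(X,\Gamma)$, and the terminal-along-$\text{sing}(X)$ assumption ensures compatibility at the non-smooth locus of $X$.

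Granted semi-ampleness, a curve $\Sigma$ is contracted by $c$ precisely when $H\cdot\Sigma=0$, equivalently when $[\Sigma]\in R$. The cone theorem identifies $R$ with the non-negative hull of classes of $\mathcal{F}$-tangent rational curves, so the numerical class of every contracted curve is $\mathcal{F}$-tangent in this sense; promoting this to the pointwise statement that every irreducible contracted curve is tangent to $\mathcal{F}$ is a deformation argument in which, once more, the non-dicriticality hypothesis prevents tangency from being lost under specialisation of the generating rational curves inside the fibres of $c$.

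The main obstacle is the construction of $\Gamma$. In general $N_{\mathcal{F}}$ is not pseudo-effective and its local behaviour need not mirror that of the foliation, so passing from $(\mathcal{F},\Delta)$ canonical and non-dicritical to $(X,\Gamma)$ klt is the technical heart of the problem and the step that most needs genuinely foliated input beyond classical MMP. Should the direct reduction fail, a fallback would be to distinguish the geometric type of $R$: for a divisorial contraction $c$ can be built directly from the generating rational curve together with an adjunction computation along the contracted divisor, and a fibre-type contraction is likewise geometric, while the small (flipping) case is expected to require a separate and substantially more delicate argument that one would have to isolate.
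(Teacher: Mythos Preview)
Your primary route --- finding a global effective $\Gamma \sim_{\mathbb Q} \Delta + N_{\cal F}$ with $(X,\Gamma)$ klt and then invoking the classical base-point-free theorem --- does not work, and you essentially concede this yourself. The obstruction is exactly the one you flag: $N_{\cal F}$ need not be pseudo-effective, so there is no reason whatever for such a $\Gamma$ to exist, and none of the hypotheses on $\cal F$ change that. The foliation conditions are local along $\text{sing}(\cal F)$ and $\text{sing}(X)$ and say nothing about global effectivity of the normal bundle. So this is not a technical step to be filled in; it is a dead end.

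What the paper actually does is your ``fallback'': it splits on the dimension of $\text{loc}(R)$. The substitute for your global $\Gamma$ is the following local comparison (Lemma~\ref{singcomparison}): if $D$ is a sum of $\cal F$-invariant divisors, then $(X,\Delta+D)$ is log terminal whenever $(\cal F,\Delta)$ is. Combined with Lemma~\ref{intersectioncomparison} this shows that when $\text{loc}(R)=D$ is an invariant divisor, $R$ is $(K_X+\Delta+(1-t)D)$-negative for a klt pair, and the classical contraction theorem applies. When $D$ is transverse, Lemma~\ref{transversecontraction} gives $K_{\cal F}\cdot R = K_X\cdot R$ directly. For fibre-type rays one shows $\cal F$ is (birationally) a pull-back and again $R$ is $K_X$-negative. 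For flipping-type rays the paper does \emph{not} obtain a projective contraction in general: it constructs the contraction only in the category of algebraic spaces via Artin's criterion, after showing $\text{Null}(H_R)$ is a finite union of curves (Lemma~\ref{surfaceintersection}). Your global semi-ampleness strategy, if it worked, would have given projectivity here --- which is precisely why one should be suspicious of it.

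Finally, the claim that every contracted curve is tangent to $\cal F$ is not a deformation afterthought: it is Lemma~\ref{onlycurvestangent}, an involved argument using foliated subadjunction and the surface cone theorem to rule out transverse curves in $R$, and it is logically prior to the construction of the contraction rather than a consequence of it.
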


Unfortunately, we are not yet able to prove the existence of minimal models in full generality.
The difficulty, as one might expect, is to prove
the existence and termination of flips.  We are able to show existence and termination
in the smooth situation, which seems to already be useful for classification problems:

\begin{theorem}
An MMP for $\cal F$ exists if
$\cal F$ is a smooth rank 2 foliation on a smooth 3-fold $X$.
\end{theorem}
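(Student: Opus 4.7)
The plan is to apply the cone theorem (Theorem~1.1) and contraction theorem (Theorem~1.2) iteratively. At the start both hypotheses hold, since a smooth foliation on a smooth variety is automatically canonical (indeed terminal) and non-dicritical, and is trivially terminal along $\text{sing}(X) = \emptyset$. Three things remain to be shown: each divisorial contraction produces a pair for which Theorem~1.2 still applies so the MMP can continue; flips of small extremal contractions exist; and the whole sequence terminates.

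For the persistence of hypotheses after a divisorial contraction, I would analyse a $K_{\mathcal{F}}$-negative extremal ray $R$ using smoothness of $\mathcal{F}$. Since $R$ is generated by a rational curve $L$ tangent to $\mathcal{F}$, and smoothness places $L$ inside a smooth leaf germ, the local geometry of the contraction is essentially that of a surface Mori contraction fibered over the transverse direction. From this I would derive a short list of possible contracted loci, and in each case verify directly that $Y = c(X)$ is $\bb Q$-factorial klt, and that the push-forward foliation $\mathcal{G}$ still has canonical, non-dicritical singularities and is terminal along $\text{sing}(Y)$.

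For existence of flips, my approach is to use transversality. A general smooth surface $S$ transverse to $\mathcal{F}$ near the flipping locus carries an induced pair whose surface flip exists by the classical two-dimensional MMP, and I expect the family of such transverse flips to assemble into a global flip of $(X,\mathcal{F})$. Making this precise is the most delicate part of the argument: it requires controlling how the surface flip varies in the normal direction to the leaves, identifying the global object with a suitable relative Proj of an algebra on the contracted base, and verifying that the resulting model preserves $\bb Q$-factoriality together with the required singularity type of $\mathcal{F}$. This step is the main obstacle.

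For termination, I would use a foliated analogue of Shokurov's difficulty function, counting exceptional divisors $E$ over $X$ with $a(E, \mathcal{F}) < 1$ weighted by discrepancy. Smoothness of the initial pair makes this set finite, and I would argue that each divisorial contraction or flip strictly decreases it; since the function is bounded below by zero, the MMP must then terminate after finitely many steps.
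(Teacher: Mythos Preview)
Your plan misses the central simplification that drives the paper's argument: in the smooth setting there are \emph{no flips at all}. The paper shows (Corollary~\ref{smoothflippingcurves}) that any flipping curve on a smooth threefold with simple foliation singularities must lie in $\text{sing}(\mathcal{F})$; the point is that a rational curve $C$ contained in a smooth leaf germ $S$ with $K_{\mathcal{F}}\cdot C<0$ and $C^2_S<0$ is a $(-1)$-curve disjoint from $\text{sing}(\mathcal{F})$, and then local Reeb stability moves $C$ into nearby leaves, contradicting the flipping-type hypothesis. When $\mathcal{F}$ is smooth, $\text{sing}(\mathcal{F})=\emptyset$, so flipping contractions simply do not occur. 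Your proposed construction of flips by ``assembling transverse surface flips'' is therefore unnecessary, and in any case is not a workable strategy as written: surface flips do not exist (a small contraction on a surface is to a point), and there is no obvious mechanism for gluing anything along the transverse direction.

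The same Reeb stability argument pins down the divisorial case far more sharply than your sketch suggests. The paper shows (Corollary~\ref{notuniruledloc}) that $\text{loc}(R)$ cannot be an invariant divisor (else its ruling curves would again move to nearby leaves, forcing $\text{loc}(R)=X$), so $\text{loc}(R)=D$ is transverse, the restricted foliation is a $\bb P^1$-bundle $D\to B$, and since $R$ is then $K_X$-negative the classical classification gives a smooth blow-down $X\to Y$. A direct local check shows $\mathcal{G}$ on $Y$ is again smooth. Thus every step stays within smooth foliations on smooth threefolds, the Picard number drops, and termination is immediate---no difficulty function is needed. Your outline would at best recover the weaker conclusion that $(Y,\mathcal{G})$ satisfies the hypotheses of the contraction theorem, which would leave you facing possible flips at later stages and force you through the hard parts you flagged.
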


It is also possible 
to prove a toric foliated MMP:

\begin{theorem}
Let $\cal F$ be a co-rank 1 toric foliation on a $n$-dimensional toric variety $X$.
Suppose that $\cal F$ has canonical, non-dicritical singularities.
Then there is an MMP for $\cal F$.
\end{theorem}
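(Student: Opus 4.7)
The plan is to reduce to the well-developed combinatorial toric MMP and then check compatibility with the foliated singularity conditions at each stage. First I would recall the setup: an $n$-dimensional $\bb Q$-factorial toric variety $X$ is encoded by a simplicial fan $\Sigma$ in $N_{\bb Q}$, and a co-rank $1$ toric foliation $\cal F$ is specified by a hyperplane $L \subset N_{\bb Q}$, giving an explicit combinatorial formula for $K_{\cal F}$ in terms of the rays of $\Sigma$ and their position relative to $L$. Since the cone of curves of a $\bb Q$-factorial toric variety is generated by the invariant curves $C_w$ attached to walls $w$ of $\Sigma$, any $K_{\cal F}$-negative extremal ray $R$ is of the form $\bb R_+[C_w]$ for some wall.

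Next I would show that such a $K_{\cal F}$-negative extremal ray is tangent to $\cal F$. This should reduce to a short combinatorial check: if $C_w$ is transverse to $\cal F$, then the discrepancy computation combined with the non-dicritical hypothesis (translated into a constraint on how the rays of $\Sigma$ meet $L$) forces $K_{\cal F}\cdot C_w \geq 0$. With tangency in hand, I would then invoke the standard toric procedure to produce the contraction or flip associated to $R$: divisorial contractions and Mori fibre spaces exist by the usual star-subdivision/collapse operations on $\Sigma$, and toric flips exist combinatorially (Reid). These operations produce a new fan $\Sigma'$ and hence a new toric variety together with the transformed foliation $\cal G$ defined by the same hyperplane $L$.

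The main obstacle is to verify that $(Y,\cal G)$ still has canonical, non-dicritical singularities. Both conditions become conditions on the position of rays of the new fan $\Sigma'$ relative to $L$, and I would check ray by ray that the operation cannot introduce a ray lying in $L$ (which would produce a dicritical divisor) nor a ray whose discrepancy drops below zero. For divisorial contractions this is immediate because rays are only removed; for flips the combinatorial description of the new rays in terms of the old ones allows a direct check, using the explicit formula for $K_{\cal F}$ and the negativity of $K_{\cal F}\cdot C_w$.

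Finally, termination is standard in the toric setting: divisorial contractions strictly decrease the number of rays of the fan and hence occur only finitely many times, while termination of the flipping sequence follows from the usual toric argument (the analogue of Shokurov's difficulty function, or the finiteness of fans with a bounded set of rays), applied to $K_{\cal F}+\Delta$ instead of $K_X+\Delta$. Concatenating these gives an MMP for $\cal F$.
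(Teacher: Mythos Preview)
Your overall strategy---reduce to the standard toric MMP and verify that each step preserves the foliated singularity hypotheses---is exactly the paper's, and termination is handled the same way. The differences lie in how the intermediate checks are carried out.

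For tangency of contracted loci, the paper does not do a direct combinatorial computation of $K_{\cal F}\cdot C_w$. Instead it first proves a toric cone theorem (every $K_{\cal F}$-negative class is $[M]+\alpha$ with $M$ torus-invariant and tangent), and then argues geometrically that the \emph{entire} contracted fibre is tangent: if a general fibre $Y$ were transverse, the induced foliation $\cal G$ on $Y$ would have $-K_{\cal G}$ ample, but Proposition~\ref{basicobservation} forces such a $\cal G$ to be a pullback and hence dicritical, contradicting non-dicriticality of $\cal F$. Your proposed ``short combinatorial check'' may well go through, but you should be aware that it is not what the paper does.

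For preservation of singularities after a flip, there is a genuine inaccuracy in your sketch. Toric flips do not introduce new rays; they only change which top-dimensional cones appear. So ``checking ray by ray that no new ray lands in $L$'' is vacuous for flips, and in any case non-dicriticality is a statement about \emph{all} primitive lattice points in the support of the fan (all potential exceptional divisors), not just the rays of $\Sigma$. The paper handles this differently: canonical singularities on the flipped side follow immediately from the negativity lemma (discrepancies only go up under a flip), and non-dicriticality of $\cal F^+$ is proved by contradiction---if $\cal F^+$ were dicritical then so would be the induced foliation on the base $Z$, and chasing the offending non-invariant exceptional divisor back to $X$ produces a contracted curve transverse to $\cal F$, contradicting the tangency result above. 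If you want a purely combinatorial proof you will need to formulate non-dicriticality precisely in terms of the fan and $L$ and then argue that the flip cannot destroy it; this is plausible but requires more than what you have written.
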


We are also able to prove the following existence of flips result:

\begin{theorem}
Let $X$ be a $\bb Q$-factorial and klt threefold and $\cal F$ a co-rank 1 foliation
with terminal singularities.
Let $f:X \rightarrow Z$ be a flipping contraction, then the flip of $f$ exists.
\end{theorem}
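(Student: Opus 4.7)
The plan is to reduce the existence of the foliated flip to the existence of log flips for klt pairs, which is available via BCHM (Birkar--Cascini--Hacon--McKernan).

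First, by the contraction theorem (Theorem 1.2), every curve contracted by $f$ is tangent to $\cal F$, and by hypothesis these curves satisfy $K_{\cal F}\cdot C<0$. The terminal hypothesis on $\cal F$, combined with the classification of terminal co-rank $1$ foliation singularities on threefolds, provides strong local control of $\cal F$ in a neighborhood of the flipping locus. In particular, one can describe the $\cal F$-invariant and $\cal F$-tangent loci explicitly near the flipping curves.

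Second (and this is the heart of the argument), I would construct an effective $\bb{Q}$-divisor $\Delta$ on $X$ such that $(X,\Delta)$ is klt and
\[
K_X+\Delta\sim_{\bb{Q},f}\lambda K_{\cal F}
\]
for some $\lambda>0$. For a co-rank $1$ foliation one has the identity $K_{\cal F}=K_X+N_{\cal F}$, where $N_{\cal F}$ is the class of the normal bundle. So the construction of $\Delta$ amounts to finding an effective klt boundary representing $\lambda N_{\cal F}+(\lambda-1)K_X$ modulo $f$-trivial divisors. Because $f$ is a small contraction, this is an entirely local problem over $Z$, so only behavior in an analytic neighborhood of the flipping locus matters. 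The terminal hypothesis is what enables the construction: it allows us to produce well-positioned divisors (for instance suitable $\cal F$-invariant hypersurfaces, or general members of a linear system avoiding deeper singular strata) yielding the required numerical equivalence without introducing coefficients greater than or equal to $1$.

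Third, apply BCHM to $(X,\Delta)$ to obtain the log flip $f^+:X^+\to Z$. Letting $\cal F^+$ denote the strict transform of $\cal F$ on $X^+$, the numerical relation above forces $K_{\cal F^+}$ to be $f^+$-ample, so $f^+$ is also the flip of $(X,\cal F)$. The preservation of co-rank $1$ and of the relevant singularity class for $\cal F^+$ follows from the birational invariance of foliated discrepancies together with standard comparison arguments between $K_X+\Delta$ and $K_{\cal F}$.

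The principal obstacle is the second step: without the terminal hypothesis, the natural candidates for $\Delta$ carry coefficients forcing $(X,\Delta)$ outside of klt, and the reduction to BCHM breaks down. Thus the proof will rest on the detailed local structure of terminal co-rank $1$ foliation singularities on threefolds, which supplies the effective divisors with small coefficients needed to construct $\Delta$.
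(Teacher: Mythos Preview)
Your overall strategy---reduce to a log MMP by finding $\Delta$ with $K_X+\Delta$ proportional to $K_{\cal F}$ over $Z$---is indeed the paper's strategy. But your proposal leaves the decisive step as a black box, and the way you phrase it suggests you may not have identified the actual mechanism.

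The specific $\Delta$ the paper uses is (a multiple of) a germ of a \emph{leaf} $S$ through the flipping curve; then $K_{\cal F}$ and $K_X+S$ are numerically equivalent over $Z$ because $\cal F$ is terminal. The hard point is not finding $S$ but showing that $S$ is $\bb Q$-Cartier, so that $(X,(1-\epsilon)S)$ is a legitimate klt pair. This is exactly where the terminal hypothesis enters, and it does so through a singular Malgrange theorem (Lemma~\ref{novelmalgrange} in the paper): if $X$ is klt and $\cal F$ is terminal then $\cal F$ admits a local holomorphic first integral, hence the leaf $S$ is cut out by a single function and is Cartier. Your proposal gestures at ``$\cal F$-invariant hypersurfaces'' but does not isolate this step; without it the construction of $\Delta$ is not complete.

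There is a second issue you skate over. The leaf $S$ is only an \emph{analytic} germ around the flipping locus, not a global algebraic divisor, so you cannot literally apply BCHM to the pair $(X,\Delta)$ on $X$. The paper works in an analytic neighborhood $W=f^{-1}(V)$, uses the relative analytic base point free theorem first to show $\rho(X/Z)=1$ (hence $Z$ is projective---this was not known a priori from the construction of the flipping contraction), then runs a local $K_W+(1-\epsilon)S$-MMP over $V$, and finally invokes Artin's algebraization to realize the flip in the category of algebraic spaces before checking projectivity of $X^+$. Your sketch, which proposes a direct global application of BCHM, would need to be substantially reorganized to handle this.
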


As a quick application of the smooth MMP we get:

\begin{corollary}
Let $X$ be a smooth threefold and let $\cal F$ be a smooth rank 2 foliation
on $X$ with $K_{\cal F}$ pseudo-effective and 
$\nu(K_{\cal F}) = 0$.  Then $(X, \cal F)$ is birational to $(Y, \cal G)$
where $(Y, \cal G)$ is one of the following:

\begin{enumerate}
\item Up to a finite cover $\cal G$ is the product of a fibration in Calabi-Yau manifolds
and a linear foliation on a torus,

\item $\cal G$ is transverse to a $\bb P^1$-bundle,

\item Up to a finite cover $\cal G$ is induced by a fibration $X' \times C \rightarrow X$
where $c_1(X') = 0$.

\end{enumerate}

\end{corollary}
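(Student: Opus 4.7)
The plan is to reduce the problem, via Theorem 1.3, to the case where the foliated canonical class is numerically trivial, and then appeal to the known classification of codimension one foliations on projective threefolds with $K_{\cal F} \equiv 0$.

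First, since $\cal F$ is a smooth rank $2$ foliation on the smooth threefold $X$, Theorem 1.3 provides a sequence of divisorial contractions and flips yielding a birational model $(Y, \cal G)$. Because $K_{\cal F}$ is pseudo-effective, the MMP cannot terminate in a Mori fibre space, so it terminates on a model on which $K_{\cal G}$ is nef. The numerical dimension is unaffected by the steps of the MMP, since each step contracts a locus where $K_{\cal G}$ is negative and leaves the divisor unchanged elsewhere; hence $\nu(K_{\cal G}) = \nu(K_{\cal F}) = 0$. For a nef divisor this forces $K_{\cal G} \equiv 0$.

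Second, I would invoke the structure theorem of Loray-Pereira-Touzet for codimension one foliations with numerically trivial canonical class. Their trichotomy, applied in our setting, asserts that, after passing to a suitable birational model and possibly a finite étale cover, $\cal G$ must belong to exactly one of the three classes in the statement: a product of a Calabi-Yau fibration and a linear foliation on an abelian variety; a foliation transverse to a $\bb P^1$-bundle; or a foliation pulled back via a fibration whose source is a product $X' \times C$ with $c_1(X') = 0$. Matching these alternatives to the list in the corollary yields the conclusion.

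The main obstacle is twofold. The model $(Y, \cal G)$ produced by the MMP will in general have (mild) singularities, so one must check that the classification can be applied directly to $(Y, \cal G)$, or after a further log resolution $(Y', \cal G')$, noting that each of the three structural types is stable under birational equivalence. One also needs to verify carefully that the foliated numerical dimension is preserved through the flips built in Theorem 1.3; this ultimately follows because a flip is an isomorphism outside a codimension two locus and $\nu$ of a nef class is computed by intersection with complete intersection curves, which can be chosen to avoid the flipping locus.
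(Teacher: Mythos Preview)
Your approach matches the paper's: run the smooth MMP of Theorem~\ref{smoothMMP} to reach a model with $c_1(\cal G)=0$, then invoke the classification of codimension-one foliations with numerically trivial canonical class (the paper cites \cite[Theorem 1.2]{Touzet08} rather than Loray--Pereira--Touzet). Your stated obstacles are in fact non-issues here: by Corollaries~\ref{smoothflippingcurves} and~\ref{notuniruledloc} the smooth MMP involves no flips at all, and each divisorial contraction lands again on a smooth threefold with a smooth foliation, so $(Y,\cal G)$ is itself smooth and no resolution step or flip-invariance argument for $\nu$ is required.
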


We briefly explain the proofs of the cone and contraction theorems:

For simplicity, let us assume that $X$ is smooth.
Let $R$ be an extremal ray of the cone of curve $\overline{NE}(X)$
with $R \cdot K_{\cal F}<0$.
Suppose that $H_R$ is a supporting hyperplane of $R$.

$H_R$ is a nef divisor on $X$.  If $H_R^k = 0$ for some $k\leq n$
then we can show by a foliated bend and break result that
through a general point of $X$ there is a rational curve tangent to the foliation
spanning $R$.

If $H_R^n \neq 0$, then we may take $H_R$ to be effective, and so we see
that $R$ actually comes from a lower dimensional subvariety $S$ of $X$.  The idea
here is to proceed by induction on dimension.
Unfortunately, the singularities
which arise in this induction step are worse than the singularities of $(X, \cal F)$.  
Indeed, as we will see
the singularities in our induction are sometimes worse than log canonical.
The bulk of our work is therefore to surmount these difficulties.

The strategy around this will be to 
\begin{enumerate}
\item first show that the extremal ray 
is spanned by a curve then,

\item 
using a foliated subadjunction result 
(extending Kawamata's subadjunction result to the foliated case) 
we show
that the extremal ray is spanned by a curve tangent to the foliation.  Finally,

\item we develop an algebraicity criterion for 2 dimensional leaves of
foliations, which allows us to show that the extremal ray is spanned by a 
rational curve tangent to the foliation.
\end{enumerate}

The subadjunction and algebraicity results alluded to are Theorem \ref{foliatedsubadjunction} 
and
Lemma \ref{algebraicorrational} respectively.
An amusing corollary of our algebraicity criterion is the following:
\begin{corollary}
Let $\cal F$ be a smooth rank 2 foliation on a smooth projective variety.
Let $C$ be a $K_{\cal F}$-negative curve tangent to $\cal F$.
Let $S$ be a germ of a leaf around $C$.  Then either
\begin{enumerate}
\item $C$ is rational or
\item $S$ is algebraic (i.e. its Zariski closure is a surface).
\end{enumerate}
\end{corollary}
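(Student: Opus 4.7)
The plan is to invoke the algebraicity criterion of Lemma \ref{algebraicorrational}, after verifying that in the $K_{\cal F}$-negative setting its key hypothesis -- ampleness of the normal bundle of $C$ inside the foliation -- holds whenever $C$ fails to be rational.

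First I would exploit the smoothness of $\cal F$. Since $\cal F$ is a smooth rank $2$ foliation on the smooth variety $X$, the tangent sheaf $T_{\cal F}$ is a subbundle of $TX$, and the germ $S$ of the leaf through $C$ is a smooth $2$-dimensional submanifold with $T_S = T_{\cal F}|_S$. In particular $K_{S} = K_{\cal F}|_S$, and the two normal bundles coincide: $N_{C/S} = N_{C/\cal F} := T_{\cal F}|_C / T_C$. I would then apply adjunction on the smooth surface germ $S$ (valid because $S$ is smooth around $C$, even if not globally algebraic):
\[
2g(C) - 2 \;=\; K_S \cdot C + C^2 \;=\; K_{\cal F} \cdot C + \deg N_{C/S},
\]
giving $\deg N_{C/S} = 2g(C) - 2 - K_{\cal F} \cdot C > 2g(C) - 2$ by the hypothesis $K_{\cal F} \cdot C < 0$.

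Next I would split into cases. If $g(C) = 0$ then $C$ is rational and we are in case (1). Otherwise $g(C) \geq 1$, whence $\deg N_{C/S} \geq 2g(C) - 1 \geq 1$, so $N_{C/S}$ is a positive-degree line bundle on the smooth curve $C$, hence ample. Equivalently $N_{C/\cal F}$ is ample along $C$, which is exactly the input required by Lemma \ref{algebraicorrational}; its conclusion is that the formal, and hence analytic, germ of the leaf containing $C$ is algebraic, placing us in case (2).

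The main obstacle is largely absorbed into Lemma \ref{algebraicorrational} itself: showing that ampleness of the normal bundle along a single curve suffices to algebraize the ambient leaf. The piece of work I would carry out directly is the numerical computation of $\deg N_{C/S}$, which hinges on the two smoothness-driven identifications $K_S = K_{\cal F}|_S$ and $N_{C/S} = N_{C/\cal F}$; both would require substantial modification if $\cal F$ or $X$ were permitted to be singular.
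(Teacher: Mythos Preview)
Your underlying argument is sound, but you have mischaracterized the hypothesis of Lemma~\ref{algebraicorrational}, and in doing so taken a detour that the paper avoids. The stated input to Lemma~\ref{algebraicorrational} is \emph{not} ampleness of the normal bundle; it is the inequality $(K_{S^\nu}+\Delta)\cdot C' < 0$ for some boundary $\Delta \geq 0$. The ampleness-of-normal-bundle step you isolate is part of the \emph{proof} of that lemma (via the Hartshorne / Bogomolov--McQuillan transcendence-degree bound), not its hypothesis. So when you write ``which is exactly the input required by Lemma~\ref{algebraicorrational}'', you are really appealing to an intermediate step inside its proof rather than to the lemma as stated.

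The paper's route is shorter: since $\cal F$ and $X$ are smooth, the leaf germ $S$ is smooth, so $S^\nu = S$ and $K_{\cal F}\vert_S = K_S$; taking $\Delta = 0$ one has $(K_{S^\nu}+\Delta)\cdot C = K_{\cal F}\cdot C < 0$ directly, and Lemma~\ref{algebraicorrational} immediately yields the dichotomy. Your adjunction computation and case split on $g(C)$ simply re-derive what the lemma already packages. A minor additional point: you implicitly treat $C$ as smooth (``a positive-degree line bundle on the smooth curve $C$''), whereas the corollary does not assume this; the fix is to work with the arithmetic genus, noting that $p_a(C)=0$ forces $C$ to be smooth rational, and that $C^2>0$ pulls back to an ample line bundle on the normalization---which is exactly how Lemma~\ref{algebraicorrational} is set up.
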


For the contraction and flip theorem, the rough idea is to deduce the foliated statement
by running a well chosen log MMP.  In proving the flip theorem we prove a version
of Malgrange's theorem for singular varieties, which we think is 
of independent interest, see Lemma \ref{novelmalgrange}:

\begin{theorem}
Let $0 \in X$ be a threefold germ with a co-rank 1 foliation $\cal F$.
Suppose $X$ has log terminal singularities
and that $\cal F$ has terminal singularities.  Then $\cal F$ has a holomorphic
first integral.
\end{theorem}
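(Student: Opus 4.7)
The plan is to reduce to the smooth setting and invoke the classical Malgrange theorem, which produces a holomorphic first integral for a codimension-one holomorphic foliation on a smooth germ whose singular locus has codimension at least three.

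First, I would take a simultaneous log resolution $\pi \colon Y \to X$ of the pair $(X, \cal F)$, with $Y$ smooth, the exceptional divisor a simple normal crossings divisor, and the transformed foliation $\cal F_Y = \pi^{-1}\cal F$ in a standard local form. Terminality of $\cal F$ passes to $\cal F_Y$ on $Y$: each $\pi$-exceptional divisor contributes a positive foliated discrepancy by hypothesis, so $\cal F_Y$ is still terminal on the smooth threefold $Y$.

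Next, I would upgrade foliated terminality to the geometric statement that $\sing{F_Y}$ is zero-dimensional, via a local analysis near a curve of foliated singularities: any one-dimensional component of $\sing{F_Y}$ can be blown up to extract an exceptional divisor whose discrepancy is $\le 0$, contradicting terminality. The same bookkeeping shows that every $\pi$-exceptional divisor must be $\cal F_Y$-invariant (i.e.\ non-dicriticality is forced by terminality), because any non-invariant exceptional divisor contributes a non-positive discrepancy to $K_{\cal F_Y}$.

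With $\sing{F_Y}$ of codimension $\ge 3$ on the smooth threefold $Y$, the classical Malgrange theorem yields, on a sufficiently small analytic neighborhood of each point of $\pi^{-1}(0)$, a holomorphic first integral for $\cal F_Y$. To globalize, I use that each exceptional divisor $E_i$ is $\cal F_Y$-invariant and of dimension equal to the leaf dimension of $\cal F_Y$, hence is the closure of a single leaf; consequently any local first integral must be constant on $E_i$. Connectedness of $\pi^{-1}(0)$ then lets one stitch the local integrals into a single holomorphic $f$ on a neighborhood $U$ of $\pi^{-1}(0)$ in $Y$, with $f$ taking a single constant value on $\pi^{-1}(0)$. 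Since $X$ is normal and $\pi$ is proper birational, $\pi_* \cal O_Y = \cal O_X$, so $f$ descends to a holomorphic function $\bar f$ on a neighborhood of $0 \in X$, which is the desired first integral for $\cal F$.

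The main obstacle will be the second step, namely converting the algebraic condition ``terminal'' into the geometric condition that $\sing{F_Y}$ has codimension three on the smooth model. This requires a careful local analysis of the $1$-form defining $\cal F_Y$ along a candidate curve of singularities, together with the right weighted blowup to extract a witness divisor of non-positive discrepancy. A secondary subtlety is the patching argument in Step 3 across triple intersections of exceptional components, which reduces to checking that the constant values of local integrals on pairs of adjacent invariant divisors necessarily coincide --- this is where the ``leaf equals divisor'' observation enters essentially.
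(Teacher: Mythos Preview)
Your Step 2 is where the argument breaks. Positive discrepancies for the $\pi$-exceptional divisors say only that $a(E_i,\cal F)>0$; terminality of $\cal F_Y$ would require $a(F,\cal F_Y)>0$ for every $F$ over $Y$, and the two are related by $a(F,\cal F)\geq a(F,\cal F_Y)$, so the former does not imply the latter. A clean counterexample: take $X=\bb C^3$ with the smooth (hence terminal) foliation $dz=0$ and let $\pi$ be the blow up of the origin. In the chart $(x,y,z)=(u,uv,uw)$ one has $\pi^*dz=u\,dw+w\,du$, so $\text{sing}(\cal F_Y)$ contains the line $\{u=w=0\}\subset E$; blowing up this line extracts an invariant divisor $E'$ with $a(E',\cal F_Y)=0$ (while $a(E',\cal F)=2$). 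Thus $\cal F_Y$ is only canonical and carries a curve of singularities. This kills Step 3 too: you cannot verify Malgrange's codimension-$3$ hypothesis on $Y$, and on any resolution the pulled-back foliation will typically acquire curves of (canonical, not terminal) singularities along the exceptional set.

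The patching in Step 5 is an independent problem. Local first integrals are only unique up to post-composition by germs of $\text{Diff}(\bb C,0)$, so on overlaps you obtain a cocycle with values in this non-abelian group; connectedness of $\pi^{-1}(0)$ and the observation that each $E_i$ is a leaf do not trivialize it. Equivalently, nontrivial holonomy of $\cal F_Y$ along a compact invariant exceptional component obstructs a first integral on any tubular neighbourhood.

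For comparison, the paper avoids both issues by not resolving at all. It passes to a Galois quasi-\'etale index-one cover $(Y,\cal G)\to(X,\cal F)$; being \'etale in codimension one, terminality of $\cal G$ is genuinely inherited. Now $N^*_{\cal G}$ is a line bundle, so the foliated discrepancies are at most the usual ones, and $Y$ is forced to be terminal of index one---an isolated cDV hypersurface singularity with $\cal G$ smooth away from $\text{sing}(Y)$. The Cerveau--Lins Neto extension of Malgrange (which allows complete-intersection ambient singularities of codimension $\geq 3$) then yields a first integral on $Y$ in one stroke, and descent to $X$ is by Galois equivariance, with no patching required.
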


We make one final remark: modern methods
for proving the statements in the classical MMP are largely cohomological, i.e. utilising 
various theorems
on the vanishing and non-vanishing of cohomology groups. These are all 
false in the foliated context.
The main challenge in this paper 
is to develop techniques for foliations which can replace the cohomological
methods developed in the classical setting.

\subsection{Acknowledgements} 
We'd like to thank Jorge Vit\'{o}rio Pereira, 
Michael McQuillan and James McKernan for
many helpful discussions
and suggestions, as well as for providing feedback and corrections 
on earlier drafts
of this work.  Their generosity in sharing their guidance and expertise has been
greatly appreciated.
We'd also like to thank the referee for his or her careful reading and numerous suggestions which greatly
improved this paper.

\section{Set up and basic results}

\begin{defn}
Given a normal variety $X$ a {\bf foliation} $\cal F$ is a coherent saturated
subsheaf
of the tangent sheaf of $X$ which is closed under the Lie bracket.

The {\bf rank} of the foliation, $\text{rk}(\cal F)$,
is its rank as a sheaf and its {\bf co-rank} is $\text{dim}(X) - \text{rk}(\cal F)$.

The {\bf singular locus} of the foliation is the locus where $\cal F$
fails to be a sub-bundle of $T_X$.  Note that $\text{sing}(\cal F)$
has codimension at least 2.
\end{defn}

The canonical divisor plays a central role in the birational geometry
of foliations, we define it as follows:

\begin{defn}
Let $U$ be the locus where $X$ and $\cal F$ are smooth.
We can associate a divisor to
$\text{det}(\cal F\vert_U)^*$, which gives a Weil divisor on all of $X$,
denoted $K_{\cal F}$.
\end{defn}

For the rest of this paper we will take $\cal F$ to be a co-rank 1
foliation over $\bb C$.

\begin{defn}
We say $W \subset X$ is {\bf tangent} to $\cal F$ if the
tangent space of $W$ factors through $\cal F$
along $X - (\text{sing}(\cal F) \cup \text{sing}(X))$. Otherwise we say that $W$ 
is {\bf transverse} to the foliation.


If $\cal F$ factors through the tangent space of $W$, 
$\cal F\vert_W \rightarrow T_W \rightarrow T_X\vert_W$, we say
that $W$ is {\bf invariant}.
\end{defn}

\subsection{1-forms and pulled back foliations}
\begin{defn}
Let $\omega$ be a rational 1-form
with $\omega\wedge d\omega = 0$.  Then we can define a foliation
by contraction.  Namely, we take $\cal F$
to be the kernel of the pairing of $\omega$ with $T_X$.
Thus, given a rank 1 coherent subsheaf of $\Omega_X$, we can 
define
a foliation by contraction.

On the other hand, given a foliation $\cal F$ we can define a subsheaf
of $\Omega_X$ by taking the kernel of $\Omega_X \rightarrow \cal F^*$.
\end{defn}

Let $\cal F$ be a co-rank 1 foliation on $X$,
and suppose that it is defined by the
rank 1 subsheaf of the cotangent sheaf $0 \rightarrow \cal L \rightarrow \Omega_X$.

\begin{defn}
Let $f:W \rightarrow X$.  We have a morphism
$df: f^*\Omega_X \rightarrow \Omega_W$.
Assume that $f(W)$ is is not tangent to $\cal F$ and that
$f(W)$ is not contained in $\text{sing}(\cal F)\cup\text{sing}(X)$.
Then $df(f^*\cal L)$ is a rank 1 coherent subsheaf
of $\Omega_W$.  
Observe that if $\omega \in \cal L$ is
an integrable 1 form then
$df(\omega)$ is still integrable.

This gives a foliation $\cal F_W$,
called the {\bf pulled back} foliation.  
\end{defn}
When $f$ is a closed immersion
we will sometimes refer to it as the restricted foliation.

In general, even if $\cal L$ is a saturated subsheaf, $f^*\cal L$ might not be saturated.

\begin{defn}
Let $0 \rightarrow \cal L \rightarrow \Omega_X$ define
a foliation, $\cal F$.  We call the saturation of $\cal L$
in $\Omega_X$ the {\bf conormal sheaf}, denoted $N^*_{\cal F}$.

On the smooth locus of $X$, $(N^*_{\cal F})\vert_{X^{sm}}$
is a line bundle
represented by 1-forms
with zero loci of codimension at least 2.
We can therefore associate to $N^*_{\cal F}$ a well defined
Weil divisor.
We will denote this divisor by $[N^*_{\cal F}]$.
\end{defn}

\begin{lemma}
\label{basicadjunction}
We have the following equivalence of Weil divisors:
$K_X = K_{\cal F} + [N^*_{\cal F}]$.
\end{lemma}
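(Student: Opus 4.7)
The plan is to verify the identity on the largest open set $U \subset X$ where both $X$ is smooth and $\mathcal{F}$ is a subbundle of $T_X$, and then extend by codimension reasons, since all of the divisors involved are Weil divisors defined exactly by their restriction to such a $U$.

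First, I would set $U = X^{sm} \setminus \mathrm{sing}(\mathcal{F})$. Since $X$ is normal and $\mathrm{sing}(\mathcal{F})$ has codimension $\geq 2$ by definition, $X \setminus U$ has codimension at least $2$ in $X$. On $U$, the inclusion $\mathcal{F} \hookrightarrow T_X$ has locally free cokernel of rank $1$ (because $\mathcal{F}$ has co-rank $1$ and is a subbundle there), so we get a short exact sequence of locally free sheaves
\[
0 \longrightarrow \mathcal{F} \longrightarrow T_X \longrightarrow T_X/\mathcal{F} \longrightarrow 0.
\]
Dualising this sequence gives
\[
0 \longrightarrow (T_X/\mathcal{F})^* \longrightarrow \Omega_X \longrightarrow \mathcal{F}^* \longrightarrow 0,
\]
and by the very definition of the conormal sheaf (as the saturation of any defining subsheaf $\mathcal{L}$ inside $\Omega_X$), the left-hand term is canonically identified with $N^*_{\mathcal{F}}|_U$ on $U$: both are the kernel of $\Omega_X \to \mathcal{F}^*$, which is already a line subbundle of $\Omega_X$ on $U$ and hence automatically saturated.

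Next, I would take determinants of this locally free sequence on $U$:
\[
\det \Omega_X \;=\; N^*_{\mathcal{F}} \otimes \det \mathcal{F}^*.
\]
Passing to associated Weil divisor classes this reads $K_X|_U = [N^*_{\mathcal{F}}]|_U + K_{\mathcal{F}}|_U$, the first two terms being well-defined as divisors (not merely line bundle classes) because $N^*_{\mathcal{F}}$ is represented on the smooth locus by $1$-forms whose zero locus has codimension $\geq 2$, and $K_{\mathcal{F}}$ is defined in the same way via $\det(\mathcal{F}|_U)^*$ as in the excerpt.

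Finally, I would extend the equality from $U$ to all of $X$. Both sides are Weil divisors on the normal variety $X$, and two Weil divisors that agree on an open subset whose complement has codimension at least $2$ agree everywhere; equivalently, the reflexive rank-$1$ sheaves $\mathcal{O}_X(K_X)$ and $\mathcal{O}_X(K_{\mathcal{F}} + [N^*_{\mathcal{F}}])$ are determined by their restrictions to $U$, where they coincide. There is no serious obstacle here; the only point to be careful about is that $N^*_{\mathcal{F}}$ really is a line bundle on $U$ (not merely on the even smaller locus where the original defining subsheaf $\mathcal{L}$ is saturated), which is exactly why one must saturate $\mathcal{L}$ in the definition of $N^*_{\mathcal{F}}$.
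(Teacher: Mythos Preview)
Your argument is correct and is the standard way to establish this identity: work on the open set where $X$ and $\mathcal{F}$ are both smooth, use the short exact sequence $0 \to \mathcal{F} \to T_X \to T_X/\mathcal{F} \to 0$ of vector bundles, take determinants, and extend across a codimension-$2$ set. The paper states this lemma without proof, treating it as a basic fact; your sketch supplies exactly the expected justification.
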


\subsection{Foliated Pairs and Foliation singularities}

Frequently in birational geometry it is useful to consider
pairs $(X, \Delta)$ where $X$ is a normal variety, and $\Delta$ is a $\bb Q$-Weil divisor
such that $K_X+\Delta$ is $\bb Q$-Cartier.  By analogy we define

\begin{defn}
A {\bf foliated pair} $(\cal F, \Delta)$ is a pair of a foliation and a $\bb Q$-Weil 
($\bb R$-Weil) divisor
such that $K_{\cal F}+\Delta$ is $\bb Q$-Cartier ($\bb R$-Cartier).
\end{defn}

Note also that we are typically interested only in the cases when $\Delta \geq 0$,
although it simplifies some computations to allow $\Delta$ to have negative coefficients.

Given a birational morphism $\pi: \widetilde{X} \rightarrow X$ 
and a foliated pair $(\cal F, \Delta)$ on $X$ 
let $\widetilde{\cal F}$ be the pulled back foliation on $\tilde{X}$ and $\pi_*^{-1}\Delta$ be the strict
transform.
We can write
$$K_{\widetilde{\cal F}}+\pi_*^{-1}\Delta=
\pi^*(K_{\cal F}+\Delta)+ \sum a(E_i, \cal F, \Delta)E_i,$$

\begin{defn}
We say that $(\cal F, \Delta)$ is {\bf terminal, canonical, log terminal, log canonical} if
$a(E_i, \cal F, \Delta) >0$, $\geq 0$,
$> -\epsilon(E_i)$, $\geq -\epsilon(E_i)$, respectively, where
$\epsilon(D) = 0$ if $D$ is invariant and 1 otherwise and where $\pi$
varies across all birational morphisms.

If $(\cal F, \Delta)$ is log terminal and $\lfloor \Delta \rfloor = 0$
we say that $(\cal F, \Delta)$ is {\bf foliated klt}.
\end{defn}

Notice that these notions are well defined, i.e. $\epsilon(E)$ and $a(E, \cal F, \Delta)$
are independent of $\pi$.

Observe that in the case where $\cal F = T_X$ no exceptional divisor
is invariant, i.e. $\epsilon(E)=1$, and so this definition recovers the usual
definitions of (log) terminal, (log) canonical.

We have the following nice characterization due to \cite[Corollary I.2.2.]{McQuillan08}:

\begin{proposition}
Let $0 \in X$ be a surface germ with a terminal foliation $\cal F$.  
Then there exists a smooth foliation, $\cal G$,
on a smooth surface, $Y$, and a cyclic quotient
$Y \rightarrow X$ such that $\cal F$ is the quotient of
$\cal G$ by this action.
\end{proposition}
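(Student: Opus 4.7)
The plan is to combine the classification of reduced foliation singularities on smooth surfaces with the index-one cover construction. My strategy proceeds in three steps: reduce to the case where $K_{\cal F}$ is Cartier by passing to a cyclic cover, show that the pulled-back foliation is then smooth, and finally identify $\cal F$ as the cyclic quotient.

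First, I would consider the cyclic cover $\sigma: Y \to X$ that trivialises the torsion of $K_{\cal F}$, chosen to be \'etale over the smooth locus of $\cal F$. Let $\cal G = \sigma^{-1}\cal F$ denote the pulled-back foliation on $Y$. By standard covering arguments, together with the fact that discrepancies transform in a controlled way under \'etale-in-codimension-one cyclic covers, $\cal G$ remains terminal on $Y$, and moreover $K_{\cal G}$ is now Cartier. This reduces the problem to understanding terminal foliation germs with Cartier canonical class on a surface.

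Second, I would show that a terminal foliation on a surface with Cartier canonical divisor must be smooth and lives on a smooth surface. By Seidenberg's theorem there is a birational map $\pi: \widetilde Y \to Y$ with $\widetilde Y$ smooth and $\widetilde{\cal G} = \pi^{-1}\cal G$ a foliation with only reduced singularities, which on a surface come in three types (non-degenerate non-resonant, negative rational ratio, and saddle-nodes). For each such type I would perform one further blow-up, compute the discrepancy of the new exceptional divisor from the leading terms of the local defining $1$-form, and verify that it is non-positive, contradicting terminality. Hence $\widetilde{\cal G}$ is globally smooth, and the Cartier property of $K_{\cal G}$ then forces $Y$ to be smooth at the preimage of $0$, since any non-trivial contribution from variety singularities would likewise be detected by a non-positive exceptional discrepancy.

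Finally, the Galois group of $\sigma$ acts on $Y$ preserving the smooth foliation $\cal G$, and $\cal F$ is precisely its quotient, yielding the claimed description. The main obstacle is the discrepancy computation in the saddle-node case, where the formal normal form carries a divergent strong separatrix; however, since the required inequality depends only on the leading-order behaviour of the defining form under blow-up, the argument remains tractable and robust.
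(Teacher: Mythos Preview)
The paper does not give its own proof of this proposition; it is simply quoted from \cite[Corollary I.2.2]{McQuillan08}. So there is no ``paper's approach'' to compare against, and your index-one-cover strategy is indeed the standard route to the result. The overall architecture---pass to a cyclic cover making $K_{\cal F}$ Cartier, then show Gorenstein terminal implies smooth---is correct.

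However, your execution of the second step contains a genuine gap. You propose to take a Seidenberg resolution $\pi:\widetilde Y\to Y$, locate a reduced singularity $p$ of $\widetilde{\cal G}$, blow it up once more, and obtain a non-positive discrepancy. The problem is that this computes $a(E,\widetilde{\cal G})$, not $a(E,\cal G)$. Writing $K_{\widetilde{\cal G}}=\pi^*K_{\cal G}+\sum a_iE_i$ with $a_i>0$, one has
\[
a(E,\cal G)=a(E,\widetilde{\cal G})+\sum_i a_i\cdot\operatorname{mult}_p(E_i),
\]
and the reduced singularities produced by Seidenberg typically sit on the exceptional locus, so the right-hand side is strictly positive and no contradiction ensues. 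Concretely: blowing up a \emph{smooth} point of a smooth foliation already creates a reduced singularity on the exceptional curve, so your argument would ``prove'' that this cannot happen, which is false.

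The repair is to bypass Seidenberg entirely. If $Y$ is smooth and $\cal G$ is singular at $0$, blow up $0$ directly: the defining $1$-form $\omega$ vanishes at $0$, so $\pi^*\omega$ vanishes to order $m\ge 1$ along the exceptional divisor, and the foliation discrepancy is $1-m\le 0$, contradicting terminality. This handles the smooth-$Y$ case in one line. The case where $Y$ itself is singular needs a separate argument (your sentence ``any non-trivial contribution from variety singularities would likewise be detected by a non-positive exceptional discrepancy'' is too vague as stated); one clean way is to run the relative foliated MMP from a resolution over $Y$ and use the negativity lemma together with the integrality of discrepancies to force the exceptional locus to be empty.
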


We also make note of the following easy fact:

\begin{lemma}
Let $\pi:(Y, \cal G) \rightarrow (X, \cal F)$ be a birational morphism.
Write $\pi^*(K_{\cal F}+\Delta) = K_{\cal G}+\Gamma$. Then
$a(E, \cal F, \Delta) = a(E, \cal G, \Gamma)$ for all $E$.
\end{lemma}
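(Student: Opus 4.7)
The plan is to verify the equality by choosing, for each divisorial valuation $E$, a common birational model on which $E$ appears as a divisor, and then comparing the two discrepancy formulas against $(\cal F,\Delta)$ and against $(\cal G,\Gamma)$. The whole calculation is forced by $\pi^*(K_{\cal F}+\Delta) = K_{\cal G}+\Gamma$ together with the transitivity of pullback.

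First I would dispose of the case where $E$ already appears as a divisor on $Y$. If $E$ is not $\pi$-exceptional, then $\text{coeff}_E(\Gamma) = \text{coeff}_{\pi_*E}(\Delta)$ by the definition of strict transform, so $a(E,\cal G,\Gamma) = -\text{coeff}_E(\Gamma) = -\text{coeff}_{\pi_*E}(\Delta) = a(E,\cal F,\Delta)$. If $E$ is $\pi$-exceptional, then reading off $\Gamma = \pi^{-1}_*\Delta - \sum_i a(E_i,\cal F,\Delta) E_i$ gives $\text{coeff}_E(\Gamma) = -a(E,\cal F,\Delta)$, and the two discrepancies again coincide.

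For a general divisorial valuation $E$, choose a birational morphism $f: Z \to Y$ on which $E$ appears as a divisor, and set $g = \pi \circ f$. Applying $f^*$ to $\pi^*(K_{\cal F}+\Delta) = K_{\cal G}+\Gamma$ yields $g^*(K_{\cal F}+\Delta) = f^*(K_{\cal G}+\Gamma)$. Substitute this into the two discrepancy formulas
\begin{align*}
K_{\cal F_Z} + g^{-1}_*\Delta &= g^*(K_{\cal F}+\Delta) + \sum a(E',\cal F,\Delta) E',\\
K_{\cal F_Z} + f^{-1}_*\Gamma &= f^*(K_{\cal G}+\Gamma) + \sum a(E',\cal G,\Gamma) E',
\end{align*}
where the sums run over $g$-exceptional and $f$-exceptional divisors respectively. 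Subtracting, the difference $g^{-1}_*\Delta - f^{-1}_*\Gamma$ is supported on divisors of $Z$ that are $g$-exceptional but not $f$-exceptional, namely strict transforms under $f$ of $\pi$-exceptional divisors on $Y$; by the first case, their $(\cal F,\Delta)$- and $(\cal G,\Gamma)$-discrepancies already match. Matching coefficients on what remains gives $a(E,\cal G,\Gamma) = a(E,\cal F,\Delta)$ for every $f$-exceptional $E$.

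The only obstacle I anticipate is purely combinatorial: keeping careful track of which prime divisors of $Z$ are $f$-exceptional, $g$-exceptional, both, or neither, so that the two discrepancy formulas subtract cleanly. There is no genuine geometric or analytic difficulty — the lemma is fundamentally an exercise in transitivity of pullback combined with the defining formula for discrepancy, and the only slightly delicate point (the foliation $\cal F_Z$ being the same whether pulled back from $\cal F$ via $g$ or from $\cal G$ via $f$) is immediate from the functoriality of the pulled-back foliation construction.
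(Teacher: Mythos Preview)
Your argument is correct and is the standard one for this kind of statement; it works verbatim in the foliated setting because, as you note, the pulled-back foliation is functorial and the discrepancy formula has exactly the same shape as in the classical case. The paper itself does not give a proof of this lemma at all --- it is introduced merely as ``the following easy fact'' and left to the reader --- so there is nothing to compare against beyond observing that what you wrote is precisely the routine verification the author had in mind.
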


\begin{remark}
Observe that if any component of $\text{supp}(\Delta)$ is foliation invariant,
then $(\cal F, \Delta)$ is not log canonical.
\end{remark}

We will also make use of the class of simple foliation singularities:

\begin{defn}
We say that $p \in X$ with $X$ smooth is a {\bf simple singularity} for $\cal F$
provided in formal coordinates around $p$ we can write the defining 1-form
for $\cal F$ in one of the following two forms, where $1 \leq r \leq n$:

(i) There are $\lambda_i \in \bb C^*$ such that
$$\omega = (x_1...x_r)(\sum_{i = 1}^r \lambda_i \frac{dx_i}{x_i})$$
and if $\sum a_i\lambda_i = 0$ for some non-negative integers $a_i$
then $a_i = 0$ for all $i$.

(ii) There is an integer $k \leq r$ such that
$$\omega = (x_1...x_r)(\sum_{i = 1}^kp_i\frac{dx_i}{x_i} + 
\psi(x_1^{p_1}...x_k^{p_k})\sum_{i = 2}^r \lambda_i\frac{dx_i}{x_i})$$
where $p_i$ are positive integers, without a common factor, $\psi(s)$
is a series which is not a unit, and $\lambda_i \in \bb C$
and if $\sum a_i\lambda_i = 0$ for some non-negative integers $a_i$
then $a_i = 0$ for all $i$.

We say the integer $r$ is the dimension-type of the singularity.
\end{defn}

\begin{remark}
A general hyperplane section of a simple singularity is again
a simple singularity.  
\end{remark}

By Cano, \cite{Cano}, every foliation on a smooth threefold admits a resolution
by blow ups centred in the singular locus of the foliation
such that the transformed
foliation has only simple singularities.


Using \cite{AD13} it is easy to check the following:

\begin{lemma}
Simple singularities (including smoothly foliated points) are canonical.
\end{lemma}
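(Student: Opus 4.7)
The plan is to verify the canonical condition directly from the definition by reducing the discrepancy computation to a tractable local one via Lemma~\ref{basicadjunction}. Given any birational morphism $\pi \colon \widetilde X \to X$ extracting a prime exceptional divisor $E$ with local equation $t_E$, and any local saturated generator $\omega$ of $N^*_{\cal F}$ near $\pi(E)$, I write $\pi^*\omega = t_E^{\,\ell_E}\,\widetilde\omega$, where $\widetilde\omega$ is the saturated generator of $N^*_{\widetilde{\cal F}}$; equivalently, $\ell_E$ is just the order of vanishing of $\pi^*\omega$ as a section of $\Omega_{\widetilde X}$ along $E$. Comparing Lemma~\ref{basicadjunction} applied on $X$ and on $\widetilde X$ with the standard discrepancy formula $K_{\widetilde X} = \pi^*K_X + \sum a(E_i, X) E_i$ yields the identity
\[
a(E, \cal F) = a(E, X) - \ell_E,
\]
so canonicality for $\cal F$ reduces to verifying $a(E, X) \geq \ell_E$ for every exceptional $E$.

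For a smoothly foliated point I take coordinates in which $\omega = dx_1$, so $\pi^*\omega = d(\pi^*x_1)$. Either $\pi^*x_1\vert_E$ is nonconstant, in which case $\ell_E = 0$ and $a(E, \cal F) = a(E, X) \geq 1$ by smoothness of $X$; or $\pi^*x_1\vert_E$ is a constant $c$, and after replacing $x_1$ by $x_1 - c$ I may take $c = 0$. A direct computation then gives $\ell_E = \operatorname{mult}_E(\pi^*x_1) - 1$, and the log canonicity of the smooth snc pair $(X, \{x_1 = 0\})$ yields $a(E, X) \geq \operatorname{mult}_E(\pi^*x_1) - 1 = \ell_E$, hence $a(E, \cal F) \geq 0$.

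For a simple singularity I would follow \cite{AD13}. The key structural point is that blow-ups along the invariant strata $\{x_i = x_j = 0\}$ preserve the normal forms (case (i) or case (ii), with a computable transformation of the eigenvalues and of the series $\psi$), and on each such blow-up a direct calculation in the normal form yields $\ell_E = a(E, X)$, hence $a(E, \cal F) = 0$. Any divisorial valuation over a simple point is extracted by some iterated tower of such invariant blow-ups, possibly composed with further blow-ups at the resulting simple or smoothly foliated points; by the birational invariance of discrepancy noted in the lemma immediately preceding the definition of simple singularity, one promotes the step-by-step estimate to $a(E, \cal F) \geq 0$ in every case.

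The main obstacle is the structural stability of the simple normal form under blow-ups along invariant strata: one must verify that each such blow-up again produces only simple (or smoothly foliated) singularities, with the eigenvalue and $\psi$ combinatorics controlled well enough to keep $\ell_E \leq a(E, X)$ throughout. This is precisely the local normal-form analysis furnished by \cite{AD13}.
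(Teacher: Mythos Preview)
The paper's own proof is nothing more than a citation to \cite{AD13}, so your proposal---which also defers to \cite{AD13}---is aligned with the paper and in fact more detailed. Your identity $a(E,\cal F)=a(E,X)-\ell_E$ is correct (it is the precise form underlying Corollary~\ref{smoothdiscrepcorollary}(2)), and your self-contained treatment of the smooth case via log canonicity of $(X,\{x_1=0\})$ is clean and correct.

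Your sketch of the simple case, however, has a circularity. Blowing up an invariant stratum at a simple point yields again simple points of the same kind (for instance in dimension~$2$, the form $\lambda_1 y\,dx+\lambda_2 x\,dy$ blows up to the same normal form with shifted eigenvalue ratio), so there is no induction parameter that eventually lands you at smoothly foliated points; the center of a given $E$ can remain over simple points through an arbitrarily long tower, and your claim that every valuation is reached this way is not justified. What one actually extracts from \cite{AD13} is a direct estimate with no tower: setting $m_i=\operatorname{mult}_E(\pi^*x_i)$ and $M=\sum_{i=1}^r m_i$, the non-resonance hypothesis (no nontrivial relation $\sum a_i\lambda_i=0$ with $a_i\in\bb Z_{\geq 0}$) forbids cancellation of the order-$(M-1)$ part of $\pi^*\omega$, so $\ell_E=M-1$ whenever some $m_i\geq 1$; then log canonicity of the snc pair $\bigl(X,\sum_{i=1}^r\{x_i=0\}\bigr)$ gives $a(E,X)\geq M-1=\ell_E$ at once. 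Type~(ii) needs a little more bookkeeping with the $\psi$-term but follows the same shape. This direct computation, not the tower-of-blow-ups picture, is what you should expect \cite{AD13} to supply.
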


The converse of this statement is false:

\begin{example}
Consider the germ of the foliation $(0 \in X, \cal F)$ given by the
degeneration of smooth surfaces to the cone over an elliptic curve.
Consider the blow up $\pi$ at the point $0$ with
exceptional divisor $E$ and let $\cal F'$
be the transformed foliation.
Observe that $\cal F'$ has simple singularities,
and that $E$ is invariant.

Write $K_{\cal F'} = \pi^*K_{\cal F}+aE$.

Denote by $L$ the closure of a leaf in $X$ passing through $0$,
and $L'$ its strict transform.
$K_{\cal F}\vert_L = K_L$,
$K_{\cal F'}\vert_{L'} = K_{L'}+E\vert_{L'}$ and
$K_{L'} = \pi^*K_L - E\vert_{L'}$.

From this we see that
$K_{L'}+E\vert_{L'} = \pi^*K_L+aE\vert_{L'}$
and so $a = 0$, hence $\cal F$ is canonical.
However, $\cal F$ is not simple since simple
singularities in dimension $\geq 3$ are never isolated.
\end{example}

We will need to define one final type of foliation singularity:

\begin{defn}
Given a foliated pair $(X, \cal F)$ we say that 
$\cal F$ has {\bf non-dicritical} singularities if for any sequence of
blow ups $\pi:(X', \cal F') \rightarrow (X, \cal F)$ such that $X'$ is smooth
and any $q \in X$ we have $\pi^{-1}(q)$ is tangent to
the foliation.
\end{defn}

\begin{remark}
Observe that non-dicriticality implies that if $W$ is $\cal F$ invariant, then
$\pi^{-1}(W)$ is $\cal F'$ invariant.
\end{remark}

\begin{defn}
Given a germ $0 \in X$ with a foliation $\cal F$
such that $0$ is a singular point for $\cal F$
we call a (formal) hypersurface germ $0 \in S$ a {\bf (formal) separatrix}
if it is invariant under $\cal F$.
\end{defn}

Note that away from the singular locus of $\cal F$
a separatrix is in fact a leaf.  Furthermore being non-dicritical 
implies that there are only finitely
many separatrices through a singular point.

\begin{example}
Consider the foliation on $\bb C^3$ defined by the 1-form \[\omega = (x^my-z^{m+1})dx+(y^mz-x^{m+1})dy+(z^mx-y^{m+1})dz\]
where $m \geq 2$.
Blowing up the origin shows that this is a dicritical foliation.
Observe that every line passing through the origin is tangent to this foliation however it admits no separatrices at the origin.
\end{example}

\begin{example}
Let $\lambda \in \bb R$.  Consider the foliation $\cal F_\lambda$ on $\bb C^2$
generated by $x\partial_x+\lambda y\partial_y$.
For $\lambda \in \bb Q_{> 0}$ we can see that $\cal F_\lambda$ is dicritical,
and otherwise is non-dicritical.
\end{example}

\begin{example}
Simple singularities are non-dicritical.
\end{example}

Even for simple foliation singularities it is possible that there
are separatrices which do not converge.  However, as the following
definition/result of \cite{CC92} shows there is always at least 1 convergent
separatrix along a simple foliation singularity of codimension 2.

\begin{defn}
For a simple singularity of type (i), all separatrices are convergent.

For a simple singularity of type (ii), around a general point
of a codimension 2 component of the singular locus we can write 
$\omega = pydx+qxdy+ x\psi(x^py^q)\lambda dy$. $x = 0$ is a convergent
separatrix, called the {\bf strong separatrix}.
\end{defn}

\begin{defn}
\label{tangtransdef}
Suppose $X$ is a normal variety and $\cal F$ is a co-rank 1 foliation with non-dicritical singularities.

We say $W \subset X$ (possibly contained in $\text{sing}(X)$)
is {\bf tangent} to the foliation if for any birational morphism 
$\pi:(X', \cal F') \rightarrow (X, \cal F)$ and any (equivalently some) 
$\pi$-exceptional divisor $E$ such that $E$
dominates $W$ we have that $E$ is $\cal F'$-invariant.

We say $W \subset X$ (possibly contained in $\text{sing}(X)$)
is {\bf transverse} to the foliation if for any birational morphism 
$\pi:(X', \cal F') \rightarrow (X, \cal F)$ and any (equivalently some) 
$\pi$-exceptional divisor $E$ such that $E$
dominates $W$ we have that $E$ is not $\cal F'$-invariant.
\end{defn}

Notice that this definition agrees with the one given earlier if
$W$ is not contained in $\text{sing}(X) \cup \text{sing}(\cal F)$.

\subsection{Foliated MMP for surfaces}

As mentioned earlier, McQuillan in \cite{McQuillan08} proves the existence of a foliated MMP,
namely:
\begin{theorem}
Let $X$ be a smooth projective
surface and $\cal F$ a foliation with canonical foliation singularities.
Then, there is an MMP starting with $X$, namely a sequence of contractions of curves
$\pi:X \rightarrow Y$ and a foliation $\cal G$ on $Y$, birationally
equivalent to
$\cal F$ such that either $K_{\cal G}$ is nef, or it is a $\bb P^1$-bundle
over a curve. 
\end{theorem}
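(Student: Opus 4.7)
The plan is to run a foliated analog of the classical surface MMP, contracting $K_{\cal F}$-negative extremal rays one at a time and checking that canonical singularities persist.

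First, assume $K_{\cal F}$ is not nef. I would combine the classical cone theorem for $K_X$ with the relation $K_X = K_{\cal F} + [N^*_{\cal F}]$ from Lemma \ref{basicadjunction}, together with a foliated bend-and-break argument (in the spirit of Miyaoka's generic semi-positivity and the Bogomolov-type inequality for foliations), to produce a $K_{\cal F}$-negative extremal ray $R$ of $\overline{NE}(X)$ spanned by a rational curve $L$ tangent to $\cal F$. On a smooth surface, tangency of an irreducible curve to a rank $1$ foliation forces $L$ to be $\cal F$-invariant away from $\sing{F}$.

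Second, I would perform the contraction. Foliated adjunction along the invariant $L \cong \bb P^1$ produces a formula of the shape
$$K_{\cal F} \cdot L = -2 + \sum_{p \in L \cap \sing{F}} m_p,$$
with each $m_p \geq 0$ measuring a tangency order at $p$; combined with extremality, the Hodge index theorem forces $L^2 \leq 0$. If $L^2 < 0$, the standard surface contraction provides $\pi : X \to X_1$ with $X_1$ a (possibly singular) normal projective surface. If $L^2 = 0$, then $L$ moves in a covering pencil inducing a morphism $X \to B$ whose fibers are tangent to $\cal F$; since $\cal F$ has rank $1$ it must coincide with the foliation by fibers, so after contracting any remaining $(-1)$-curves contained in fibers we realize $X$ as a $\bb P^1$-bundle over $B$ and terminate in the second alternative.

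Third, I would verify that the transformed foliation $\cal F_1 = \pi_* \cal F$ on $X_1$ is again canonical. Writing $K_{\cal F} = \pi^*K_{\cal F_1} + aL$, canonicity of $\cal F$ applied to any blow-up extracting $L$ gives $a \geq 0$. For any divisor $E$ over $X_1$ extracted by a further birational morphism $\mu: Y \to X$, the discrepancy identity combined with effectivity of $\mu^*L$ yields
$$a(E, \cal F_1) = a(E, \cal F) + a \cdot \mathrm{ord}_E(\mu^*L) \geq 0,$$
so $\cal F_1$ is canonical. Termination is then automatic because each contraction strictly decreases the Picard number of the surface, and after finitely many steps we reach a model with $K_{\cal G}$ nef unless we fell into the fibration case above.

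The main obstacle is the first step: producing a $K_{\cal F}$-negative rational curve tangent to the foliation. No analog of Mori's bend-and-break is directly available for $K_{\cal F}$, so one must appeal to the deeper foliated positivity results (Miyaoka's theorem that $K_{\cal F}$ is pseudo-effective whenever $\cal F$ is not uniruled by $\cal F$-tangent curves, together with Bogomolov's inequality). Steps two and three are comparatively routine on surfaces; their failure to generalize straightforwardly to threefolds is precisely what motivates the machinery developed in the rest of this paper.
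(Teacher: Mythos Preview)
The paper does not give its own proof of this theorem: it is stated as a citation of McQuillan \cite{McQuillan08}, and the only argument provided is for the relative corollary that follows. So there is no in-paper proof to compare against; I will evaluate your outline on its own merits.

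Your overall architecture---cone theorem, contraction, persistence of canonical singularities, termination by Picard drop---is the correct shape, and matches the structure of McQuillan's argument and of the surface cone theorem proved later in this paper (Theorem~\ref{conetheoremsurfaces}). Two points, however, are genuine gaps rather than routine details.

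First, your treatment of the $L^2=0$ case is incomplete. You assert that $L$ ``moves in a covering pencil inducing a morphism $X\to B$'', but an irreducible curve with self-intersection zero need not move, and even if it does, you have not produced a \emph{morphism}. What is actually used (both in McQuillan and in Theorem~\ref{conetheoremsurfaces}) is an analysis of the numerical dimension of the supporting hyperplane $H_R$: when $\nu(H_R)\le 1$ one applies bend-and-break to get a covering family; when $\nu(H_R)=2$ one finds an effective divisor negative on $R$ and reduces to a curve in its support. Your dichotomy on $L^2$ is not a substitute for this, and the passage from ``$L^2=0$'' to ``$\bb P^1$-bundle'' as written does not go through.

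Second, and more seriously, you have not addressed what happens after the first contraction. When $L^2<0$ you contract to a \emph{singular} surface $X_1$, and your induction requires the cone theorem, the adjunction formula, and the contraction step to hold on $X_1$. The singularities that arise are cyclic quotient singularities (this is McQuillan's classification, cf.\ \cite[Fact~I.2.4]{McQuillan08}), and one must verify that the whole package works in that category---in particular that $K_{\cal F_1}$ is $\bb Q$-Cartier and that the adjunction computation along invariant curves still gives the bound you need. Your sketch tacitly assumes $X$ stays smooth throughout, which it does not. This is exactly the content that makes McQuillan's paper nontrivial, and it cannot be absorbed into ``comparatively routine''.
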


Observe that we can make the following modifications, implicit in
\cite{McQuillan08}, see for example \cite[Proposition III.2.1.]{McQuillan08}:

\begin{corollary}
\label{relativesurfaceMMP}
Let $f:X \rightarrow U$ be a projective birational morphism of surfaces, and let
$\cal F$ be foliation on $X$.
Suppose $X$ is smooth and $\cal F$ has canonical singularities.
Let $\Delta$ be a divisor
not containing any fibres of $f$.  Then we can run the relative MMP, i.e.
there is a birational map $g:X \rightarrow Y$
and $h:Y \rightarrow U$ and a foliation $\cal G$ on $Y$ such that
$K_{\cal G}+g_*\Delta$ is $h-$nef and $f = h \circ g$.
\end{corollary}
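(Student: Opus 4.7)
The plan is to reduce this relative statement to McQuillan's absolute foliated surface MMP by exploiting the hypothesis that $\Delta$ contains no fibre of $f$. The crucial observation is that for any irreducible $f$-exceptional curve $C$ and any irreducible component $D$ of $\Delta$, the assumption that $D$ is not contained in any fibre of $f$ forces $D \cdot C \geq 0$, since $D$ then meets $C$ only properly. Hence $\Delta \cdot \gamma \geq 0$ for every $\gamma \in \overline{NE}(X/U)$, and in particular any $(K_{\cal F}+\Delta)$-negative extremal ray $R$ in the relative cone is automatically $K_{\cal F}$-negative.

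With this in hand, I would iterate the following step. Given a $(K_{\cal F}+\Delta)$-negative extremal ray $R \subset \overline{NE}(X/U)$, apply McQuillan's cone and contraction theorem for foliated surfaces with canonical singularities to $R$ viewed as a $K_{\cal F}$-negative ray. This either exhibits $R$ as spanned by a rational curve tangent to $\cal F$ admitting a birational contraction, or presents the ambient surface as a $\bb P^1$-bundle over a curve with $\cal F$ transverse to the fibration. The second alternative is ruled out in our setting because $f$ is birational, so fibres of $f$ are zero- or one-dimensional and cannot accommodate a $\bb P^1$-fibration over the two-dimensional base $U$. Thus $R$ yields a contraction $\pi:X \to X_1$ over $U$ of an $\cal F$-tangent rational curve. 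By McQuillan's classification, $\cal F_1 := \pi_*\cal F$ is again canonical on the (possibly singular, but normal) surface $X_1$, and because $\pi$ contracts a curve contained in a fibre of $f$, the induced morphism $h_1: X_1 \to U$ is such that $\pi_*\Delta$ still contains no $h_1$-fibre.

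Termination of the procedure is immediate from the drop in the relative Picard rank $\rho(X/U)$ at each step, since this is a non-negative integer that strictly decreases under contraction of an extremal ray. When no $(K_{\cal F}+\Delta)$-negative extremal ray remains in $\overline{NE}(Y/U)$, the divisor $K_{\cal G}+g_*\Delta$ is by definition $h$-nef, as required.

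The main obstacle is ensuring that the iterative step remains valid after the very first contraction, since $X_1$ may acquire quotient singularities and is no longer smooth. This requires McQuillan's cone and contraction theorem in the setting of canonical foliations on \emph{normal} projective surfaces, rather than the smooth version quoted in the theorem above; this is precisely the generality in which the foliated surface MMP is proved in \cite{McQuillan08}, so iteration indeed proceeds, and the fact that the $\Delta$-condition is preserved under the contractions is what makes our argument, rather than the unmodified MMP, suffice to produce a model with $K_{\cal G}+g_*\Delta$ relatively nef.
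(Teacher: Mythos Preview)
Your proposal is correct and follows essentially the same approach as the paper: both proofs use the hypothesis on $\Delta$ to deduce $\Delta\cdot C\geq 0$ for $f$-exceptional curves, reduce to the $K_{\cal F}$-negative case, and then invoke McQuillan's foliated surface MMP iteratively. Your version is somewhat more detailed (explicitly ruling out the $\bb P^1$-bundle alternative via the birationality of $f$, and flagging the passage to normal surfaces after the first contraction), but the argument is the same.
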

\begin{proof}
If $C$ is a $(K_{\cal F}+\Delta)$-negative curve contracted by $f$,
then in fact $\Delta\cdot C \geq 0$ and so $C$ is $K_{\cal F}$-negative. 
By the cone theorem for surface foliations, \cite[Corollary II.4.3]{McQuillan08},  we see that
$C$ is an invariant rational curve, and following \cite[III.2]{McQuillan08}
we can contract it to a point.  Notice that the contracted space still maps down to $U$.  
Continuing inductively, and letting $(Y, \cal G)$
be the output of this MMP we see that $K_{\cal G}+\Delta_Y$ is nef over $U$
where $\Delta_Y$ is the pushforward of $\Delta$ to $Y$.
\end{proof}

\subsection{Foliated bend and break}

We recall the following theorem due to \cite{Miyaoka87},
\cite[Theorem 9.0.2]{Shepherd-Barron92} or \cite{BMc01}

\begin{theorem}
Let $(X, \cal F)$ be a normal foliated variety of dimension n, and let $H_1, ..., H_{n-1}$
be ample divisors.  Let $C$ be a general intersection of
elements $D_i \in \mid m_iH_i \mid$ where $m_i \gg 0$.  
Suppose that $C \cdot K_{\cal F}<0$.
Then if $A$ is an ample divisor
through a general point of $C$
there is a rational curve $\Sigma$ tangent to $\cal F$
with 
$$A\cdot \Sigma \leq 2n\frac{A\cdot C}{-K_{\cal F}\cdot C}.$$
\end{theorem}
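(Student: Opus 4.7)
The plan is to follow the classical Miyaoka--Bogomolov--McQuillan strategy. The argument proceeds in three stages: upgrade the numerical positivity $C\cdot K_{\cal F}<0$ into a positivity statement for $\cal F|_C$ as a vector bundle, use characteristic-$p$ methods to extract from that positivity a family of rational curves tangent to $\cal F$ through general points of $C$, and finally run a Mori-style bend-and-break to achieve the $A$-degree bound $2n(A\cdot C)/(-K_{\cal F}\cdot C)$.

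For the first stage, restrict $\cal F$ to $C$. Since $C$ is a general complete intersection of divisors $D_i\in|m_iH_i|$ with $m_i\gg 0$, the Mehta--Ramanathan restriction theorem ensures that the Harder--Narasimhan filtration of $\cal F$ with respect to the polarisation $H_1\cdots H_{n-1}$ restricts to that of $\cal F|_C$. The hypothesis $C\cdot K_{\cal F}<0$ is exactly $\deg\cal F|_C>0$, so $\cal F|_C$ has a positive-degree piece; by a slope/Bogomolov argument one can arrange, after passing to a positive-slope sub-foliation if necessary, that $\cal F|_C$ is actually ample.

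For the second stage, I would spread $(X,\cal F,C,A)$ over a finitely generated $\bb Z$-subring and reduce modulo a large prime $p$. After a finite purely inseparable cover Frobenius closes $\cal F$ under $p$-th powers, and the ampleness of $\cal F|_C$ forces $\cal F$ to be algebraically integrable along $C$: a general point of $C$ lies on an algebraic leaf, and successive Frobenius pull-backs exploit the growth of $\deg\cal F|_C$ to produce, through that point, rational curves whose $A$-degree is bounded uniformly in $p$ by a multiple of $(A\cdot C)/(-K_{\cal F}\cdot C)$. The bounded family lifts back to characteristic zero, producing a morphism $f_0:\bb P^1\to X$ with image tangent to $\cal F$ through a general point of $C$.

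For the final stage, consider the space of morphisms $\bb P^1\to X$ whose image is tangent to $\cal F$ and which send two fixed points of the source to two prescribed general points of $X$. A Riemann--Roch computation on $f_0^*\cal F$ gives an expected dimension of at least $-K_{\cal F}\cdot f_0(\bb P^1)-2n$ at $[f_0]$; whenever this is positive the family cannot be rigid, and a positive-dimensional deformation through two fixed points must break off a rational tail tangent to $\cal F$ of strictly smaller $A$-degree. Iterating until rigidity holds yields $\Sigma$ with $A\cdot\Sigma\le 2n(A\cdot C)/(-K_{\cal F}\cdot C)$. The main obstacle I expect is the middle stage: the Mehta--Ramanathan restriction and the bend-and-break dimension count are classical inputs, whereas extracting the initial tangent rational curve from Frobenius positivity, while keeping degrees uniformly bounded in $p$ so as to spread back to characteristic zero, is the subtle heart of the argument and the step where the foliated theory departs most sharply from classical Mori theory.
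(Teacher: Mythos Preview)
The paper does not prove this theorem; it is stated as a recalled result, attributed to Miyaoka, Shepherd-Barron, and Bogomolov--McQuillan, and used as a black box in the subsequent bend-and-break corollary. Your outline is essentially the strategy of those cited references: Mehta--Ramanathan restriction to upgrade $K_{\cal F}\cdot C<0$ to positivity of (a quotient of) $\cal F|_C$, reduction mod $p$ and Frobenius amplification to produce tangent rational curves of bounded degree through general points of $C$, and a deformation/bend-and-break count to get the final $2n(A\cdot C)/(-K_{\cal F}\cdot C)$ bound. So your approach matches what the paper implicitly invokes.

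One small caution on the first stage: it is not quite that one arranges $\cal F|_C$ itself to be ample. What the argument in Bogomolov--McQuillan actually uses is that the \emph{maximal destabilising quotient} of $\cal F|_C$ (equivalently the bottom piece of the Harder--Narasimhan filtration after restriction) has positive degree if $\deg \cal F|_C>0$, and by Mehta--Ramanathan this quotient globalises to a sub-foliation $\cal G\subset\cal F$ with $\cal G|_C$ semistable of positive degree, hence ample. The rational curves produced are then tangent to $\cal G$, a fortiori to $\cal F$. Your phrase ``after passing to a positive-slope sub-foliation if necessary'' gestures at this, but the point is that one passes to the sub-foliation determined by the HN filtration, not an arbitrary positive-slope piece, and it is this sub-foliation whose restriction is ample. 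Otherwise your sketch is sound.
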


We make a minor modification of a lemma due to \cite{KMM94}.

\begin{corollary}[Bend and Break]
\label{bendandbreak}
Let $X$ be a normal projective variety of dimension $n$.
Let $\cal F$ be a foliation of rank $r$ on $X$, and $\Delta \geq 0$.
Let $M$ be any nef divisor.  Suppose that there are nef
$\bb R$-divisors $D_1, ..., D_n$ such that

(1) $D_1\cdot D_2 \cdot ... \cdot D_n = 0$

(2) $-(K_{\cal F}+\Delta)\cdot D_2 \cdot ... \cdot D_n >0$

Then, through a general point of $X$ there is a rational
curve $\Sigma$ with
$D_1\cdot \Sigma = 0$ and
$$M\cdot \Sigma \leq 2n\frac{M\cdot D_2 \cdot...\cdot D_n}
{-K_{\cal F}\cdot D_2 \cdot ... \cdot D_n}$$
and $\Sigma$ is tangent to $\cal F$
\end{corollary}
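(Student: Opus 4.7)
The plan is to approximate the nef $\bb R$-classes $D_i$ by ample $\bb Q$-classes, apply the foliated bend and break theorem just stated, and pass to a limit in the Chow variety, in the spirit of \cite{KMM94}.

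Fix an ample divisor $H$ on $X$ and, for small rational $\epsilon>0$, set $D_i(\epsilon):=D_i+\epsilon H$; each $D_i(\epsilon)$ is ample. Since $\Delta\ge 0$ and the $D_i$ are nef, $\Delta\cdot D_2\cdots D_n\ge 0$, whence
$$-K_{\cal F}\cdot D_2\cdots D_n \;\ge\; -(K_{\cal F}+\Delta)\cdot D_2\cdots D_n \;>\; 0,$$
and by continuity the same strict inequality persists after replacing $D_i$ by $D_i(\epsilon)$ for $\epsilon$ sufficiently small. Pick $m_i\in\bb N$ so that $m_iD_i(\epsilon)$ is very ample, and let $C_\epsilon$ be a general complete intersection of members of $|m_iD_i(\epsilon)|$ for $i=2,\ldots,n$. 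Being a general complete intersection, $C_\epsilon$ meets $\Delta$ properly, so $K_{\cal F}\cdot C_\epsilon\le (K_{\cal F}+\Delta)\cdot C_\epsilon<0$. The foliated bend and break theorem then produces, through a fixed general point $x\in X$ (chosen in the intersection of the countably many dense opens corresponding to a sequence $\epsilon_n\to 0$, which is nonempty over $\bb C$ by a Baire-type argument), a rational curve $\Sigma_\epsilon$ tangent to $\cal F$ such that
$$A\cdot\Sigma_\epsilon \;\le\; 2n\,\frac{A\cdot D_2(\epsilon)\cdots D_n(\epsilon)}{-K_{\cal F}\cdot D_2(\epsilon)\cdots D_n(\epsilon)}$$
for every ample divisor $A$.

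Taking $A=H$, the right-hand side stays uniformly bounded as $\epsilon\to 0$, so $H\cdot\Sigma_\epsilon$ is uniformly bounded and the $\Sigma_\epsilon$ lie in a bounded portion of the Chow variety of $X$. Passing to a convergent subsequence, $\Sigma_\epsilon$ converges to an effective $1$-cycle $\Sigma$ of rational curves through $x$, and every component of $\Sigma$ is tangent to $\cal F$ because tangency is a closed condition. Taking $A=D_1(\epsilon)$ shows $D_1(\epsilon)\cdot\Sigma_\epsilon\to 0$ (the numerator tends to $D_1\cdot D_2\cdots D_n=0$ while the denominator stays bounded below by a positive constant); together with the bound $\epsilon H\cdot\Sigma_\epsilon\to 0$, this yields $D_1\cdot\Sigma_\epsilon\to 0$ and hence $D_1\cdot\Sigma=0$ by continuity. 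Finally, for each fixed $\delta>0$ the inequality with $A=M+\delta H$ passes to the limit $\epsilon\to 0$ to give $(M+\delta H)\cdot\Sigma\le 2n(M+\delta H)\cdot D_2\cdots D_n/(-K_{\cal F}\cdot D_2\cdots D_n)$; letting $\delta\to 0$ then yields
$$M\cdot\Sigma \;\le\; 2n\,\frac{M\cdot D_2\cdots D_n}{-K_{\cal F}\cdot D_2\cdots D_n}.$$

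To conclude, pick an irreducible component $\Sigma_0$ of $\Sigma$ through $x$: since $D_1$ and $M$ are nef, the remaining components contribute non-negatively to these intersections, so $\Sigma_0$ is a rational curve tangent to $\cal F$ through the general point $x$ satisfying $D_1\cdot\Sigma_0=0$ and the claimed $M$-bound. The main subtlety is the Chow-theoretic limit argument: one must ensure the family $\{\Sigma_\epsilon\}$ stays in a bounded subset of the Chow variety, which is exactly what the $A=H$ case of the bend and break inequality achieves; the rest is continuity of intersection numbers and the non-negativity of intersections of nef classes with effective cycle components.
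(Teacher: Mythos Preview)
Your approach is essentially the same as the paper's---both follow the KMM94 template of perturbing the nef $D_i$ to ample $\bb Q$-classes, applying the foliated Miyaoka--Mori bend and break to a general complete intersection curve, and using a boundedness argument. There is, however, a difference in execution that conceals a small gap in your version.

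You apply the bend-and-break theorem once (for each $\epsilon$) to obtain a single curve $\Sigma_\epsilon$ and then assert the degree bound ``for every ample divisor $A$''. But the theorem as stated immediately above the corollary only guarantees, for each fixed ample $A$, the existence of a rational curve satisfying the bound for that particular $A$; the curve may a priori depend on $A$. Your subsequent appeals to $A = H$, then $A = D_1(\epsilon)$, then $A = M + \delta H$ are therefore not immediately justified for the \emph{same} $\Sigma_\epsilon$, and without this your Chow-limit argument does not assemble a single curve with all the required properties.

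The paper's proof sidesteps this by applying the theorem with the single ample divisor $A = kD_1 + H$ (for $H$ ample), once for each integer $k$, producing curves $\Sigma_k$. Since $D_1\cdot D_2\cdots D_n = 0$, the right-hand side of the resulting bound, with $H_i$ taken sufficiently close to $D_i$, approaches a constant independent of $k$; in particular $H\cdot\Sigma_k \le (kD_1+H)\cdot\Sigma_k$ is uniformly bounded, so the $\Sigma_k$ live in a bounded family and one may take $\Sigma_k = \Sigma$ fixed for infinitely many $k$. Then $(kD_1+H)\cdot\Sigma$ bounded forces $D_1\cdot\Sigma = 0$ directly, with $\Sigma$ an \emph{irreducible} rational curve---no Chow limit and no component-picking is needed. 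Letting $H\to M$ finishes. Your argument is easily repaired by adopting this $kD_1+H$ device in place of the three separate choices of $A$.
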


\begin{proof}
We can pick ample $\bb Q$-divisors $H_2, ... H_n$ sufficiently
close to $D_2, ..., D_n$ so that
$$-K_{\cal F} \cdot H_2 \cdot...\cdot H_n > \Delta\cdot H_2 \cdot...\cdot H_n \geq 0$$
Pick $m_i\gg 0$ such that $m_iH_i$ is very ample, and let $C$
be an intersection of general elements in $\mid m_iH_i \mid$.
Then, we may take $C$ to be contained in the smooth
locus of both $X$ and $\cal F$.

Then, apply the above theorem to 
give rational curves $\Sigma_k$ tangent to the foliation with

\begin{align*}
(kD_1+H)\cdot \Sigma_k \leq 2n\frac{(kD_1+H)\cdot m_2H_2 \cdot...\cdot m_nH_n}
{-K_{\cal F}\cdot m_2H_2 \cdot ... \cdot m_nH_n} \\
= 2n\frac{(kD_1+H)\cdot H_2 \cdot...\cdot H_n}
{-K_{\cal F}\cdot H_2 \cdot ... \cdot H_n}
\end{align*}

As $H_i$ approaches $D_i$, the right hand side of the inequality
approaches a bounded constant.
Thus, as $k$ varies, $\Sigma_k = \Sigma$ belongs to a bounded family, so for $k\gg0$
we may take $\Sigma$ to be fixed.  Letting $H$ approach $M$ and letting $k$
go to infinity gives our result.
\end{proof}

\begin{remark}
Observe that this result is totally independent of either the
rank of the foliation or the dimension of the ambient variety.
We recover the usual form of bend and break when we take the rank
of the foliation to be $r = \text{dim}(X)$.
\end{remark}

\section{Some adjunction results for foliations}
Many of the results in this section are known for rank 1 foliations on surfaces, equivalently
co-rank 1 foliations on surfaces.  The statements (and proofs) for co-rank 1 foliations in general 
are similar, but since
we could not find these results in the literature already we have decided to include
them here.

We begin with a simple lemma:

\begin{lemma}
\label{smoothdiscrep}
Let $f: Y \rightarrow X$ be a morphism of normal varieties.  Let $\cal F$ be 
a foliation on $X$.
Suppose that $f(Y)$ is not tangent to $\cal F$ and that $f(Y)$
is not contained in $\text{sing}(X)$.
Let $\cal F_Y$ be the pulled
back foliation.
Suppose $K_X+\Delta_X$ is $\bb R$-Cartier and either

(i) $N^*_{\cal F}$ is
a line bundle (e.g. $X$ is smooth)
or,

(ii) we have a morphism 
$f^*\Omega^{[1]}_X \rightarrow \Omega^{[1]}_Y$ between
sheaves of reflexive differentials, and $(N^*_{\cal F})^{**}$ is a line bundle.
Here $\Omega^{[1]}_X$ means $(\Omega^1_X)^{**}$.

Then
$K_{\cal F}+\Delta_X$ is $\bb R$-Cartier and
$$f^*(K_{\cal F}+\Delta_X) - K_{\cal F_Y} =
f^*(K_X+\Delta_X) - K_Y +\Theta$$
where $\Theta \geq 0$.
\end{lemma}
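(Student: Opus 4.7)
The plan is to reduce the identity to a comparison between $[N^*_{\cal F_Y}]$ and the pullback of $[N^*_{\cal F}]$, and then establish this comparison via the natural morphism on reflexive 1-forms.

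First I apply Lemma \ref{basicadjunction} on both $X$ and $Y$ to write
\[
K_{\cal F} = K_X - [N^*_{\cal F}], \qquad K_{\cal F_Y} = K_Y - [N^*_{\cal F_Y}].
\]
Under hypothesis (i) or (ii), $[N^*_{\cal F}]$ is Cartier, so the hypothesis that $K_X+\Delta_X$ is $\bb R$-Cartier immediately gives that $K_{\cal F}+\Delta_X$ is $\bb R$-Cartier, and in particular can be pulled back by $f$. Substituting the two adjunction formulas and rearranging yields
\[
f^*(K_{\cal F}+\Delta_X) - K_{\cal F_Y} = f^*(K_X+\Delta_X) - K_Y + \Theta,
\]
where by definition
\[
\Theta := [N^*_{\cal F_Y}] - f^*[N^*_{\cal F}].
\]
All that remains is to prove $\Theta \geq 0$ as a Weil divisor on $Y$.

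For this I would use the natural pullback of reflexive 1-forms. In case (i), $df:f^*\Omega_X \to \Omega_Y$ already exists; in case (ii) it is provided by hypothesis as a morphism $f^*\Omega^{[1]}_X \to \Omega^{[1]}_Y$. Restricting this morphism to the saturated line bundle $N^*_{\cal F}$ (or rather its reflexive hull), and using that $f(Y)$ is neither tangent to $\cal F$ nor contained in $\text{sing}(X)$, one obtains a non-zero morphism $f^*N^*_{\cal F} \to \Omega^{[1]}_Y$ whose image is a rank $1$ subsheaf inducing exactly the foliation $\cal F_Y$. By the very definition of $N^*_{\cal F_Y}$ as the saturation inside $\Omega^{[1]}_Y$, this image factors through $N^*_{\cal F_Y}$. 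Taking reflexive hulls we get an injection of line bundles $(f^*N^*_{\cal F})^{**} \hookrightarrow N^*_{\cal F_Y}$, whose cokernel is supported in codimension one and contributes a non-negative Weil divisor; this divisor is precisely $\Theta$.

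The main point of subtlety, and where I expect to spend most of the care, is the translation between line-bundle inclusions and Weil-divisor inequalities in the singular case: verifying that the natural map on reflexive hulls really does compute $[N^*_{\cal F_Y}] - f^*[N^*_{\cal F}]$ as the divisor of zeros of $df(\omega)$, where $\omega$ is a local generator of $N^*_{\cal F}$ along a general codimension-one point of $Y$. Hypothesis (ii) is designed exactly to make $df$ well-defined on reflexive differentials, so once one has the map $f^*N^*_{\cal F} \to \Omega^{[1]}_Y$ the saturation argument goes through verbatim as in the smooth case of hypothesis (i).
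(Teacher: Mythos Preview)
Your proposal is correct and follows essentially the same route as the paper: reduce via Lemma~\ref{basicadjunction} to the identity $\Theta = [N^*_{\cal F_Y}] - f^*[N^*_{\cal F}]$, then use that $N^*_{\cal F_Y}$ is by definition the saturation of the image of $f^*N^*_{\cal F}$ (or its reflexive hull) in $\Omega^{[1]}_Y$ to conclude $\Theta \geq 0$. The paper dispatches your stated ``main point of subtlety'' in one line by noting that the equality of Weil divisors need only be checked outside a codimension~$2$ subset of $Y$, so one may assume $Y$ is smooth; your remark about working at a general codimension-one point of $Y$ amounts to the same reduction.
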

\begin{proof}
By Lemma \ref{basicadjunction}
 we have the equality $K_X = K_{\cal F}+[N^*_{\cal F}]$ and by assumption $[N^*_{\cal F}]$
is a Cartier divisor.  Thus $K_{\cal F}+\Delta_X = (K_X+\Delta_X)-[N^*_{\cal F}]$ and so $K_{\cal F}+\Delta_X$
is $\bb R$-Cartier.

It suffices to prove the stated equality outside of a codimension 2 subset on $Y$,
and so we may assume that $Y$ is smooth.

By definition in case (i) $N^*_{\cal F_Y}$ is the saturation of
$f^*N^*_{\cal F}$ in $\Omega^1_Y$ 
(in case (ii) it is the saturation of $f^*((N^*_{\cal F})^{**}))$ in $\Omega^1_Y$).
And so $f^*N^*_{\cal F} = N^*_{\cal F_Y} - \Theta$ (in case (ii)
$f^*((N^*_{\cal F})^{**})) = N^*_{\cal F_Y} - \Theta$).  Apply Lemma \ref{basicadjunction}
to conclude.
\end{proof}

\begin{remark}
Observe that if $X$ is not klt the morphism
$f^*\Omega^{[1]}_X \rightarrow \Omega^{[1]}_Y$ does
not always exist.
\end{remark}

Of particular interest are the cases where 
$f$ is a closed immersion, 
$f$ is a blow up or $f$ is a fibration.  In these cases, we get

\begin{corollary}
\label{smoothdiscrepcorollary}
Let $X$ be smooth.

(1) Let $\nu: D^{\nu} \rightarrow D \subset X$ be the normalization of a divisor transverse
to the foliation, then
$\nu^*(K_{\cal F} + D) = K_{\cal F_{D^\nu}}+\Theta$ where $\Theta \geq 0$.
Furthermore, $\nu(\Theta)$ is either contained in $\text{sing}(D)$ or is
tangent to $\cal F$.

(2) The foliation discrepancy is less than or equal to the usual discrepancy.

(3) If the fibres of $f: Y \rightarrow X$ are all reduced, then
$f^*K_{\cal F} - K_{\cal F_Y}= f^*K_X - K_Y$.
\end{corollary}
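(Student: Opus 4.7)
The plan is to derive all three statements as specializations of Lemma~\ref{smoothdiscrep} applied to a carefully chosen morphism $f$. Since $X$ is smooth, $N^*_\cal F$ is automatically a line bundle, so hypothesis (i) of that lemma is in force throughout and the work reduces to identifying the correct $f$ and $\Delta_X$ in each case and then analyzing the resulting correction divisor $\Theta$.

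For (1), I would take $Y=D^{\nu}$, $f=\nu$, and $\Delta_X = D$. Classical adjunction for the normalization of a reduced Cartier divisor on a smooth variety yields $\nu^*(K_X+D)-K_{D^{\nu}} = B$, where $B\ge 0$ is the conductor (different), supported on $\nu^{-1}(\operatorname{sing}(D))$. Substituting into Lemma~\ref{smoothdiscrep} gives
$$\nu^*(K_\cal F + D) - K_{\cal F_{D^{\nu}}} = B + \Theta_0 =: \Theta,$$
with $\Theta_0 \ge 0$ the codimension-one defect in the saturation of $\nu^* N^*_\cal F$ inside $\Omega^1_{D^{\nu}}$. To pin down $\operatorname{supp}(\Theta_0)$, at a smooth point $p\in D^{\nu}$ with $\nu(p)$ in the smooth locus of both $X$ and $\cal F$, I would pick a local generator $\omega$ of $N^*_\cal F$ and note that $\nu^*\omega$ vanishes at $p$ iff $d\nu(T_{D^{\nu},p})\subseteq \ker\omega = \cal F_{\nu(p)}$, i.e.\ iff $D$ is tangent to $\cal F$ at $\nu(p)$. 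Combining the two contributions $B$ and $\Theta_0$ gives the stated image description.

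For (2), I would apply the lemma to an arbitrary birational $\pi:\widetilde{X}\to X$ with the given $\Delta$ and compare coefficients. Using the definitions of foliation and variety discrepancies,
\begin{align*}
\pi^*(K_\cal F+\Delta)-K_{\widetilde{\cal F}} &= \pi_*^{-1}\Delta - \sum a(E_i,\cal F,\Delta) E_i,\\
\pi^*(K_X+\Delta)-K_{\widetilde{X}} &= \pi_*^{-1}\Delta - \sum a(E_i,X,\Delta) E_i,
\end{align*}
so subtraction and Lemma~\ref{smoothdiscrep} give $a(E_i,\cal F,\Delta) = a(E_i,X,\Delta) - \operatorname{mult}_{E_i}(\Theta) \le a(E_i,X,\Delta)$, which is the claim. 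For (3), the lemma reduces the equality to showing $\Theta = 0$ whenever every fibre of $f$ is reduced. If a local generator $\omega$ of $N^*_\cal F$ near $f(D)$ had $f^*\omega$ acquiring a codim-one zero along a prime divisor $D\subset Y$, then a direct local computation with $f^*\omega = \sum (a_i\circ f)\, d(x_i\circ f)$ would show that $df$ drops rank along $D$ in the direction conormal to $D$, i.e.\ $f$ ramifies along $D$; this forces the scheme-theoretic fibre through a general point of $f(D)$ to contain $D$ with multiplicity $>1$, contradicting the reducedness hypothesis.

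The step I expect to be the main obstacle is the support analysis in parts (1) and (3): translating the saturation failure of $f^*N^*_\cal F$ into the intended geometric condition (tangency in (1), divisorial ramification in (3)) via a careful local 1-form computation, while staying away from the singular loci of $X$, $D$ and $\cal F$ so that standard generators of $N^*_\cal F$ are available and the map $f^*\Omega^1_X\to\Omega^1_Y$ is well-behaved.
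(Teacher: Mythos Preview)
Your proposal is correct and follows essentially the same route as the paper: both reduce everything to Lemma~\ref{smoothdiscrep}, and for (1) both split $\Theta$ into the conductor part supported on $\nu^{-1}(\operatorname{sing}(D))$ and the saturation defect coming from zeros of $\nu^*\omega$, then identify the latter with loci tangent to $\cal F$ via the vanishing $i^*\omega=0$. The paper is simply terser about (2) and (3), declaring them immediate from the lemma, whereas you spell out the coefficient comparison and the local ramification argument; your extra detail in (3) (passing to a local first integral to see that a codimension-one zero of $f^*\omega$ forces a non-reduced fibre) is the right way to make that step precise.
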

\begin{proof}
The only part which doesn't follow from Lemma \ref{smoothdiscrep}
is the claim in item (1) that $\nu(\Theta)$ is either contained in $\text{sing}(D)$
or is tangent to $\cal F$.  This is a local problem and so may be checked
in a neighborhood of a general point of $\nu(\Theta)$ where $\cal F$ is defined by a 1-form
$\omega$.

Write $\nu^*(K_X+D) = K_{D^\nu}+\Delta$ where $\Delta$ is the usual different.  Observe that
$\Delta$ is supported on $\nu^{-1}(\text{sing}(D))$.

Thus, we see that that $\Theta$ is supported on the union of the zero locus of $\nu^*\omega$
and $\text{supp}(\Delta)$.
Let $C \subset D^{\nu}$ 
be a component of the zero locus of $\nu^*\omega$ so that $\nu(C)$ is not contained
in $\text{sing}(D)$ and let $i: \nu(C) \rightarrow X$ be the inclusion.  Then observe
that $i^*\omega =0$ and so $\nu(C)$ is tangent to $\cal F$.
\end{proof}


The following is a more general version of foliation adjunction that we will need: 

\begin{proposition}
\label{foliationadjunction}
Let $\cal F$ be a co-rank 1 foliation on a normal projective variety $X$, 
let  $S$ be a prime divisor transverse
to the foliation, with normalization $S^\nu$,
and let $\cal F_{S^\nu}$ the foliation restricted to $S^\nu$.  

Let $\Delta$ be an effective divisor
such that $S$ is not contained in the support of $\Delta$.
If $K_{\cal F}+\Delta+S$ is an $\bb R$-Cartier divisor then
$$\nu^*(K_{\cal F}+\Delta+S) = K_{\cal F_{S^\nu}}+\Delta_{S^\nu}$$
where $\Delta_{S^\nu}\geq 0$.

Furthermore $\Delta_{S^\nu}$ is supported on 
$\nu^{-1}(\text{sing}(X)\cup\text{sing}(S)\cup \text{supp}(\Delta))$
and on centres tangent to the foliation.
\end{proposition}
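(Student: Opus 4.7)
The plan is to reduce to the smooth case of Corollary \ref{smoothdiscrepcorollary}(1) via a log resolution, and then push forward to $S^\nu$. First, take a log resolution $\pi : Y \to X$ of $(X, \cal F, S + \Delta)$ with $Y$ smooth, the strict transform $T := \pi_*^{-1} S$ smooth and transverse to $\widetilde{\cal F} := \pi^{-1}\cal F$, all exceptional and strict transform divisors in simple normal crossings, and such that $\widetilde{\cal F}$ has only simple singularities (by Cano's theorem). Write
\[
\pi^*(K_{\cal F} + S + \Delta) = K_{\widetilde{\cal F}} + T + \pi_*^{-1}\Delta + E,
\]
where $E$ is the $\pi$-exceptional correction; since $\pi$ is a log resolution, every component of $E$ has image contained in $\text{sing}(X) \cup \text{sing}(S) \cup \text{supp}(\Delta)$.

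On the smooth pair $(Y, T)$, Corollary \ref{smoothdiscrepcorollary}(1) gives $(K_{\widetilde{\cal F}} + T)|_T = K_{\widetilde{\cal F}_T} + \Theta$ with $\Theta \geq 0$ supported on centres tangent to $\widetilde{\cal F}$. Restricting the displayed formula to $T$ and pushing forward by the birational morphism $\sigma := \pi|_T : T \to S^\nu$ (noting $\sigma_* K_{\widetilde{\cal F}_T} = K_{\cal F_{S^\nu}}$ as both foliations are obtained by pullback) yields
\[
\nu^*(K_{\cal F} + S + \Delta) = K_{\cal F_{S^\nu}} + \sigma_*\Theta + \sigma_*(\pi_*^{-1}\Delta|_T) + \sigma_*(E|_T),
\]
so $\Delta_{S^\nu}$ is the sum of the last three terms. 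The support statement follows directly: $\sigma_*\Theta$ lies on tangent centres, $\sigma_*(\pi_*^{-1}\Delta|_T) \subset \nu^{-1}(\text{supp}(\Delta))$, and $\sigma_*(E|_T) \subset \nu^{-1}(\text{sing}(X) \cup \text{sing}(S) \cup \text{supp}(\Delta))$.

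The main obstacle is proving effectivity of $\Delta_{S^\nu}$: while $\sigma_*\Theta$ and $\sigma_*(\pi_*^{-1}\Delta|_T)$ are manifestly non-negative, the components of $\sigma_*(E|_T)$ may individually have either sign, and the whole point is that they are dominated by the other two pieces. My plan is to verify effectivity locally at each codimension one point $p \in S^\nu$. If $\nu(p)$ lies in the locus where $X$, $S$, and $\cal F$ are all smooth and $p$ is not on a tangent centre, then we may arrange $\pi$ to be an isomorphism near $\nu(p)$; here $E$ vanishes near $T$ and the coefficient of $\Delta_{S^\nu}$ at $p$ is zero. For $p$ lying above the bad locus, I would cut by a general linear section through a general point of the codimension two stratum $\overline{\nu(p)} \subset X$ to reduce to a low-dimensional local problem, where the normal forms for simple singularities combined with effectivity of the foliated different in the surface case (as in \cite{McQuillan08}) give the desired non-negativity. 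In effect, the reduction to simple singularities provides enough local rigidity to check that the contribution of $\sigma_*(E|_T)$ along each prime divisor either vanishes or is compensated by a strictly larger contribution from $\sigma_*\Theta$ or $\sigma_*(\pi_*^{-1}\Delta|_T)$.
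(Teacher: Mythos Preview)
Your construction of $\Delta_{S^\nu}$ via log resolution and pushforward, together with the support statement, matches the paper. The gap is in effectivity. You correctly isolate the obstacle --- the exceptional correction $E$ may have negative components --- but what follows is a plan, not a proof. The assertion that ``normal forms for simple singularities provide enough local rigidity'' is not substantiated, and the appeal to ``effectivity of the foliated different in the surface case as in \cite{McQuillan08}'' is essentially circular: the surface case of this very proposition is what must be established, and it cannot be read off from local normal forms of $\cal F$, since the coefficients of $E$ over a given centre depend on the global combinatorics of the chosen resolution, not just on the singularity type of the foliation.

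The paper supplies the missing mechanism. After cutting by hyperplanes to reduce to surfaces (as you also propose), one runs the \emph{relative foliated $(K_{\cal G}+S_Y)$-MMP over $X$} using Corollary~\ref{relativesurfaceMMP}. The key computation is that each contracted curve $C$ has $0>K_{\cal F_W}\cdot C \geq -1$, while smoothness of the model near the strict transform $S_W$ forces $S_W\cdot C \geq 1$ whenever $C$ meets $S_W$; hence every contracted curve is disjoint from $S_W$, and the output $\widetilde{X}$ remains smooth near $S_{\widetilde{X}} \cong S^\nu$. The negativity lemma then shows the exceptional correction $\Gamma$ is effective, and restricting to $S_{\widetilde{X}}$ gives $\Delta_{S^\nu}\geq 0$ via Corollary~\ref{smoothdiscrepcorollary}. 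This is the standard adjunction architecture (MMP plus negativity lemma, cf.\ \cite{Ambro07}, \cite{Fujino09}) transported to foliations; the relative foliated surface MMP is the engine your argument lacks.
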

\begin{remark}
Observe that we are not assuming that $K_X+\Delta+S$ is $\bb Q$-Cartier.
\end{remark}
\begin{proof}
We construct $\Delta_{S^\nu}$ as follows: 
pass to a log resolution $\pi:Y \rightarrow X$ of $(X, \Delta+S)$
and write $K_{\cal F_Y}+S'+\Delta'+E = \pi^*(K_{\cal F}+S+\Delta)$.  Let $\sigma:S' \rightarrow S^\nu$
factor through $S' \rightarrow S$.  We may apply Corollary \ref{smoothdiscrepcorollary} item (i) 
to $(\cal F_Y, S'+\Delta'+E)$
(since $Y$ is smooth) and push forward along $\sigma$.

It remains to check that $\Delta_{S^\nu} \geq 0$.  Using Corollary \ref{relativesurfaceMMP},
the line of argument of the general adjunction statement in the $K_X$ case
works equally well in the foliated situation, see for example \cite[9.2.1]{Ambro07}
or \cite[14.1]{Fujino09}. For the reader's convenience we explain the argument here.

By taking general hyperplane cuts we may reduce to the case where $X$ is a surface,
$\cal F$ is a foliation by curves and $S$ is a curve.
Let $\mu: Y \rightarrow X$ be a log resolution of $(X, \Delta+S)$, i.e.
$Y$ is smooth and $\text{exc}(\mu)\cup \mu_*^{-1}(\text{supp}(\Delta+S))$ is an snc divisor,
and let $\cal G$ be the transformed foliation.  Perhaps passing to a higher model
we may assume that $\cal G$ has canonical singularities.
Let $\Delta'$ denote the strict transform of $\Delta_Y$ and let $S_Y$ denote the strict transform
of $S$.  

By Corollary \ref{relativesurfaceMMP} we may run a $K_{\cal G}+S_Y$-MMP over $X$
terminating in $\pi: \widetilde{X} \rightarrow X$.  We claim that this MMP
only contracts curves $E$ which are disjoint from the strict transform of $S$.  
We prove this by induction
on the number of steps in the MMP, and so let $Y \rightarrow W$ be some intermediate step
of the MMP and let $S_W$ denote the strict transform of $S$ and $\Delta_W$ the strict transform of $\Delta$.  
Observe that by assumption $W$
is smooth in a neighborhood of $S_W$ so if $E$ is any curve meeting $S_W$ we have that $S_W\cdot E \geq 1$.
On the other hand, the foliated MMP over $X$ only contracts curves with $0> K_{\cal F_W}\cdot E \geq -1$.
Thus if $(K_{\cal F_W}+\Delta_W+S_W)\cdot E <0$ we see that $E$ must be disjoint from $S_W$.

Therefore we see that $\widetilde{X}$ is smooth in a neighborhood of $S_{\widetilde{X}}$ and moreover
$S_{\widetilde{X}}$ is also smooth.
Write
\[(K_{\cal F_{\widetilde{X}}}+\Delta_{\widetilde{X}}+S_{\widetilde{X}})+\Gamma = \pi^*(K_{\cal F}+\Delta+S)\]
where $\Gamma$ is $\pi$-exceptional.  Observe that since $K_{\cal F_{\widetilde{X}}}+S_{\widetilde{X}}$
is $\pi$-nef and since $\Delta_{\widetilde{X}}$ is an effective divisor containing no $\pi$-exceptional divisor that
\[-\Gamma = (K_{\cal F_{\widetilde{X}}}+\Delta_{\widetilde{X}}+S_{\widetilde{X}}) - \pi^*(K_{\cal F}+\Delta+S)\]
is $\pi$-nef and so the negativity lemma, \cite[Lemma 3.38]{KM98}, applies to show that $\Gamma \geq 0$.

For ease of notation set $T = S_{\widetilde{X}}$ and observe that we have an isomorphism $T \rightarrow S^\nu$.

Write $(K_{\cal F_{\widetilde{X}}}+\Delta_{\widetilde{X}}+S_{\widetilde{X}}+\Gamma)\vert_T = K_{\cal F_T}+\Delta_T$
and by the fact that $\Gamma \geq 0$ and by Corollary \ref{smoothdiscrepcorollary} we see
that $\Delta_T \geq  0$.  On the other hand, by construction we see that $\Delta_T = \Delta_{S^\nu}$ and so we are done.
\end{proof}

\begin{defn}
We will refer to $\Delta_{S^\nu}$ as the {\bf foliated different}.
\end{defn}

We also have a foliated Riemann-Hurwitz formula:

\begin{proposition}
\label{prop_riemannhurwitz}
Let $\pi: Y \rightarrow X$ be a surjective, 
finite morphism of normal varieties.  Let $\cal F$
be a co-rank 1 foliation on $X$, with $K_{\cal F}$ $\bb Q$-Cartier
and let $\cal F_Y$ be the pulled back foliation.
Then $$K_{\cal F_Y} = \pi^*K_{\cal F}+\sum \epsilon(D)(r_D-1)D$$
where the sum is over divisors $D$ with ramification index $r_D$, 
\end{proposition}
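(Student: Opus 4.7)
The plan is to verify the identity at the generic point of each prime divisor $D \subset Y$; since both sides are Weil divisors, codimension-one agreement is enough. At the generic point of such a $D$, the varieties $Y$ and $X$ are smooth (they are normal), and we may also assume $\cal F$ is a sub-bundle of $T_X$ near $E := \pi(D)$, since $\pi^{-1}(\text{sing}(\cal F))$ has codimension at least two in $Y$. In this setting, Lemma \ref{basicadjunction} applied on both $X$ and $Y$ yields
\[
K_{\cal F_Y} - \pi^* K_{\cal F} \;=\; (K_Y - \pi^* K_X) + (\pi^*[N^*_{\cal F}] - [N^*_{\cal F_Y}]).
\]
The first summand contributes $(r_D - 1) D$ at $D$ by classical Riemann--Hurwitz, so everything reduces to computing the order of vanishing along $D$ of a local generator of $d\pi(\pi^* N^*_{\cal F})$ inside its saturation $N^*_{\cal F_Y}$.

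For this I would pass to \'etale-local coordinates in which $D = \{y_1 = 0\}$, $\pi^* x_1 = y_1^{r_D}$, and $\pi^* x_i = y_i$ for $i \geq 2$, the standard local structure of a tamely ramified finite cover at the generic point of a ramified divisor. If $\omega = \sum a_i(x)\, dx_i$ is a local generator of $N^*_{\cal F}$ at the generic point of $E$ (available since $\cal F$ is smooth there), then
\[
d\pi(\pi^*\omega) \;=\; r_D\, y_1^{r_D - 1} a_1(\pi y)\, dy_1 + \sum_{i \geq 2} a_i(\pi y)\, dy_i.
\]
A short check using that $\pi|_D\colon D \to E$ is dominant and generically \'etale shows the equivalence: $D$ is $\cal F_Y$-invariant if and only if $E$ is $\cal F$-invariant. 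When $\epsilon(D) = 1$, some $a_i$ with $i \geq 2$ is a unit at the generic point of $E$, so $d\pi(\pi^*\omega)$ is already saturated along $D$, giving $\pi^*[N^*_{\cal F}] - [N^*_{\cal F_Y}] = 0$ and a total contribution of $(r_D - 1) D$. When $\epsilon(D) = 0$, invariance forces each $a_i$ for $i \geq 2$ to be divisible by $x_1$ (so $a_i(\pi y)$ is divisible by $y_1^{r_D}$), while $a_1$ is a unit at the generic point of $E$ by smoothness of $\cal F$; hence $d\pi(\pi^*\omega)$ has exactly $y_1^{r_D - 1}$ as a common factor, contributing $-(r_D - 1) D$, which cancels the Riemann--Hurwitz term. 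Either way, the total is $\epsilon(D)(r_D - 1) D$, as required.

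The main point requiring care is the equivalence of invariance between $D$ and $E$, which rests on the generic \'etaleness of $\pi|_D$ in characteristic zero, together with the standard \'etale-local normal form for $\pi$ at the generic point of a ramified divisor; once these are in hand, the remainder is a direct coordinate computation of orders of vanishing.
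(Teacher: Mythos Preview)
Your argument is correct and is essentially the computation the paper defers to \cite[pp.~20--21]{Brunella00}: the paper's own proof is just a citation, noting that Brunella's surface argument works verbatim for co-rank~1 foliations in any dimension, and what you have written is precisely that argument spelled out. The reduction via Lemma~\ref{basicadjunction} to comparing $\pi^*[N^*_{\cal F}]$ with $[N^*_{\cal F_Y}]$, followed by the local normal form $x_1 = y_1^{r_D}$ and the case split on invariance of $E$, is exactly how the Brunella computation goes.
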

\begin{proof}
This is proven in \cite[pp. 20-21]{Brunella00} where $X, Y$ are surfaces, but the proof
works equally well for any co-rank 1 foliation.
\end{proof}

\begin{remark}
If the ramification of $\pi:(Y, \cal G) \rightarrow (X, \cal F)$ is 
foliation invariant then $K_{\cal G} = \pi^*K_{\cal F}$.
\end{remark}

Later on we will need to compute the discrepancies
of pairs $(\cal F, \Delta)$.
The following two results will be useful in this regard.

\begin{corollary}
\label{quotientsmoothdiscrep}
Suppose $X$ is klt and $\bb Q$-factorial and let $\cal F$ be a co-rank 1 foliation. 
Let $\Delta$
be an effective divisor.
Let $\pi: Y \rightarrow X$ be a birational morphism which extracts
divisors of usual discrepancy with respect to $(X, \Delta)$ $\leq -1$. 
Then if $\pi$ extracts $E$,
the discrepancy of $E$ with respect to $(\cal F, \Delta)$ is $\leq -\epsilon(E)$
with strict inequality if $\epsilon(E) = 0$. In particular,
$\pi$ only extracts divisors of foliation discrepancy $<0$.
\end{corollary}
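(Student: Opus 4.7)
The plan is a direct comparison of foliated and usual discrepancies on a common smooth model, with Lemma \ref{smoothdiscrep} as the essential input. First I would verify that $(\cal F,\Delta)$ is a bona fide foliated pair: $\bb Q$-factoriality of $X$ makes both $K_X$ and the Weil divisor $[N^{*}_{\cal F}]$ $\bb Q$-Cartier, so by Lemma \ref{basicadjunction} $K_{\cal F}$ is $\bb Q$-Cartier, hence so is $K_{\cal F}+\Delta$.

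Next I would choose a common log resolution $h : W \to X$ of $(X,\Delta)$ which factors through $\pi : Y \to X$, so that the divisor $E$ appears on $W$ as the strict transform of a prime divisor. Because $W$ is smooth, Lemma \ref{smoothdiscrep} applies to $h$ and supplies an effective divisor $\Theta \geq 0$ with
\[ h^{*}(K_{\cal F}+\Delta) - K_{\cal F_W} \;=\; h^{*}(K_X+\Delta) - K_W + \Theta. \]
Expanding both sides using the defining formulas for $a(E_i,X,\Delta)$ and $a(E_i,\cal F,\Delta)$ and matching coefficients of each $h$-exceptional $E_i$ yields
\[ a(E_i,X,\Delta) - a(E_i,\cal F,\Delta) \;=\; \mathrm{mult}_{E_i}(\Theta) \;\geq\; 0, \]
so the foliated discrepancy of every divisor is bounded above by its usual discrepancy.

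Specialising to the divisor $E$ of the hypothesis gives $a(E,\cal F,\Delta) \leq a(E,X,\Delta) \leq -1$. Since $\epsilon(E) \in \{0,1\}$, this already forces $a(E,\cal F,\Delta) \leq -1 \leq -\epsilon(E)$, and in the invariant case $\epsilon(E)=0$ it forces $a(E,\cal F,\Delta) \leq -1 < 0 = -\epsilon(E)$, giving the promised strict inequality. The ``in particular'' statement follows since $-1 < 0$ regardless of $\epsilon(E)$.

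The only delicate point, and the main obstacle in this otherwise short argument, is checking that the hypotheses of Lemma \ref{smoothdiscrep} hold for $h : W \to X$. Since $X$ is only $\bb Q$-factorial klt rather than smooth, $N^{*}_{\cal F}$ is a priori merely $\bb Q$-Cartier and not Cartier, so hypothesis (i) is unavailable and one must invoke hypothesis (ii), using that klt singularities admit a canonical reflexive pullback $h^{*}\Omega^{[1]}_X \to \Omega^{[1]}_W$ and that $N^{*}_{\cal F}$ is already reflexive as a saturation inside $\Omega^{[1]}_X$. The resulting identity is first established on the preimage of the smooth locus of $X$, where $N^{*}_{\cal F}$ genuinely is a line bundle, and then extended to all of $W$ as an identity of Weil divisors by normality of $W$; this is precisely the ``reduce to codimension two'' move used inside the proof of Lemma \ref{smoothdiscrep}.
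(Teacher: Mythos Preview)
Your approach has a genuine gap at exactly the point you flag as delicate. Hypothesis (ii) of Lemma \ref{smoothdiscrep} requires not just that $N^*_{\cal F}$ be reflexive, but that $(N^*_{\cal F})^{**}$ be a \emph{line bundle}. On a $\bb Q$-factorial klt variety a reflexive rank~1 sheaf is $\bb Q$-Cartier but need not be locally free, so this hypothesis is simply not available for $h:W\to X$. Your proposed workaround---check the identity on $h^{-1}(X^{sm})$ and extend by normality---fails because $h^{-1}(\text{sing}(X))$ can have codimension~1 in $W$: any exceptional divisor whose centre lies in $\text{sing}(X)$ is contained in it. The ``reduce to codimension two'' move inside the proof of Lemma \ref{smoothdiscrep} throws away a codimension-2 locus on the \emph{source} $Y$, not the preimage of a locus on the target $X$; these are different operations. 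Concretely, the divisor $\Theta=[N^*_{\cal F_W}]-h^*[N^*_{\cal F}]$ (with $h^*$ the $\bb Q$-Cartier pullback) has no reason to be effective along such exceptional divisors, so the inequality $a(E,\cal F,\Delta)\le a(E,X,\Delta)$ is not established.

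The paper repairs this by passing to the index-1 cover $f:X'\to X$ associated to $N^*_{\cal F}$, which is quasi-\'etale; on $X'$ the conormal sheaf \emph{is} a line bundle, so Lemma \ref{smoothdiscrep}(ii) applies to $\pi':Y'\to X'$ and yields $b'\le a'$ there. One then compares discrepancies on $X'$ and $X$ via Riemann--Hurwitz and foliated Riemann--Hurwitz (Proposition \ref{prop_riemannhurwitz}) for the finite map $g:Y'\to Y$, obtaining $a'=ra+(r-1)$ and $b'=rb+\epsilon(E)(r-1)$ where $r$ is the ramification index of $g$ along $E'$. From $a\le -1$ one gets $a'\le -1$, hence $b'\le -1$, hence $b\le \frac{-\epsilon(E)(r-1)-1}{r}\le -\epsilon(E)$, strictly when $\epsilon(E)=0$. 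Note in particular that when $\epsilon(E)=0$ and $r>1$ this only gives $b\le -1/r$, not $b\le -1$; your claimed bound $a(E,\cal F,\Delta)\le -1$ is stronger than what the argument actually yields in the invariant case.
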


\begin{remark}
This result can be phrased as saying that the non-klt
places of $(X, \Delta)$ are non-klt places of $(\cal F, \Delta)$.
\end{remark}

\begin{proof}
The statement can be checked locally on $X$, so consider
the following diagram:
\begin{center}
\begin{tikzcd}
Y' \arrow{r}{g} \arrow{d}{\pi'} & Y \arrow{d}{\pi} \\
X' \arrow{r}{f} & X
\end{tikzcd}
\end{center}
Here $f: X' \rightarrow X$
is the index 1 cover associated to $N^*_{\cal F}$, note $f$ is \'etale
in codimension 2.
Denote by $\cal F'$ the foliation on $X'$ and let
$Y'$ be the normalization
of $X' \times_X Y$. Observe that $g$ is finite.

Next, note $f^*K_X = K_{X'}$ and $f^*K_{\cal F} = K_{\cal F'}$.
Write $\Delta' = f^*\Delta$

Let $E$ be a divisor contracted by $\pi$ and let $E'$ be a divisor contracted
by $\pi'$ such that $g(E') = E$, let $r$ be the ramification index.
Working around a general point of $E, E'$ we may 
write
\begin{align*}
K_Y+\pi_*^{-1}\Delta = \pi^*(K_X+\Delta)+aE\\
K_{\cal F_Y}+\pi_*^{-1}\Delta = \pi^*(K_{\cal F}+\Delta)+bE\\
K_{Y'}+\pi'^{-1}_*\Delta' = \pi'^*(K_{X'}+\Delta') +a'E'\\
K_{\cal F_{Y'}}+\pi'^{-1}_*\Delta' = \pi'^*(K_{\cal F'}+\Delta')+b'E'.
\end{align*}

We have $(N^*_{\cal F'})^{**}$
is a line bundle sub-sheaf of $\Omega^{[1]}_{X'}$.
Next, by \cite[Theorem 4.3]{GKKP11} we have 
a morphism 
$$d\pi': \pi'^{[*]}(N^*_{\cal F'})^{**} \rightarrow \Omega^{[1]}_{Y'}.$$
We apply Lemma \ref{smoothdiscrep} to see $b' \leq a'$.

Next, by Riemann-Hurwitz,
\begin{align*}
K_{Y'}+\pi'^{-1}_*\Delta' = 
g^*(K_Y+\pi_*^{-1}\Delta)+(r-1)E' = \\
g^*(\pi^*(K_X+\Delta)+aE)+(r-1)E'.
\end{align*}
Pulling back the other way around the diagram
shows that
$$a' = ra+(r-1).$$
Likewise, foliated Riemann-Hurwitz, Proposition \ref{prop_riemannhurwitz},
 tells us that
\begin{align*}
K_{\cal F_{Y'}}+\pi'^{-1}_*\Delta' = 
g^*(K_{\cal F_Y}+\pi_*^{-1})+\epsilon (r-1)E' =\\
g^*(\pi^*(K_{\cal F}+\Delta)+bE)+\epsilon(r-1)E'
\end{align*}
where $\epsilon = 0$ if $E'$ is invariant and $ = 1$ otherwise.
Again, pulling back the other way around the diagram gives
$$b' = rb+\epsilon(E)(r-1).$$ 
Since $a \leq -1$, we get that $a' \leq -1$.
And so $rb+\epsilon(E)(r-1) = b' \leq a' \leq -1$.
This gives that $b \leq \frac{-\epsilon(E)(r-1)-1}{r} \leq -\epsilon(E)$
with strict inequality if $\epsilon(E) = 0$.
\end{proof}

\begin{lemma}
\label{transdiscrep}
Let $\cal F$ be a co-rank 1 foliation with non-dicritical singularities on a $\bb Q$-factorial 
threefold $X$. 
Let $\pi:X' \rightarrow X$ be a birational morphism.  Suppose $Z$
is a centre transverse to the foliation in the sense of Definition \ref{tangtransdef}. 
Then the foliation discrepancy
of a divisor $E$ centred over $Z$ is equal to the usual discrepancy. 
\end{lemma}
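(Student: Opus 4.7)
My strategy is to combine the basic adjunction $K_X = K_{\cal F} + [N^*_{\cal F}]$ with a local differential calculation at a general point of $E$. Replacing $X'$ by a higher smooth resolution (which leaves all discrepancies unchanged), I may assume $X'$ is smooth. Since $X$ is $\bb Q$-factorial, Lemma \ref{basicadjunction} gives the $\bb Q$-Cartier identities $K_X = K_{\cal F} + [N^*_{\cal F}]$ and $K_{X'} = K_{\cal F'} + [N^*_{\cal F'}]$. Comparing the discrepancy expansions for $K_X$ and $K_{\cal F}$, and using that $N^*_{\cal F'}$ is the saturation of $\pi^* N^*_{\cal F}$ inside $\Omega^1_{X'}$, I obtain
\[ a(E, X) - a(E, \cal F) = \text{ord}_E(\Theta), \qquad \Theta := [N^*_{\cal F'}] - \pi^*[N^*_{\cal F}] \geq 0, \]
exactly as in the proof of Lemma \ref{smoothdiscrep}. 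Moreover, for any local generator $\omega$ of $N^*_{\cal F}$ near a general point of $Z$, $\text{ord}_E(\Theta)$ equals the order of vanishing of $\pi^*\omega$ along $E$ as a section of $\Omega^1_{X'}$. It therefore suffices to show that $\pi^*\omega$ does not vanish at a general point of $E$ whenever $Z = \pi(E)$ is transverse.

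\textbf{Main step.} Non-dicriticality immediately rules out $\dim Z = 0$: any exceptional divisor over a point is invariant, so a transverse center must have positive dimension. I may thus assume $\dim Z \geq 1$, and (after first passing to a suitable birational model if needed) that a general point $z \in Z$ lies outside $\text{sing}(X) \cup \text{sing}(\cal F)$. By the remark following Definition \ref{tangtransdef}, the generalized and classical notions of transversality coincide at such $z$, so $i^*\omega$ is a nonzero $1$-form at $z$, where $i: Z \hookrightarrow X$ is the inclusion. Choose a general $p \in E$ with $\pi(p) = z$; then $\pi\vert_E : E \to Z$ is smooth at $p$ and $d(\pi\vert_E)_p : T_p E \to T_z Z$ is surjective. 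For $v \in T_p E$ one computes
\[ (\pi^*\omega)_p(v) = \omega_z\bigl(d(\pi\vert_E)_p(v)\bigr) = (i^*\omega)_z\bigl(d(\pi\vert_E)_p(v)\bigr), \]
which is nonzero for a suitable $v$, since $(i^*\omega)_z \neq 0$ and $d(\pi\vert_E)_p$ surjects onto $T_z Z$. Hence $\pi^*\omega$ does not vanish at $p$, so $\text{ord}_E(\Theta) = 0$ and $a(E, X) = a(E, \cal F)$.

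\textbf{Main obstacle.} The delicate point is handling the possibility that $Z$ itself is contained in $\text{sing}(X) \cup \text{sing}(\cal F)$; on a threefold the singular strata have dimension at most one, so the only genuinely tricky case is when $Z$ is a curve inside these strata. There the classical notion of transversality is not directly available, and one must invoke non-dicriticality to pass to a birational model on which the strict transform of $Z$ (or the relevant component of the center of $E$) meets the smooth locus of both $X$ and $\cal F$ while remaining transverse in the sense of Definition \ref{tangtransdef}, so that the differential argument above can be applied.
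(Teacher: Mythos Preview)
Your differential argument is clean and correct when a general point of $Z$ lies outside $\text{sing}(X)\cup\text{sing}(\cal F)$: the computation $(\pi^*\omega)_p(v)=(i^*\omega)_z\bigl(d(\pi\vert_E)_p(v)\bigr)\neq 0$ is exactly right, and the identity $a(E,X)-a(E,\cal F)=\text{ord}_E\bigl([N^*_{\cal F'}]-\pi^*[N^*_{\cal F}]\bigr)$ follows formally from Lemma~\ref{basicadjunction}. This is a genuinely different route from the paper's, and in the generic case it is arguably more direct.

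The gap is precisely the case you flag in your ``main obstacle'' paragraph, and your proposed fix does not close it. If $Z\subset\text{sing}(X)$ (which is allowed: $X$ is only $\bb Q$-factorial), then $N^*_{\cal F}$ need not be a line bundle near $Z$, so there is no local generator $\omega$ and the identification of $\text{ord}_E(\Theta)$ with a vanishing order breaks down. If $Z\subset\text{sing}(\cal F)$, then $\omega$ vanishes identically along $Z$, so $i^*\omega=0$ and your nonvanishing argument fails outright. Passing to an intermediate smooth model $\mu:Y\to X$ does not help in any obvious way: writing $[N^*_{\cal F'}]-\pi^*[N^*_{\cal F}]$ as the sum of the corresponding term over $Y$ and $\rho^*\bigl([N^*_{\cal F_Y}]-\mu^*[N^*_{\cal F}]\bigr)$, the second summand contributes $\sum_i\bigl(a(E_i,X)-a(E_i,\cal F)\bigr)\cdot\text{mult}_W(E_i)$ along the center $W$ of $E$ on $Y$, and these coefficients are exactly what you are trying to prove vanish. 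Since $W$ can sit inside $\mu$-exceptional divisors (indeed it must, when $Z\subset\text{sing}(X)$), this is circular.

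The paper sidesteps the issue entirely by working intersection-theoretically on a log resolution $X''$. The key observation is that non-dicriticality forces the restricted foliation $\cal H=\cal G\vert_E$ to coincide with the fibration $\sigma:E\to Z$ (each fibre lies in $\pi^{-1}(\text{point})$, hence is tangent to $\cal G$, hence to $\cal H$). Then for a general fibre $f$ one has $K_{\cal H}\cdot f=K_E\cdot f$, and comparing foliated and classical adjunction gives $K_{\cal G}\cdot f=K_{X''}\cdot f$. Running this over every exceptional divisor shows $K_{\cal G}$ and $K_{X''}$ are numerically equivalent over $X$, whence the discrepancies agree. This argument never needs to locate $Z$ relative to the singular loci, which is why it succeeds uniformly.
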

\begin{proof}
Perhaps passing to a higher resolution $\mu: X'' \rightarrow X'$ we 
may assume that $(X'', \text{exc}(\pi \circ \mu))$ is log smooth. Let $\cal G$
be the foliation pulled back to $X''$.  Perhaps shrinking around a general point
of $Z$ we may assume that every exceptional divisor dominates $Z$.  Let $E$ be one such divisor.

By non-dicriticality of $\cal F$ we see that if $\cal H$ is the foliation restricted
to $E$ then $\cal H$ is induced by the fibration $\sigma: E \rightarrow Z$.
Let $f$ be a general fibre of $\sigma$.  Then notice
that $K_{\cal H}\cdot f = K_E\cdot f$ then
\[(K_{\cal G}+E) \cdot f = K_{\cal H}\cdot f = K_E\cdot f =  (K_{X''}+E)\cdot f\]
where the first equality follows from foliation adjunction, Corollary \ref{smoothdiscrepcorollary}, and the third equality follows from usual
adjunction, hence $K_{\cal G}\cdot f = K_{X''}\cdot f$.

Repeating this computation for every exceptional divisor $E$
shows (perhaps shrinking $X$ a bit more) that $K_{\cal G}$ and $K_{X''}$ are $f$-numerically
equivalent, and so their discrepancies agree.
\end{proof}

\section{Foliation sub-adjunction}

In this section we prove a foliated version of sub-adjunction.
We recall the definition of dlt and some related results:

\begin{defn}
A pair $(X, \Delta = \sum a_i\Delta_i)$ is called {\bf divisorial log terminal (dlt)}
if $0 \leq a_i \leq 1$ and 
there exists a log resolution $\pi: (Y, \Gamma) \rightarrow (X, \Delta)$
such that $\pi$ only extracts divisors of discrepancy $>-1$.
\end{defn}

We will need the following result due to Hacon on the existence of dlt models,
see for example \cite[Theorem 10.4]{Fujino09}:

\begin{theorem}
Let $X$ be a quasi projective variety, and $B$ a boundary such that
$K_X+B$ is $\bb R$-Cartier.
One can construct a projective birational morphism $f:Y \rightarrow X$
where $Y$ is normal and $\bb Q$-factorial.
Furthermore, $f$ only extracts divisors of discrepancy $\leq -1$,
and if we set $B_Y = f_*^{-1}B + \sum_{f-exceptional} E$,
then $(Y, B_Y)$ is dlt.
\end{theorem}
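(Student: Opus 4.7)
The plan is to construct the dlt modification as the output of a relative MMP over $X$ starting from a log resolution, and then extract the required properties from the negativity lemma applied to that output.

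First, take a log resolution $\pi: W \to X$ of $(X, B)$ and let $E_1, \ldots, E_N$ denote the $\pi$-exceptional prime divisors, with discrepancies $a_i := a(E_i, X, B)$. Set
$$\Gamma_W := \pi_*^{-1}B + \sum_{i=1}^N E_i.$$
All coefficients lie in $[0,1]$ and the support is simple normal crossing, so $(W, \Gamma_W)$ is log smooth, hence dlt, and $W$ is $\bb Q$-factorial. By direct computation,
$$K_W + \Gamma_W - \pi^*(K_X+B) = \sum_i (a_i + 1) E_i.$$
The exceptional divisors we wish to contract are exactly those with $a_i + 1 > 0$, while those with $a_i \leq -1$ should be preserved.

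Next, run a $(K_W + \Gamma_W)$-MMP over $X$ with scaling of a $\pi$-ample divisor $A$. Appealing to BCHM, which supplies existence of flips and termination of the MMP with scaling in the dlt setting required here, this terminates in a projective birational morphism $f: Y \to X$ on which $K_Y + \Gamma_Y$ is $f$-nef, where $\Gamma_Y$ is the strict transform of $\Gamma_W$. Each step of the MMP preserves $\bb Q$-factoriality and the dlt property, so $(Y, \Gamma_Y)$ is dlt with $Y$ normal and $\bb Q$-factorial.

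It remains to identify which divisors are extracted. Put $D := K_Y + \Gamma_Y - f^*(K_X + B)$. Its pushforward to $X$ vanishes, so $D$ is supported on the $f$-exceptional locus, and $D$ is $f$-nef because $K_Y + \Gamma_Y$ is and $f^*(K_X+B)$ is $f$-numerically trivial. Reading off coefficients, each $f$-exceptional prime divisor $F$ on $Y$ contributes $a(F, X, B) + 1$ to $D$. The negativity lemma \cite[Lemma 3.38]{KM98} then yields $-D \geq 0$, so $a(F, X, B) \leq -1$ for every $f$-exceptional $F$. In particular, every $E_i$ with $a_i > -1$ has been contracted by the MMP, and no new non-dlt divisors have been uncovered. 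Setting $B_Y := \Gamma_Y = f_*^{-1}B + \sum_{F\text{ $f$-exc}} F$ gives the statement.

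The main obstacle is the termination of the relative MMP with scaling, which is precisely where the full strength of BCHM is invoked; ensuring the hypotheses of that result are met (e.g.\ bigness of the boundary over $X$, which can be arranged by perturbing $\Gamma_W$ by a small $\pi$-ample divisor) is the one delicate technical point. A secondary subtlety arises when $B$ is only an $\bb R$-boundary: one then approximates by $\bb Q$-boundaries and passes to the limit using standard arguments.
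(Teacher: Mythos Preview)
The paper does not actually prove this statement: it is quoted as a known result due to Hacon, with a reference to \cite[Theorem 10.4]{Fujino09}. Your argument is precisely the standard proof that appears in that reference---log resolve, run a relative $(K_W+\Gamma_W)$-MMP over $X$ with scaling (invoking BCHM for termination), and apply the negativity lemma to the $f$-nef, $f$-exceptional divisor $K_Y+\Gamma_Y-f^*(K_X+B)$ to force all surviving exceptional divisors to have discrepancy $\leq -1$. So your proposal is correct and matches the argument the paper is implicitly citing.

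One small remark on your closing caveats: the perturbation by a small $\pi$-ample divisor is not strictly necessary to meet the hypotheses of the relevant MMP with scaling, since over $X$ the boundary $\Gamma_W$ is automatically big (any $\pi$-exceptional effective divisor plus the pullback of an ample is big over the base), and the special termination arguments for dlt flips suffice here; but flagging it does no harm.
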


\begin{lemma}
Let $(X, \Delta)$ be dlt and let $S_1, ..., S_k$ be the irreducible
components of $\lfloor \Delta \rfloor$.

(1) $(X, \Delta)$ is log canonical. 

(2) $S_i$ is normal and if we write $(K_X+\Delta)\vert_{S_i} = K_{S_i}+\Delta_i$
then $(S_i, \Delta_i)$ is dlt.

(3) If $\lfloor \Delta \rfloor = 0$ then $(X, \Delta)$ is klt
\end{lemma}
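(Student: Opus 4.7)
The plan is to push every computation onto the dlt log resolution $\pi \colon Y \to X$ provided by the hypothesis. Writing
\[\pi^{*}(K_X + \Delta) = K_Y + \Delta_Y, \qquad \Delta_Y = \pi_*^{-1}\Delta + \sum_{j}(-a_j)E_j,\]
where the $E_j$ are the $\pi$-exceptional divisors, the dlt condition says $-a_j < 1$ for every $j$, while the coefficients of $\pi_*^{-1}\Delta$ lie in $[0,1]$ with value exactly $1$ precisely along $S_i' := \pi_*^{-1}S_i$. Thus $(Y,\Delta_Y)$ is an snc pair with every coefficient in $[0,1]$.

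Parts (1) and (3) are then formal consequences. For any exceptional divisor $F$ over $X$: if $F$ already appears on $Y$, then $a(F,X,\Delta) = -\operatorname{coeff}_F(\Delta_Y) \geq -1$; if instead $F$ is exceptional over $Y$, then $a(F,X,\Delta) = a(F,Y,\Delta_Y)$, and snc pairs with coefficients in $[0,1]$ are log canonical by direct computation, so again $a(F,X,\Delta) \geq -1$. This proves (1). For (3), if $\lfloor \Delta \rfloor = 0$ then no $S_i'$ appears in $\Delta_Y$, so every coefficient of $\Delta_Y$ is strictly less than $1$, and the identical argument gives strict inequality $a(F,X,\Delta) > -1$.

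The main obstacle is (2). The approach is to establish normality of $S_i$ by proving $(\pi|_{S_i'})_* \mathcal{O}_{S_i'} = \mathcal{O}_{S_i}$ and then invoking Zariski's main theorem, which applies because $S_i'$ is smooth. The key input is the relative vanishing $R^{1}\pi_{*}\mathcal{O}_Y(-S_i') = 0$. To see this, rewrite
\[-S_i' - K_Y = (\Delta_Y - S_i') - \pi^{*}(K_X + \Delta),\]
note that $\lfloor \Delta_Y - S_i' \rfloor$ is supported on $\bigcup_{j \neq i} S_j'$, and then after a small $\pi$-ample perturbation the right-hand side falls into the hypothesis of relative Kawamata-Viehweg vanishing. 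Combined with $0 \to \mathcal{O}_Y(-S_i') \to \mathcal{O}_Y \to \mathcal{O}_{S_i'} \to 0$ and $\pi_*\mathcal{O}_Y = \mathcal{O}_X$, the vanishing delivers the desired isomorphism and hence normality. Restricting $K_Y + \Delta_Y = \pi^{*}(K_X+\Delta)$ to $S_i'$ via usual adjunction and pushing forward along $\pi|_{S_i'}$ produces $\Delta_i = (\pi|_{S_i'})_{*}\bigl((\Delta_Y - S_i')|_{S_i'}\bigr) \geq 0$ with $(K_X+\Delta)|_{S_i} = K_{S_i} + \Delta_i$; and the restriction $\pi|_{S_i'} \colon S_i' \to S_i$ is a log resolution extracting only divisors of discrepancy $>-1$ (inherited from $(Y,\Delta_Y)$ through adjunction), so $(S_i,\Delta_i)$ is dlt.

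The hardest step is thus (2): the divisor manipulations that bring $-S_i'$ into the range of Kawamata-Viehweg must carefully isolate the other $S_j'$ components of $\lfloor \Delta_Y \rfloor$ from the fractional part. In practice one separates the components of $\Delta$ by a further blow-up and perturbs fractional coefficients slightly so that the "round up" and "fractional" pieces behave cleanly, which is the place where the argument is most delicate.
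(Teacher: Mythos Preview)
The paper does not give a proof of this lemma at all; it simply writes ``Standard, see for example \cite{KM98}.'' Your sketch is essentially the argument one finds there (Propositions~5.51 and Corollary~5.52 of Koll\'ar--Mori for part~(2), together with the elementary discrepancy computations for~(1) and~(3)), so in that sense you and the paper agree.

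Your handling of~(1) and~(3) is clean and correct. For~(2) your overall strategy---relative Kawamata--Viehweg to get $R^1\pi_*\mathcal O_Y(-S_i')=0$, then the short exact sequence plus Zariski's main theorem for normality, then adjunction on the resolution to identify $\Delta_i$ and check dlt---is exactly the standard one. You are right that the only genuinely delicate point is dealing with the other components $S_j'$ ($j\neq i$) of $\lfloor \Delta_Y\rfloor$: as written, the divisor $\Delta_Y - S_i'$ is not klt, and a naive perturbation $\Delta_Y - S_i' - \epsilon\sum_{j\neq i}S_j'$ leaves you with $\epsilon\sum_{j\neq i}S_j'$ on the ``nef'' side, which is not $\pi$-nef. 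The fix in \cite{KM98} is to use the form of vanishing that allows an integral boundary part (their Theorem~2.68) or, equivalently, to absorb the $S_j'$ into the round-up rather than the fractional part; your last paragraph gestures at this but the phrase ``small $\pi$-ample perturbation'' earlier is not quite the right mechanism. If you want a self-contained write-up, spell out precisely which version of relative vanishing you invoke and how the $S_j'$ are placed.
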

\begin{proof}
Standard, see for example \cite{KM98}.
\end{proof}

\begin{defn}
Given a pair $(X, \Delta)$ or $(\cal F, \Delta)$ we say that $W$
is a {\bf log canonical centre} of $(X \text{ or } \cal F, \Delta)$ 
if $(X \text{ or } \cal F, \Delta)$ is log canonical above the generic point of $W$,
and there is a divisor $D$ of discrepancy $=-\epsilon(D)$ dominating $W$.
\end{defn}

\begin{theorem}
\label{foliatedsubadjunction}
Let $X$ be a $\bb Q$-factorial and klt threefold and let $\cal F$ be a co-rank 1 foliation
with non-dicritical singularities.
Suppose that $W$ is a log canonical centre of $(\cal F, \Delta)$ with $\text{dim}(W) = 1$
Furthermore, suppose that $W$ is transverse to the foliation in the sense of Definition \ref{tangtransdef}.

Then $(K_{\cal F}+\Delta)\cdot W \geq 0$.
\end{theorem}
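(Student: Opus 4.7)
The plan is to reduce the foliated subadjunction to classical Kawamata subadjunction by using the transversality hypothesis to transfer the log canonical centre structure from $(\cal F, \Delta)$ to the usual pair $(X, \Delta)$, and then to bridge $(K_\cal F + \Delta) \cdot W$ and $(K_X + \Delta) \cdot W$ via a direct sheaf-theoretic comparison along the transverse curve $W$.

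To set up, I would first check that $W$ is a log canonical centre of $(X, \Delta)$ in the classical sense. By hypothesis there is a divisor $D$ dominating $W$ with $a(D, \cal F, \Delta) = -\epsilon(D)$, and transversality of $W$ in the sense of Definition \ref{tangtransdef} forces $\epsilon(D) = 1$, so $a(D, \cal F, \Delta) = -1$. Because $W$ is transverse to $\cal F$ and $\cal F$ is non-dicritical, Lemma \ref{transdiscrep} gives $a(D, \cal F, \Delta) = a(D, X, \Delta)$, hence $W$ is a log canonical centre of the (generically) log canonical pair $(X, \Delta)$. Then, perhaps after a small perturbation of $\Delta$ (using $\bb Q$-factoriality and ampleness) ensuring $W$ is a minimal lc centre, Kawamata's classical subadjunction on the normalization $\nu \colon W^\nu \to W$ yields
\[
\nu^*(K_X + \Delta) \sim_{\bb Q} K_{W^\nu} + B_W + J_W
\]
with $B_W \geq 0$ effective and $J_W$ nef, so that $(K_X + \Delta) \cdot W \geq \deg K_{W^\nu}$.

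On the other hand, transversality produces a generically nonzero morphism $T_{W^\nu} \to \nu^*(T_X / \cal F)^{**}$ of rank-$1$ reflexive sheaves on $W^\nu$, hence an injection of line bundles on the smooth locus. Taking degrees, and using the identity $c_1((T_X/\cal F)^{**}) = -[N^*_\cal F]$, gives $[N^*_\cal F] \cdot W \leq \deg K_{W^\nu}$. Combining this with the subadjunction inequality and the basic relation $K_\cal F = K_X - [N^*_\cal F]$ of Lemma \ref{basicadjunction}, I obtain
\[
(K_\cal F + \Delta) \cdot W = (K_X + \Delta) \cdot W - [N^*_\cal F] \cdot W \geq \deg K_{W^\nu} - \deg K_{W^\nu} = 0.
\]

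The main obstacle will be the subadjunction step: one must ensure $W$ is, or can be perturbed to be, a minimal lc centre so that Kawamata's formula with a nef moduli part is actually available, and then handle the sheaf-theoretic comparison carefully through the singularities of $X$, $\cal F$, and $W$. This is precisely where klt-ness of $X$ and non-dicriticality of $\cal F$ become crucial, keeping both $(T_X/\cal F)^{**}$ and the tangent sheaf of $W^\nu$ well-behaved in codimension one so the degree comparison is valid.
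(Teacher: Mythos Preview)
Your approach is genuinely different from the paper's, and it is worth spelling out both what you do and where the gap lies.

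Both arguments begin the same way: transversality of $W$ together with Lemma~\ref{transdiscrep} shows that foliation and usual discrepancies agree over $W$, so $W$ is also a log canonical centre of the classical pair $(X,\Delta)$. From here the routes diverge. The paper takes a dlt modification $f\colon (Y,\cal H)\to (X,\cal F)$ of $(X,\Delta)$, picks an exceptional divisor $E$ of coefficient $1$ dominating $W$, and observes that by non-dicriticality the restricted foliation $\cal H_E$ is induced by the fibration $\sigma\colon E\to W^\nu$. One then writes $(K_{\cal H}+\Gamma)\vert_E=K_{\cal H_E}+\Theta=\sigma^*G$ with $G=\nu^*(K_{\cal F}+\Delta)$; if $\deg G<0$ the surface foliation cone theorem (Theorem~\ref{conetheoremsurfaces}) produces a $(K_{\cal H_E}+\Theta)$-negative curve tangent to $\cal H_E$, but such a curve is a fibre of $\sigma$ and hence has intersection zero with $\sigma^*G$, a contradiction. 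The point is that all the delicate comparison between $K_{\cal F}$ and $K_X$ is absorbed into the dlt model and handled by the surface cone theorem, with no need to restrict $N^*_{\cal F}$ to $W$.

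Your route instead stays on $W^\nu$: Kawamata subadjunction gives $(K_X+\Delta)\cdot W\ge \deg K_{W^\nu}$, and you want to close the loop with the sheaf-theoretic inequality $[N^*_{\cal F}]\cdot W\le \deg K_{W^\nu}$ coming from a generically nonzero map $T_{W^\nu}\to \nu^*(T_X/\cal F)^{**}$. This second inequality is where the real work is, and your sketch does not establish it. When $W$ meets (or is contained in) $\text{sing}(X)$, the map $T_{W^\nu}\to \nu^*T_X$ is not available as a map of locally free sheaves, and even if one produces a map into some rank-$1$ sheaf on $W^\nu$, there is no reason its degree equals the $\bb Q$-Cartier intersection number $-[N^*_{\cal F}]\cdot W$: the reflexive hull $(T_X/\cal F)^{**}$ is only $\bb Q$-Cartier, and pulling back the reflexive sheaf versus pulling back the $\bb Q$-Cartier class can differ at non-Gorenstein points. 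Invoking ``klt-ness and non-dicriticality'' does not by itself resolve this; one would need something like an index-one cover argument or a passage to a resolution together with a discrepancy comparison (and there Lemma~\ref{transdiscrep} only controls divisors centred on $W$ itself, not those centred on points of $W$). This is exactly the difficulty the paper's dlt-model argument is designed to bypass.
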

\begin{proof}
Let $\nu:W^\nu \rightarrow W$ be the normalization
and let $G = \nu^*(K_{\cal F}+\Delta)$.

First, notice that since $W$ is
transverse if $E$ is any divisor such
that the centre of $E$ on $X$ is $W$ then $E$ is transverse to the foliation and we have by Lemma \ref{transdiscrep}
that $a(E, \cal F, \Delta) = a(E, X, \Delta)$.  In particular,
$W$ is a log canonical centre of $(X, \Delta)$.

Let $f:(Y, \cal H) \rightarrow (X, \cal F)$ be a dlt modification of $(X, \Delta)$
and write $$f^*(K_X+\Delta) = K_Y+\Gamma'$$
and $$f^*(K_{\cal F}+\Delta) = K_{\cal H}+\Gamma.$$
Since $f$ only extracts divisors of usual discrepancy $\leq -1$,
by Corollary \ref{quotientsmoothdiscrep} 
it only extracts divisors of foliation discrepancy $\leq 0$, 
and so $\Gamma \geq 0$.

Observe that if $E$ is a divisor then the coefficient of $E$ in $\Gamma$ (respectively $\Gamma'$)
is $-a(E, \cal F, \Delta)$ (respectively $-a(E, X, \Delta)$) and so by Lemma \ref{transdiscrep} 
we know that $\Gamma$ and $\Gamma'$ agree above the generic point of $W$.

Let $E \rightarrow W$ be a divisor dominating $W$ which
has coefficient 1 in $\Gamma$.
$E$ is transverse to $\cal H$ and if we write
$\cal H_E$ for the foliation restricted to $E$ we have that $\cal H_E$
is the foliation induced by $\sigma:E \rightarrow W^\nu$.

Write $(K_Y+\Gamma')\vert_E = K_E+\Theta'$ and 
$(K_{\cal H}+\Gamma)\vert_E = K_{\cal H_E}+\Theta$.  Note that $\Theta, \Theta' \geq 0$ and
that if $D \subset E$ is a divisor dominating $W$ it is transverse to the foliation and therefore
has the same coefficient in $\Theta$ and  $\Theta'$ (by the construction of the different, Proposition \ref{foliationadjunction},
and Lemma \ref{transdiscrep}).
Thus $(\cal H_E, \Theta)$ is lc above the generic point of $W^\nu$.

Suppose for sake of contradiction that $\text{deg}(G)<0$.  Then $K_{\cal H_E}+\Theta = \sigma^*G$
is not nef and we can apply 
the cone theorem for surface foliations, see Theorem \ref{conetheoremsurfaces} below, 
to conclude
that there is some rational curve tangent to $\cal H_E$ which is $K_{\cal H_E}+\Theta$-negative.
However, if $C$ is any curve tangent to $\cal H_E$, then
$$(K_{\cal H_E}+\Theta)\cdot C = \sigma^*G \cdot C = 0$$
a contradiction.
\end{proof}

\begin{remark}
This should be viewed as a foliated version of Kawamata's subadjunction, \cite{Kawamata98}.  
Indeed, with
more work it is possible to prove foliated subadjunction for $X$ and $W$ of any dimension.  We will
only need the case where $\text{dim}(X) = 3$ and $\text{dim}(W)=1$ to prove our main result
and so have restricted our attention to this case.
\end{remark}

\begin{lemma}
\label{negativecurvetransverse}
Let $X$ be a
$\bb Q$-factorial and klt threefold, $\cal F$
a co-rank 1 foliation with non-dicritical singularities,
and let $S$ a surface transverse to the foliation.
Suppose that $(\cal F, \Delta)$ has canonical singularities. Let 
$R$ be an extremal ray of $\overline{NE}(X)$ such that 
$K_{\cal F}+\Delta$ and $S$ are negative on $R$.

Then $R$ is spanned by the class of a curve
$C$ which is tangent to the foliation.
\end{lemma}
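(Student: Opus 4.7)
The plan is to descend the problem to the surface $S^\nu$, apply the foliated cone theorem for surfaces, and push the resulting curve back up to $X$. Since $S\cdot R<0$, any irreducible curve whose class lies in $R$ must be contained in $S$. Let $\nu:S^\nu\to S$ be the normalization, $\iota:S\hookrightarrow X$ the inclusion, and $\sigma=\iota\circ\nu$. Because $S$ is transverse to $\cal F$, the restricted foliation $\cal F_{S^\nu}$ has rank $1$ on the surface $S^\nu$, so tangency of a curve to $\cal F_{S^\nu}$ is equivalent to invariance, and any such curve pushes forward under $\sigma$ to a curve tangent to $\cal F$. By foliation adjunction (Proposition \ref{foliationadjunction}) there is an effective divisor $\Theta$ on $S^\nu$ with
$$\sigma^*(K_{\cal F}+\Delta+S)=K_{\cal F_{S^\nu}}+\Theta.$$

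The extremality of $R$ in $\overline{NE}(X)$ implies that the subcone $F:=\sigma_*^{-1}(R)\cap\overline{NE}(S^\nu)$ is extremal in $\overline{NE}(S^\nu)$, and the first step shows that $F$ is non-trivial. By the projection formula every nonzero $\beta\in F$ satisfies
$$(K_{\cal F_{S^\nu}}+\Theta)\cdot\beta=(K_{\cal F}+\Delta+S)\cdot\sigma_*\beta<0,$$
since both $K_{\cal F}+\Delta$ and $S$ are negative on $R$. Picking an extremal ray $R'$ of $F$, one obtains an extremal ray of $\overline{NE}(S^\nu)$ on which $K_{\cal F_{S^\nu}}+\Theta$ is negative, and it suffices to exhibit a rational curve tangent to $\cal F_{S^\nu}$ whose class spans $R'$: its image under $\sigma$ is then the desired curve.

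The main obstacle is that $S^\nu$ need not be smooth and $(\cal F_{S^\nu},\Theta)$ need not have canonical singularities, so McQuillan's surface cone theorem does not apply directly. To overcome this I would pass to a log resolution $\mu:T\to S^\nu$ with $T$ smooth and $\cal G:=\mu^{-1}\cal F_{S^\nu}$ of canonical (even simple) foliation singularities by Seidenberg's theorem, and write $\mu^*(K_{\cal F_{S^\nu}}+\Theta)=K_{\cal G}+\Gamma$. Inside $\mu_*^{-1}(R')$ I would select a $(K_{\cal G}+\Gamma)$-negative extremal ray $R''$ of $\overline{NE}(T)$ that is not $\mu$-exceptional. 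Arguing as in the proof of Corollary \ref{relativesurfaceMMP}, a curve $\Sigma$ spanning $R''$ satisfies $\Gamma\cdot\Sigma\ge 0$ (otherwise $\Sigma$ would be contained in the positive part of $\Gamma$ and hence be $\mu$-exceptional), so $K_{\cal G}\cdot\Sigma<0$; McQuillan's cone theorem \cite{McQuillan08} then produces a $\cal G$-invariant rational curve spanning $R''$. Its image under $\tau:=\sigma\circ\mu$ is a rational curve in $X$ tangent to $\cal F$ whose class spans $R$, as required.
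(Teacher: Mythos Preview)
Your reduction to $S^\nu$ and the choice of an extremal ray $R'$ mapping to $R$ are fine, and this is also how the paper begins. The gap is in the last paragraph, at the step where you claim $\Gamma\cdot\Sigma\ge 0$.

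You argue that if $\Gamma\cdot\Sigma<0$ then $\Sigma$ lies in the positive part of $\Gamma$ and is therefore $\mu$-exceptional. But the positive part of $\Gamma$ is not purely $\mu$-exceptional: it contains the strict transforms of the components of $\Theta$, with the same (possibly $>1$) coefficients. So $\Sigma$ can perfectly well be the strict transform of a component of $\Theta$, not $\mu$-exceptional, with $\Gamma\cdot\Sigma<0$ and $K_{\cal G}\cdot\Sigma\ge 0$. McQuillan's cone theorem then says nothing about $R''$, and there is no reason for $\Sigma$ to be $\cal G$-invariant. Pushing forward, $\sigma(\mu(\Sigma))$ is a component of the foliated different $\Theta$; by Proposition~\ref{foliationadjunction} such a component can lie over $\text{sing}(X)$, $\text{sing}(S)$, or $\text{supp}(\Delta)$ and be genuinely transverse to $\cal F$. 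Your argument does not exclude this, and the analogy with Corollary~\ref{relativesurfaceMMP} breaks down precisely because there the boundary $\Delta$ was assumed to contain no fibres of $f$, whereas here the non-exceptional part of $\Gamma$ may well contain $\Sigma$.

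The paper handles exactly this case. It applies the generalized surface cone theorem (Theorem~\ref{conetheoremsurfaces}), which allows $\Theta$ to be worse than a boundary at the cost of a $Z_{-\infty}$ contribution supported on the non-lc locus of $(\cal F_{S^\nu},\Theta)$. This gives that $R'$ is spanned by a curve $C$ which is either $\cal F_{S^\nu}$-invariant (and we are done) or lies in the non-lc locus. In the second case $\nu(C)$ is a non-lc centre of $(\cal F,\Delta+S)$, so one can choose $0<\lambda<1$ making $\nu(C)$ an lc centre of $(\cal F,\Delta+\lambda S)$ transverse to $\cal F$; foliated subadjunction (Theorem~\ref{foliatedsubadjunction}) then forces $(K_{\cal F}+\Delta+\lambda S)\cdot\nu(C)\ge 0$, contradicting the negativity of both $K_{\cal F}+\Delta$ and $S$ on $R$. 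The subadjunction step is the missing idea in your argument; without it, the transverse components of $\Theta$ cannot be ruled out.
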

\begin{proof}
Let $\nu: S^\nu \rightarrow S$ be the normalization of $S$.
Write $\nu^*(K_{\cal F}+\Delta+S) = K_{\cal F_{S^\nu}}+\Theta$.
A straightforward computation (see for example \cite[Propostion 5.46]{KM98})
shows that the non-log canonical centres of $(\cal F_{S^\nu}, \Theta)$ are contained in
non-log canonical centres of $(\cal F, \Delta+S)$.

Since $R\cdot S<0$, there exists an extremal ray $R'$ in $\overline{NE}(S)$ 
such that $\nu_*R' = R$ in $\overline{NE}(X)$.

By the cone theorem for surface foliations, see Theorem \ref{conetheoremsurfaces} below, 
$R'$ is spanned by a curve $C$
and so $R$ is spanned by $\nu(C)$.
Furthermore either
\begin{enumerate}
\item $C$ is transverse to $\cal F_{S^\nu}$ and 
contained in the non-log canonical locus of $(\cal F_{S^\nu}, \Theta)$ or
\item $C$ is tangent to $\cal F_{S^\nu}$.
\end{enumerate}

In the first case observe that $\nu(C)$ is a
non-log canonical centre of $(\cal F, \Delta+S)$ transverse to $\cal F$.

Thus we may find $0<\lambda \leq 1$ so that
$\nu(C)$ is a log canonical centre of $K_{\cal F}+\Delta+\lambda S$ transverse to $\cal F$.
In fact, since $\nu(C)$ is not a log canonical centre of $(\cal F, \Delta+S)$ we see that $\lambda <1$.
However, $(K_{\cal F}+\Delta+\lambda S)\cdot C <0$, a contradiction 
of Theorem \ref{foliatedsubadjunction}.  Thus $C$ is tangent to the foliation
and so $\nu(C)$ is tangent to $\cal F$.
\end{proof}

\section{$K_{\cal F}$-negative curves tangent to $\cal F$}

Throughout this section we will assume $X$ to be a threefold and $\cal F$ a
co-rank 1 foliation.
The object of this section is to show that if there exists a $K_{\cal F}$-negative curve
tangent to the foliation, then there exists a rational $K_{\cal F}$-negative curve tangent
to the foliation.

\subsection{Existence of germs of leaves}

Cano and Cerveau in \cite{CC92} prove the following:
\begin{theorem}
Let $(0 \in X, \cal F)$ be the germ of a 3-dimensional complex manifold
with a co-rank 1 foliation.  Suppose that $\cal F$ has non-dicritical singularities.
Let $\gamma$ be a curve tangent to the foliation and not contained in 
$\text{sing}(\cal F)$.  Then $\gamma$ is contained in a unique convergent separatrix.
\end{theorem}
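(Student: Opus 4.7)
The plan is to reduce to the case of simple singularities via Cano's resolution and then construct the separatrix point-by-point along $\gamma$, gluing local convergent pieces together.

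First, apply Cano's resolution theorem to produce $\pi:(X',\cal F')\to(X,\cal F)$ with $X'$ smooth and $\cal F'$ having only simple singularities. Since $\cal F$ is non-dicritical, every $\pi$-exceptional divisor is $\cal F'$-invariant, and $\cal F'$ remains non-dicritical. Let $\gamma'$ be the strict transform of $\gamma$; it is tangent to $\cal F'$ and not contained in $\text{sing}(\cal F')$. If we produce a convergent separatrix $S'\subset X'$ containing $\gamma'$, then $S:=\pi(S')$ is a convergent hypersurface in $X$ which is $\cal F$-invariant (since invariance is preserved under pushforward by $\pi$ along non-exceptional divisors) and contains $\gamma$. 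So it suffices to work on $X'$.

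Next, perform the local analysis along $\gamma'$. At any point $p\in\gamma'$ where $\cal F'$ is smooth, the classical leaf theorem gives a unique smooth holomorphic separatrix through $p$, and by the uniqueness of leaves in a smooth foliation, the local leaf through $p$ contains every curve through $p$ tangent to $\cal F'$, in particular a germ of $\gamma'$. At a simple singularity $p\in\gamma'$, we use the classification. In type (i) the separatrices are precisely $\{x_i=0\}$ for $1\le i\le r$, all convergent; since $\gamma'\not\subset\text{sing}(\cal F')$, the curve $\gamma'$ lies in exactly one of these. In type (ii), the strong separatrix $\{x=0\}$ is convergent, and one argues that a genuine (convergent) holomorphic tangent curve cannot be contained in a purely formal separatrix without being contained in a convergent one: indeed, the coordinate hyperplanes transverse to the strong separatrix direction intersect $\gamma'$ only in a discrete set, so the only way for $\gamma'$ to be tangent at $p$ without lying in the singular locus is to lie in a convergent separatrix at $p$.

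Finally, glue and verify uniqueness. The curve $\gamma'$ meets $\text{sing}(\cal F')$ in only finitely many points; between them the local leaves are unique and assemble into a smooth convergent $\cal F'$-invariant hypersurface. At each singular point, the local convergent separatrix containing $\gamma'$ is uniquely determined by the germ of $\gamma'$ (as the germ determines which coordinate hyperplane from the classification list it lies in), so the pieces match up to produce a global convergent analytic $\cal F'$-invariant surface $\tilde S\supset\gamma'$. Uniqueness of the resulting $S$ as a separatrix containing $\gamma$ follows because at any smooth point of $\cal F$ on $\gamma$ (such points exist since $\gamma\not\subset\text{sing}(\cal F)$), the leaf is locally unique, so any two separatrices through $\gamma$ coincide on an open subset of $\gamma$ and hence agree everywhere. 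The main obstacle is the local analysis at type (ii) singularities, where one must rule out the possibility that $\gamma'$ only lies in a formal (non-convergent) separatrix; this is precisely the point at which non-dicriticality is essential, as the dicritical example in the text shows that without it, tangent curves need not admit any separatrix at all.
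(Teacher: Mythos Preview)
The paper does not supply its own proof of this statement: it is quoted as a result of Cano--Cerveau \cite{CC92}. What the paper does prove are the closely related Lemma~\ref{l_firstextensionlemma} and Corollaries~\ref{constructingsurfacegerm}--\ref{constructingsurfacegermIII}, which adapt the Cano--Cerveau method. So the relevant comparison is between your argument and that method.

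Your broad strategy (resolve to simple singularities, local analysis, glue, push forward) is the right one, but there is a genuine gap at the gluing/pushforward step. After you pass to $\pi:(X',\cal F')\to(X,\cal F)$, the point $0$ is replaced by a \emph{compact} exceptional divisor $E=\pi^{-1}(0)$, which is $\cal F'$-invariant by non-dicriticality. The strict transform $\gamma'$ meets $E$ only at a single point (or a finite set), and your construction produces a leaf germ $\tilde S$ only in a neighborhood of $\gamma'$, hence near that one point of $E$. To apply the proper mapping theorem and conclude that $\pi(\tilde S)$ is a closed analytic hypersurface germ at $0$, you need $\tilde S$ to be a \emph{closed} invariant hypersurface in a full neighborhood of $E$, not merely near $\gamma'\cap E$. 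Nothing in your argument prevents the leaf from accumulating on $E$ away from $\gamma'\cap E$; if it does, the image is not closed near $0$ and you do not get a separatrix.

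This is exactly the issue that the Cano--Cerveau argument (and the paper's Lemma~\ref{l_firstextensionlemma}) is designed to overcome. One first arranges, by further blowing up, that along $E'$ each singular point has at most one separatrix not contained in $E'\cup\mathrm{exc}(\pi)$; one then runs an open/closed argument along the relevant component $V_0$ of $\mathrm{sing}(\cal F')\cap E'$, using Lemma~\ref{simpleextension} to pass to limits, to propagate the local separatrix containing $\gamma'$ to a closed invariant hypersurface $S'$ in a neighborhood of all of $E'$. Only then does the proper mapping theorem give a genuine convergent separatrix at $0$. Your local analysis at type (ii) points is also imprecise (the claim that a convergent tangent curve cannot sit in a purely formal separatrix needs justification), but the essential missing ingredient is this global extension step over the exceptional divisor.
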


As noted earlier 
there are examples of dicritical singularities with no separatrices, convergent or formal.

In what follows we adapt the techniques and ideas in \cite{CC92} to work in the setting
where $X$ is singular.

We will use the following fact about simple foliation singularities
found in \cite[Proposition II.5.5]{CC92}:

\begin{lemma}
\label{simpleextension}
Let $(0 \in X, \cal F)$ be a foliated germ with simple foliation singularities.
Let $Q_i \in \text{sing}(\cal F)$ and $Q_i \rightarrow 0$.  
Suppose that at each $Q_i$ there is a convergent germ
of a separatrix $S_{Q_i}$ such that the $S_{Q_i}$ agree
on overlaps.  Then there is a convergent germ of a separatrix at $0$
which extends the $S_{Q_i}$.
\end{lemma}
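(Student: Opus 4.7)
The plan is to glue the convergent germs $S_{Q_i}$ into a single analytic hypersurface defined on a punctured neighborhood of $0$, and then extend across $\{0\}$ via the Remmert--Stein extension theorem.

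First, I would carry out the gluing. Each $S_{Q_i}$ is a convergent hypersurface germ defined on some polydisc $U_i \ni Q_i$, and by hypothesis $S_{Q_i}$ and $S_{Q_j}$ agree on $U_i \cap U_j$ whenever the domains overlap. Since the $Q_i$ accumulate at $0$ along (codimension two components of) $\text{sing}(\cal F)$, we may shrink the $U_i$ so that $U := \{0\} \cup \bigcup_i U_i$ is an open neighborhood of $0$, and the gluing produces a closed analytic hypersurface $S^\ast \subset U \setminus \{0\}$ which is $\cal F$-invariant on its entire (smooth) domain.

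Next, I would extend $S^\ast$ across the point $0$. Because simple singularities are by definition supported on a smooth ambient manifold, $X$ is smooth at $0$ with $\dim X \geq 2$, so the point $\{0\}$ has codimension at least $2$ in $X$. The analytic set $S^\ast$ has pure dimension $\dim X - 1$, so by the Remmert--Stein extension theorem its closure $S := \overline{S^\ast}$ in $U$ is an analytic subset of $U$ of pure dimension $\dim X - 1$, i.e.\ a convergent hypersurface germ at $0$. To check that $S$ remains $\cal F$-invariant at $0$, note that if $\omega$ is a holomorphic $1$-form defining $\cal F$ near $0$ and $f$ is a holomorphic equation for $S$, then invariance is the closed analytic condition $\omega \wedge df \in (f)\cdot \Omega^2_{X,0}$; since it holds on the open dense subset $S \setminus \{0\}$, it holds on all of $S$. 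By construction, the germ of $S$ at each $Q_i$ coincides with $S_{Q_i}$, so $S$ is the desired convergent separatrix.

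The main obstacle I anticipate is the first step: one must check that the combinatorics of how the $Q_i$ accumulate at $0$ lets the domains $U_i$ cover a full punctured neighborhood of $0$, rather than merely a neighborhood of the sequence itself. This is where the simple singularity hypothesis enters, since it provides the required local structure of $\text{sing}(\cal F)$ (a union of smooth curves through $0$, along each of which a distinguished separatrix is available) and thereby guarantees that the $S_{Q_i}$ propagate consistently along a whole punctured neighborhood of each singular curve. Once $S^\ast$ is exhibited as a genuine analytic hypersurface in $U \setminus \{0\}$, Remmert--Stein and the closed-condition argument for invariance complete the proof without further difficulty.
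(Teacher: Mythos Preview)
The paper does not prove this lemma; it simply cites \cite[Proposition II.5.5]{CC92}. So there is no ``paper's own proof'' to compare against beyond noting that the actual argument in Cano--Cerveau proceeds by a direct analysis of the explicit normal forms of simple singularities (types (i) and (ii)), tracking how the convergent separatrices along each branch of $\text{sing}(\cal F)$ fit together at the origin.

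Your Remmert--Stein strategy is appealing but has a genuine gap in the gluing step, and it is exactly the one you flag without resolving. You assert that ``we may shrink the $U_i$ so that $U := \{0\} \cup \bigcup_i U_i$ is an open neighborhood of $0$''. This is false. In dimension $3$ the singular locus is a (union of) curve(s) through $0$; the $Q_i$ lie on these curves, and any union of small polydiscs around the $Q_i$ is contained in a tubular neighborhood of $\text{sing}(\cal F)$, which never contains a punctured ball around $0$ in $X$. Consequently your $S^\ast$ is only known to be closed analytic in an open set that does \emph{not} have the form $U \setminus \{0\}$ for $U$ a neighborhood of $0$, and Remmert--Stein does not apply. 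Propagating the $S_{Q_i}$ ``along a whole punctured neighborhood of each singular curve'' still only produces a hypersurface over a tube around the curve, not over a punctured ball.

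To make the approach work you would need an additional step: extend $S^\ast$ away from $\text{sing}(\cal F)$ by saturating under the (smooth) foliation, and then argue that the resulting invariant set is a \emph{closed} analytic hypersurface in $B \setminus \{0\}$ for some ball $B \ni 0$. This is nontrivial --- leaves of holomorphic foliations need not be closed, and one must control accumulation of $S^\ast$ onto other separatrices or onto $\text{sing}(\cal F)$ as one approaches $0$. It is precisely here that the simple-singularity normal forms are used in \cite{CC92}: they give explicit (often monomial) equations for the candidate separatrices and let one verify directly that the germ at $0$ is convergent and extends the $S_{Q_i}$. Your outline does not supply this analysis, so as written the argument is incomplete.
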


\begin{lemma}
\label{l_firstextensionlemma}
Let $X$ be smooth threefold, $\cal F$ a co-rank 1 foliation with simple singularities
and $E$ a compact $\cal F$-invariant divisor.
Let $\gamma$ be a germ of a curve tangent to $\cal F$ meeting $E$
but not contained in $E$.
Then there exists a neighborhood $U$ of $E$ and a closed $\cal F$-invariant
hypersurface $S \subset U$ such that $\gamma \subset S$.
\end{lemma}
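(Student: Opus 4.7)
The plan is to construct $S$ by propagating a germ of separatrix at the point $p := \gamma \cap E$ along the $1$-dimensional stratum $\text{sing}(\cal F) \cap E$. First observe that $p \in \text{sing}(\cal F)$: otherwise the unique local leaf of $\cal F$ through $p$ would be $E$, forcing $\gamma \subset E$, contrary to hypothesis. Since simple singularities are non-dicritical, applying the Cano--Cerveau theorem stated at the start of this subsection (or, in the degenerate case $\gamma \subset \text{sing}(\cal F)$, simply reading off the local normal form of the simple singularity) produces a convergent germ of separatrix $S_p$ at $p$, distinct from the germ of $E$, and containing $\gamma$. Moreover, since at any smooth point of $\cal F$ lying on $E$ the unique local leaf is $E$ itself, the germ $S_p \cap E$ must be a curve contained in $\text{sing}(\cal F) \cap E$, so the extension problem is genuinely one along this $1$-dimensional stratum.

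The next step is to define $T \subset E$ to consist of those points $q$ for which there exists a convergent germ $S_q$ of $\cal F$-invariant hypersurface at $q$ obtained from $S_p$ by analytic continuation along a path in $E$, and to show that $T$ is open, nonempty and closed. Nonemptyness is clear since $p \in T$. Openness is essentially automatic: near a smooth point of $\cal F$ in $E$ the only nontrivial $\cal F$-invariant germ is the leaf $E$ itself and the extension is trivial, while near a simple singularity the finitely many separatrices vary continuously along the strata of $\text{sing}(\cal F) \cap E$, so a germ chosen at one point pins down a compatible germ on a neighborhood. Closedness is precisely the content of Lemma \ref{simpleextension}: given a sequence $q_n \to q_0$ in $T$ with compatible germs $S_{q_n}$, the hypothesis of the lemma is verified and yields a convergent germ $S_{q_0}$ extending them.

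Letting $E_0$ denote the connected component of $E$ containing $p$, openness, closedness and non-emptiness of $T$ inside $E_0$ force $T \supseteq E_0$. The resulting family $\{S_q\}_{q \in E_0}$ of compatible convergent germs then patches to a closed $\cal F$-invariant analytic hypersurface $S_0$ defined in some open neighborhood $V$ of $E_0$. Taking $U$ to be an open neighborhood of $E$ obtained by adjoining to $V$ small pairwise disjoint neighborhoods of the other connected components of $E$, the hypersurface $S := S_0$, viewed as a closed analytic subset of $U$, is an $\cal F$-invariant hypersurface with $\gamma \subset S_p \subset S$.

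The hard part is the closedness step, namely verifying that the nearby germs $S_{q_n}$ really do agree on overlaps in the precise sense required by Lemma \ref{simpleextension}. This reduces to a local analysis along each component of $\text{sing}(\cal F) \cap E$ using the normal forms of simple singularities: in type (i) the non-$E$ separatrix passing through a given component is a coordinate hyperplane $\{x_i = 0\}$ and in type (ii) it is the strong separatrix, and in both cases the choice determined by $S_p$ propagates canonically along the stratification, which is what makes the gluing data consistent. A related subtlety is confirming that the ``analytic continuation'' used to define $T$ is well-defined near intersection points of several strata of $\text{sing}(\cal F) \cap E$, where one must track which branch of the local separatrix family is being selected.
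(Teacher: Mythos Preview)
Your proposal has the right skeleton—an open-closed connectedness argument with the closure step supplied by Lemma \ref{simpleextension}—and this is indeed the engine of the paper's proof. But the step you flag as ``the hard part'' and ``a related subtlety'' is a genuine gap, and the paper's argument contains a preparatory move, absent from your sketch, whose entire purpose is to make that step trivial rather than hard.

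Concretely, consider a dimension-type $3$ point on $E$, with local model $\omega = x_1x_2x_3\bigl(\lambda_1\frac{dx_1}{x_1}+\lambda_2\frac{dx_2}{x_2}+\lambda_3\frac{dx_3}{x_3}\bigr)$ and $E=\{x_1=0\}$. There are \emph{two} separatrices $\{x_2=0\}$ and $\{x_3=0\}$ not contained in $E$, and two branches of $\text{sing}(\cal F)\cap E$ through the origin. The germ carried along one branch does not determine a germ along the other, so your assertion that ``the choice determined by $S_p$ propagates canonically along the stratification'' is not justified at such junctions. The paper eliminates this ambiguity at the outset: after shrinking around $E$ it passes to a further resolution $\pi\colon X'\to X$, sets $E'=\pi^{-1}(E)$, and arranges that every point of $\text{sing}(\cal F')\cap E'$ admits \emph{at most one} separatrix not contained in $E'\cup\text{exc}(\pi)$. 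Only after this reduction is the continuation forced, the overlaps automatic, and the open-closed argument clean; at the end one pushes forward by the proper mapping theorem.

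Two smaller points. First, your set $T$ lives in $E$, but at smooth points of $\cal F$ on $E$ the only local leaf is $E$ itself, so ``analytic continuation along a path in $E$'' through such a point carries no data; the paper accordingly runs its argument inside the connected component $V_0$ of the locus of singular points admitting exactly one relevant separatrix. Second, you do not verify that the patched hypersurface is closed in a full neighborhood of $E$—that is, that the separatrices do not accumulate at points of $E$ lying off the stratum you are propagating along. The paper builds this into the definition of its open-closed set by additionally requiring, for each candidate point, that every $R\in E'\setminus V_0$ has a neighborhood disjoint from the chosen local separatrix.
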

\begin{proof}
The proof is essentially a small generalization of the proofs of 
\cite[Lemma IV.1.4, Corollary IV.1.5]{CC92}.

Since $\gamma$ is not contained in $E$ we have $\gamma \cap E = \{P_1, ..., P_n\}$.
Moreover, if $U$ is a sufficiently small neighborhood of $E$ we see
that $U\cap \gamma = \bigcup_i \gamma_i$ where $\gamma_i\cap E = P_i$ and $\gamma_i = \bigcup_j\gamma_{ij}$ 
where $\gamma_{ij}$ is irreducible.
It suffices to construct a separatrix containing each $\gamma_{ij}$ and thus shrinking $X$ about
$E$ we are free to assume that $\gamma$ is irreducible and $\gamma \cap E = P$ is a single point.

Furthermore, possibly further shrinking $X$ around $E$, by passing to a resolution, $\pi:X' \rightarrow X$,
setting $E' = \pi^{-1}E$ and setting $\cal F'$ for the transformed foliation,
we may assume that each point of $\text{sing}(\cal F')\cap E'$ has at most 1
(formal) separatrix not contained in $E'\cup \text{exc}(\pi)$.

Let $W=\text{sing}(\cal F')\cap E'$, let $V$ be the union of those irreducible components $W'$ of $W$
such that at a general point of $W'$ there is exactly one separatrix not contained in $E' \cup \text{exc}(\pi)$.
Let $\gamma'$ be the strict transform of $\gamma$ and let $\gamma' \cap E' = P'$.

Observe that $P' \in \text{sing}(\cal F')\cap E'$ and there is a separatrix of $\cal F'$, call it $\Sigma$,
in a neighborhood of $P'$
containing $\gamma'$.  Since $\gamma'$ is not contained in $E'\cup \text{exc}(\pi)$ we see
that $\Sigma$ is not contained in $E' \cup \text{exc}(\pi)$ and so $P' \in V$.
Let $V_0$ be the connected component of $V$ containing $\gamma'\cap E'$. 

We let $\cal A$ be the locus of points  $Q \in V_0$
such that 

\begin{enumerate}
\item \label{i_a} there exists an open set $Q \in U_Q$ and separatrix $Q \in S'_Q \subset U_Q$
such that $S'_Q$ is not contained in $E'\cup \text{exc}(\pi)$; and

\item \label{i_b} for every $R \in E' - V_0$ there exists an open set $W_R$ containing $R$
such that $W_R \cap S'_Q = \emptyset$.  
\end{enumerate}

Observe that if $R \in E' - V_0$ and if $Q \in V_0$ and $U_Q$ is a small
open neighborhood of $Q$ and $S'_Q \subset U_Q$ is a separatrix at $Q$ then
$S'_Q \cap E' \subset V_0$ by definition and so $R \notin S'_Q$.

Thus, it is easy to see that $\cal A$ is open in $V_0$.  To see that $\cal A$ is closed in $V_0$
let $Q_i$ be a sequence of points in
$\cal A$  converging to $Q \in V_0$. By Lemma \ref{simpleextension} we can find a separatrix $S'_Q$ at $Q$
agreeing with the $S'_{Q_i}$ on overlaps, and therefore $S'_Q$ 
satisfies \ref{i_a} and \ref{i_b}.  
By assumption $\gamma' \cap E' \in \cal A$ is nonempty, and so $\cal A = V_0$.

There exist finitely many $P_i$ such that $U_{P_i}$ cover $V_0$.
For all $R \in E' - V_0$ there exists an open $W_R$
disjoint from all the $S'_{P_i}$.  
The collection $\{W_R, U_{P_i}\}$ forms an open cover of a neighborhood of $E'$, call it $U$.
Let $S' = \cup S'_{P_i}$. Note that these separatrices agree on overlaps since 
if $Q \in U_{P_i}\cap U_{P_j}\cap V_0$ there is at most 1 separatrix at $Q$ not
contained in $E'\cup \text{exc}(\pi)$ and $S'_{P_i}, S'_{P_j}$ 
are both separatrices at $Q$ not contained
in $E'\cup \text{exc}(\pi)$.  
Thus $S'$ is a well-defined closed foliation invariant
hypersurface in $U$.

Finally, since $\pi$ is proper, by the proper mapping
theorem we have that $V = \pi(U)$ is  neighborhood of $E$ and
$S = \pi(S') \subset V$ is a $\cal F$-invariant hypersurface in $V$.
\end{proof}

\begin{corollary}
\label{constructingsurfacegerm}
Let $C$ be a compact curve tangent to a co-rank 1
foliation with non-dicritical singularities on threefold $X$ such that $C$ is
not contained in $\text{sing}(\cal F)\cup\text{sing}(X)$.
Then there is a germ of an analytic surface containing $C$, call it $S$,
such that $S$
is tangent to the foliation.
\end{corollary}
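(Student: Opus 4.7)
The plan is to reduce to Lemma \ref{l_firstextensionlemma} via a resolution and then extend the resulting invariant hypersurface along the smooth part of $C$ by the Frobenius theorem.

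First I would pass to a resolution $\pi:(X',\cal F')\to (X,\cal F)$ with $X'$ smooth and $\cal F'$ having only simple singularities, as furnished by Cano's theorem, and let $C'\subset X'$ denote the strict transform of $C$. Because $C$ is compact and not contained in $\text{sing}(X)\cup\text{sing}(\cal F)$, the set $\{p_1,\dots,p_n\}:=C\cap(\text{sing}(X)\cup\text{sing}(\cal F))$ is finite. I would then take $E$ to be the union of the divisorial components of $\pi^{-1}(p_1)\cup\dots\cup\pi^{-1}(p_n)$; non-dicriticality forces each $\pi^{-1}(p_i)$ to be tangent to $\cal F'$, so every component of $E$ is $\cal F'$-invariant, $E$ is compact, and $C'\cap E$ is a finite subset of $\pi^{-1}(\{p_1,\dots,p_n\})$.

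Next I would apply Lemma \ref{l_firstextensionlemma} to this $E$ with $\gamma$ taken to be the germ of $C'$ along $C'\cap E$. This produces an open neighborhood $U\supset E$ in $X'$ and a closed $\cal F'$-invariant hypersurface $S_U\subset U$ containing the germ of $C'$ at each point of $C'\cap E$. Along $C'\setminus E$ the map $\pi$ is an isomorphism onto its image in $X\setminus(\text{sing}(X)\cup\text{sing}(\cal F))$, so $\cal F'$ is smooth there; Frobenius then gives, around each $q\in C'\setminus E$, a unique germ of a leaf of $\cal F'$, and this germ contains the germ of $C'$ at $q$ because $C'$ is tangent to $\cal F'$. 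The uniqueness of integral manifolds at points of $C'\cap(U\setminus E)$ forces these local leaves to agree with $S_U$ on the overlap, and they glue into a closed $\cal F'$-invariant analytic hypersurface $S'$ in an open neighborhood $V'$ of $C'$.

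To finish I would push down. Properness of $\pi$ shows $\pi(V')$ contains an open neighborhood of $C$ in $X$, and Remmert's proper mapping theorem gives that $S:=\pi(S')$ is a closed analytic hypersurface in this neighborhood, containing $C$ and tangent to $\cal F=\pi_*\cal F'$. The point I expect to be most delicate is the gluing between the separatrix $S_U$ near $E$ and the Frobenius leaves along the rest of $C'$; fortunately both objects, on their overlap, are codimension-one integral manifolds of the (there smooth) foliation through the same points of $C'$, and so must coincide by the uniqueness clause in Frobenius's theorem. The only other subtlety is making sure that non-dicriticality indeed ensures that $\pi^{-1}(p_i)$ is $\cal F'$-invariant, so that Lemma \ref{l_firstextensionlemma} is available in the first place; this is immediate from the definition and the remark following it.
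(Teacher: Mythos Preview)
Your proof is correct, but it takes a genuinely different route from the paper's.

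The paper avoids the gluing step entirely by a clever choice of resolution and seed curve. Rather than blowing up only the bad points $p_i$, it passes to a resolution with $\pi^{-1}(C)$ an snc \emph{divisor} (so the entire curve $C$ is blown up), which is $\cal F'$-invariant by non-dicriticality. It then picks an auxiliary germ $\gamma$ tangent to $\cal F$ meeting $C$ transversally at a single smooth point $p$; the strict transform $\gamma'$ meets the divisor $\pi^{-1}(C)$ but is not contained in it, so Lemma~\ref{l_firstextensionlemma} applies with $E=\pi^{-1}(C)$ and yields an invariant hypersurface $S'$ in a neighborhood of \emph{all} of $\pi^{-1}(C)$ at once. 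The image $\pi(S')$ then automatically contains $C$ (it contains $p$, hence the leaf through $p$, hence the germ of $C$ at $p$; and $C\cap\pi(S')$ is a closed analytic subset of $C$ containing infinitely many points).

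Your approach instead keeps $E$ small (only the fibres over the bad points), applies the lemma with $\gamma=C'$ itself near $E$, and patches the resulting separatrix with Frobenius plaques along the rest of $C'$. This is more hands-on: the gluing requires the observation that $C'\cap U\subset S_U$ (an open--closed argument along $C'$, using smoothness of $\cal F'$ on $C'\setminus E$), so that $S_U$ and the Frobenius leaf coincide as the unique local integral hypersurface through $C'$ at points of the overlap. One minor point to tighten: you should either arrange that the exceptional locus over each $p_i$ is purely divisorial (pass to a higher model if necessary) or simply take $E=\pi^{-1}(\{p_1,\dots,p_n\})$; otherwise $V'$ need not contain $\pi^{-1}(C)$, which you need for the proper-mapping step.

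The paper's approach buys economy: one application of the lemma, no patching. Yours buys transparency about where the Frobenius theorem is doing the work and where the separatrix theory is genuinely needed, and it also makes clear that when $C$ misses $\text{sing}(X)\cup\text{sing}(\cal F)$ entirely the statement is purely local Frobenius.
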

\begin{proof}
First, pick a point $p \in C$ which is smooth point of $X, \cal F$ and $C$.
Since $\cal F$ has rank 2 there is a smooth germ of a curve $\gamma$ tangent to
$\cal F$ meeting $C$ at exactly $p$.

Let $\pi:(Y, \cal G) \rightarrow (X, \cal F)$ be a resolution of singularities
of $X$ and $\cal F$ so that $Y$ is smooth and $\cal G$ has simple singularties.  Passing to a higher resolution if needed we may assume
that $\pi^{-1}(C)$ is a snc divisor, which must be $\cal G$ invariant.

Let $\gamma'$ be the strict transform of $\gamma$.  By Lemma \ref{l_firstextensionlemma}
 we may find 
$S'$, the germ
of an invariant hypersurface containing $\gamma'$ in a small neighborhood $U$ of $\pi^{-1}(C)$.
By the proper mapping theorem $\pi(S') = S$ is a $\cal F$-invariant hypersurface
containing $C$.
\end{proof}

\begin{remark}
Observe that in contrast to the smooth case where every non-dicritical
singularity admits at least one convergent separatrix, if $X$
is singular it is possible for there to be no separatrices (formal or otherwise)
through a particular point $x \in X$.
In the case of surfaces an example is given by considering the contraction
of an elliptic Gorenstein leaf.
In these cases, however, there are no germs of curves tangent to $\cal F$
passing through $x$.
Intriguingly, these very same counter-examples are also intimately related to counter-examples
to abundance for foliations.
\end{remark}

\begin{corollary}
\label{constructingsurfacegermII}
Let $X$ be a threefold, and suppose $\cal F$ is a co-rank 1 foliation on $X$ 
with canonical and non-dicritical singularites
and suppose that $C \subset \text{sing}(\cal F)$ is a compact curve that is not contained in
$\text{sing}(X)$.  Furthermore, 
if $\cal F$ is simple at the generic point of $C$
then we may choose $S$ so that $S$ agrees with the strong separatrix along $C$.
\end{corollary}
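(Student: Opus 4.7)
The plan is to adapt the proof of Corollary \ref{constructingsurfacegerm} to the situation where $C$ lies entirely in the singular locus of $\cal F$. The obstacle is that we can no longer choose a smooth point of $\cal F$ on $C$ at which to draw a $\cal F$-tangent germ of a curve transverse to $C$, so we must manufacture the starting data for Lemma \ref{l_firstextensionlemma} directly from the separatrix structure at a generic point of $C$.

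First I would pass to a resolution $\pi:(Y,\cal G)\to(X,\cal F)$ with $Y$ smooth, $\cal G$ having only simple singularities, and with $E:=\pi^{-1}(C)$ an snc divisor. Since $C\not\subset\text{sing}(X)$ and $\dim\text{sing}(X)\leq 1$, $C$ meets $\text{sing}(X)$ only at finitely many points and $\pi$ is an isomorphism over a generic point of $C$. By non-dicriticality every component of $E$ is $\cal G$-invariant, and $E$ is compact since $C$ is.

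Next I would produce a germ of a curve $\gamma'$ on $Y$ tangent to $\cal G$, meeting $E$ but not contained in $E$. Assuming $\cal F$ is simple at a generic point $p\in C$ (the general case reduces to this by passing to $Y$, where $\cal G$ is simple at a general point of any codimension-$2$ component of $\text{sing}(\cal G)$ over $C$), the codimension-$2$ simple singularity along $C$ at $p$ admits a canonical convergent separatrix $\Sigma_p$: the strong separatrix if $p$ is of type (ii), or a coordinate separatrix if $p$ is of type (i). In local coordinates $(x,y,z)$ at $p$ with $C=\{x=y=0\}$ and $\Sigma_p=\{x=0\}$, the restriction of the defining $1$-form $\omega$ to $\Sigma_p$ vanishes, so every smooth germ of a curve in $\Sigma_p$ is tangent to $\cal F$; I would take $\gamma$ to be one such curve meeting $C$ transversally at $p$, and let $\gamma'$ be its strict transform in $Y$, which meets $E$ but is not contained in $E$.

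Now I would apply Lemma \ref{l_firstextensionlemma} to $(Y,\cal G)$ with the compact invariant divisor $E$ and the germ $\gamma'$, obtaining a closed $\cal G$-invariant hypersurface $S'$ in a neighborhood of $E$ containing $\gamma'$. By the proper mapping theorem $S:=\pi(S')$ is a closed analytic subset of a neighborhood of $C$ in $X$, and since $\gamma=\pi(\gamma')\subset S$ is a curve not contracted, $S$ is a hypersurface; it is $\cal F$-invariant since $S'$ is $\cal G$-invariant. To conclude, I would verify locally at $p$ that $S$ agrees with $\Sigma_p$: $S$ contains $\gamma\subset\Sigma_p$, and by the classification of simple singularities the only formal invariant hypersurface at $p$ through $\gamma$ is $\Sigma_p$, so the germs of $S$ and $\Sigma_p$ coincide at $p$. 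In particular $C\subset S$ near $p$, and by irreducibility of $C$ and closedness of $S$ we obtain $C\subset S$ globally, with $S$ agreeing with the strong separatrix along $C$. The hard part is the construction of $\gamma'$: since $C$ lies in $\text{sing}(\cal F)$, the transverse-curve argument of Corollary \ref{constructingsurfacegerm} fails, and one must appeal to the existence of convergent separatrices at simple singularities, which is precisely where the simple-at-generic-$C$ hypothesis is used decisively.
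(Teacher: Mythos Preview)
Your approach is correct and structurally identical to the paper's: produce a curve germ $\gamma$ tangent to $\cal F$ meeting $C$ but not contained in it, pass to a resolution with invariant snc divisor $E=\pi^{-1}(C)$, apply Lemma~\ref{l_firstextensionlemma}, and push forward. The only real difference is in how $\gamma$ is manufactured. The paper takes a general hyperplane section $H$ through a point $P\in C$, restricts $\cal F$ to $H$, and invokes Camacho--Sad on the surface $(H,\cal F\vert_H)$ to obtain a separatrix curve $\gamma$ through $P$ (choosing the strong separatrix when $\cal F$ is simple along $C$). Your route---taking a curve inside the strong separatrix surface $\Sigma_p$ directly---is more hands-on in the simple case and makes the verification that $S$ agrees with $\Sigma_p$ (hence $C\subset S$) pleasantly explicit; the paper leaves this implicit. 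On the other hand, the hyperplane-cut trick handles the non-simple case uniformly via a classical surface result, whereas your parenthetical reduction (``pass to $Y$ where $\cal G$ is simple'') is correct in spirit but not self-contained: to produce $\gamma'$ on $Y$ not lying in $E$ you are implicitly relying on the existence of a separatrix not contained in the exceptional divisor, which is what the hyperplane cut plus Camacho--Sad avoids. One small glitch worth fixing: you cannot simultaneously have $E=\pi^{-1}(C)$ an snc \emph{divisor} and $\pi$ an isomorphism over the generic point of $C$; fortunately your argument never actually uses the latter claim.
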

\begin{proof}
Let $H$ be a general hyperplane cut of $X$ passing through $C$.  Let $P = H \cap C$ and let $\cal G$
be the foliation restricted to $H$.  By a result of Camacho and Sad, \cite[Theorem 3.3]{Brunella00},
there exists a germ of a curve, $\gamma$ tangent to $\cal G$ passing through $P$.
Moreover, if $\cal F$ has simple singularities at the generic point of $C$ then we may take $\gamma$
to be the strong separatrix at $P$.  Pushing forward, we see that $\gamma$ is a germ of a curve in $X$
tangent to $\cal F$ and meeting $C$.

As above, let $\pi:(X', \cal F') \rightarrow (X, \cal F)$ be a resolution
of singularities of $X$ and $\cal F$ such that $\pi^{-1}(C)$ is a snc divisor.
Since $C$ is contained in the singular locus, we see that $C$ is tangent
to $\cal F$ and since $\cal F$ is non-dicritical every divisor on $X'$ dominating $C$ is invariant, see
Definition \ref{tangtransdef}.
Thus $\pi^{-1}(C)$ is invariant.

By Lemma \ref{l_firstextensionlemma}there exists an extension of $\gamma$ to an invariant hypersurface
$S'$ in a neighborhood of $\pi^{-1}(C)$.  Taking $\pi(S')$ gives our desired separatrix.
\end{proof}

\begin{corollary}
\label{constructingsurfacegermIII}
Let $X$ be a threefold and
suppose that $\cal F$ is a co-rank 1 foliation with canonical and non-dicritical singularities
and suppose that $C \subset \text{sing}(X)$ is a compact curve tangent to $\cal F$.  Suppose that
$\cal F$ is canonical at the generic point of $C$.  Then there exists a germ
of a separatrix $S$ containing $C$.
\end{corollary}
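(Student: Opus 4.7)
The plan is to mirror the strategy of Corollaries \ref{constructingsurfacegerm} and \ref{constructingsurfacegermII}: pass to a simultaneous resolution of $X$ and $\cal F$, produce there a germ of a curve tangent to the resolved foliation that meets the preimage of $C$ but is not contained in it, extend it to a closed invariant hypersurface via Lemma \ref{l_firstextensionlemma}, and push the result down to $X$.

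Concretely, take a resolution $\pi: (X', \cal F') \rightarrow (X, \cal F)$ with $X'$ smooth, $\cal F'$ having only simple singularities, and $\pi^{-1}(C)$ a simple normal crossings divisor. Because $C$ is tangent to $\cal F$ and $\cal F$ is non-dicritical, every irreducible component of $\pi^{-1}(C)$ is $\cal F'$-invariant (Definition \ref{tangtransdef} together with the remark following it). If we have produced a germ of a smooth analytic curve $\gamma' \subset X'$ tangent to $\cal F'$ meeting $\pi^{-1}(C)$ but not contained in it, then Lemma \ref{l_firstextensionlemma} yields a closed $\cal F'$-invariant hypersurface $S'$ in some open neighborhood $U$ of $\pi^{-1}(C)$ containing $\gamma'$; by the proper mapping theorem, $V = \pi(U)$ is a neighborhood of $C$ and $S = \pi(S') \subset V$ is a closed $\cal F$-invariant hypersurface containing $C$, which is the germ of a separatrix sought.

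The substance of the proof is thus to produce $\gamma'$. Let $E$ be an irreducible component of $\pi^{-1}(C)$ dominating $C$. I would search for a point $P \in E$ at which $\cal F'$ has a simple singularity of type (i) with dimension-type 3, having three convergent coordinate separatrices, two of which are components of $\pi^{-1}(C)$ passing through $P$ and the third of which is a locally defined analytic invariant hypersurface not contained in $\pi^{-1}(C)$. Any smooth curve germ inside this third separatrix, passing through $P$ and transverse to $\pi^{-1}(C)$, serves as $\gamma'$. To arrange for such $P$ to exist, if necessary one first refines the resolution: if $E$ does not meet another component of $\pi^{-1}(C)$ along a curve dominating $C$, blow up a section of $\pi|_E: E \rightarrow C$; this section is tangent to $\cal F'$ because it lies in the invariant divisor $E$, so by non-dicriticality the resulting exceptional divisor is invariant, and its intersection with $E$ is a curve in $\text{sing}(\cal F')$ dominating $C$. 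Further blow-ups can then be performed to reach a type (i) configuration of dimension-type 3 at a generic point of this intersection curve.

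The main obstacle is verifying that after these refinements one can indeed find a $P$ whose third convergent separatrix avoids $\pi^{-1}(C)$. The canonical hypothesis on $\cal F$ along $C$ is essential here: if every convergent separatrix at every type (i) simple singularity of $\cal F'$ over $C$, together with the strong separatrix at every type (ii) point over $C$, were contained in $\pi^{-1}(C)$, then a discrepancy computation using the foliated Riemann--Hurwitz formula of Proposition \ref{prop_riemannhurwitz} applied to the sequence of blow-ups in the construction, together with the classification of simple foliation singularities in dimension three, would force some $\pi$-exceptional divisor to have strictly negative foliated discrepancy, contradicting the canonical assumption. This discrepancy analysis, forcing the existence of the requisite escaping separatrix, is the heart of the argument.
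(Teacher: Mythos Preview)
Your overall framework matches the paper's: resolve so that $\pi^{-1}(C)$ is snc and $\cal F'$-invariant, produce a curve germ tangent to $\cal F'$ meeting $\pi^{-1}(C)$ but not contained in it, extend via Lemma~\ref{l_firstextensionlemma}, and push down.

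The divergence is entirely in how the seed curve is found, and here your proposal has a genuine gap. You work upstairs on $X'$ and try to locate a simple singularity of $\cal F'$ with a separatrix escaping $\pi^{-1}(C)$, invoking a discrepancy argument to guarantee such a point exists. That argument is not carried out, and as stated it is not clear it can be: the asserted link between ``every convergent separatrix at simple points over $C$ lies in $\pi^{-1}(C)$'' and ``some exceptional divisor has negative foliated discrepancy'' is at best heuristic, and your appeal to Proposition~\ref{prop_riemannhurwitz} is misplaced --- that is a Riemann--Hurwitz formula for finite covers, not a discrepancy formula for birational morphisms. The extra blow-ups you propose, to manufacture dimension-type~3 points, pile on further complication without resolving the core issue.

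The paper bypasses all of this by producing the seed curve \emph{downstairs} on $X$. Since $\cal F$ is canonical at the generic point of $C$, a general hyperplane section $H$ through $C$ carries a restricted foliation $\cal F_H$ that is canonical at $P = H\cap C$. McQuillan's classification of canonical surface foliation singularities, \cite[Fact~I.2.4]{McQuillan08}, then directly supplies a curve germ $\gamma \subset H$ tangent to $\cal F_H$ (hence to $\cal F$) meeting $C$ transversely at $P$. With $\gamma$ in hand one argues exactly as in Corollary~\ref{constructingsurfacegerm}: its strict transform on $X'$ meets $\pi^{-1}(C)$ without lying in it, and Lemma~\ref{l_firstextensionlemma} applies. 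The canonical hypothesis is thus spent in a single clean stroke via the two-dimensional classification, not through an indirect discrepancy-counting scheme on the resolution.
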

\begin{proof}
Since $\cal F$ is canonical, taking a general hyperplane $H$ section
meeting $C$ we have if $P = C\cap H$ that $\cal F_H$ is canonical near $P$.
Applying the classification of canonical foliation singularities on surfaces,
see \cite[Fact I.2.4]{McQuillan08}, there exists a germ of a curve $\gamma$
tangent to $\cal F$ meeting $C$ transversely.

As above let $\pi:(X', \cal F') \rightarrow (X, \cal F)$ be a resolution of singularities 
of $X$ and $\cal F$ and so that $\pi^{-1}(C)$ is a snc divisor.  Since $C$ is tangent to $\cal F$, 
$\pi^{-1}(C)$ is invariant, and arguing as in Corollary \ref{constructingsurfacegerm} we may produce 
our desired separatrix containing $\gamma$ and
 our result follows.
\end{proof}


\subsection{Producing rational curves}
We begin here with an algebraicity criterion:

\begin{lemma}
\label{algebraicorrational}
Let $C$ be a compact curve, and $S$ an analytic surface germ containing $C$, sitting
inside a projective variety $X$ and suppose that $C$ is not contained in $\text{sing}(S)$.
Let $\nu: S^\nu \rightarrow S$ be the normalization of $S$ and let $C' \subset S^\nu$ be the strict transform
of $C$.

Assume that $K_{S^\nu}+\Delta$ is $\bb Q$-Cartier and 
$(K_{S^\nu}+\Delta)\cdot C'<0$, and that $\Delta \geq 0$ is a boundary along $C'$.
Then either $C$ is rational and $(K_{S^\nu}+\Delta)\cdot C \geq -2$
or $S$ is algebraic, i.e. the Zariski closure
of $S$ is an algebraic surface.
\end{lemma}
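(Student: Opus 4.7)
The plan is to split on the sign of the self-intersection $C'^2$ on $S^\nu$, handling one case by adjunction and the other by an algebraicity / deformation argument. Using the boundary hypothesis along $C'$, I will write $\Delta = aC' + \Delta'$ with $a\in[0,1]$, $\Delta'\geq 0$ and $C'\not\subset \operatorname{supp}(\Delta')$. The organizing tool is the identity
$$(K_{S^\nu}+\Delta)\cdot C' = (K_{S^\nu}+C')\cdot C' + (a-1)C'^2 + \Delta'\cdot C'.$$

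In the case $C'^2 \leq 0$, both $(a-1)C'^2$ and $\Delta'\cdot C'$ are non-negative, so $(K_{S^\nu}+C')\cdot C' \leq (K_{S^\nu}+\Delta)\cdot C' < 0$. Adjunction on the normal surface $S^\nu$, applied after passing to a resolution and comparing with the normalization $\widetilde C$ of $C$ (the different along $C'$ and the gap between arithmetic and geometric genus both contribute non-negatively), gives
$$(K_{S^\nu}+C')\cdot C' \geq 2g(\widetilde C) - 2 \geq -2.$$
Combining forces $g(\widetilde C) = 0$, so $C$ is rational, and $(K_{S^\nu}+\Delta)\cdot C' \geq -2$, which is the desired conclusion.

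In the case $C'^2 > 0$, the goal is to show $S$ is algebraic. The heuristic is that the positivity of $C'^2$ makes $C$ behave like an ample class on $S^\nu$, forcing $C$ to move in a non-trivial family inside $S$. To make this precise I would combine a Riemann-Roch estimate on a resolution of $S^\nu$ (so that for $n\gg 0$ the class $nC'$ satisfies $\chi(\mathcal{O}(nC'))\sim \tfrac{n^{2}}{2}C'^{2}>0$ and in particular has a non-trivial family of effective deformations) with GAGA on the projective ambient $X$: any analytic deformation of (a multiple of) $C$ contained in $S$ is an algebraic cycle in $X$, hence corresponds to a point of $\operatorname{Hilb}(X)$. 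Zariski-closing the resulting analytic arc inside $\operatorname{Hilb}(X)$ and pushing the universal family down to $X$ produces an algebraic subvariety $Y\subset X$ containing a full two-dimensional neighbourhood of $C$ in $S$; since $S$ is an irreducible germ, $S\subset Y$, and the Zariski closure of $S$ is an algebraic surface.

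The main obstacle is the second case: passing from the numerical positivity $C'^2>0$ to actual analytic deformations of $C$ in $S$. This is delicate when $g(\widetilde C)$ is large or when $S^\nu$ or $C'$ is singular, because $\chi(N_{C'/S^\nu})$ need not be positive for $C'$ itself. The cleanest way to bypass these subtleties is to work with sufficiently large multiples $nC'$ on a resolution of $S^\nu$, where the quadratic in $n$ dominates all lower-order corrections, or alternatively to appeal to a Grauert-type algebraization theorem for the formal neighbourhood of $C$ in $S$, which directly manufactures the algebraic model once $C'^2>0$.
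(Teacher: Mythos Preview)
Your case split on the sign of the self-intersection and your treatment of the $C'^2\le 0$ case via adjunction are essentially the same as the paper's, except that the paper first passes to the minimal resolution $T\to S^\nu$ and works with the strict transform $B$ there; this avoids having to make sense of $C'^2$ and of the identity you wrote when $C'$ passes through singular points of $S^\nu$.

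The genuine gap is in the $C'^2>0$ case. Your primary argument invokes Riemann--Roch on (a resolution of) $S^\nu$ to produce $\chi(\cal O(nC'))\sim \tfrac{n^2}{2}C'^2$ and hence effective deformations of $nC'$ inside $S$. But $S^\nu$ is only an analytic germ around $C'$, not a compact surface: there is no Euler characteristic to compute, and the usual Riemann--Roch/deformation package simply does not apply. Without this, you have no mechanism to manufacture the analytic arc in $\text{Hilb}(X)$ that the rest of your argument needs. The ``Grauert-type algebraization'' you mention at the end as a fallback is in fact the whole content of this case, not a shortcut around it.

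The paper handles this case differently and cleanly: it passes to the formal completion $\ff T$ of $T$ along $B$ and observes that the field $K(\ff T)$ of formal meromorphic functions contains $K(Y)$, where $Y$ is the Zariski closure of $S$. When $B^2>0$ the normal bundle of $B$ in $T$ is ample, and a theorem of Hartshorne (\cite[Theorem 6.7]{Hartshorne68}), or equivalently an observation of Bogomolov--McQuillan (\cite[Fact 2.1.1]{BMc01}), bounds the transcendence degree of $K(\ff T)$ over $\bb C$ by $2$. This immediately forces $\text{dim}(Y)\le 2$, hence $S$ is algebraic. You should replace the Riemann--Roch sketch by this formal-functions argument.
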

\begin{proof}
Let $Y$ be the Zariski closure of $S$ with $K(Y)$ the field
of rational functions on $Y$.  
The algebraicity of $S$ follows if the transcendence degree of $K(Y)$
over $\bb C$ is 2.

Let $T \xrightarrow{f} S^\nu \xrightarrow{g} S$ 
be the minimal resolution of the normalization of $S$
(perhaps after restricting $S$ to a smaller neighborhood of $C$) and let $B \subset T$
be the strict transform of $C$.
We have $K_T +\Delta_T = f^*(K_{S^\nu}+\Delta)$ where $\Delta_T \geq 0$.

Let $\mathfrak{T}$ denote the formal scheme given by the completion of
$T$ along $B$ and let $K(\mathfrak{T})$ be the field of
formal meromorphic functions on $\mathfrak{T}$.  Notice
that $K(\mathfrak{T})$ is a field extension of $K(Y)$, and
so it suffices to bound the transcendence degree of $K(\mathfrak{T})$.
Let $n: B' \rightarrow B$ be the normalization of $B$.

By assumption there exists a $t \geq 0$ such that 
$K_T+\Delta_T+tB = K_T+\Theta+B$ where $\Theta \geq 0$ and $B$ is not contained in $\text{supp}(\Theta)$.
By adjunction $(K_T+\Theta+B)\cdot B = 2g(B') -2+d_{B}$,
where $d_{B} \geq 0$.

If $n^*\cal O(B)$ is not ample,
then the left hand side of the equation is negative, hence $B$ is rational, 
and $(K_T+\Delta_T) \cdot B \geq (K_T+\Theta+B)\cdot B \geq -2$.

On the other hand, if $n^*\cal O(B)$ is ample, the normal bundle
of $B$ in $\mathfrak{T}$ is ample,
which by a result of Hartshorne,
\cite[Theorem 6.7]{Hartshorne68}, or by an observation of 
Bogomolov and McQuillan, \cite[Fact 2.1.1]{BMc01},
implies that $K(\mathfrak{T})$ has transcendence degree at most 2
over $\bb C$, and our result follows.
\end{proof}

As mentioned in the introduction this has the immediate consequence:

\begin{corollary}
Let $\cal F$ be a smooth rank 2 foliation on a smooth projective variety.
Let $C$ be a $K_{\cal F}$-negative curve tangent to $\cal F$.
Let $S$ be a germ of a leaf around $C$.  Then either
\begin{enumerate}
\item $C$ is rational or

\item $S$ is algebraic (i.e. its Zariski closure is a surface).
\end{enumerate}
\end{corollary}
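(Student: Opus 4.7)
The plan is to deduce this directly from Lemma \ref{algebraicorrational} applied to a well-chosen surface germ, with $\Delta = 0$. Since $\cal F$ is a smooth rank $2$ foliation on a smooth variety, it is a genuine subbundle of the tangent bundle, so through every point of $X$ there is a smooth germ of a leaf. In particular, taking $S$ to be the germ of a leaf containing $C$, both $X$ and $S$ are smooth along $C$, so the normalization $\nu: S^\nu \to S$ is the identity near $C$ and the strict transform $C' \subset S^\nu$ is just $C$ itself. This sidesteps any of the subtleties about $\text{sing}(S)$ present in the general statement of Lemma \ref{algebraicorrational}.

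Next I would verify the numerical hypothesis $(K_{S^\nu} + \Delta) \cdot C' < 0$ with the trivial choice $\Delta = 0$. Since $S$ is a leaf of $\cal F$, the inclusion $\cal F \subset T_X$ restricts to an isomorphism $\cal F\vert_S \cong T_S$. Taking determinants gives $K_S = K_{\cal F}\vert_S$ as line bundles on $S$, and intersecting against $C$ yields
\begin{equation*}
K_S \cdot C \;=\; K_{\cal F}\vert_S \cdot C \;=\; K_{\cal F} \cdot C \;<\; 0,
\end{equation*}
where the last inequality is the hypothesis that $C$ is $K_{\cal F}$-negative. The zero divisor $\Delta = 0$ is trivially a boundary along $C$.

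With these hypotheses in hand, Lemma \ref{algebraicorrational} immediately applies and yields the dichotomy: either $C$ is rational, or the Zariski closure of $S$ is an algebraic surface, which is exactly the statement of the corollary. There is no real obstacle here; the only point that needs to be checked carefully is that smoothness of $\cal F$ guarantees the existence of a \emph{smooth} germ of a leaf through $C$, so that the hypotheses of Lemma \ref{algebraicorrational} are met with $S^\nu = S$ and the boundary divisor equal to zero. Everything else is a formal consequence of the foliated adjunction identity $K_{\cal F}\vert_S = K_S$ for leaves.
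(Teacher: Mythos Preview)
Your proposal is correct and is exactly the argument the paper has in mind: the corollary is stated there as an immediate consequence of Lemma \ref{algebraicorrational}, and your verification that $S$ is smooth with $K_S = K_{\cal F}\vert_S$ (so one may take $\Delta = 0$) is precisely what makes the application immediate.
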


In the following  proof we will
make use of the following definition:

\begin{defn}
Given a reflexive sheaf $L$ and a positive integer $q \leq \text{dim}(X)$
a {\bf Pfaff field} of rank $q$ is a non-zero morphism $\Omega^q_X \rightarrow L$.
Given a foliation $\cal F$ of rank $q$, by taking the $q$-th wedge
power of $\Omega^1_X \rightarrow \cal F^*$ we get a Pfaff field
$\Omega^q_X \rightarrow \cal O(K_{\cal F})$ of rank $q$.
\end{defn}

\begin{lemma}
\label{l_summarylemma}
Let $X$ be 3-fold.  Suppose that $K_{\cal F}$ is $\bb Q$-Cartier
and $\cal F$ has only non-dicritical singularities.

Let $C$ be a compact curve tangent to the foliation such that either $C$
is not contained in $\text{sing}(X) \cup \text{sing}(\cal F)$
or $\cal F$ is canonical at the generic point of $C$.

Then there exists a germ of an analytic surface $S$
such that $C$ is contained
in $S$, and $S$ is foliation invariant.

If $\nu: S^\nu \rightarrow S$ is the normalization,
then $\nu^*K_{\cal F} = K_{S^\nu}+\Delta$ where $\Delta \geq 0$.
\end{lemma}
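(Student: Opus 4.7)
The statement has two claims, so the plan splits naturally into establishing the germ $S$ and then verifying the discrepancy formula.

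For the existence of $S$, I would proceed by case analysis on how $C$ sits relative to $\text{sing}(X)\cup\text{sing}(\cal F)$. If $C\not\subset \text{sing}(X)\cup\text{sing}(\cal F)$, then Corollary \ref{constructingsurfacegerm} immediately produces $S$. If $C\subset\text{sing}(\cal F)$ but $C\not\subset\text{sing}(X)$, I would apply Corollary \ref{constructingsurfacegermII}. Finally, if $C\subset\text{sing}(X)$, then $C$ is tangent in the sense of Definition \ref{tangtransdef} (since $C$ is tangent to $\cal F$), and Corollary \ref{constructingsurfacegermIII} applies. In the two singular cases the corollaries are stated assuming canonicity globally, but since the construction is germ-theoretic around the compact curve $C$ and canonicity is an open condition, I may replace $X$ by an analytic neighborhood of $C$ on which $\cal F$ is canonical, which is available under the hypothesis that $\cal F$ is canonical at the generic point of $C$.

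For the formula $\nu^*K_{\cal F}=K_{S^\nu}+\Delta$, the first observation is that on the Zariski open set $U\subset S^\nu$ where $\nu$ is an isomorphism onto its image, $X$ is smooth at $\nu(U)$, $\cal F$ is smooth at $\nu(U)$, and $S$ is smooth at $\nu(U)$, the invariance of $S$ forces the equality $\cal F\vert_{\nu(U)}=T_{\nu(U)}$ of rank-$2$ subsheaves of $T_X\vert_{\nu(U)}$. Taking determinants gives $\cal O(K_{\cal F})\vert_{\nu(U)}\cong\omega_{\nu(U)}$, so $\Delta$ is supported on the complement of $U$ in $S^\nu$, which is a finite union of prime divisors lying over $\text{sing}(X)\cup\text{sing}(\cal F)\cup\text{sing}(S)$.

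To upgrade this generic identification to the effectivity $\Delta\geq 0$ as Weil divisors on the normal surface $S^\nu$, I would use the Pfaff field formalism from the previous subsection. The inclusion $\cal F\hookrightarrow T_X$ has determinant $\cal O(-K_{\cal F})\hookrightarrow \wedge^2 T_X$, and the invariance of $S$ means that along the smooth part of $S$ this inclusion lands inside $\wedge^2 T_S$. Passing to $S^\nu$ and saturating in the reflexive sheaf $\wedge^{[2]} T_{S^\nu}=\omega_{S^\nu}^{-1}$ yields a non-zero morphism $\nu^{[*]}\cal O(-K_{\cal F})\to \omega_{S^\nu}^{-1}$, which dually is a non-zero morphism $\omega_{S^\nu}\to \nu^{[*]}\cal O(K_{\cal F})$ of rank-one reflexive sheaves on the normal surface $S^\nu$ (after passing to an index-$m$ cover so that $mK_{\cal F}$ is Cartier, then dividing back). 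The corresponding non-vanishing section exhibits $\nu^*K_{\cal F}-K_{S^\nu}$ as an effective $\bb Q$-Weil divisor.

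The main obstacle is this last extension step: the inclusion $\cal F\hookrightarrow T_X$ restricted to $S$ is controlled only on the smooth locus, so some care is needed around the preimages of the singular loci where the reflexive pullback $\nu^{[*]}$ and the saturation inside $T_{S^\nu}$ must be handled properly. The non-dicriticality hypothesis and the fact that $S^\nu$ is a normal surface (so reflexive sheaves are automatically determined by their restriction to codimension-one complements) are what ultimately make the global extension work.
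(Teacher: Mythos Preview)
Your case analysis for the existence of $S$ matches the paper exactly: it simply cites Corollaries \ref{constructingsurfacegerm}, \ref{constructingsurfacegermII}, \ref{constructingsurfacegermIII}. Your remark about shrinking to a neighborhood where canonicity holds is a reasonable gloss the paper leaves implicit.

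For the adjunction formula your strategy is the dual of the paper's, and morally equivalent: the paper works with the Pfaff field $\Omega^2_X \to \cal O(K_{\cal F})$ on the cotangent side, restricts it to $S$ using invariance, and then invokes \cite[Lemma 3.7]{AD14} to lift the resulting map $(\Omega^2_S)^{\otimes m} \to \cal O_S(mK_{\cal F})$ to the normalization, obtaining a nonzero map $\cal O(mK_{S^\nu}) \to \nu^*\cal O(mK_{\cal F})$. You instead work on the tangent side with $\wedge^2\cal F \to \wedge^2 T_X$, which is the same content after dualizing. The substantive difference is that where you sketch the lifting-to-normalization step via saturation and reflexive extension (and flag it as the ``main obstacle''), the paper disposes of it in one line by citing the Araujo--Druel lemma, which is precisely the statement that Pfaff fields lift along normalization. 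Your direct approach can be made to work but would need more care than you indicate, since the complement of your open set $U$ is codimension one in $S^\nu$, not codimension two, so reflexivity alone does not give the extension; one really needs the Pfaff-field machinery or an equivalent argument.

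One small correction: non-dicriticality plays no role in the second part of the proof. It is used only to construct $S$; the adjunction formula $\nu^*K_{\cal F} = K_{S^\nu}+\Delta$ with $\Delta \geq 0$ holds for any invariant surface germ once you have it, and the paper notes that $\Delta$ is supported on the preimage of $\text{sing}(X)\cup\text{sing}(\cal F)$, not that non-dicriticality controls it.
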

\begin{proof}
By Lemmas \ref{constructingsurfacegerm}, \ref{constructingsurfacegermII}, \ref{constructingsurfacegermIII} 
we get the existence of the germ $S$
containing $C$.

To prove our last statement,
if $\Omega_X^2 \rightarrow \cal O(K_{\cal F})$ is the Pfaff field associated
to our foliation, since $S$ is foliation invariant we
have a morphism 
$(\Omega_S^2)^{\otimes m} \rightarrow \cal O_S(mK_{\cal F})$,
where $m$ is the Cartier index of $K_{\cal F}$.
We can apply \cite[Lemma 3.7]{AD14}
to see that this lifts to a map
$(\Omega_{S^\nu}^2)^{\otimes m} \rightarrow \nu^*O_S(mK_{\cal F})$.
Observe that the lemma is proven in the case
where $S$ is a variety, however
the proof works just as well in the case where $S$ is an analytic variety.

Thus, we have a nonzero map
$\cal O(mK_{S^\nu}) \rightarrow \cal O(m\nu^*K_{\cal F})$
and our result follows.
Observe that $\Delta$ is supported on the locus
where this map fails to be surjective, which is contained
within $\text{sing}(X) \cup \text{sing}(\cal F)$.
\end{proof}

\begin{example}
\label{explicitcomputation}
In the case that $X$ is smooth with simple singularities, the computation
of $\Delta$ is easy.  $\Delta$ is supported on $\nu^{-1}(\text{sing}(\cal F))$
and if $Z$ is a component of $\text{sing}(\cal F)$ and $S$ is a strong
separatrix along $Z$, the coefficient of $Z$ in $\Delta$ is exactly 1.  Otherwise
the coefficient of $Z$ is some positive integer $k$ which depends on the analytic
type of the singularity.

More generally, passing to a resolution $\pi:(X', \cal F') \rightarrow (X, \cal F)$
of $X, S$ and so that $\cal F'$ has simple singularities we can write
$$K_{\cal F'}+\Gamma = \pi^*K_{\cal F}$$
and 
$$(K_{\cal F'}+\Gamma)\vert_{S'} = K_{S'}+\Delta'$$
where $\Delta'$ can be computed as above.  If $\sigma:S' \rightarrow S^\nu$ is the induced morphism
we have that $\Delta = \sigma_*\Delta'$.
\end{example}

\begin{defn}
McQuillan's classification of $\bb Q$-Gorenstein canonical surface foliation singularities,
\cite[Fact I.2.4]{McQuillan08},
implies that the underlying surface has at worst quotient singularities.  Thus if $(\cal F, \Delta)$
has canonical singularities, we see that $X$ has at worst quotient singularities
in codimension 2.
In this situation, if $C \subset \text{sing}(X)$ we will say $C$ is contained in $\text{sing}(\cal F)$
if around a general point of $C$ there exists a quotient $q:(X', \cal F') \rightarrow (X, \cal F)$
with $X'$ smooth
so that $q^{-1}(C) \subset \text{sing}(\cal F')$.
\end{defn}

We finish the section with our characterization of $(K_{\cal F}+\Delta)$-negative
curves tangent to $\cal F$.

\begin{lemma}
\label{negativecurvestangenttofoliation}
Let $X$ be a projective $\bb Q$-factorial threefold and let $\cal F$ be a co-rank 1 foliation
on $X$.

Let $C$ be a curve tangent to $\cal F$,
with $(K_{\cal F}+\Delta)\cdot C<0$.  Suppose that $\cal F$
has non-dicritical singularities and that $(\cal F, \Delta)$ is canonical.
Then,
$[C]= \sum a_i [M_i]+\beta$
where $(K_{\cal F}+\Delta)\cdot \beta \geq 0$ and the $M_i$ are 
rational curves tangent to the foliation with
$0 > K_{\cal F}\cdot M_i \geq -4$.
\end{lemma}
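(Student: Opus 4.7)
The plan is to find an $\cal F$-invariant analytic surface germ $S\supset C$, reduce the problem to the normalization of $S$, and apply the algebraicity criterion of Lemma~\ref{algebraicorrational} -- combined with foliated bend-and-break (Corollary~\ref{bendandbreak}) when the germ is not algebraic, or with the relative surface foliated MMP of Corollary~\ref{relativesurfaceMMP} when it is.

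Since $(\cal F,\Delta)$ is canonical, $\cal F$ itself is canonical at the generic point of $C$, so Lemma~\ref{l_summarylemma} applies regardless of whether $C \subset \mathrm{sing}(X)\cup\mathrm{sing}(\cal F)$ and yields an $\cal F$-invariant analytic surface germ $S\supset C$ together with the adjunction $\nu^*K_{\cal F}=K_{S^\nu}+\Delta_1$, $\Delta_1\geq 0$, where $\nu:S^\nu\to S$ is the normalization. Because no component of $\mathrm{supp}(\Delta)$ can be $\cal F$-invariant under the log canonical hypothesis (by the remark following the definition of canonical singularities), $S\not\subset\mathrm{supp}(\Delta)$ and we may form
\[
\nu^*(K_{\cal F}+\Delta)=K_{S^\nu}+\Delta_S,\qquad \Delta_S:=\Delta_1+\nu^*\Delta\geq 0.
\]
Writing $C':=\nu^{-1}(C)$, the hypothesis gives $(K_{S^\nu}+\Delta_S)\cdot C'<0$. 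Using Example~\ref{explicitcomputation} on a simultaneous resolution of $(X,\cal F,S)$ together with the canonicity of $(\cal F,\Delta)$ to control the coefficients of $\nu^*\Delta$ along $C'$, I would verify that $\Delta_S$ is a boundary along $C'$; the only potentially critical contribution of coefficient exactly $1$ is that of a strong separatrix.

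Lemma~\ref{algebraicorrational} now splits the problem in two. If $S$ is not algebraic, $C$ is rational with $(K_{\cal F}+\Delta)\cdot C\geq -2$; when $K_{\cal F}\cdot C\geq -4$ we simply set $M_1=C$, and otherwise $\Delta\cdot C$ is so large that we may apply foliated bend-and-break (Corollary~\ref{bendandbreak}) along general hyperplane sections to extract a rational tangent curve $M$ with $K_{\cal F}\cdot M \geq -4$ and decompose $[C]$ in $\overline{NE}(X)$ accordingly. If instead $S$ is algebraic, let $\widetilde{S}$ be the normalization of the Zariski closure $\overline{S}$, with structure morphism $\sigma:\widetilde{S}\to X$. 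Since $S$ is a leaf, the restricted foliation on $\widetilde{S}$ is $T_{\widetilde{S}}$ itself, and $\sigma^*K_{\cal F}=K_{\widetilde{S}}+\Gamma$ with $\Gamma\geq 0$, so that $(K_{\widetilde{S}}+\Gamma+\sigma^*\Delta)\cdot C'<0$. Running the relative foliated surface MMP of Corollary~\ref{relativesurfaceMMP} over a suitable base and invoking the surface cone theorem together with Mori-type bend-and-break decomposes $[C']$ in $\overline{NE}(\widetilde{S})$ as $\sum a_i[M_i']+\beta'$ with rational $M_i'$ tangent to $T_{\widetilde{S}}$ and $K_{\widetilde{S}}\cdot M_i'\geq -4$; pushing forward along $\sigma$, the identity $K_{\cal F}\cdot\sigma(M_i')=K_{\widetilde{S}}\cdot M_i'+\Gamma\cdot M_i'\geq K_{\widetilde{S}}\cdot M_i'$ yields the desired decomposition on $X$, with tangency of the $\sigma(M_i')$ to $\cal F$ automatic from the invariance of $\widetilde{S}$.

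The main obstacle is the boundary verification for $\Delta_S$ along $C'$, which requires unwinding the local analytic structure of the singularities of $\cal F$ along $C$ via a resolution as in Example~\ref{explicitcomputation}. A secondary difficulty, in the non-algebraic case when $\Delta\cdot C$ is large, is producing a rational tangent curve with $K_{\cal F}\cdot M\geq -4$ while keeping the residue $\beta=[C]-\sum a_i[M_i]$ in the $(K_{\cal F}+\Delta)\geq 0$ half-cone -- this requires a bend-and-break argument on the ambient threefold despite the rigidity of the leaf, and is the delicate point of the proof.
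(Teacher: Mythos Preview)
Your overall strategy---construct the invariant leaf germ $S$, normalize, apply the algebraicity criterion, then in the algebraic case invoke a surface cone theorem and push forward---is the same as the paper's. But two steps are not right.

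The bend-and-break detour in the non-algebraic branch is both unnecessary and unworkable. Corollary~\ref{bendandbreak} produces rational curves through a \emph{general} point of $X$; it cannot manufacture a decomposition of the fixed class $[C]$, nor single out a curve numerically proportional to $C$. The discrepancy you are trying to repair (the bound stated on $K_{\cal F}\cdot M_i$ rather than on $(K_{\cal F}+\Delta)\cdot M_i$) is an inaccuracy in the lemma's statement: the paper's own argument, and its use in the main cone theorem, only establishes and uses $(K_{\cal F}+\Delta)\cdot M_i\ge -4$. In the non-algebraic case Lemma~\ref{algebraicorrational} gives $(K_{\cal F}+\Delta)\cdot C\ge -2$ directly and one simply takes $M_1=C$.

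More seriously, in the algebraic case you skip the actual content. The pair $(S^\nu,\Theta)$ is typically \emph{not} log canonical, so the surface cone theorem contributes extremal rays supported on the non-lc locus, and these are neither a priori rational nor bounded. The paper deals with this by first treating the case $C\subset\text{sing}(\cal F)$ separately: there one chooses $S$ so that $\Theta=C+\Theta_0$ with $C\not\subset\text{supp}(\Theta_0)$, and adjunction on $S^\nu$ forces $C$ rational with $K_{\cal F}\cdot C\ge -2$ directly. Then, via Example~\ref{explicitcomputation}, the one-dimensional part of the non-lc locus of $(S^\nu,\Theta)$ lies in $\text{sing}(\cal F)$, so any $(K_{\cal F}+\Delta)$-negative curve appearing there is handled by the already-settled singular case. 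Your inequality $K_{\cal F}\cdot\sigma(M_i')\ge K_{\widetilde S}\cdot M_i'$ requires $\Gamma\cdot M_i'\ge 0$, which fails exactly when $M_i'\subset\text{supp}(\Gamma)$, i.e.\ for the non-lc contributions you have not addressed. Finally, Corollary~\ref{relativesurfaceMMP} is a relative MMP over a birational base of surfaces and plays no role here; the tool you need is just the classical cone theorem on the projective surface $S^\nu$.
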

\begin{proof}
Let $S$ be the germ of a surface tangent to the foliation
containing $C$ whose existence is guaranteed by Lemma \ref{l_summarylemma}
 and let $\nu:S^\nu \rightarrow S$ be the normalization.  
Again, by Lemma \ref{l_summarylemma}
 we may write $\nu^*(K_{\cal F}+\Delta) = K_{S^\nu}+\Theta$.

First, suppose that $C \subset \text{sing}(\cal F)$. 
By McQuillan's classification of canonical surface singularities, 
around a general point of $C$ 
we have a quotient $q:(X', \cal F') \rightarrow (X, \cal F)$ where $X'$ is smooth. Set
$C' = q^{-1}(C)$. We can find a smooth separatrix around a general point of  $C'$, call it $S'$, so that
$K_{\cal F'}\vert_{S'} = K_{S'}+C+\Theta'$
where $C'$ is not contained in $\text{supp}(\Theta')$. We can therefore choose our separatrix $S$
around $C$ to be such that $S = q(S')$.
A direct computation as in Example \ref{explicitcomputation} shows that 
$\Theta = C+\Theta_0$ where $C$ is not contained in $\text{supp}(\Theta_0)$.
Adjunction tells us that
\[0> (K_{S^\nu}+C+\Theta_0)\cdot C \geq 2g(C)-2\]
and so $C$ is rational.  Moreover, since $(\cal F, \Delta)$ is
canonical and $C \subset \text{sing}(\cal F)$ we see
that $C$ is not contained in $\text{supp}(\Delta)$ and so $\Delta \cdot C \geq 0$ which gives us
$(K_{\cal F}+\Delta)\cdot C \geq -2$.

Otherwise, since $(\cal F, \Delta)$ is canonical
we see that $\Theta$ is a boundary along $C$.
We can then apply our algebraicity criterion, Lemma \ref{algebraicorrational},
to see that either $C$ is rational and of bounded negativity, or $S$ is algebraic.
In the latter case we can apply the usual cone theorem for surfaces.

Computing explicitly as in Example \ref{explicitcomputation} we see
that the non-log canonical locus of $(S^\nu, \Theta)$ is supported
on the singular locus of $\cal F$ and on a finite collection of points.
The cone theorem
for surfaces tells us that in $\overline{NE}(S^\nu)$
we can write $[C] = \sum a_i[L_i] +\beta$ where the $L_i$
are curves contained in the non-log canonical locus of $(S^\nu, \Theta)$ or are rational
curves with $(K_{S^\nu}+\Theta)\cdot L_i = (K_{\cal F}+\Delta)\cdot L_i \geq -4$, 
and $(K_{\cal F}+\Delta)\cdot \beta \geq 0$. 

Again, since the non-log canonical locus
of $(S^\nu, \Theta)$ is contained in $\text{sing}(\cal F)$, if $L_i$ is $K_{\cal F}$-negative
it must be rational.
And so pushing forward to $X$ gives our result.
\end{proof}

\section{The cone theorem for surfaces}
We will need the following extension of the foliated cone theorem to foliations
with a boundary.
In proving it we
use the following definition and result from convex geometry:

\begin{defn}
Let $K$ be a convex cone containing no lines.  A ray $R$
of $K$ is called exposed if there is a hyperplane meeting $K$
exactly along $R$.
\end{defn}

\begin{lemma}
\label{exposedextremalray}
If $K$ is a closed convex cone containing no lines, then $K$ is the closure
of the subcone generated by the exposed rays.
\end{lemma}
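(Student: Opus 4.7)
The plan is to reduce this to a classical density statement about exposed points of a compact convex set, via the standard device of passing to a bounded cross-section of the cone.

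First I would exploit the hypothesis that $K$ contains no lines to pass from a cone to a compact convex set. Concretely, the dual cone $K^{\vee}$ has nonempty interior precisely because $K$ contains no lines, so I can pick a linear functional $\ell$ that is strictly positive on $K\setminus\{0\}$. Then the slice $C:=K\cap\{\ell=1\}$ is a compact convex set (a ``base'' of $K$) with the property that $K=\bb{R}_{\geq 0}\cdot C$. Extremal rays of $K$ correspond to extreme points of $C$, and exposed rays of $K$ correspond to exposed points of $C$; so it suffices to show that $C$ is the closed convex hull of its exposed points.

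The main content is then Straszewicz's theorem: in a compact convex set the exposed points are dense in the set of extreme points. I would cite this rather than reprove it, but the underlying idea is that if $x$ is an extreme point of $C$ and $B$ is a large ball containing $C$, then for any point $y$ outside $B$ sufficiently close to $x$, the point of $C$ nearest to $y$ is an exposed point close to $x$ (the unique nearest-point functional exposes it); letting $y\to x$ produces exposed points converging to $x$.

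Combining Straszewicz with the Krein--Milman theorem, $C$ is the closed convex hull of its extreme points, and hence also the closed convex hull of the closure of its exposed points, which equals the closed convex hull of its exposed points. Taking $\bb{R}_{\geq 0}\cdot(-)$ on both sides gives that $K$ is the closure of the subcone generated by the exposed rays. There is no real obstacle beyond invoking Straszewicz; the only thing to be a little careful about is checking that exposed (resp.\ extreme) rays of $K$ are in bijection with exposed (resp.\ extreme) points of $C$, which is immediate from the fact that $\ell$ is strictly positive on $K\setminus\{0\}$.
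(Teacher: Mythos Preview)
Your argument is correct; the paper itself gives no proof and simply refers to \cite[Corollary 18.7.1]{Rockafellar}. Your route---pass to a compact base $C$ via a strictly positive linear functional, then combine Straszewicz's theorem with Krein--Milman---is exactly the standard way this corollary is established in Rockafellar, so there is no substantive difference to discuss.
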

\begin{proof}
See \cite[Corollary 18.7.1]{Rockafellar}.
\end{proof}

\begin{theorem}
\label{conetheoremsurfaces}
Let $X$ be a normal projective surface, $\cal F$ a 
rank 1 foliation,
and $\Delta = \sum a_iD_i$ an effective divisor.
Suppose that $K_{\cal F}+\Delta$ is $\bb R$-Cartier.
Then
$$\overline{NE}(X) = \overline{NE}(X)_{K_{\cal F}+\Delta \geq 0} +
Z_{-\infty} + \sum \bb R^+[L_i]$$
where $L_i$ are invariant rational curves with $(K_{\cal F}+\Delta)\cdot L_i \geq -4$, 
and $Z_{-\infty}$ is spanned by those
$D_i$ in $\text{supp}(\Delta)$ with $a_i > \epsilon(D_i)$.

In particular, if $H$ is ample, there are only finitely many curves with
extremal rays $R$ with $(K_{\cal F}+\Delta+H)\cdot R<0$.
\end{theorem}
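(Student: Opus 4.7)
The strategy is to reduce this boundary cone theorem to McQuillan's boundary-free version (used in Corollary \ref{relativesurfaceMMP}) via convex geometry and a birational reduction. Since $X$ is projective, $\overline{NE}(X)$ contains no lines, so by Lemma \ref{exposedextremalray} it is the closed convex hull of its exposed extremal rays. Hence it suffices to show that every $(K_{\cal F}+\Delta)$-negative exposed extremal ray $R$ either lies in $Z_{-\infty}$ or is spanned by an invariant rational curve $L$ with $(K_{\cal F}+\Delta)\cdot L \geq -4$.

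Fix such an $R$ with nef supporting hyperplane $H_R$, i.e.\ $H_R^{\perp}\cap \overline{NE}(X)=R$. Because $X$ is a surface and $R$ is extremal, $R$ is spanned by an irreducible curve $C$: decompose any effective representative of $R$ into irreducibles; each is $H_R$-orthogonal and hence has class in $R$ by extremality. If $C$ is some component $D_i$ of $\Delta$ with $a_i>\epsilon(D_i)$, then $R\subseteq Z_{-\infty}$ and we are done. Otherwise, set $\Delta' = \sum_i \min(a_i,\epsilon(D_i))\,D_i$, so that $\Delta-\Delta'\geq 0$ is supported on the ``bad'' components $D_i$ with $a_i>\epsilon(D_i)$, none of which equals $C$. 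Consequently $(K_{\cal F}+\Delta')\cdot C \leq (K_{\cal F}+\Delta)\cdot C<0$ while every coefficient of $\Delta'$ lies in $[0,\epsilon(D_i)]$.

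Now take a common log resolution $\pi:Y\to X$ of $X$, $\cal F$, and $\mathrm{supp}(\Delta')$, chosen so that $\cal G:=\cal F_Y$ has canonical singularities (possible by surface foliation resolution). Write $\pi^{\ast}(K_{\cal F}+\Delta')=K_{\cal G}+\Gamma$ and decompose $\Gamma=\Gamma^{+}-\Gamma^{-}$ with $\Gamma^{\pm}\geq 0$ of disjoint support, noting $\Gamma^{-}$ is $\pi$-exceptional. By Corollary \ref{relativesurfaceMMP}, run a $(K_{\cal G}+\Gamma^{+})$-MMP over $X$ to arrive at $\pi':Y'\to X$ on which $K_{\cal G'}+(\Gamma')^{+}$ is $\pi'$-nef. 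The strict transform $\tilde C\subset Y'$ of $C$ is not contracted in this MMP (it is not $\pi$-exceptional), and a negativity-lemma computation with the projection formula yields $(K_{\cal G'}+(\Gamma')^{+})\cdot \tilde C<0$. Applying McQuillan's cone theorem on the smooth canonical pair $(Y',\cal G')$ expresses $[\tilde C]$ as a sum of classes of $K_{\cal G'}$-negative invariant rational curves of bounded slope plus a nef remainder; pushing one of these curves down to $X$ produces an invariant rational curve $L$ with $[L]\in R$. A final discrepancy computation, tracking $\Gamma^{+}$ and the foliated different back through $\pi'$, gives the slope bound $(K_{\cal F}+\Delta)\cdot L \geq -4$.

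The main obstacle will be the bookkeeping in the previous paragraph: verifying (i) $\tilde C$ survives the MMP, (ii) the negativity of $(K_{\cal F}+\Delta)\cdot C$ propagates to negativity on $Y'$ in a form to which McQuillan applies, and (iii) the constant $-4$ emerges cleanly despite the unrestricted singularities of $\cal F$ and $X$. Given the classification, the ``in particular'' statement is then standard: any extremal ray $R$ with $(K_{\cal F}+\Delta+H)\cdot R<0$ and $R\not\subseteq Z_{-\infty}$ satisfies $H\cdot L<4$ for a spanning invariant rational curve $L$, and the finite-dimensional $N_1(X)$ carries only finitely many extremal ray classes of $H$-degree below any fixed bound.
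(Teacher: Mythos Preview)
Your argument has a genuine gap at the very start: you assert that because $X$ is a surface and $R$ is extremal, $R$ is spanned by an irreducible curve, justifying this by ``decompose any effective representative of $R$ into irreducibles.'' But the existence of an effective representative is precisely what is at stake---an exposed extremal ray of $\overline{NE}(X)$ need not a priori contain any curve class (think of the round boundary of the nef cone on an abelian surface). The paper does not assume this; it proves it by a case split on $\nu(H_R)$. When $\nu(H_R)\le 1$ one has $H_R^2=0$ and $(K_{\cal F}+\Delta)\cdot H_R<0$, so foliated bend and break (Corollary~\ref{bendandbreak}) produces an invariant rational curve $\Sigma$ through a general point with $H_R\cdot\Sigma=0$ and bounded $(K_{\cal F}+\Delta)$-degree; this is where the constant $-4=-2n$ with $n=2$ enters. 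When $\nu(H_R)=2$ the divisor $H_R$ is big, so $H_R\sim_{\bb Q}A+E$ with $A$ ample and $E\ge 0$, whence $E\cdot R<0$ and $R$ is spanned by a component $C$ of $E$; foliation adjunction (Proposition~\ref{foliationadjunction}) then forces $C$ to be invariant unless its coefficient in $\Delta$ exceeds $\epsilon(C)$, landing $R$ in $Z_{-\infty}$.

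Even granting your step 3, the reduction to McQuillan's boundary-free cone theorem on $Y'$ does not close. McQuillan's statement decomposes $\overline{NE}(Y')$ relative to $K_{\cal G'}$, not $K_{\cal G'}+(\Gamma')^+$; from $(K_{\cal G'}+(\Gamma')^+)\cdot\tilde C<0$ you cannot conclude $K_{\cal G'}\cdot\tilde C<0$ (indeed $\tilde C$ may sit in $\mathrm{supp}(\Gamma')^+$), so the decomposition of $[\tilde C]$ could be entirely in the $K_{\cal G'}\ge 0$ part. Moreover, nothing forces the $K_{\cal G'}$-negative invariant curves $L_i$ appearing in such a decomposition to satisfy $(\pi')^*H_R\cdot L_i=0$, so there is no reason their images land in $R$. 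Finally, the promised ``discrepancy computation'' yielding $-4$ is not supplied, and it is unclear how any uniform bound would emerge from your route: in the paper the bound comes directly from the $2n$ in bend and break together with the $-2$ from adjunction on an invariant curve, neither of which your argument invokes.
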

\begin{proof}
Let $W$ denote the closure of the right hand side of the desired equality.

Assume that $W$ is strictly smaller than $\overline{NE}(X)$.
Then, by Lemma \ref{exposedextremalray}, if $H$ is a suitable ample divisor,
if we choose $t$ so that 
$H_R = K_{\cal F}+\Delta+tH$ is nef, 
it is zero precisely on one exposed
extremal ray $R$, not contained in $W$.

We argue depending on $\nu(H_R)$.  If $\nu(H_R)\leq 1$, then,
we apply foliated bend and break lemma, Corollary \ref{bendandbreak}, setting $D_i = H_R$ for $i\leq \nu(H_R)+1$
and $D_i = H$ otherwise.
Then, $D_1\cdot D_2 = 0$ and $(K_{\cal F}+\Delta)\cdot D_2 = -tH\cdot D_2 <0$.
We can then conclude that through a general point of $S$ there is a rational curve $\Sigma$ with
$D_1\cdot \Sigma = H_R\cdot \Sigma = 0$,
with \[M \cdot \Sigma \leq 4\frac{M\cdot D_2}{-K_{\cal F}\cdot D_2}\] 
where $M$ is any nef divisor.  

For $k\gg 0$ sufficiently large we see
that $A \coloneqq kH_R-(K_{\cal F}+\Delta)$ is ample
and 
\[A\cdot \Sigma \leq 4\frac{(kH_R-K_{\cal F}-\Delta)\cdot D_2}{-K_{\cal F}\cdot D_2}=
4 \frac{-(K_{\cal F}+\Delta)\cdot D_2}{-K_{\cal F}\cdot D_2} \leq 4.\]
In particular, the extremal ray
is spanned by the class $[\Sigma]$ and $A\cdot \Sigma = -(K_{\cal F}+\Delta)\cdot \Sigma \leq 4$.

If $\nu(H_R) = 2$, then, writing $H_R = A+E$ where $A$ is ample
and $E$ is effective we see that $E\cdot R<0$, and hence $R$
is spanned by some component of $E$, call it $C$.

Write $E = rC+E'$ with $r >0$.
If $\Delta$ is a boundary along $C$, we see that there exists some $\alpha \geq 0$ such
that $K_{\cal F}+\Delta+\alpha E = K_{\cal F}+\Delta'+C$ where $\Delta' \geq 0$ and $C$ is
not contained in $\text{supp}(\Delta')$.
However, we have $(K_{\cal F}+\Delta'+C)\cdot C<0$ which is a contradiction of
foliation adjunction, Proposition \ref{foliationadjunction}, if $C$ is not invariant.
Thus $C$ must be invariant and so $R$ is spanned by an
invariant curve, $C$.

If $C$ is not contained in $\Delta$ then
$\nu^*(K_{\cal F}+\Delta) = K_{C^\nu}+\Theta \geq -2$ where $\Theta \geq 0$
and where $\nu:C^\nu \rightarrow C$ is the normalization.
Thus, either $C \subset \text{supp}(\Delta)$ or $C^\nu$ is rational
and $(K_{\cal F}+\Delta)\cdot C \geq -2$.
We see then that $W$ and $\overline{NE}(X)$ coincide.

Standard arguments then apply to show that the right hand side of our
equality is already closed, and that the extremal rays are locally
discrete.
\end{proof}

\begin{remark}
Observe that $Z_{-\infty}$ is in fact the contribution to the cone coming
from the non-log canonical locus of $(X, \Delta)$.
\end{remark}

\section{The cone theorem for threefolds}
With the work of the previous sections in hand, we are now in a position
to give a proof of the foliated cone theorem.
The argument is similar to the one used to prove the cone theorem for surfaces.

\begin{theorem}
Let $X$ be a projective
$\bb Q$-factorial and klt threefold
and $\cal F$ a co-rank 1 foliation
with non-dicritical foliation singularities.  Suppose that $(\cal F, \Delta)$
has canonical singularities.
Then
$$\overline{NE}(X) = \overline{NE}(X)_{K_{\cal F}+\Delta \geq 0}+\sum \bb{R}_
+ [L_i]$$
where $L_i$ is a rational curve with 
$(K_{\cal F}+\Delta)\cdot L_i \geq -6$.

In particular, the $(K_{\cal F}+\Delta)$-negative
extremal rays are locally discrete
in the $(K_{\cal F}+\Delta)$-negative portion of the cone.
\end{theorem}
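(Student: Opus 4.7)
The plan is to adapt the argument used for the surface foliated cone theorem (Theorem \ref{conetheoremsurfaces}) to the threefold setting. Let $W$ denote the subcone on the right-hand side of the asserted equality, and argue by contradiction assuming $W \subsetneq \overline{NE}(X)$. By Lemma \ref{exposedextremalray} together with a standard perturbation, one can fix an ample class $H$ and a real number $t>0$ such that $H_R := K_{\cal F}+\Delta+tH$ is nef, vanishes on a single exposed extremal ray $R$, and that $R$ is not contained in $W$. The main dichotomy is by the numerical dimension $\nu(H_R) \in \{0,1,2,3\}$.

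In the case $\nu(H_R) \leq 2$, I would apply the foliated bend and break of Corollary \ref{bendandbreak} with $D_1=H_R$ and $D_2, D_3$ chosen from $\{H_R, H\}$ so that $D_1\cdot D_2 \cdot D_3=0$ while $-(K_{\cal F}+\Delta)\cdot D_2\cdot D_3 > 0$, the latter being possible because $\Delta \geq 0$ and $H$ is ample. This produces a rational curve $\Sigma$ tangent to $\cal F$ with $H_R\cdot \Sigma=0$, so $[\Sigma] \in R$. Testing the displayed inequality in Corollary \ref{bendandbreak} against the nef class $M = k H_R - (K_{\cal F}+\Delta)$ for $k\gg 0$ gives $-(K_{\cal F}+\Delta)\cdot \Sigma \leq 2n = 6$, so $[\Sigma] \in W$, contradicting $R \not\subset W$.

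The remaining (and harder) case is $\nu(H_R) = 3$: here $H_R$ is big, so write $H_R = A + E$ with $A$ ample and $E$ effective. Since $H_R\cdot R = 0$ and $A\cdot R > 0$ we have $E\cdot R < 0$, and so there is an irreducible component $S$ of $E$ with $S\cdot R < 0$, forcing $R$ to be spanned by a class of a curve on $S$. If $S$ is transverse to $\cal F$, Lemma \ref{negativecurvetransverse} directly produces a curve $C$ tangent to $\cal F$ with $[C]\in R$. If $S$ is tangent (invariant), I would restrict $\cal F$ to $S^\nu$, use an appropriate adjunction formula to express $(K_{\cal F}+\Delta+S)|_{S^\nu}$ as $K_{\cal F_{S^\nu}} + \Theta$, and run Theorem \ref{conetheoremsurfaces} for the resulting surface foliation; any extremal curve on $S^\nu$ transverse to $\cal F_{S^\nu}$ can be ruled out by the foliated subadjunction of Theorem \ref{foliatedsubadjunction} applied at a suitable log canonical threshold, exactly as in the proof of Lemma \ref{negativecurvetransverse}, so the resulting spanning curve is automatically tangent to $\cal F$.

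In either case I end up with a curve $C$ tangent to $\cal F$ with $(K_{\cal F}+\Delta)\cdot C<0$ whose class lies in $R$. Then Lemma \ref{negativecurvestangenttofoliation} decomposes $[C] = \sum a_i [M_i] + \beta$ with $M_i$ rational curves tangent to $\cal F$ satisfying $K_{\cal F}\cdot M_i \geq -4$ and $(K_{\cal F}+\Delta)\cdot \beta \geq 0$; extremality of $R$ forces at least one $[M_i]$ to lie in $R$, contradicting $R \not\subset W$ and completing the proof. The hardest step I expect is the $\nu(H_R)=3$ case when $S$ is invariant: the restricted pair $(\cal F_{S^\nu}, \Theta)$ may fail to be log canonical (as flagged in the introduction), so to invoke Theorem \ref{conetheoremsurfaces} one must carefully control the $Z_{-\infty}$ contribution, and a negativity-lemma argument is needed to ensure that the spanning class produced on $S^\nu$ pushes forward to a class genuinely proportional to $R$ in $\overline{NE}(X)$.
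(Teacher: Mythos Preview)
Your overall architecture---contradiction via an exposed ray, dichotomy on $\nu(H_R)$, bend-and-break for $\nu(H_R)\leq 2$, and Lemma~\ref{negativecurvetransverse} for the transverse divisor in the big case---matches the paper exactly, and the final appeal to Lemma~\ref{negativecurvestangenttofoliation} is also the paper's conclusion.

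The gap is in your treatment of the case $\nu(H_R)=3$ with $S$ \emph{invariant}. When $S$ is $\cal F$-invariant there is no nontrivial restricted foliation $\cal F_{S^\nu}$: the tangent sheaf of $S$ already sits inside $\cal F$, so the ``induced foliation'' is all of $T_{S^\nu}$. In particular the adjunction formula you invoke, Proposition~\ref{foliationadjunction}, explicitly requires $S$ to be transverse, and the expression $(K_{\cal F}+\Delta+S)\vert_{S^\nu}=K_{\cal F_{S^\nu}}+\Theta$ is not available here (indeed, restricting the extra $+S$ term to $S^\nu$ would bring in the self-intersection, which need not have a sign). Likewise, the worry about ruling out curves on $S^\nu$ transverse to $\cal F_{S^\nu}$ via subadjunction is misplaced: every curve on an invariant divisor is automatically tangent to $\cal F$.

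The paper handles this case differently and more simply. Using Lemma~\ref{l_summarylemma} (adjunction for an invariant hypersurface, without any $+S$ term) one writes $\nu^*(K_{\cal F}+\Delta)=K_{S^\nu}+\Delta_{S^\nu}$ with $\Delta_{S^\nu}\geq 0$, and then applies the \emph{classical} cone theorem for surfaces to $(S^\nu,\Delta_{S^\nu})$. The extremal ray $R'\subset\overline{NE}(S^\nu)$ mapping to $R$ is then spanned either by a $K_{S^\nu}$-negative rational curve or by a curve in $\text{supp}(\Delta_{S^\nu})\subset\text{sing}(\cal F)$; in either case the pushforward is tangent to $\cal F$, and Lemma~\ref{negativecurvestangenttofoliation} finishes. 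No negativity-lemma step is needed to relate $R'$ to $R$: this is immediate from $S\cdot R<0$.
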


\begin{proof}
Choose a suitable  ample divisor $H$ so that $H_R = (K_{\cal F}+\Delta)+H$ is nef, and such
that $H_R$ is zero on precisely one exposed extremal ray, $R$.
We argue based on the numerical dimension of $H_R$.

If $\nu = \nu(H_R) <3$, as in the proof of Theorem \ref{conetheoremsurfaces},
we see that $R$ is spanned by a family of rational curves $\Sigma_t$
tangent to $\cal F$, passing through a general point of $X$ and  with $\Sigma_t\cdot (K_{\cal F}+\Delta) \geq -6$
 by applying Corollary \ref{bendandbreak} with $D_i = H_R$ for $1 \leq i \leq \nu(H_R)+1$
and $D_i = H$ for $\nu(H_R)+1<i \leq \text{dim}(X)=3$.

So, suppose that $\nu(H_R) = 3$, i.e. $H_R$ is big and nef.
Taking $\epsilon >0$ sufficiently small, we may take
$K_{\cal F}+\Delta+(t - \epsilon)H$ to be a big $\bb Q$-divisor 
which is negative
on $R$ and so there exists some
effective prime divisor $D$ such that $D\cdot R<0$.
We have then that
$R$ comes from an extremal ray $R' \subset \overline{NE}(D^\nu)$
where $\nu: D^\nu \rightarrow D$ is the normalization of $D$.

Suppose first that $D$ is invariant.
By Lemma \ref{l_summarylemma} we may write
 $\nu^*(K_{\cal F}+\Delta) = K_{D^\nu}+\Delta_{D^\nu}$ where $\Delta_{D^\nu} \geq 0$. The (classical) the cone theorem for surfaces
shows that $R'$ is spanned by a $K_{D^\nu}$-negative rational curve or it is spanned by
a curve contained in $\text{supp}(\Delta_{D^\nu})$.  In either case, we see that $R$ is spanned
by a curve tangent to $\cal F$ in which case may apply
Lemma \ref{negativecurvestangenttofoliation} 
to conclude.

Now suppose that $D$ is transverse to the foliation.
In this case, since $R$ is $K_{\cal F}+\Delta$ and $D$-negative
Lemma \ref{negativecurvetransverse}
applies to show that $R$ is spanned by the class
of a curve tangent to the foliation, in which case Lemma \ref{negativecurvestangenttofoliation}
applies again.

In both cases, $R$ is spanned by the class of a rational curve $C$ tangent to
the foliation with $(K_{\cal F}+\Delta) \cdot C \geq -4$.

Our result then follows by standard arguments to show
that the cone of curves indeed has the claimed structure.
\end{proof}

\section{The contraction theorem}

In this section we will prove a contraction result for $K_{\cal F}+\Delta$-negative
extremal rays in the cone of curves.

\begin{defn}
Let $R$ be a $K_{\cal F}+\Delta$-negative extremal ray in $\overline{NE}(X)$.
By a {\bf contraction} of $R$ we mean a morphism $c_R:X \rightarrow Y$ between
normal varieties with $c_{R*}\cal O_X = \cal O_Y$ and $c_R(\Sigma) = p$, a point,
if and only if $[\Sigma] \in R$.
\end{defn}

Our goal is to prove the following:

\begin{theorem}
Suppose $X$ is a projective $\bb Q$-factorial and klt threefold.  Suppose that $\cal F$
has non-dicritical singularities and that $(\cal F, \Delta)$ is
canonical and is terminal along $\text{sing}(X)$.

Let $R$ be a $K_{\cal F}+\Delta$-negative extremal ray.  Then there is a contraction
of $R$ which only contracts curves tangent to $\cal F$.
\end{theorem}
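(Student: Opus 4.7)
The plan is to exhibit $R$ as a $(K_X+\Delta')$-negative extremal ray of a well-chosen klt pair $(X,\Delta')$, apply Kawamata's contraction theorem from the classical log MMP, and then check that every contracted curve is tangent to $\cal F$. By the cone theorem proved in the previous section, $R = \bb R_+[L]$ is spanned by a rational curve $L$ tangent to $\cal F$ with $(K_{\cal F}+\Delta)\cdot L \geq -6$. I would first fix a nef $\bb Q$-divisor $H_R$ with $H_R^\perp \cap \overline{NE}(X) = R$ and $H_R - (K_{\cal F}+\Delta)$ ample, to serve as the natural supporting hyperplane around which the contraction will be built.

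The crucial second step is to produce an effective $\bb Q$-divisor $\Delta'$ with $(X,\Delta')$ klt such that $H_R - (K_X+\Delta')$ is ample. By Lemma \ref{basicadjunction} one has $K_X+\Delta = K_{\cal F}+\Delta + [N^*_{\cal F}]$, so the obstruction is that $[N^*_{\cal F}]$ need not be effective. The canonicity of $(\cal F,\Delta)$ combined with Corollary \ref{quotientsmoothdiscrep} already rules out any divisor of $(X,\Delta)$-discrepancy $\leq -1$, so $(X,\Delta)$ sits very close to being klt. A small general perturbation of $\Delta$ by an effective $\bb Q$-divisor realising $-[N^*_{\cal F}]$ modulo an ample class, together with a carefully sized effective contribution supported near $\text{sing}(X)$ whose log-discrepancies are controlled by Lemma \ref{transdiscrep} and the hypothesis that $(\cal F,\Delta)$ is terminal along $\text{sing}(X)$, should yield the desired $\Delta'$. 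Kawamata's base-point-free theorem then gives that $|mH_R|$ is base-point free for $m \gg 0$, producing a morphism $c_R: X \to Y$ with $c_{R*}\cal O_X = \cal O_Y$ that contracts precisely the curves of class in $R$.

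For the final verification, let $C$ be any curve contracted by $c_R$. Then $[C] \in R$ and $(K_{\cal F}+\Delta)\cdot C < 0$. If $c_R$ is divisorial with exceptional divisor $E$, then either $E$ is $\cal F$-invariant, so every curve in $E$ is automatically tangent to $\cal F$, or $E$ is transverse with $E\cdot R < 0$; in the latter case Lemma \ref{negativecurvetransverse} together with restriction of $\cal F$ to the normalization $E^\nu$ via foliated adjunction (Proposition \ref{foliationadjunction}) and the surface cone theorem (Theorem \ref{conetheoremsurfaces}) forces every $R$-curve on $E$ to be tangent. Small and fibering contractions are treated by analogous restriction arguments on general ample hyperplane cuts. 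The main obstacle is the construction of $\Delta'$ in the second step: the non-effectivity of $[N^*_{\cal F}]$ prevents any formal manipulation from transferring foliated negativity on $R$ into classical log-negativity, and arranging a perturbation that is simultaneously klt on $X$ and preserves the supporting role of $H_R$ requires delicate local analysis at $\text{sing}(X)$. The terminality of $(\cal F,\Delta)$ along $\text{sing}(X)$ is precisely the hypothesis that makes this analysis possible, by ensuring a uniform positive gap between foliation and classical log-discrepancies there.
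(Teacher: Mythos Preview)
Your proposal has a genuine gap in the second step, and the paper's own treatment shows why it cannot be filled in the way you suggest. You want an effective $\Delta'$ with $(X,\Delta')$ klt and $H_R-(K_X+\Delta')$ ample; since $K_X=K_{\cal F}+[N^*_{\cal F}]$, this amounts to finding an effective $\Delta'$ numerically bounded above by $\Delta+A-[N^*_{\cal F}]$ for some ample $A$. But $[N^*_{\cal F}]$ has no effectivity properties in general, and your ``small general perturbation \dots realising $-[N^*_{\cal F}]$ modulo an ample class'' is not a construction. Indeed, the paper explicitly remarks after the flipping case that ``we do not know if the flipping contraction is a $K_X+D$-contraction for some divisor $D$,'' which is precisely the obstruction you are trying to wish away. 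The terminality of $(\cal F,\Delta)$ along $\text{sing}(X)$ controls discrepancies, not the global numerical class of $N^*_{\cal F}$, so it does not rescue this step.

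The paper's route is quite different and proceeds case by case on $\dim\text{loc}(R)$. In the fibre case it shows $R$ is already $K_X$-negative. In the divisorial case with $\text{loc}(R)=D$ it does produce a klt log pair, but the boundary is the \emph{specific} divisor $(1-\epsilon(D))(1-t)D$: when $D$ is invariant one uses Lemma~\ref{intersectioncomparison} to compare $K_{\cal F}$ with $K_X+D$ along $R$, and Lemma~\ref{singcomparison} to show $(X,\Delta+(1-t)D)$ is klt; when $D$ is transverse Lemma~\ref{transversecontraction} gives $K_{\cal F}\cdot R=K_X\cdot R$ directly. In the flipping case no log pair is produced at all: one proves $\text{Null}(H_R)$ is a finite set of curves via bend-and-break on restricted surfaces, resolves the stable base locus, and contracts using Artin's theorem, obtaining a priori only an algebraic space. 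Finally, tangency of every contracted curve is not deduced from the contraction but is an independent and substantial result (Lemma~\ref{onlycurvestangent}), whose proof involves a Hilbert-scheme argument and foliated subadjunction; your hyperplane-cut sketch for the small and fibering cases does not reproduce this.
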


We will handle the cases of fibre, divisorial and flipping type contractions separately.

\begin{defn}
Given an extremal ray $R \subset \overline{NE}(X)$ we define
$\text{loc}(R)$ to be all those points $x$ such that there exists a curve $C$ with $x \in C$ 
and $[C] \in R$.
\end{defn}

\begin{lemma}
\label{l_closedness}
Let $X$ be a $\bb Q$-factorial threefold.
Let $R$ be a $K_{\cal F}+\Delta$-negative extremal ray.  Then $\text{loc}(R)$ is closed.
\end{lemma}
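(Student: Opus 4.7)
The approach is to produce a nef divisor supporting $R$, and then to analyze $\text{loc}(R)$ by splitting into cases on whether this divisor is big on $X$.

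First, using that $K_{\cal F}+\Delta$-negative extremal rays are locally discrete by the cone theorem, I will pick an ample divisor $A$ and the smallest $t\geq 0$ such that $H_R := (K_{\cal F}+\Delta)+tA$ is nef; after a mild perturbation of $A$ this supports precisely the ray $R$, i.e.\ $H_R\cdot R = 0$ and $H_R$ is strictly positive on every other class in $\overline{NE}(X)$. Then $\text{loc}(R)$ coincides with the set of points lying on some curve $C$ with $H_R\cdot C = 0$, so it suffices to control this null locus.

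If $H_R$ is not big (so $H_R^3 = 0$), I will apply foliated bend and break (Corollary \ref{bendandbreak}) with nef classes $D_1, D_2, D_3$ chosen from $H_R$ and $A$ so that $D_1\cdot D_2\cdot D_3 = 0$ and $-(K_{\cal F}+\Delta)\cdot D_2\cdot D_3 > 0$; such a choice can be arranged according to the numerical dimension of $H_R$ using $(K_{\cal F}+\Delta) = H_R - tA$. This yields, through a general point of $X$, a rational curve $\Sigma$ tangent to $\cal F$ with $H_R\cdot \Sigma = 0$, hence $[\Sigma]\in R$, and with $A$-degree uniformly bounded. The family of such $\Sigma$ is then a bounded component of the Chow scheme, whose universal family has closed image in $X$; since that image is dominant it is all of $X$, so $\text{loc}(R) = X$, trivially closed.

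If $H_R$ is big, Kodaira's lemma gives $H_R \sim_{\bb R} A'+E$ with $A'$ ample and $E\geq 0$ effective. Any curve $C$ with $[C]\in R$ satisfies $A'\cdot C > 0 = H_R\cdot C$, forcing $E\cdot C < 0$, so $C$ must lie in some irreducible component $D_i$ of $\text{supp}(E)$ with $D_i\cdot R < 0$; therefore $\text{loc}(R)\subset \bigcup_i D_i$, an effective divisor that is already closed in $X$. To show $\text{loc}(R)$ is closed, I will pass to the normalization $\nu_i:D_i^\nu\to D_i$, pull back $\cal F$, and invoke the surface cone theorem (Theorem \ref{conetheoremsurfaces}) to describe the classes in $\nu_i^*R \subset \overline{NE}(D_i^\nu)$ as generated by finitely many extremal rays; properness of the associated bounded families of curves then shows that $\text{loc}(R)\cap D_i$ is a closed union of such curves (and possibly some non-lc centres), and hence the finite union $\text{loc}(R) = \bigcup_i(\text{loc}(R)\cap D_i)$ is closed. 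The main obstacle, I expect, is this descent to $D_i^\nu$: one must treat separately the cases where $D_i$ is $\cal F$-invariant (using Lemma \ref{l_summarylemma} to pull back $K_{\cal F}$ as $K_{D_i^\nu}+\Delta_{D_i^\nu}$) and where $D_i$ is transverse to $\cal F$ (using Proposition \ref{foliationadjunction} and Theorem \ref{foliatedsubadjunction} to control the extremal rays of the restricted foliation), and in both cases carefully handle non-general points on $D_i^\nu$ where the surface-level families of curves in $\nu_i^*R$ may degenerate but, by properness of the Chow scheme, still produce limit curves with class in $R$.
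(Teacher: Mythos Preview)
Your overall architecture matches the paper: build a supporting hyperplane $H_R$, split on whether $H_R$ is big, and in the non-big case apply Corollary \ref{bendandbreak} plus a universal-family argument to get $\text{loc}(R)=X$. That part is fine and essentially identical to the paper's argument.

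The gap is in your treatment of the big case. You correctly observe that $\text{loc}(R)$ is contained in an irreducible divisor $S$ with $S\cdot R<0$ (in fact a single such $S$ suffices, since $S\cdot C<0$ forces every $C$ with $[C]\in R$ into $S$). But invoking the surface cone theorem on $S^\nu$ only tells you that the \emph{face} $\{\nu^*H_R=0\}\subset\overline{NE}(S^\nu)$ is spanned by finitely many extremal rays. It does not tell you that the locus of curves with class in that face is closed: there can be infinitely many irreducible curves whose classes lie in the interior of the face, and ``properness of the associated bounded families'' does not apply to them, since they need not sit in any bounded family. So the passage from ``finitely many extremal rays'' to ``$\text{loc}(R)\cap S$ is closed'' is not justified.

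The paper's fix is simply to iterate the same dichotomy one level down rather than switch tools. Set $G=\nu^*H_R$ on $S^\nu$; by adjunction (Lemma \ref{l_summarylemma} or Proposition \ref{foliationadjunction}, according to whether $S$ is invariant or transverse) one can write $G = K_{S^\nu}+\Theta+\nu^*A$ or $G = K_{\cal F_{S^\nu}}+\Theta+\nu^*A$ with $\Theta\geq 0$ and $A$ ample. Now split on $\nu(G)$. If $\nu(G)=2$ then $G$ is big and nef on the surface, so by Kodaira's lemma on $S^\nu$ its null locus is contained in an effective divisor, hence is a finite union of curves, and $\text{loc}(R)$ is that finite set. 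If $\nu(G)<2$, apply Corollary \ref{bendandbreak} on the surface exactly as you did on $X$: this produces curves with $G\cdot\Sigma=0$ through a general point of $S^\nu$, and the same universal-family closure argument gives $\text{loc}(R)=S$. Either way $\text{loc}(R)$ is closed, and you never need Theorem \ref{conetheoremsurfaces} here at all.
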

\begin{proof}
Let $H_R$ be a supporting hyperplane to $R$.

Suppose that $\nu(H_R)<3$.
As in the proof of the cone theorem, 
we see that $R$ is spanned by a family of rational curves
tangent to $\cal F$ and passing through a general point of $X$
 by applying Corollary \ref{bendandbreak} with $D_i = H_R$ for $1 \leq i \leq \nu(H_R)+1$
and $D_i = H$ for $\nu(H_R)+1<i \leq \text{dim}(X)=3$ where $H$ is an ample divisor.

Let $B$ be the closed subscheme of the Hilbert scheme of $X$ parametrizing those subvarieties
tangent to $\cal F$, see \cite[\S 2.3]{LPT11}.  We may consider the following diagram 
as in \cite[\S 4]{Kollar91c}
\begin{center}
\begin{tikzcd}
U \arrow{r}{F} \arrow{d}{p} & X\\
Z &
\end{tikzcd}
\end{center}
where $Z$ is the closed subscheme of $B$ whose general element
paramatrizes a rational curve tangent to $\cal F$ and spanning $R$.
Let 
$p: U \rightarrow Z$ be the universal family over $Z$ and $F:U \rightarrow X$ the evaluation map and observe
that $F$ is surjective.

However, observe that if $\Sigma$ is any curve contained in a fibre of $p$
then $[F(\Sigma)] \in R$
and so $\text{loc}(R) = X$.

Otherwise $H_R$ is big and nef,
and so there exists an irreducible effective divisor $S$ with 
$R\cdot S<0$, in particular $\text{loc}(R) \subset S$.
We can write $H_R = K_{\cal F}+\Delta+t\epsilon(S)S+A$ where $A$ is ample and
$t$ is the log canonical threshold of $(\cal F, \Delta)$ along $S$.
Write $G = H_R\vert_{S^\nu}$ where $S^\nu \rightarrow S$ is the normalization 
and observe that $G = K_{S^\nu} +\Theta+\nu^*A$ or $ = K_{\cal F_{S^\nu}}+\Theta+\nu^*A$
depending on whether $S^\nu$ is invariant or not and
where $\Theta \geq 0$.

If $\nu(G)=2$ then $\text{loc}(R)$ is a finite collection of curves.
Otherwise, as above, we can apply bend and break, to produce through every point $x \in S$
a curve spanning $R$ and so $\text{loc}(R) = S$.
\end{proof}

\begin{defn}
Let $\text{loc}(R) = Z$.  If $\text{dim}(Z) = 3$ we say that the contraction
corresponding to $R$ is of {\bf fibre type}, if $\text{dim}(Z) = 2$ we say that the contraction
corresponding to $R$ is of {\bf divisorial type} and if $\text{dim}(Z) = 1$ we say
that the contraction is of {\bf flipping type}.
\end{defn}

\subsection{Preliminary computations}

We collect here several computations that we will use
repeatedly through this section.

\begin{lemma}
\label{termsingtangent}
Let $X$ be a $\bb Q$-factorial and klt threefold.
Suppose that $(X, \cal F)$ has non-dicritical terminal
foliation singularities.  Then $\text{sing}(X)$ is tangent to
$\cal F$.
\end{lemma}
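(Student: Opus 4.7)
The plan is to argue by cases on the dimension of each irreducible component $W$ of $\text{sing}(X)$. Since $X$ is a normal threefold, $\text{sing}(X)$ has codimension at least two, so $W$ is either a point or a curve.

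If $W=\{q\}$ is a point, then any exceptional divisor $E$ dominating $W$ in a birational morphism $\pi:X'\to X$ lies entirely inside $\pi^{-1}(q)$. Non-dicriticality of $\cal F$ asserts that $\pi^{-1}(q)$ is tangent to $\cal F'$, which for a divisorial component reduces to $\cal F'$-invariance of $E$. Hence $\{q\}$ is tangent to $\cal F$ in the sense of Definition \ref{tangtransdef}.

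If $W$ is a curve, I argue by contradiction. The ``any $\equiv$ some'' clause in Definition \ref{tangtransdef} implies, under non-dicriticality, a dichotomy: any subvariety with at least one exceptional divisor over it is either tangent or transverse to $\cal F$. So if $W$ is not tangent, it must be transverse, and Lemma \ref{transdiscrep} then gives $a(E,\cal F)=a(E,X)$ for every exceptional divisor $E$ centred over $W$. On the other hand, $X$ is a klt threefold singular along the curve $W$; by the classification of terminal threefold singularities as isolated $\text{cDV}$ points, $X$ cannot be terminal along a positive-dimensional component of $\text{sing}(X)$, so there is some exceptional divisor $E$ over $W$ with $a(E,X)\leq 0$. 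Combining, $a(E,\cal F)\leq 0$, contradicting the hypothesis that $\cal F$ is terminal.

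The main obstacle is establishing the tangent--transverse dichotomy under non-dicriticality rigorously, i.e.\ ruling out the ``mixed'' scenario in which some exceptional divisor over $W$ is $\cal F'$-invariant while another is not. This should follow by checking uniformity within each individual resolution (the content of the parenthetical in Definition \ref{tangtransdef}) and then propagating across resolutions via a common refinement, using that non-dicriticality is preserved by pullback. With the dichotomy in hand, Lemma \ref{transdiscrep}, the classification of terminal threefold singularities, and the definition of foliated terminality combine routinely to finish the argument.
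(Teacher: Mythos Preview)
Your argument is correct and takes a genuinely different route from the paper's proof.

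The paper argues by contradiction as you do, assuming a curve $C\subset\text{sing}(X)$ is transverse, but then proceeds by an explicit intersection-theoretic computation: take a resolution $\pi:(Y,\cal G)\to(X,\cal F)$, run a relative $K_Y$-MMP over $X$ so that $K_Y$ becomes $\pi$-nef, and then use the negativity lemma together with foliation adjunction on an exceptional divisor $E$ to force $K_Y\cdot f=-1$ for a contracted curve $f$, contradicting $\pi$-nefness. This is self-contained and avoids invoking external classification results.

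Your approach is more conceptual: you invoke Lemma~\ref{transdiscrep} to identify foliated and ordinary discrepancies over a transverse curve, and then appeal to the fact that terminal threefold singularities are isolated to produce a divisor $E$ over $W$ with $a(E,X)\leq 0$, contradicting foliated terminality. This is shorter and makes transparent exactly which inputs are doing the work. Two small remarks: first, your worry about the tangent--transverse dichotomy is unnecessary here, since the equivalence of ``any'' and ``some'' is asserted as part of Definition~\ref{tangtransdef} and you may simply use it; second, your parenthetical ``isolated cDV points'' is slightly imprecise, since cDV characterises only the Gorenstein terminal case---the fact you need is just that terminal threefold singularities are isolated, which follows for instance from the classification or from cutting by a general hyperplane and using that terminal surface singularities are smooth. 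With that adjustment your proof goes through cleanly.
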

\begin{proof}
Suppose not.  Then there exists a curve $C \subset \text{sing}(X)$
transverse to $\cal F$.  Let $\pi:(Y, \cal G) \rightarrow (X, \cal F)$ be a resolution of
singularities of $X$.  Perhaps shrinking around a general point of $C$ we may
assume that $\pi$ only extracts divisors which dominate $C$ 
and which are transverse to the foliation.  Furthermore, we may run a $K_Y$-MMP
over $X$ and by replacing $Y$ by the output of this MMP we may 
assume that $K_Y$ is $\pi$-nef.  

On the other hand, since $\cal F$ is terminal, by the negativity lemma, \cite[Lemma 3.38]{KM98}, we know that $K_{\cal G}$
is not $\pi$-nef, so
let $f \subset Y$ be a curve mapping to a point with $K_{\cal G}\cdot f <0$, and let $E \subset Y$
be a divisor with $E\cdot f<0$.

Let $(E, \cal H)$ be the restriction of $\cal G$ to $E$.  By non-dicriticality,
$\cal H$ is induced by the fibration $E \rightarrow C$.  
By foliation adjunction, Proposition \ref{foliationadjunction}, we can
write $(K_{\cal G}+E)\vert_E = K_{\cal H}+\Delta$ where $\Delta \geq 0$.  Since $K_{\cal G}, E$ are
both Cartier and intersect $f$ negatively we get that
$-2 \leq K_{\cal H}\cdot f \leq -2$, 
and so in fact $\cal H$ is induced by a fibration in rational curves and $E\cdot f = -1$.

Thus, $K_E\cdot f = -2 = K_{\cal H}\cdot f$, which since $E\cdot f = -1$, 
implies that $K_Y\cdot f =-1$, a contradiction of the $\pi$-nefness of $K_Y$.
\end{proof}

\begin{lemma}
Let $X$ be a threefold and $\cal F$ a co-rank 1 foliation with non-dicritical terminal 
foliation singularities.
Let
$H$ be a general hyperplane.  Then $(H, \cal F_H)$ has 
terminal foliation singularities.
\end{lemma}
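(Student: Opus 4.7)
The plan is to transport the discrepancy positivity from $\cal F$ to $\cal F_H$ via a foliated log resolution together with Bertini genericity. Invoke Cano's theorem (after resolving $X$ if necessary) to obtain a birational morphism $\pi:(Y,\cal G)\to(X,\cal F)$ with $Y$ smooth, $\cal G$ having only simple singularities, and the exceptional locus an snc divisor $\sum E_i$. Terminality of $\cal F$ gives $a(E_i,\cal F)>0$ for every $i$, and the structure of the resolution ensures $\text{sing}(\cal G)\subset\text{exc}(\pi)$.

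Pick now $H$ sufficiently general. Bertini gives: $H$ smooth and generically transverse to $\cal F$; $H\cap\text{sing}(X)$ finite (Lemma \ref{termsingtangent} governs the tangent structure there); and $H$ does not contain any of the centres blown up by $\pi$. Setting $H':=\pi^{-1}(H)$, the surface $H'$ is smooth, meets each $E_i$ transversely, $\cal G_{H'}$ has simple singularities by the remark following the definition of simple singularities, and $\text{sing}(\cal G_{H'})\subset\text{exc}(\pi|_{H'})$. Hence $\pi|_{H'}:H'\to H$ is a foliated resolution of $(H,\cal F_H)$ along which $\cal G_{H'}$ is canonical. Since no centre lies in $H$ we have $\pi^{*}H=H'$; combining the identity $K_{\cal G}+H'=\pi^{*}(K_{\cal F}+H)+\sum_{i}a(E_i,\cal F)E_i$ with Proposition \ref{foliationadjunction} applied to both $(X,\cal F,H)$ and $(Y,\cal G,H')$ yields
\[
a(C_j,\cal F_H) \;=\; a(E_j,\cal F) \;+\; \bigl[(\pi|_{H'})^{*}\Theta-\Theta'\bigr]_{C_j},
\]
where $C_j:=E_j\cap H'$ and $\Theta,\Theta'$ are the foliated differents on $H$ and $H'$ respectively. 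For generic $H$ the image $\pi(C_j)\in H$ is a generic point of $\pi(E_j)\cap H$ avoiding the support of $\Theta$, and at each point of $C_j$ the surface $H'$ is transverse (not tangent) to $\cal G$, so $C_j$ avoids the tangency locus where $\Theta'$ is supported. The correction therefore vanishes and $a(C_j,\cal F_H)=a(E_j,\cal F)>0$.

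To conclude that $\cal F_H$ is terminal, take any divisor $F$ over $H$. If the centre of $F$ on $H'$ lies over some $C_j$, then
\[
a(F,\cal F_H)\;\geq\;a(C_j,\cal F_H)+a(F,\cal G_{H'})\;>\;0
\]
by canonicity of $\cal G_{H'}$. Otherwise the centre of $F$ lies off $\text{exc}(\pi|_{H'})$, where $\cal G_{H'}$ is smooth, so $a(F,\cal F_H)=a(F,\cal G_{H'})>0$ by the standard positivity of discrepancies of iterated blow-ups above smooth points of a smooth foliation on a smooth surface.

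The hard part will be the vanishing of the correction terms $\bigl[(\pi|_{H'})^{*}\Theta-\Theta'\bigr]_{C_j}$: this is a Bertini-type argument requiring a careful enumeration of the finitely many bad Zariski-closed loci (tangency curves of $H$ with $\cal F$, the tangency locus of $H'$ with $\cal G$, and the invariant components of $\pi$-exceptional divisors meeting $H'$) together with the verification that a generic $H$ can avoid them all simultaneously; the non-dicriticality hypothesis, together with Lemma \ref{termsingtangent}, is what makes these bad loci manageable.
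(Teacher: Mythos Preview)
The paper disposes of this in one line: ``The proof of the corresponding statement for varieties works equally well in this case.'' Your approach---pass to a resolution with simple singularities and compare discrepancies on the hyperplane section---is indeed the classical one it alludes to, but you route the comparison through foliated adjunction and the differents $\Theta,\Theta'$, and are then forced to argue that the correction $(\pi|_{H'})^{*}\Theta-\Theta'$ vanishes along each $C_j$. You yourself flag this as ``the hard part'' and leave it as a Bertini sketch; the paper's brevity suggests one should not go through the differents at all.

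There is also a genuine gap. In your final paragraph you assert that off $\text{exc}(\pi|_{H'})$ the restricted foliation $\cal G_{H'}$ is smooth, so any $F$ centred there has strictly positive discrepancy. This is false: even where $X$ and $\cal F$ are both smooth, a general hyperplane $H$ is tangent to a leaf at finitely many points (the zeros of $\omega|_H$ where $\omega$ defines $\cal F$), and at each such point $\cal F_H$ acquires a reduced singularity with eigenvalue ratio $-1$. The blow-up of such a point has foliation discrepancy $0$, so $\cal F_H$ is only canonical there, not terminal. Your containment $\text{sing}(\cal G)\subset\text{exc}(\pi)$ does not help, since these singularities of $\cal G_{H'}$ arise from the restriction itself, not from $\text{sing}(\cal G)$. (Note also that Lemma~\ref{termsingtangent}, which you invoke, requires $X$ to be $\bb Q$-factorial and klt---hypotheses not present here.)

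The paper in fact only needs the lemma locally: in the Corollary immediately following, one fixes a hypothetical $1$-dimensional component of $\text{sing}(\cal F)$, cuts by a general $H$, and wants $\cal F_H$ terminal at the resulting point. For this it suffices to fix $p\in X$, choose $H$ general through $p$ with $T_pH\neq\cal F_p$, and run the classical discrepancy comparison over $p$; this local reading avoids both the differents and the tangency singularities.
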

\begin{proof}
The proof of the corresponding statement for varieties works equally well in this case.
\end{proof}

\begin{corollary}
Let $X$ be a threefold and $\cal F$ a terminal Gorenstein co-rank 1 foliation.  Then
$(X, \cal F)$ has at most isolated singularities.
\end{corollary}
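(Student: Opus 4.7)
The strategy is to reduce to a surface statement by restricting to a general hyperplane and then invoking McQuillan's classification of terminal surface foliation singularities. Suppose for contradiction that $\text{sing}(X) \cup \text{sing}(\cal F)$ contains an irreducible curve $C$. Fix a general point $p \in C$ and a general hyperplane section $H$ of $X$ through $p$ transverse to $C$; by Bertini, $H$ is normal. Assuming (or verifying via a discrepancy computation) that terminal Gorenstein foliation singularities are non-dicritical, the previous lemma gives that $(H, \cal F_H)$ has terminal foliation singularities at $p$.

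I would next argue that $\cal F_H$ is itself Gorenstein. Applying Proposition \ref{foliationadjunction} with $S = H$ and $\Delta = 0$ yields
\[
K_{\cal F}\vert_H + H\vert_H = K_{\cal F_H} + \Delta_H,
\]
where $\Delta_H \geq 0$ is supported on $(\text{sing}(X) \cap H) \cup \text{sing}(H) \cup (\text{tangent centres in }H)$. For generic $H$ this locus is zero-dimensional: $\text{sing}(X)$ has dimension at most $1$ and is cut transversely by $H$, $\text{sing}(H) \subset \text{sing}(X) \cap H$ by Bertini, and $\text{sing}(X)$ is tangent to $\cal F$ by Lemma \ref{termsingtangent}. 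As an effective Weil divisor on a surface with zero-dimensional support, $\Delta_H$ must vanish; since $K_{\cal F}$ and $H$ are Cartier on $X$, we conclude that $K_{\cal F_H}$ is Cartier on $H$, so $\cal F_H$ is Gorenstein.

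Then I would observe that $(H, \cal F_H)$ is genuinely singular at $p$: if $C \subset \text{sing}(X)$, then the Zariski tangent space to $H$ at $p$ has dimension strictly greater than $2 = \dim H$ (being the intersection of the ambient hyperplane's tangent space with the larger-than-expected tangent space of $X$), so $H$ is singular at $p$; and if $C \subset \text{sing}(\cal F)$, then the defining $1$-form $\omega$ of $\cal F$ vanishes at $p$, and so does $\omega\vert_H$, making $\cal F_H$ singular at $p$. Either way, $(p \in H, \cal F_H)$ is a non-smooth, terminal, Gorenstein surface foliation singularity.

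The main obstacle, and the concluding step, is to rule out the existence of such a singularity. By McQuillan's classification (the proposition cited in Section~2), $(p \in H, \cal F_H)$ is a cyclic quotient $q \colon (Y, \cal G) \to (H, \cal F_H)$ of a smooth foliation $\cal G$ on a smooth surface $Y$. Since $q$ is \'etale in codimension one, the foliated Riemann--Hurwitz formula (Proposition~\ref{prop_riemannhurwitz}) gives $q^{*}K_{\cal F_H} = K_{\cal G}$, which is trivial on the smooth germ $Y$. Combining this with the Cartier hypothesis on $K_{\cal F_H}$ and a direct analysis of cyclic covers then forces the Galois group of $q$ to be trivial, so $(H, \cal F_H)$ is smooth at $p$, contradicting the previous paragraph. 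Making this Gorenstein-descent step fully rigorous---i.e., verifying that a nontrivial cyclic quotient of a smooth surface foliation can never be Gorenstein and terminal---is where the real work of the proof lies.
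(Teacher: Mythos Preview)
Your approach is essentially the paper's: cut by a general hyperplane, invoke the preceding lemma to get terminality on $H$, and use that a Gorenstein terminal surface foliation must be smooth. The paper's proof is the two-line version of what you wrote; you have correctly filled in the steps it leaves implicit (that $K_{\cal F_H}$ is Cartier via adjunction with zero different, and that the hyperplane genuinely sees the singularity).

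Two small remarks. First, your appeal to Lemma~\ref{termsingtangent} is both unnecessary and not licensed here, since that lemma assumes $X$ is $\bb Q$-factorial and klt; fortunately you do not need it, as $\text{sing}(X)\cap H$ is already finite by normality of $X$ and Bertini, and the tangency locus of a general $H$ with $\cal F$ is zero-dimensional for elementary reasons. Second, the ``real work'' you flag at the end is lighter than you suggest: linearizing the cyclic action on $(\bb C^2,0)$, the generator $v$ of $\cal G$ must have $v(0)$ an eigenvector of the group action with eigenvalue some primitive $n$-th root of unity, so the induced character on $K_{\cal G}=\cal G^*$ has order exactly $n$, and hence $K_{\cal F_H}$ has Cartier index $n$; Gorenstein forces $n=1$. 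The paper simply quotes this as a known consequence of McQuillan's classification.
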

\begin{proof}
Follows from the above lemma and the fact that Gorenstein terminal surface foliations
are smooth foliations on smooth surfaces.
\end{proof}

\begin{lemma}
\label{intersectioncomparison}
Let $X$ be a threefold and $\cal F$ a co-rank 1 foliation.
Suppose that $(\cal F, \Delta)$ has canonical
singularities and $(\cal F, \Delta)$ is terminal along $\text{sing}(X)$.
Let $D, D_1, ..., D_n$ be a collection of $\cal F$-invariant
divisors.  Suppose that $D, D_1, ..., D_n$ are $\bb Q$-Cartier.
Let $D^\nu \rightarrow D$ be the normalization.

Write $K_{\cal F}\vert_{D^\nu} = K_{D^\nu}+\Theta$
and $(K_X+D+\sum D_i)\vert_{D^\nu} = K_{D^\nu}+\Delta$.
Then $\Theta \geq \Delta \geq 0$ with 
equality along those centres contained in $\text{sing}(X)$.

In particular, 
if $C\subset D$ is not contained in $\text{sing}(\cal F)$,
then $K_{\cal F}\cdot C \geq (K_X+D+\sum D_i)\cdot C$.
\end{lemma}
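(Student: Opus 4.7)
My plan is to reduce the inequality $\Theta\geq\Delta$ to a local antieffectivity computation using the identity $K_X=K_{\cal F}+[N^*_{\cal F}]$ from Lemma \ref{basicadjunction}. Restricting this to $D^\nu$ and combining with the definitions of $\Theta$ and $\Delta$ gives
\[
\Delta-\Theta \;=\; \big([N^*_{\cal F}]+D+\textstyle\sum D_i\big)\big|_{D^\nu},
\]
so the inequality is equivalent to the right-hand side being antieffective on $D^\nu$. The effectivity statements $\Theta\geq 0$ and $\Delta\geq 0$ themselves follow from Lemma \ref{l_summarylemma} (applied to the invariant divisor $D$, using the Pfaff field lift to $D^\nu$) and the standard different construction respectively, so the content really is in the comparison.

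\textbf{Local reduction and computation at simple singularities.} The claim is local on $D$, so I would work near a general point of an irreducible curve $C\subset D$ and pass to a simultaneous resolution $\pi\colon(Y,\cal F_Y)\to(X,\cal F)$ via Cano's theorem so that $Y$ is smooth, $\cal F_Y$ has only simple singularities, and $\pi^{-1}(D\cup\bigcup_i D_i)$ is snc. By non-dicriticality together with Lemma \ref{termsingtangent}, which says $\text{sing}(X)$ is tangent to $\cal F$ under the terminal hypothesis, every $\pi$-exceptional divisor over $\text{sing}(X)$ or over $D$ is $\cal F_Y$-invariant. The canonical hypothesis on $\cal F$ ensures the foliation discrepancies extracted by $\pi$ are nonnegative, while on the usual side the exceptional contributions fit into the standard different computation. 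Restricting to $D'=\pi_*^{-1}D$ and pushing forward by $\sigma\colon D'\to D^\nu$, it suffices to verify the pointwise inequality $([N^*_{\cal F_Y}]+D'+\sum D_i')|_{D'}\leq 0$ on $Y$. The only nontrivial contributions come from $\text{sing}(\cal F_Y)\cap D'$: at a type (i) simple singularity $\omega=(x_1\cdots x_r)(\sum\lambda_i\,dx_i/x_i)$ with $D'=\{x_1=0\}$, the generator map $N^*_{\cal F_Y}|_{D'}\to\cal O(-D')|_{D'}$ coming from the conormal sequence vanishes precisely along $\bigcup_{i\geq 2}\{x_i=0\}$, so $([N^*_{\cal F_Y}]+D')|_{D'}$ has coefficient $-1$ along each other separatrix through the point; type (ii) is analogous up to the $\psi$-factor. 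Since every $D_i'$ through such a point must itself be a separatrix by invariance, $\sum D_i'|_{D'}$ contributes at most $+1$ along each such separatrix, giving the required antieffectivity.

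\textbf{Equality on $\text{sing}(X)$-centres and the intersection statement.} For a curve $C\subset D\cap\text{sing}(X)$ the terminal hypothesis on $\cal F$ along $\text{sing}(X)$ combined with McQuillan's classification of terminal foliation singularities forces $X$ to have cyclic quotient singularities in a neighborhood of $C$; taking the index-one cover reduces to the smooth case handled above, and tracking contributions via foliated Riemann-Hurwitz (Proposition \ref{prop_riemannhurwitz}) shows that the coefficients of $\Theta$ and $\Delta$ along $C$ coincide, because the cover absorbs exactly the extra separatrix contribution. The ``in particular'' assertion is then immediate: $\Theta-\Delta$ is effective and, by the equality claim, its support lies in $\nu^{-1}(\text{sing}(\cal F))\setminus\nu^{-1}(\text{sing}(X))$, so for any curve $C\not\subset\text{sing}(\cal F)$ we have $C$ avoids this support and $(\Theta-\Delta)\cdot C\geq 0$, yielding $K_{\cal F}\cdot C\geq(K_X+D+\sum D_i)\cdot C$. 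The main obstacle in this plan is the careful bookkeeping at type (ii) simple singularities, where the series $\psi$ complicates the explicit vanishing computation of the generator map, and the verification of equality along $\text{sing}(X)$-centres via the index-one cover, where the Riemann-Hurwitz contributions on both the usual and foliated sides must cancel exactly.
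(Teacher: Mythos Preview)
Your conormal identity $\Delta-\Theta=([N^*_{\cal F}]+D+\textstyle\sum D_i)|_{D^\nu}$ is a clean reformulation, and your local computation at type (i) simple points via the map $N^*_{\cal F_Y}|_{D'}\to\cal O(-D')|_{D'}$ is correct. The gap is in the reduction step. After passing to a Cano resolution $\pi\colon Y\to X$ you assert that it suffices to verify $([N^*_{\cal F_Y}]+D'+\textstyle\sum D_i')|_{D'}\leq 0$, but this drops the exceptional contributions: one has $\pi^*([N^*_{\cal F}]+D+\textstyle\sum D_i)=[N^*_{\cal F_Y}]+D'+\textstyle\sum D_i'+\textstyle\sum c_kE_k$, where the $c_k$ mix the foliation discrepancies, the usual discrepancies, and the multiplicities of $D,D_i$ along the centres of $\pi$, and $E_k|_{D'}$ is typically \emph{not} $\sigma$-exceptional (e.g.\ when $\pi(E_k)$ is a curve contained in $D$). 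So the inequality you check on $Y$ is not the one that pushes forward to the desired inequality on $D^\nu$; controlling the $c_k$ is precisely the discrepancy bookkeeping the conormal reformulation was meant to avoid. Relatedly, the step ``$\textstyle\sum D_i'|_{D'}$ contributes at most $+1$ along each separatrix'' presupposes that distinct invariant divisors meet $D$ reducedly; on $X$ this is the statement that $D\cup\textstyle\bigcup D_i$ is normal crossings in codimension $2$, which genuinely uses the canonical hypothesis and is \cite[Corollary 3.6]{LPT11}---you need to invoke it, not derive it from the resolution.

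The paper's proof bypasses the resolution entirely. It decomposes $\Theta=\textstyle\sum a_iT_i+\textstyle\sum b_jS_j$ and $\Delta=\textstyle\sum c_iT_i+\textstyle\sum d_jS_j$ according to whether the centre lies in $\text{sing}(\cal F)$ or in $\text{sing}(X)$, invokes \cite{LPT11} directly on $X$ to get $c_i\leq 1$, observes that terminality along $\text{sing}(X)$ forbids two invariant divisors from meeting there (so the $D_i$ contribute nothing to the $S_j$-part), and then reduces each remaining coefficient comparison to a surface by cutting with a general hyperplane. The inequality $a_i\geq 1$ is read off from the explicit local form of canonical surface foliation singularities, treating the simple and the canonical-but-not-simple cases separately, and the equality $b_j=d_j$ comes from McQuillan's description \cite[III.2.bis.1]{McQuillan08} of terminal surface foliation singularities rather than from an index-one cover plus Riemann--Hurwitz. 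This is both shorter and avoids the exceptional-divisor tracking that your route would require.
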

\begin{proof}
First, observe that since $\cal F$ is terminal along $\text{sing}(X)$ we see
that $\text{sing}(\cal F)\cap \text{sing}(X)$ does not contain any curve.

Write $\Theta = \sum a_iT_i +\sum b_jS_j$ and $\Delta = \sum c_iT_i+\sum d_jS_j$
where $T_i \subset \text{sing}(\cal F)$
and $S_j \subset \text{sing}(X)$.

Notice that since $\cal F$ has canonical singularities,
$D\cup D_1\cup...\cup D_n$
is normal crossings in codimension 2, \cite[Corollary 3.6]{LPT11}.
Furthermore, since $\cal F$ is terminal along $\text{sing}(X)$
any 2 invariant divisors cannot intersect along $\text{sing}(X)$.

This gives us $D_i\vert_D$ is a reduced divisor 
and that $c_k =1$ for all $k$ such that $T_k \subset \text{supp}(\Delta)$.
$D \cap D_i \subset \text{sing}(\cal F)$ for all $i$
and so $\text{supp}(\sum c_iT_i) \subset \text{supp}(\sum a_iT_i)$.

We have that $a_i \geq 1$, with equality
if and only if either
\begin{enumerate}
\item $\cal F$ is simple at the generic point of $T_i$ and $D$ is a strong separatrix along $T_i$ or
\item $\cal F$ is canonical but not simple at the generic point of $T_i$.
\end{enumerate} 
To see this we may cut by a generic hyperplane, and so we may assume
that $D$ is a curve and $T_i$ is a point. If $T_i$ is a simple singularity
then the claim follows from Example \ref{explicitcomputation}.  So suppose that $T_i$
is canonical but not simple and let $\omega$ be a 1-form
defining the foliation in a neighborhood of $T_i$.  
In this case by the classification
of canonical surface foliation singularities, \cite[Fact I.2.4]{McQuillan08},
we see that $\omega = pxdy-(qy+\epsilon x)dx+h.o.t.$ where $\epsilon \in \{0, 1\}$ and $D = \{x = 0\}$ or $\{y = 0\}$.
In either case $\omega\vert_D$ vanishes to order 1 at $T_i$ and so $a_i =1$.
In particular, from this it follows that $a_i \geq c_i$ for all $i$.

To see that $d_j= b_j$ for all $j$ we may cut by a generic hyperplane, and so we may assume
that $X$ is a surface and that $D$ is a curve.  The equality then follows from
\cite[III.2.bis.1]{McQuillan08}, for example.

Thus, for $C \neq T_i$ for all $i$, we have
that $0> (K_{D^\nu}+\Theta)\cdot C \geq (K_{D^\nu}+\Delta)\cdot C$.
\end{proof}

The next lemma guarantees that we only contract curves tangent to the foliation in the
course of the MMP, which in turn implies that the singularities of our foliation
stay non-dicritical.

\begin{lemma}
\label{onlycurvestangent}
Let $X$ be $\bb Q$-factorial and klt threefold and $\cal F$ 
be non-dicritical, co-rank 1 foliation, 
and suppose furthermore that 
$(\cal F, \Delta)$ is log canonical.

Let $R$ be a $K_{\cal F}+\Delta$-negative extremal ray.  Suppose
that $[C] \in R$.  Then $C$ is tangent to the foliation.
\end{lemma}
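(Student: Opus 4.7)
The plan is to argue by contradiction: suppose $C$ is transverse to $\cal F$. Since $[C]\in R$, we have $(K_{\cal F}+\Delta)\cdot C<0$. I will produce an effective $\bb Q$-divisor $\Theta$ on $X$ with $C\subset\mathrm{supp}(\Theta)$ and a rational $\lambda>0$ such that $(\cal F,\Delta+\lambda\Theta)$ is log canonical with $C$ a log canonical centre, yet
\[
(K_{\cal F}+\Delta+\lambda\Theta)\cdot C<0.
\]
This directly contradicts foliated subadjunction (Theorem \ref{foliatedsubadjunction}) applied to the one-dimensional transverse log canonical centre $C$.

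First I would handle the case in which some effective divisor $S$ satisfies $S\cdot R<0$ -- always the case when $\dim\mathrm{loc}(R)\leq 2$ by Lemma \ref{l_closedness}, taking $S$ to contain $\mathrm{loc}(R)$. Since $C$ is irreducible and $S\cdot C<0$, $C\subset S$. The divisor $S$ cannot be $\cal F$-invariant: $\cal F$ has rank $2=\dim S$, so invariance would force $\cal F|_S=T_S$ and every curve on $S$ would be tangent to $\cal F$, contradicting our assumption. So $S$ is transverse to $\cal F$. Setting $\lambda$ equal to the foliated log canonical threshold of $S$ along $C$ for $(\cal F,\Delta)$, Corollary \ref{quotientsmoothdiscrep} together with Lemma \ref{transdiscrep} identifies this with the usual log canonical threshold of $(X,\Delta;S)$ along $C$: every divisor extracted over the transverse centre $C$ is transverse by non-dicriticality (cf.\ Definition \ref{tangtransdef}), so foliated and usual discrepancies agree for such divisors. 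At $t=\lambda$ the pair $(\cal F,\Delta+\lambda S)$ is log canonical with $C$ a log canonical centre, and since both $(K_{\cal F}+\Delta)\cdot C$ and $\lambda S\cdot C$ are strictly negative, the displayed inequality holds and subadjunction delivers the contradiction.

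The remaining case is $\dim\mathrm{loc}(R)=3$, where no effective divisor $S$ with $S\cdot R<0$ is available. Here I would instead produce $\Theta$ by a Seshadri/Riemann--Roch construction: let $H_R$ be a supporting nef divisor to $R$ and $A$ ample, so $H=H_R+\varepsilon A$ is ample with $H\cdot C=\varepsilon A\cdot C$ arbitrarily small; for $m\gg 0$, asymptotic estimates on the blowup of $C$ produce $\Theta_m\in|mH|$ with $\mathrm{mult}_C(\Theta_m)\geq c m$ for some $c>0$, and then the foliated log canonical threshold $\lambda_m$ of $\Theta_m$ along $C$ is at most $2/(cm)$ (as $C$ has codimension two), giving
\[
\lambda_m\Theta_m\cdot C\leq \tfrac{2\varepsilon}{c}\,A\cdot C,
\]
which one tries to force below $-(K_{\cal F}+\Delta)\cdot C$ by tuning $\varepsilon$ and $m$.

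The main obstacle is precisely this fibre-type step: since $c$ is bounded by the Seshadri constant of $H$ along $C$, which is in turn bounded by $H\cdot C=\varepsilon A\cdot C$, the naive execution yields $\lambda_m\Theta_m\cdot C\gtrsim 2$, which contradicts subadjunction only when $-(K_{\cal F}+\Delta)\cdot C>2$. For mildly negative $C$ a finer construction is needed, likely combining the multiplicity estimate with the covering family of tangent rational curves spanning $R$ produced in the bend-and-break proof of Lemma \ref{l_closedness}, and a careful analysis of how $C$ sits relative to this family. The divisorial and flipping cases, by contrast, are straightforward adaptations of the subadjunction argument already used in the proof of Lemma \ref{negativecurvetransverse}.
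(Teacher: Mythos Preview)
Your argument in the case $\dim\operatorname{loc}(R)\leq 2$ has a genuine gap. Taking $\lambda$ to be the foliated log canonical threshold of $S$ along $C$ does not force $C$ to be a log canonical centre of $(\cal F,\Delta+\lambda S)$. In the generic situation---$X$ smooth, $S$ smooth, $C\subset S$ smooth, $\Delta$ trivial near $C$---the threshold at the generic point of $C$ equals $1$, and the divisor computing it is $S$ itself, whose centre on $X$ is $S$, not $C$. The definition of log canonical centre in the paper requires a divisor of discrepancy $-\epsilon$ \emph{dominating} $C$; no such divisor need exist. You cannot push $\lambda$ beyond $1$ either, since then the coefficient of the transverse divisor $S$ exceeds $1$ and the pair fails to be log canonical even at the generic point of $C$. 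So Theorem~\ref{foliatedsubadjunction} does not apply, and the contradiction is not reached. This is exactly why the paper's proof in the $\nu(H_R)=3$ case is so much longer than Lemma~\ref{negativecurvetransverse}: after finding the transverse divisor $E$ with $E\cdot R<0$, one restricts to $E^\nu$, shows via another subadjunction argument that $\nu^*H_R$ cannot be big on $E^\nu$, runs bend-and-break on the surface to produce a covering family of $\cal G$-tangent curves spanning $R$, deduces that $\cal G$ is a $\bb P^1$-fibration and that $\nu^*H_R$ is numerically trivial (so $-(K_{\cal G}+\Theta)$ is ample), and finally applies the surface cone theorem to reach the contradiction that $\overline{NE}(E^\nu)$ would be one-dimensional.

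For the fibre-type case, the paper does not attempt any multiplicity or Seshadri construction. Instead it works with the closed subscheme $Z$ of the Hilbert scheme parametrising the tangent rational curves produced by bend-and-break, with universal family $p\colon U\to Z$ and evaluation $F\colon U\to X$. One analyses, for general $x\in Z$, the locus $V_x=\{y:F(p^{-1}(x))\cap F(p^{-1}(y))\neq\emptyset\}$; non-dicriticality forces $\dim V_x\leq 1$. If $\dim V_x=1$ for general $x$, the invariant divisors $D_x=F_*p^*V_x$ assemble into a meromorphic first integral $\phi\colon X\dashrightarrow S$ with $\dim S=1$, which by non-dicriticality and the rigidity lemma is actually a morphism; then $D_x\cdot C=0$ for general $x$ contradicts $C$ being transverse. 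If $\dim V_x=0$, one pushes forward a general ample divisor from $Z$ and compares intersections with $C$ and $F^{-1}(C)$ to reach a contradiction. Your proposed Seshadri estimate, as you note, loses exactly the factor needed, and the paper's argument suggests that intersection-theoretic bookkeeping with the covering family---rather than multiplicity bounds---is the right mechanism here.
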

\begin{proof}
Suppose for sake of contradiction that there exists a $C \subset X$
with $[C] \in R$ and $C$ is transverse to $\cal F$.  Let $H_R$ be a supporting
hyperplane to $R$.


Suppose first that $\nu(H_R) <\text{dim}(X)=3$. 
As in the proof of the cone theorem, or in the proof of Lemma \ref{l_closedness}
we see through a general point of $X$ there is a rational curve tangent to $\cal F$
spanning $R$.

Let $B$ be the closed subscheme of the Hilbert scheme of $X$ parametrizing those subvarieties
tangent to $\cal F$.  We may consider the following diagram 
as in \cite[\S 4]{Kollar91c}
\begin{center}
\begin{tikzcd}
U \arrow{r}{F} \arrow{d}{p} & X\\
Z &
\end{tikzcd}
\end{center}
where $Z$ is the closed subscheme of $B$ whose general element
paramatrizes a rational curve tangent to $\cal F$ and spanning $R$.
Let 
$p: U \rightarrow Z$ be the universal family over $Z$ and $F:U \rightarrow X$ the evaluation map and observe
that $F$ is surjective. 
After taking general hyperplane sections
of $Z$ we may assume $\text{dim}(Z) = 2$. 

For $x \in Z$ let $V_x = \{y \in Z : F(p^{-1}(x)) \cap F(p^{-1}(y)) \neq \emptyset\}$
and observe that $V_x \subset Z$ is a closed subset. Moreover, by non-dicriticality of $\cal F$
we see that $V_x$ is a strict subset of $Z$ for all $x \in Z$. Indeed, if $\text{dim}(V_x) = 2$ then $F(p^{-1}(x))$ would be a curve
tangent to $\cal F$
meeting infinitely many leaves, a contradiction.

There are therefore two cases for general $x \in Z$
\begin{enumerate}
\item \label{i_0} $\text{dim}(V_x) = 0$; or
\item \label{i_1} $\text{dim}(V_x) = 1$.
\end{enumerate}

Suppose that we are in case \ref{i_1}.
For $x \in Z$ a general point define $D_x = F_*p^*(V_x)$.  Since $\cal F$
is non-dicritical we see that for all $y \in V_x$ that $F(p^{-1}(y))$ all must belong to the same
leaf and thus $D_x$ is in fact a $\cal F$-invariant divisor.
Thus $\cal F$ admits a meromorphic first integral $\phi:X \dashrightarrow S$
where $\text{dim}(S) = 1$. 

We claim that $\phi$ is in fact a morphism.  
Let $\mu:\widetilde{X} \rightarrow X$
be a resolution of singularities of $\phi$, 
let $\widetilde{\cal F}$ be the transformed foliation on
$\widetilde{X}$, let
$\tilde{\phi}:\widetilde{X} \rightarrow S$ be the resolved map
and observe that $\tilde{\phi}$ 
is a first integral for $\widetilde{\cal F}$.  
Since $\cal F$ is non-dicritical then for all $x \in X$
we have that $\mu^{-1}(x)$ is tangent to $\widetilde{\cal F}$
and so we see that $\tilde{\phi}(\mu^{-1}(x))$ must be a single 
point $s \in S$.  The rigidity lemma, \cite[Lemma 4.1.13]{BS95}, then implies
that $\tilde{\phi}$ descends to a morphism $X \rightarrow S$, namely
$\phi$ itself.

By definition we have that if $y \in Z-V_x$ is general then $D_x \cap F(p^{-1}(y)) = \emptyset$ hence $0 = D_x\cdot R = D_x\cdot C$.
However since $C$ is transverse to the foliation and $D_x$ is invariant, for a general $x$
we must have that $C\cdot D_x >0$, a contradiction.

So we are in case \ref{i_0}.  We define $H' = F_*p^* H$ where $H$ is a general ample divisor on $Z$. Observe
that if $y \in Z$ is a general point that $H' \cap F(p^{-1}(y)) = \emptyset$ hence $0 = H'\cdot R = H'\cdot C$.
Notice that $F^{-1}(C) \subset U$ must be a curve, otherwise, through a general point $P \in C$
there is a 1-dimensional family of curves tangent to $\cal F$ passing through $P$, hence the leaf through $P$
is algebraic.  Arguing as in case \ref{i_1}, we see that $\cal F$ would admit
a first integral, a contradiction.
Thus, we see that $p^*H\cdot F^{-1}(C) >0$ and so $H' \cdot C>0$, which is a contradiction.

\medskip

So we have $\nu(H_R) = 3$ and so $H_R \sim_{\bb Q} A_0+E_0$ where $A_0$ is ample and $E_0 \geq 0$.  Since $H_R\cdot R = 0$
and $A_0\cdot R >0$ we see
that $E_0\cdot R<0$ and so there is some irreducible component $E$ of $E_0$ with $E \cdot R <0$.
Thus $E\cdot C <0$ and so $C \subset E$.
Since $C$ is transverse to $\cal F$ we see that $E$ is transverse to $\cal F$.
We may then find $0\leq s_0 \leq 1$ such that $\Delta+s_0E = \Delta'+E$ where $E$ is not contained in the support
of $\Delta'$.

Moreover, we know that $(\cal F, \Delta'+E = \Delta+s_0E)$ is log terminal at the generic point of $C$.
Indeed, suppose otherwise and let $0 \leq t \leq s_0$ be the log canonical threshold of $(\cal F, \Delta)$ with respect
to $E$ at the generic point of $C$.  On one hand, since $E\cdot C < 0$
and $(K_{\cal F}+\Delta)\cdot C<0$ we have that $(K_{\cal F}+\Delta+tE)\cdot C <0$.  On the other hand foliation
subadjunction, Theorem \ref{foliatedsubadjunction}, implies that $(K_{\cal F}+\Delta+tE)\cdot C \geq 0$, a contradiction.

Writing $\nu:E^\nu \rightarrow E$ for the normalization map
and $\cal G$ for the foliation induced on $E^\nu$ we have by Proposition \ref{foliationadjunction},
$K_{\cal G}+\Theta = \nu^*(K_{\cal F}+\Delta'+E)$ where $\Theta \geq 0$.
A computation exactly as in \cite[Propostion 5.46]{KM98} shows that $(\cal G, \Theta)$
is log terminal at the generic point of $C'$ where $C'$ is the strict transform of $C$.
Note that we also have
\[(K_{\cal G}+\Theta)\cdot C'<0.\]


Letting $K_{\cal F}+\Delta'+E+A = H_R$ be a supporting hyperplane for $R$, with
$A$ ample, we have that $K_{\cal G}+\Theta+\nu^*A$ is a nef divisor,
and $(K_{\cal G}+\Theta+(1-\epsilon)\nu^*A)\cdot C'<0$ for $\epsilon >0$.

We claim that $K_{\cal G}+\Theta+\nu^*A$ is not big.  Indeed, suppose for sake of contradiction
that $K_{\cal G}+\Theta+\nu^*A$ were big.  Then for $\epsilon>0$ sufficiently
small we have that $K_{\cal G}+\Theta+(1-\epsilon)\nu^*A$ is big
and $(K_{\cal G}+\Theta+(1-\epsilon)\nu^*A)\cdot C' <0$ and so we may write
$(K_{\cal G}+\Theta+(1-\epsilon)\nu^*A)\sim_{\bb Q} A'+G+aC'$ where $A'$ is ample, 
$G$ is an effective divisor whose support does not contain $C'$ and $a>0$. 

Since $(\cal G, \Theta)$ is log terminal at the generic point of $C'$
we may find $t_0>0$ so that $\Theta+t_0(A'+G+aC') = \Theta'+C'$ where the support
of $\Theta'$ does not contain $C'$.
On one hand we see that $(K_{\cal G}+\Theta'+C')\cdot C'<0$, on the other
hand, foliation adjunction, Proposition \ref{foliationadjunction}, implies that $(K_{\cal G}+\Theta'+C')\cdot C' \geq 0$, a contradiction.

Thus $(K_{\cal G}+\Theta+\nu^*A)^2 =0$ and so we may apply 
foliated bend and break, Corollary \ref{bendandbreak}, as earlier in the proof to produce rational curves $\Sigma$
through a general point of $E^\nu$ tangent
to the foliation with $\nu^*H_R\cdot \Sigma = 0$ and hence the pushforward of these curves span the ray $R$.

By non-dicriticality of $\cal F$, we see that $\cal G$ is the foliation
induced by a fibration in rational curves $E^\nu \rightarrow B$.
Let $f$ be a general fibre of this morphism.  We have that $[f] \in R$.
Thus $\nu^*H_R\cdot f = \nu^*H_R\cdot C' = 0$ and so we may apply Lemma \ref{l_nefdivtriv} below to conclude
that $\nu^*H_R$ is numerically trivial.
However, this implies that $-(K_{\cal G}+\Theta)$ is numerically equivalent to $\nu^*A$.  Since $\nu^*A$
is ample this implies that $-(K_{\cal G}+\Theta)$ is ample as well.
Observe moreover, that $\nu^*H_R$ being numerically trivial implies that if $\Sigma \subset E^\nu$ is a curve
then $[\nu_*\Sigma] \in R$. 

\medskip

The foliated cone theorem
for surfaces, Theorem \ref{conetheoremsurfaces}, applies to $(\cal G, \Theta)$
to show that every extremal ray in $\overline{NE}(E^\nu)$ is spanned by a curve
tangent to the foliation or contained in the non-lc locus of $(\cal G, \Theta)$.

Let $\Sigma$ be some curve contained in the non-lc locus of $(\cal G, \Theta)$.
Then $\nu_*\Sigma$ is a non-lc centre of $(\cal F, \Delta'+E = \Delta+s_0E)$. 
By our above observations we know that $[\nu_*\Sigma] \in R$, and so 
\begin{enumerate}
\item \label{i_e} $E\cdot \nu_*\Sigma < 0$ and
\item \label{i_k} $(K_{\cal F}+\Delta)\cdot \nu_*\Sigma<0$.
\end{enumerate}
Suppose for a contradiction that $\Sigma$ is transverse to $\cal G$.
Let $0 \leq t \leq s_0$
be the log canonical threshold of $(\cal F, \Delta)$ with respect to $E$ at the generic point of $\nu_*\Sigma$. We see
by inequalities \ref{i_e} and \ref{i_k} above that
$(K_{\cal F}+\Delta+tE)\cdot \nu_*\Sigma<0$, but on the other hand foliation
subadjunction, Theorem \ref{foliatedsubadjunction}, 
tells us that $(K_{\cal F}+\Delta+tE)\cdot \nu_*\Sigma \geq 0$, a contradiction.
Thus $\Sigma$ is tangent to $\cal G$.

This implies that $\overline{NE}(E^\nu)$ is spanned by
fibres of $E^\nu \rightarrow B$, an impossibility.
\end{proof}

\begin{lemma}
\label{l_nefdivtriv} 
Let $p:S \rightarrow B$ be a surjective morphism where $S$ is a projective surface and $B$ is a curve.
Let $F \subset S$ be a general fibre of $p:S \rightarrow B$ and let $C \subset S$ be a curve 
which dominates $B$.  Suppose that $H$ is a nef divisor on $S$ with $H\cdot F = H\cdot C = 0$.
Then $H$ is numerically trivial.
\end{lemma}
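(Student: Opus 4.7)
The plan is to reduce everything to an application of the Hodge index theorem. First, I may replace $S$ by a resolution of singularities $\sigma:\widetilde{S}\to S$: the pullback $\sigma^*H$ is nef, $\sigma^*H\cdot \sigma^*F = H\cdot F = 0$, $\sigma^*H$ meets the strict transform of $C$ in zero, and numerical triviality of $\sigma^*H$ implies that of $H$ by the projection formula. So assume $S$ is a smooth projective surface.

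Next I would show $H^2=0$. Since $H$ is nef we have $H^2\ge 0$. If $H^2>0$, then $H$ is big and nef, so by the Hodge index theorem the intersection form on $NS(S)_{\bb R}$ restricted to $H^\perp$ is negative definite. Since $F$ is a general fibre of $p:S\to B$ we have $F^2=0$, and our hypothesis $H\cdot F=0$ places $F\in H^\perp$. Negative definiteness then forces $F\equiv 0$, which is absurd as $F$ is a general fibre of a surjective morphism to a curve. Hence $H^2=0$.

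Now I would combine $H^2=F^2=H\cdot F=0$ with Hodge index again. The Hodge index theorem says that the intersection form on $NS(S)_{\bb R}$ has signature $(1,\rho-1)$, and so any totally isotropic subspace has dimension at most $1$. Since the intersection form vanishes identically on $\mathrm{span}(H,F)$, we must have $H\equiv \lambda F$ for some $\lambda\in \bb R$ (with the convention that the zero class is proportional to anything).

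Finally, I would pin down $\lambda$ using the curve $C$. Because $C$ dominates the curve $B$, its restriction to a general fibre has positive degree, i.e.\ $F\cdot C = \deg(C\to B) > 0$. Then the assumption $H\cdot C=0$ combined with $H\equiv \lambda F$ gives $0 = H\cdot C = \lambda\, F\cdot C$, forcing $\lambda=0$, hence $H\equiv 0$. The argument is essentially formal once one has Hodge index in hand, so there is no real obstacle; the only delicate point is checking that reducing to the smooth case preserves all the hypotheses, which follows from standard properties of pullbacks of nef divisors and the projection formula.
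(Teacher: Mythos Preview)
Your proof is correct, and it takes a genuinely different route from the paper's. The paper argues as follows: after reducing to $S$ smooth (exactly as you do), it observes that for $k\gg 0$ the divisor $D_k = C + kF$ is big, writes $D_k \sim_{\bb Q} A + E$ with $A$ ample and $E\geq 0$, and then uses nefness of $H$ together with $H\cdot D_k = 0$ to conclude $H\cdot A = 0$, hence $H\equiv 0$. Your approach instead goes straight to the Hodge index theorem: first you rule out $H^2>0$ by noting $F\in H^{\perp}$ with $F^2=0$, then you use the signature $(1,\rho-1)$ to force $H$ and $F$ to be proportional, and finally you use $C\cdot F>0$ to kill the proportionality constant. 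Your argument is slightly more self-contained in that it avoids having to justify the bigness of $C+kF$ (which requires a small Riemann--Roch computation), while the paper's argument is perhaps more geometric and sidesteps any explicit signature reasoning. Both are short and standard.
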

\begin{proof}
Let $\mu:S' \rightarrow S$ be a resolution of singularities of $S$ and let
$p':S' \rightarrow B$ be the induced fibration. Let $C'$ denote the strict transform of $C$
and $F'$ denote the strict transform of $F$.  Observe that $C'$ still dominates $B$ and $F'$ is still a general
fibre of $p'$.  Moreover, $\mu^*H$ is numerically trivial if and only if $H$ is so we may freely replace
$S$ by $S'$ and so may assume that $S$ is smooth.

For $k\gg 0$ observe that the divisor $D_k = C+kF$ is a big divisor and so we may write
$D_k \sim_{\bb Q} A+E$ where $A$ is ample and $E$ is effective.  Since $H$ is nef and $H\cdot D_k = 0$
this implies $H\cdot E =H\cdot A = 0$.  In particular, it has zero intersection with an ample divisor and is therefore numerically trivial.
\end{proof}

\subsection{Fibre type contractions}
Throughout this section we assume 
$X$ is a projective $\bb Q$-factorial and klt threefold.
Let $\cal F$ be a co-rank 1 foliation with non-dicritical singularities, $\Delta \geq 0$
and let $R$ be a $K_{\cal F}+\Delta$-negative
extremal ray with $\text{loc}(R) = X$. Notice that in this case $R$ is in fact $K_{\cal F}$-negative.

Suppose that $\text{loc}(R) = X$.  Let $H_R$ be a supporting hyperplane of $R$.
In the proof of the cone theorem we see that in this case $\nu(H_R)<3$,
and that there is a general complete intersection curve $C$ such that $K_{\cal F}\cdot C<0$.

\begin{lemma}
Let $\cal E = \cal F\vert_C$.  Either

(i) $\cal E$ is semi-stable and $\cal F$ is a fibration over a curve, or

(ii) $\cal E$ is unstable, and either there is a foliation by rational curves tangent to $\cal F$
realizing $\cal F$ as the pullback of a surface foliation or $\cal F$ is a fibration over a curve.
\end{lemma}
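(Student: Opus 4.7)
The plan is to split on whether $\cal E = \cal F|_C$ is unstable or semi-stable and in each case produce, by extension/saturation, a canonical foliated structure forced by the numerical positivity of $\cal E$ (note that $\deg \cal E = -K_{\cal F}\cdot C > 0$).

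Suppose $\cal E$ is unstable, and let $\cal L \subset \cal E$ be the maximal destabilizing line sub-bundle, so $\deg \cal L > \tfrac{1}{2}\deg\cal E > 0$. Since $C$ is a general complete intersection of very ample divisors, a Mehta--Ramanathan type extension shows $\cal L$ is the restriction of a rank 1 saturated subsheaf $\widetilde{\cal L}\subset \cal F$ on $X$. Since $\widetilde{\cal L}$ has rank 1 it is automatically Lie-involutive, hence defines a rank 1 subfoliation $\cal G \subset \cal F$ with $K_{\cal G}\cdot C = -\deg\cal L < 0$. Applying a Bogomolov--McQuillan--Miyaoka type theorem (a rank 1 foliation whose canonical divisor is negative on a movable curve is algebraically integrable with rationally connected leaves), $\cal G$ has rational leaves tangent to $\cal F$. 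Let $\pi\colon X\dashrightarrow Y$ be the rational quotient by $\cal G$. If $\dim Y = 2$, then $\cal F/\cal G$ descends to a rank 1 foliation $\cal H$ on $Y$ with $\cal F = \pi^*\cal H$, giving the pullback structure. If $\dim Y = 1$ the $\cal G$-leaves assemble into algebraic surfaces swept out by rational curves, necessarily coinciding with leaves of $\cal F$, and $\cal F$ is a fibration over $Y$.

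Suppose now $\cal E$ is semi-stable. Foliated bend and break (Corollary \ref{bendandbreak}) produces a covering family of rational curves $\Sigma_t$ tangent to $\cal F$. The tangent line $T_{\Sigma_t}\hookrightarrow \cal F|_{\Sigma_t}$ propagates by coherence/saturation to either (a) a rank 1 saturated subsheaf $\cal G\subset \cal F$ whose restriction to each $\Sigma_t$ recovers $T_{\Sigma_t}$, or (b) a generic rank 2 agreement of these directions with $\cal F$ itself. In case (a), the movability of $\Sigma_t$ together with $K_{\cal G}\cdot \Sigma_t < 0$ forces, via a movable-cone duality / Mehta--Ramanathan argument, that $\cal G|_C$ destabilizes $\cal E$, contradicting semi-stability. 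Hence we are in case (b): the tangent directions of the rational curves fill $\cal F$ generically, so through a general point of a general $\cal F$-leaf there pass rational curves in every tangent direction of the leaf. Such a leaf is a rationally chain connected algebraic surface, and these surfaces form a fibration $X\dashrightarrow B$ over a curve $B$ to which $\cal F$ is the associated foliation.

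The main obstacle is the semi-stable case: one must rule out the scenario of a rank 1 subfoliation $\cal G$ tangent to the bend-and-break curves whose restriction to $C$ fails to destabilize $\cal E$ despite $K_{\cal G}\cdot \Sigma_t < 0$. This requires either a careful movable-cone duality relating numerical behavior on the highly special curves $\Sigma_t$ to that on the general complete intersection curve $C$ (where semi-stability is measured), or a direct algebraicity argument assembling the covering family of rational curves tangent to $\cal F$ into algebraic surface leaves of $\cal F$ itself. Once this rigidity is in hand, the classification into the stated dichotomy follows cleanly from the structure of the rational quotient.
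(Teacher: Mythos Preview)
Your treatment of the unstable case is essentially the paper's: extend the maximal destabilizing line via Mehta--Ramanathan to a rank 1 subfoliation $\cal G \subset \cal F$, invoke Bogomolov--McQuillan to get rational leaves, then case-split on the geometry of the resulting family. The paper phrases the dichotomy slightly differently (does a general curve of the family meet infinitely many others or none?) and uses non-dicriticality explicitly to force curves that meet into the same leaf, whereas you appeal to the rational quotient; but these are cosmetic differences.

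The semi-stable case, however, has a genuine gap --- and you diagnose it yourself. Your argument produces bend-and-break rational curves $\Sigma_t$ tangent to $\cal F$ and then tries to propagate their tangent directions into a subsheaf of $\cal F$, only to run into the very obstruction you name: there is no clean mechanism relating $K_{\cal G}\cdot \Sigma_t < 0$ on the special curves $\Sigma_t$ to destabilization of $\cal E$ on the general complete intersection curve $C$. The ``movable-cone duality'' you gesture at is not available in this form, and the alternative ``direct algebraicity argument'' is exactly what you are trying to prove.

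The paper bypasses all of this with a one-line observation you are missing: a semi-stable bundle of positive degree on a curve is ample. Since $\deg \cal E = -K_{\cal F}\cdot C > 0$ and $\cal E$ is semi-stable, $\cal E = \cal F|_C$ is an ample vector bundle. Now the Bogomolov--McQuillan theorem \cite{BMc01} applies directly to $\cal F$ itself (not to a rank 1 subfoliation): a foliation whose restriction to a general complete intersection curve is ample has algebraic, rationally connected leaves. Hence $\cal F$ is a fibration over a curve, and you are done without ever touching bend-and-break or tangent-direction propagation.
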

\begin{proof}
If $\cal E$ is semi-stable, then since $\text{det}(\cal E)$ is ample, $\cal E$
is ample as a vector bundle.  \cite{BMc01} applies in this case.

Otherwise, let $\cal L \subset \cal E$ be a maximal destabilizing subbundle.  By the Mehta-Ramanathan
theorem
this extends to a subsheaf $\cal G \subset \cal F$ which is a foliation by rational curves.

Consider the following diagram as in Lemma \ref{onlycurvestangent}
\begin{center}
\begin{tikzcd}
U \arrow{r}{F} \arrow{d}{p} & X\\
Z &
\end{tikzcd}
\end{center}

where $Z$ is projective and
where $p: U \rightarrow Z$ is a projective morphism whose general fibre is a rational
curve tangent to $\cal G$.

For $z \in Z$ let $C_z \coloneqq p^{-1}(z)$ and $D_z\coloneqq F(C_z)$
so $D_z$ is tangent to $\cal G$. 
Let $D_{z_0}$ be a general curve,
then either:

\begin{enumerate}
\item \label{i_emp} $D_{z_0}$ does not intersect any other $D_z$; or

\item \label{i_inf} $D_{z_0}$ intersects infinitely many other $D_z$.
\end{enumerate}

In case \ref{i_emp}
$\cal G$ in fact defines a fibration over a surface.  Since this
fibration is tangent to $\cal F$, we see that $\cal F$ is in fact pulled back from this surface.

In case \ref{i_inf}, if $D_{z}$ meets $D_{z_0}$, then since $\cal F$ is non-dicritical,
$D_z$ and $D_{z_0}$ must (generically) belong to the same leaf.  Since there are infinitely
many $D_z$ meeting $D_{z_0}$, the leaf containing $D_{z_0}$ must in fact be algebraic.  Moreover,
we see that the leaf is covered by rational curves.  In any case, the result follows.
\end{proof}

\begin{theorem}
\label{fibretypecontraction}
Set up as above.
The contraction associated to $R$
exists.
\end{theorem}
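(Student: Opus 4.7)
The plan is to exploit the structure theorem of the preceding lemma, rather than attempt a base-point-free style argument (which would require cohomological vanishing unavailable in the foliated setting). By that lemma, one of three structures holds: either $\cal F$ is induced by a fibration $p:X\to B$ over a curve, or $\cal F$ is pulled back from a surface foliation $\cal G$ on a surface $S$ via a fibration $p:X\to S$ whose general fibre is a rational curve tangent to a subfoliation of $\cal F$. In the first case (semi-stable or unstable-fibration) the map $p$ is already a morphism by the Bogomolov--McQuillan result quoted in that lemma.

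In the unstable pullback case I would first promote the rational map $p$ to a morphism defined on all of $X$. Resolve its indeterminacy by a birational morphism $\mu:X'\to X$, obtaining a morphism $p':X'\to S$. Since $\cal F$ is non-dicritical, every $\mu$-exceptional fibre is tangent to $\cal F$, hence contained in a single leaf of the rank-$1$ subfoliation defining $p$, and therefore contracted by $p'$. The rigidity lemma (applied exactly as in the proof of Lemma \ref{onlycurvestangent}) then lets $p'$ descend to a morphism $X\to S$. After taking a Stein factorization, we obtain in every case a morphism $c:X\to Y$ with $c_*\cal O_X=\cal O_Y$.

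It remains to verify that $c$ contracts a curve $\Sigma$ to a point if and only if $[\Sigma]\in R$. For the forward direction, Lemma \ref{onlycurvestangent} says that any curve with class in $R$ is tangent to $\cal F$; in the fibration-over-a-curve case this forces it into a fibre of $c$, while in the pullback case the rational curves spanning $R$ are precisely the fibres of $c$. For the converse, let $H_R$ be a supporting hyperplane of $R$. I would argue that $H_R$ descends (numerically) to an ample class on $Y$: on a general fibre $F$ of $c$, $H_R$ is trivial on the covering family of $R$-spanning rational curves produced by foliated bend and break, so a Lemma \ref{l_nefdivtriv}-type argument (applied fibrewise, after a resolution of $F$ if needed) forces $H_R|_F$ to be numerically trivial. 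Granted this, any curve $\Sigma$ contracted by $c$ satisfies $H_R\cdot\Sigma=0$, and since $H_R$ is strictly positive off $R$, the class $[\Sigma]$ lies in $R$.

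The principal obstacle is twofold. First, in the pullback case one must genuinely verify that the rational map extends to a morphism; this is where non-dicriticality of $\cal F$ is indispensable, since otherwise the leaves of the subfoliation could accumulate pathologically over singular loci of $X$ or $\cal F$. Second, the step showing that $H_R$ descends to an ample class on $Y$ must play the role of the base-point-free theorem in the classical MMP, and so requires a genuinely geometric substitute for cohomological vanishing---supplied here by the structural information from the preceding lemma together with bend and break.
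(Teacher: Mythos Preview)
Your approach diverges fundamentally from the paper's, and the gap is real. The paper's proof is a single observation: the previous lemma produces a fibration $\pi:X\to Z$ such that a general curve $C$ spanning $R$ lies in a fibre, and for such $C$ one has $K_{\cal F}\cdot C = K_{X/Z}\cdot C = K_X\cdot C$ (since $\pi^*K_Z\cdot C=0$). Hence $R$ is actually a $K_X$-negative extremal ray, and the contraction exists by the classical base-point-free/contraction theorem for klt pairs. No direct construction is needed.

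Your attempt to exhibit the fibration $c:X\to Y$ itself as the contraction of $R$ does not work as stated. Two concrete problems:

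\begin{itemize}
\item In the pullback case $c:X\to S$, a curve $\Sigma$ with $[\Sigma]\in R$ is tangent to the rank-$2$ foliation $\cal F$, but this does not force $\Sigma$ to be tangent to the rank-$1$ subfoliation whose leaves are fibres of $c$. So ``curves in $R$ are contracted by $c$'' is unjustified. Conversely, there is no reason for $\rho(X/S)=1$, so $c$ may contract curves whose class is not in $R$.
\item In the fibration-over-a-curve case $c:X\to B$, a general fibre $F$ is a surface, and you assert $H_R\vert_F\equiv 0$ via a ``Lemma~\ref{l_nefdivtriv}-type argument''. But that lemma requires a fibration of $F$ over a curve together with a section-like curve, neither of which you have here; knowing $H_R$ vanishes on a single covering family of rational curves in $F$ does not force $H_R\vert_F$ to be numerically trivial on a surface.
\end{itemize}

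The missing idea is precisely the reduction to the classical MMP: once you notice $K_X\cdot R<0$, all of these geometric verifications become unnecessary and the contraction comes for free.
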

\begin{proof}
By our previous lemma we see that $R$ is in fact $K_X$-negative, indeed, if $C$
is a general curve which spans $R$ then $K_{\cal F}\cdot C = K_{X/Z}\cdot C = K_X\cdot C$
where $\pi:X \rightarrow Z$ is the fibration guaranteed by the previous lemma.
The existence of the contraction follows the corresponding statement about
$K_X$-negative contractions.
\end{proof}

\subsection{Divisorial contractions}
Throughout this section we assume $X$ is a projective 
$\bb Q$-factorial and klt threefold.

The idea behind constructing a divisorial contraction of a divisor $D$
is to realize it as a $K_X+(1-\epsilon(D))D$-contraction.

\begin{lemma}
\label{singcomparison}
Let $X$ be as above and suppose that $\cal F$ is a co-rank 1 foliation with 
non-dicritical singularities
and that $(\cal F, \Delta)$ is log canonical (log terminal).
Let $D = \sum D_i$ be an invariant divisor.  Then $(X, \Delta+D)$
is log canonical (log terminal)
\end{lemma}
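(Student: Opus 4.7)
The plan is to verify log canonicity directly by comparing discrepancies of $(X,\Delta+D)$ with those of $(\cal F,\Delta)$ on a sufficiently fine common resolution. Take $\pi:Y\to X$ a simultaneous log resolution of $(X,\Delta+D)$ and of the foliation, chosen via Cano's theorem so that $\cal G:=\pi^*\cal F$ has only simple singularities on $Y$. By the Remark following the definition of foliation log canonicity, any $\cal F$-invariant component of $\mathrm{supp}(\Delta)$ would obstruct log canonicity of $(\cal F,\Delta)$, so every component of $\Delta$ is necessarily non-invariant and carries coefficient at most $1$. In particular the strict transforms of components of $\Delta + D$ all carry coefficients $\leq 1$, and the problem reduces to verifying $a(E, X, \Delta+D) \geq -1$ for every $\pi$-exceptional prime divisor $E$.

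Two applications of Lemma \ref{basicadjunction}, on $X$ and on $Y$, yield the identity
\[
a(E, X, \Delta + D) \;=\; a(E, \cal F, \Delta) \;+\; m_E \;-\; \mathrm{mult}_E \pi^* D,
\]
where $m_E := \mathrm{mult}_E\bigl([N^*_{\cal G}] - \pi^*[N^*_{\cal F}]\bigr)\geq 0$ records the saturation defect of $\pi^*N^*_{\cal F}$ inside $\Omega^{[1]}_Y$ along $E$. A direct pointwise analysis of local generators of $N^*_{\cal F}$ shows $m_E = 0$ unless $E$ is $\cal G$-invariant, since non-invariance precludes the pulled-back $1$-form from vanishing along $E$. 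In the non-invariant case, the remark following the definition of non-dicriticality guarantees that $\pi^{-1}(D)$ is $\cal G$-invariant, so $\pi(E)\not\subset D$ (otherwise $E\subset \pi^{-1}(D)$ would be invariant); hence $\mathrm{mult}_E \pi^*D = 0$ and $a(E, X, \Delta+D) = a(E, \cal F, \Delta) \geq -1$ by log canonicity of $(\cal F, \Delta)$.

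The main obstacle is the $\cal G$-invariant case, where $a(E, \cal F, \Delta) \geq 0$ and the question reduces to the local inequality $\mathrm{mult}_E \pi^*D \leq m_E + 1$. Here the strategy is to use the explicit simple-singularity normal forms of $\cal G$ near a general point of $\pi(E)$, in which the components of $D$ through $\pi(E)$ appear as separatrices of $\cal G$. A direct computation in each normal form --- type (i), $\omega = \prod x_i \cdot \sum \lambda_i \tfrac{dx_i}{x_i}$, and the analogous type (ii) --- shows that each separatrix through $\pi(E)$ contributes one to both $m_E$ and to $\mathrm{mult}_E \pi^*D$, with at most one extra unit of $\mathrm{mult}_E\pi^*D$ arising when $E$ itself is the strict transform of a separatrix. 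Summing the contributions over all separatrix components yields $\mathrm{mult}_E \pi^*D \leq m_E + 1$. The log terminal statement follows by the same argument, using the strict version of the relevant inequalities (in particular the strict form of Corollary \ref{quotientsmoothdiscrep}) to conclude $a(E, X, \Delta + D) > -1$.
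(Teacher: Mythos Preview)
Your decomposition into invariant and non-invariant exceptional divisors is the right starting point, and the non-invariant case is essentially correct --- though when $\pi(E)\subset\text{sing}(X)$ you cannot literally argue via ``the pulled-back $1$-form,'' since $N^*_{\cal F}$ is only $\bb Q$-Cartier there; the conclusion $m_E=0$ still holds, but it is Lemma~\ref{transdiscrep} rather than a pointwise vanishing argument.

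The invariant case, however, has a genuine gap. The quantities $m_E$ and $\text{mult}_E\pi^*D$ are \emph{not} determined by the simple-singularity normal form of $\cal G$ near a general point of $E$: they record how the data on $X$ (the conormal $N^*_{\cal F}$ and the divisor $D$) pull back through the specific birational morphism $\pi$, and this depends on the entire tower of blow-ups, not on the picture upstairs. Your assertion that ``each separatrix through $\pi(E)$ contributes one to both $m_E$ and $\text{mult}_E\pi^*D$'' conflates separatrices of $\cal F$ on $X$ (whose structure you do not control, since $\cal F$ need not be simple there and $X$ need not be smooth) with invariant divisors of $\cal G$ on $Y$, and the phrase ``when $E$ itself is the strict transform of a separatrix'' is meaningless since $E$ is $\pi$-exceptional. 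The inequality $\text{mult}_E\pi^*D\le m_E+1$ you need is equivalent to $a(E,X,D)+1\ge a(E,\cal F,0)$, which is exactly the paper's claim $b_i^0\ge a_i^0$ in the invariant case; but it is not a local normal-form computation.

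The paper establishes this comparison by a global mechanism: it shows that the $\pi$-exceptional divisor $K_{\cal G}-(K_Y+D'+\sum E_i^0)-\pi^*(K_{\cal F}-K_X-D)=\sum(a_k^\delta-b_k^\delta)E_k^\delta$ is $\pi$-nef, by intersecting with curves in each exceptional component --- using the fibration structure and adjunction on the non-invariant $E_j^1$, and Lemma~\ref{intersectioncomparison} on the invariant $E_i^0$ --- and then applies the negativity lemma to conclude $a_k^\delta\le b_k^\delta$ for all $k,\delta$ simultaneously. This sidesteps any need to track $m_E$ and $\text{mult}_E\pi^*D$ individually through the resolution.
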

\begin{proof}
Let $\pi:(Y, \Delta'+D') \rightarrow (X, \Delta+D)$ be
a log resolution of $(X, \Delta+D)$ where $\Delta'$ is the strict transform
of $\Delta$ and $D'$ is the strict transform of $D$.  Perhaps passing to a higher resolution
we may assume that $\cal G$ has simple (hence canonical) singularities where $\cal G$
is the pulled back foliation.

Write
$$K_{\cal G}+\Delta'+\sum E_j^1 = \pi^*(K_{\cal F}+\Delta)+
\sum a_i^0E_i^0 +\sum a_j^1E_j^1$$
and 
$$K_Y+\Delta'+D'+\sum E_i^0+\sum E_j^1 = \pi^*(K_X+\Delta+D)+
\sum b_i^0E_i^0 +\sum b_j^1E_j^1$$
where $E^0_i$ are the invariant $\pi$-exceptional divisors
and $E^1_j$ are the non-invariant $\pi$-exceptional divisors.

By assumption $a_i^0, a_i^1 \geq 0 (>0)$ so if we can show that 
$b^\delta_k \geq a^\delta_k$ the result will follow.

By non-dicriticality 
the foliation restricted to each $E_j^1$ is exactly the fibration
structure $E_j^1 \rightarrow \pi(E_j^1)$, and so if $f_j$ is a general fibre
of $\pi\vert_{E_j^1}$ we know 
that $(K_{\cal G}+E_j^1)\cdot f_j = (K_Y+E_j^1)\cdot f_j$
and so $K_{\cal G}\cdot f_j = K_Y\cdot f_j$.
Notice also that $f_j \cdot (D'+\sum E_i^0) = 0$.

This computation and Lemma \ref{intersectioncomparison}
show that 
$$K_{\cal G} - (K_Y+D'+\sum E_i^0)$$
is $\pi$-nef away from finitely many curves and so the negativity lemma, \cite[Lemma 3.38]{KM98},
applies to show that $a^\delta_k-b^\delta_k \leq 0$ for all $\delta, k$
and the result follows.
\end{proof}

\begin{lemma}
\label{transversecontraction}
Let $X$ be a $\bb Q$-factorial and klt threefold and $\cal F$ a non-dicritical co-rank 1
foliation and suppose that $(\cal F, \Delta)$ is canonical.
Let $R$ be a $K_{\cal F}+\Delta$-negative extremal ray with $\text{loc}(R) = D$.

Suppose that $D$ is transverse
to the foliation.  
Then $K_{\cal F}\cdot R = K_X\cdot R$.
\end{lemma}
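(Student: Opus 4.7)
The plan is to establish the equivalent statement $[N^*_{\cal F}]\cdot R = 0$, which via Lemma \ref{basicadjunction} (i.e.\ $K_X = K_{\cal F}+[N^*_{\cal F}]$) amounts to the claimed equality. I will obtain this by reducing to a computation on the normalization $\nu: D^\nu \to D$. First I would choose a general curve $C$ with $[C]\in R$; by Lemma \ref{onlycurvestangent}, $C$ is tangent to $\cal F$, and since $C\subset D$ with $D$ transverse to $\cal F$, the strict transform $\tilde{C}$ on $D^\nu$ is tangent to the induced rank $1$ foliation $\cal F_{D^\nu}$.

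The geometric heart of the argument is the following. Using $\text{loc}(R)=D$, the deformations of $C$ in $X$ form a covering family of $D$; all members lie in the class of $C$ and hence are tangent to $\cal F$ (again by Lemma \ref{onlycurvestangent}), and therefore tangent to $\cal F_{D^\nu}$. Since $\cal F_{D^\nu}$ has rank $1$ there is a unique tangent direction through a very general point of $D^\nu$, so these deformations must coincide generically with the leaves of $\cal F_{D^\nu}$. Consequently $\cal F_{D^\nu}$ is algebraically integrable and gives rise to a fibration $f:D^\nu \dashrightarrow B$ whose general fibre is $\tilde{C}$; in particular $\tilde C^2_{D^\nu}=0$ and $\tilde C$ can be chosen to lie in the smooth locus of $D^\nu$ and to avoid any prescribed proper closed subset.

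Applying Proposition \ref{foliationadjunction} together with classical adjunction then gives
\[\nu^*(K_{\cal F}+D) = K_{\cal F_{D^\nu}} + \Theta, \qquad \nu^*(K_X+D) = K_{D^\nu} + \Delta_{D^\nu},\]
with $\Theta,\Delta_{D^\nu}\geq 0$ both supported on $\nu^{-1}(\text{sing}(X)\cup\text{sing}(D))$ together with the tangency locus of $D$ and $\cal F$ (which is a proper closed subset of $D^\nu$ since $D$ is globally transverse). Choosing $\tilde C$ to miss these loci, $\Theta\cdot\tilde C = \Delta_{D^\nu}\cdot\tilde C = 0$. Subtracting the two formulas and using that $\cal F_{D^\nu}$ is the relative foliation of $f$, so that $K_{D^\nu}-K_{\cal F_{D^\nu}} = f^*K_B$ on the locus where $f$ is a morphism, yields
\[\nu^*(K_X-K_{\cal F})\cdot\tilde C \; = \; f^*K_B\cdot\tilde C \; = \; 0,\]
because $\tilde C$ is a fibre of $f$. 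The projection formula then gives $(K_X - K_{\cal F})\cdot C = 0$, i.e.\ $K_{\cal F}\cdot R = K_X\cdot R$.

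The main obstacle is the second paragraph: justifying that the covering family of deformations really does assemble into an algebraic integration of $\cal F_{D^\nu}$, rather than merely lying tangent to it in uncontrolled fashion. This step hinges essentially on the rank $1$ nature of $\cal F_{D^\nu}$ (forcing an irreducible tangent curve through a very general point to coincide with the unique leaf there), the extremality of $R$ (yielding a single numerical class), and $\text{loc}(R)=D$ (providing the covering). A related technical point is ensuring that a general fibre $\tilde C$ avoids the supports of both the classical and foliated differents, which in turn reduces to the fact that transversality of $D$ makes the $\cal F$-tangency locus a proper closed subset of $D^\nu$.
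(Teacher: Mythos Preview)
Your overall strategy mirrors the paper's: pass to the normalization $D^\nu$, argue that the induced rank $1$ foliation $\cal F_{D^\nu}$ is algebraically integrable (i.e.\ tangent to a fibration $f:D^\nu\to B$), and then compare the foliated and classical adjunction formulas against a general fibre. Your justification of algebraic integrability via the covering family of deformations is a reasonable unpacking of what the paper asserts in one line by invoking non-dicriticality.

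There is, however, a genuine gap in your handling of the differents. You assert that a general fibre $\tilde C$ can be moved off the supports of both $\Theta$ and $\Delta_{D^\nu}$. But those supports lie in $\nu^{-1}(\text{sing}(X)\cup\text{sing}(D))$ together with the $\cal F$-tangency locus, and while the tangency locus is indeed zero-dimensional on $D^\nu$, nothing prevents $\nu^{-1}(\text{sing}(X)\cup\text{sing}(D))$ from containing a curve $\Sigma$ which is \emph{horizontal}, i.e.\ dominates $B$. Every fibre of $f$ meets such a $\Sigma$, so no choice of $\tilde C$ avoids it, and your claimed vanishing $\Theta\cdot\tilde C=\Delta_{D^\nu}\cdot\tilde C=0$ is unjustified. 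Your final paragraph acknowledges a ``technical point'' here but reduces it to properness of the tangency locus, which is not the actual obstruction.

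The paper closes this gap by a different mechanism: rather than dodging the supports, it shows that the horizontal components of the foliated and classical differents carry \emph{equal} coefficients. A curve in $D$ transverse to $\cal F_{D^\nu}$ is transverse to $\cal F$ in the sense of Definition \ref{tangtransdef}, and Lemma \ref{transdiscrep} then says that over such a centre the foliated and usual discrepancies agree; tracing this through the construction of the different (Proposition \ref{foliationadjunction}) forces the coefficients to match. Vertical components, on the other hand, have zero intersection with a general fibre. Hence $(\Theta-\Delta_{D^\nu})\cdot\tilde C=0$ does hold, and your conclusion is correct, but your argument does not supply the reason.
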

\begin{proof}
Let $\cal G$ be the foliation restricted to $D^\nu$ where $\nu:D^\nu \rightarrow D$ is 
the normalization.

Choose $t$ so that $\Delta+tD = \Delta'+D$ where $D$ is not contained in $\text{supp}(\Delta')$
and write $\nu^*(K_{\cal F}+\Delta+tD) = K_{\cal G}+\Theta$.  By non-dicriticality we see that 
$\cal G$ comes from a $\bb P^1$-fibration $D^\nu \rightarrow B$.
If $f$ is a general fibre of $D^\nu \rightarrow B$, which spans $R$, then $K_{\cal G}\cdot f = K_{D^\nu}\cdot f = -2$.

Thus, it suffices to show that if $\nu^*(K_X+\Delta+tD) = K_{D^\nu}+\Theta'$ then $\Theta'\cdot f = \Theta\cdot f$.
We see that if $Y \rightarrow X$ is some high enough resolution
then for centres transverse to the foliation by Lemma \ref{transdiscrep} 
the discrepancy with respect to $(\cal F, \Delta+tD)$
is exactly the discrepancy with respect to $(X, \Delta+tD)$ and so we see that, by construction of the different, 
Proposition \ref{foliationadjunction}, 
components of $\Theta$ and $\Theta'$ transverse to $\cal G$ must have the same coefficient.

Finally, observe that if $B \subset \text{supp}(\Theta+\Theta')$ is a component tangent to $\cal G$
then $f\cdot B = 0$ and so we may conclude $\Theta\cdot f = \Theta'\cdot f$.
\end{proof}

\begin{lemma}
\label{lem_irred}
Let $X$ be a $\bb Q$-factorial and klt threefold and $\cal F$ a non-dicritical co-rank 1
foliation and suppose that $(\cal F, \Delta)$ is log canonical.
Let $R$ be a $K_{\cal F}+\Delta$-negative extremal ray with $\text{dim}(\text{loc}(R)) = 2$.
Then $\text{loc}(R)$ is an irreducible divisor.
\end{lemma}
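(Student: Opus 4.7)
The plan is to show that $\text{loc}(R)$ sits inside a unique prime divisor with negative intersection against $R$, and then deduce equality from dimension considerations. First, I would argue that any supporting hyperplane $H_R$ of $R$ must satisfy $\nu(H_R) = 3$, i.e., $H_R$ is big and nef. If $\nu(H_R) < 3$, then applying foliated bend and break (Corollary \ref{bendandbreak}) with $D_i = H_R$ for $1 \leq i \leq \nu(H_R)+1$ and $D_i = H$ otherwise, exactly as in the proofs of Lemma \ref{l_closedness} and the cone theorem, would produce a rational curve tangent to $\cal F$ through a general point of $X$ whose class lies in $R$, forcing $\text{loc}(R) = X$ and contradicting $\dim \text{loc}(R) = 2$.

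With $H_R$ big and nef, I would write $H_R \sim_{\bb Q} A + E$ where $A$ is ample and $E \geq 0$. For any curve $C$ with $[C] \in R$, the equality $H_R \cdot C = 0$ together with $A \cdot C > 0$ gives $E \cdot C < 0$, so some prime component $S$ of $E$ satisfies $S \cdot C < 0$, forcing $C \subset S$. In particular, the collection $\mathcal{S}$ of prime divisors $S$ with $S \cdot R < 0$ is nonempty, and for any such $S$ and any $[C] \in R$ we have $S \cdot C < 0$, hence $C \subset S$; this yields $\text{loc}(R) \subset S$ for every $S \in \mathcal{S}$.

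Next, I would rule out the possibility that $\mathcal{S}$ contains more than one element: if $S_1 \neq S_2$ both lay in $\mathcal{S}$, then the containments $\text{loc}(R) \subset S_1$ and $\text{loc}(R) \subset S_2$ give $\text{loc}(R) \subset S_1 \cap S_2$, which has dimension at most $1$, contradicting $\dim \text{loc}(R) = 2$. Therefore $\mathcal{S} = \{S\}$ consists of a single prime divisor $S$, and $\text{loc}(R) \subset S$. Since $\text{loc}(R)$ is closed (Lemma \ref{l_closedness}) of dimension $2$, and $S$ is an irreducible surface, we conclude $\text{loc}(R) = S$, which is irreducible.

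There is no serious obstacle here; the argument is essentially extracted from the case analysis already embedded in the proof of Lemma \ref{l_closedness}. The only small point to verify is that the bend and break reduction giving $\nu(H_R) = 3$ goes through unchanged, which it does since we only need $\dim \text{loc}(R) < 3$.
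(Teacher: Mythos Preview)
Your argument is correct and reaches the same conclusion as the paper, but the execution differs slightly. The paper argues by contradiction: it supposes $\text{loc}(R)$ has two distinct irreducible components $D_1, D_2$ (with $D_1$ a divisor), chooses covering families of curves $\{C^i_t\}$ with $[C^i_t]\in R$ on each $D_i$, produces an $R$-negative prime divisor $E_0$ from the bigness of $H_R$, identifies $E_0 = D_1$ via the family on $D_1$, and then obtains the contradiction $C^2_t\cdot D_1 \geq 0$ from the family on $D_2$. You instead work directly: any prime divisor $S$ with $S\cdot R<0$ must contain every curve with class in $R$, hence contains $\text{loc}(R)$; two such divisors would force $\dim\text{loc}(R)\leq 1$; so there is a unique such $S$, and closedness of $\text{loc}(R)$ together with $\dim\text{loc}(R)=2$ gives $\text{loc}(R)=S$.

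The two routes share the same core input (bigness of $H_R$, hence existence of an $R$-negative prime divisor), so this is a minor repackaging rather than a genuinely different idea. Your version is a touch more streamlined and avoids invoking covering families; the paper's version avoids the explicit appeal to closedness of $\text{loc}(R)$ at the end by working component-wise from the outset. Either way the content is the same.
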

\begin{proof}
Suppose for sake of contradiction that $\text{loc}(R)$ contains two distinct
irreducible components $D_1$ and $D_2$.  
We may assume without loss of generality that $D_1$ is a divisor.
By definition for $i =1, 2$
we may find a family of curves $\{C^i_t\}$
covering $D_i$ such that $[C^i_t] \in R$.

Observe that $H_R-\epsilon A \sim_{\bb Q} E \geq 0$ where $H_R$ is the supporting hyperplane
to $R$, $A$ is ample and $\epsilon >0$ is sufficiently small.  Thus there is some irreducible 
component $E_0$
of $E$ with $R\cdot E_0<0$, in particular $C^1_t\cdot E_0<0$
and we see that $E_0 = D_1$ and so $C^1_t \cdot D_1<0$.
On the other hand, we have that $C^2_t\cdot D_1 \geq 0$ but 
this is a contradiction of the fact that $[C^2_t] \in R$.
\end{proof}

\begin{theorem}
Let $X$ be a $\bb Q$-factorial and klt threefold.
Suppose that $\cal F$ is a co-rank 1 foliation
with non-dicritical singularities and $\Delta \geq 0$.  Suppose furthermore that $(\cal F, \Delta)$
has canonical singularities and that $(\cal F, \Delta)$ is terminal along $\text{sing}(X)$.
Let $R$ be a $K_{\cal F}+\Delta$-negative extremal ray and suppose 
that $\text{loc}(R) = D$ a divisor.
Then there exists a contraction of $D$.
\end{theorem}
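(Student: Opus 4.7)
The plan is to reduce the foliated divisorial contraction to a classical $K_X$-type divisorial contraction of a suitable klt pair $(X, B)$, and then invoke the Kawamata-Shokurov contraction theorem from the usual MMP. By Lemma \ref{lem_irred}, $D = \text{loc}(R)$ is an irreducible prime divisor, and by Lemma \ref{onlycurvestangent} every curve $C$ with $[C] \in R$ is tangent to $\cal F$. Hence $D$ is covered by $\cal F$-tangent curves, which forces $D$ to be either $\cal F$-invariant or $\cal F$-transverse. I handle these two cases separately.

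In the transverse case, Lemma \ref{transversecontraction} gives the crucial equality $K_X \cdot R = K_{\cal F} \cdot R$, and adding $\Delta \cdot R$ to both sides yields
\[(K_X + \Delta) \cdot R = (K_{\cal F} + \Delta) \cdot R < 0.\]
By Corollary \ref{quotientsmoothdiscrep} combined with the remark that no component of $\text{supp}(\Delta)$ is foliation-invariant, the pair $(X, \Delta)$ has only klt singularities (after, if necessary, a small ample perturbation to absorb any $\Delta$-coefficients equal to one). The classical Kawamata contraction theorem then produces a morphism $c_R \colon X \to Y$ contracting exactly the curves whose class lies in $R$.

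In the invariant case, by Lemma \ref{singcomparison} the pair $(X, \Delta + D)$ is log canonical. Choose a general curve $C \subset D$ spanning $R$ and not meeting $\text{sing}(\cal F)$, which exists by the cone theorem and Lemma \ref{l_closedness}. By Lemma \ref{intersectioncomparison}, $K_{\cal F} \cdot C \geq (K_X + D) \cdot C$, so
\[(K_X + \Delta + D) \cdot R \;\leq\; (K_{\cal F} + \Delta) \cdot R \;<\; 0.\]
For small $\delta > 0$, the boundary $B = \Delta + (1-\delta)D$ makes $(X, B)$ klt (using that $X$ is klt, that $D$ is not contained in $\text{supp}(\Delta)$ since no component of $\Delta$ is $\cal F$-invariant, and again a small perturbation to deal with any coefficient-one terms), while $R$ remains $(K_X + B)$-negative. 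The Kawamata contraction theorem applies once more. Since $\text{loc}(R) = D$, the resulting morphism $c_R$ is a divisorial contraction of $D$.

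The main technical obstacle is arranging the singularities so that the classical contraction theorem is applicable: specifically, using Corollary \ref{quotientsmoothdiscrep} to translate the foliated canonical hypothesis into klt-ness of $(X, B)$, and using the hypothesis that $(\cal F, \Delta)$ is terminal along $\text{sing}(X)$ to obtain the clean intersection-number comparison of Lemma \ref{intersectioncomparison} at curves lying in $D \cap \text{sing}(X)$. A secondary subtlety, handled at the very end, is verifying that the contraction produced by the classical theorem contracts \emph{only} the curves in $R$; this is standard once the supporting divisor $K_X + B$ is arranged to vanish on precisely $R$.
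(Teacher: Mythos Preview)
Your proposal is correct and follows essentially the same route as the paper: split into the invariant and transverse cases, use Lemma \ref{transversecontraction} and Lemma \ref{intersectioncomparison} respectively to show $R$ is negative on a suitable log pair, check klt-ness via the comparison of foliated and classical discrepancies, and invoke the classical contraction theorem. The only cosmetic difference is that the paper packages both cases into the single boundary $\Delta+(1-\epsilon(D))(1-t)D$ and appeals to Lemma \ref{singcomparison} uniformly, whereas you treat the transverse case separately via Corollary \ref{quotientsmoothdiscrep}; the content is the same.
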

\begin{proof}
By Lemma \ref{lem_irred} $D$ is irreducible,
thus it is either transverse to the foliation or is invariant by the foliation.

If $D$ is transverse to the foliation, then
by
Lemma \ref{transversecontraction}
we know that $(K_{\cal F}+\Delta)\cdot R = (K_X+\Delta)\cdot R$ and so $R$
is in fact $K_X+\Delta$-negative.  
Otherwise, $D$ is invariant and by Lemma \ref{intersectioncomparison} we see that
$R$ is $K_X+\Delta+D$-negative.

In either case, we see that $R$ is a $K_X+\Delta+(1-\epsilon(D))(1-t)D$-negative extremal
ray for $t>0$ sufficiently small.
By Lemma \ref{singcomparison} we know
that $(X, \Delta+(1-\epsilon(D))(1-t)D)$ is klt and so the contraction of $R$ exists
by \cite[Theorem 3.7]{KM98}.
\end{proof}

\begin{corollary}
Let $c_R:X \rightarrow Y$ be the contraction constructed above.

(i) $\rho(X/Y) = 1$.

(ii) $Y$ is $\bb Q$-factorial and projective.

(iii) $Y$ is klt.
\end{corollary}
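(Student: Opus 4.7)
The plan is to reduce everything to the corresponding statement in the classical minimal model program, exploiting the fact that in constructing $c_R$ we showed $R$ to be a $K_X+B$-negative extremal ray for
\[B = \Delta+(1-\epsilon(D))(1-t)D,\]
with $t>0$ sufficiently small and $(X,B)$ klt (by Lemma \ref{singcomparison}). Thus $c_R:X \to Y$ is, up to re-labelling, a classical divisorial contraction of a $K_X+B$-negative extremal ray on a $\bb Q$-factorial klt pair, and we may invoke the standard structural results (cf.\ \cite[Corollary 3.17, Proposition 3.36, Corollary 3.43]{KM98}).

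First I would verify (i). Since $c_R$ contracts exactly the curves with class in $R$ and $\rho(X)$ is finite, the exact sequence
\[0 \longrightarrow N^1(Y)_{\bb R} \xrightarrow{c_R^*} N^1(X)_{\bb R} \longrightarrow \bb R \longrightarrow 0,\]
(with the last map given by intersection with a curve spanning $R$) immediately yields $\rho(X/Y)=1$.

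Next, for (ii), projectivity of $Y$ follows from the fact that $c_R$ is obtained from the contraction theorem applied to the klt pair $(X,B)$, which produces a projective morphism. For $\bb Q$-factoriality: given a prime Weil divisor $D' \subset Y$, consider its strict transform $\widetilde{D'}$ on $X$. Since $X$ is $\bb Q$-factorial, $\widetilde{D'}$ is $\bb Q$-Cartier, and by $\rho(X/Y)=1$ there exists $\lambda \in \bb Q$ such that $\widetilde{D'}+\lambda D$ has zero intersection with any curve contracted by $c_R$; hence $\widetilde{D'}+\lambda D = c_R^* D'$ is the pullback of a $\bb Q$-Cartier divisor on $Y$, and pushing forward shows that $D'$ itself is $\bb Q$-Cartier. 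Finally, for (iii), since $(X,B)$ is klt and $c_R$ is a $K_X+B$-negative divisorial contraction, $(Y, c_{R*}B)$ is klt by the standard discrepancy computation using the negativity lemma (\cite[Lemma 3.38]{KM98}): for any exceptional divisor $E$ over $Y$, pulling back to a common resolution and using the $c_R$-antiample-ness of $-(K_X+B)$ bounds discrepancies below by those with respect to $(X,B)$. In particular $Y$ itself has klt singularities.

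There is no serious obstacle here once the construction of $c_R$ has been carried out, since the whole content of the corollary is already contained in the classical MMP results for klt pairs; the one small point to check is the $\bb Q$-factoriality argument above, which relies crucially on $\rho(X/Y) = 1$ from (i).
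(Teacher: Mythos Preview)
Your proposal is correct and takes essentially the same approach as the paper: the paper's proof is the single sentence ``This is a consequence of standard facts about log contractions, see for instance \cite[Theorem 3.7]{KM98},'' and you have simply spelled out those standard facts in detail, correctly identifying that the construction of $c_R$ as a $(K_X+B)$-negative contraction on a klt pair is what makes the classical results applicable.
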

\begin{proof}
This is a consequence of standard facts about log contractions, see for instance \cite[Theorem 3.7]{KM98}.
\end{proof}

\subsection{Flipping contractions}
Throughout this section we assume $X$ is a projective $\bb Q$-factorial and klt threefold.
Let $\cal F$ a co-rank 1 foliation and
let $R$ be a $K_{\cal F}+\Delta$-negative 
extremal ray.  Suppose $\text{loc}(R)$ is 1-dimensional and 
let $H_R$ be a supporting hyperplane to $R$.

In this section we show that a flipping contraction can be realized in the
category of algebraic spaces.  While we are able to show that this contraction
(and the flip) can be realized in the category of projective spaces in
some special cases, we are unable to deduce a complete flip theorem.

For a Cartier divisor $D$ let 
$\text{Null}(D) = \{P : P \in V, V \cdot D^{\text{dim}(V)} = 0\}$,
and $BS(D)$ denotes the stable base locus of $D$, i.e. 
$\cap_{m \geq 0} bs(mD)$ where $bs(mD)$ is the base locus of $mD$.
It is easy to see that $BS(D) = bs(mD)$ for $m$ sufficiently large
and divisible.

We will make use of the following result \cite[Corollary 1]{CL14}.
\begin{lemma}
Let $X$ be normal threefold and let $L$ be big and nef.
Let $A$ be an ample divisor.  Then for all $\epsilon >0$ sufficiently small
$\text{Null}(L) = BS(L-\epsilon A)$.
\end{lemma}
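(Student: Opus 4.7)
The plan is to reduce this to two standard facts about base loci of big and nef divisors: the definition/stabilisation of the augmented base locus, and Nakamaye's theorem identifying it with the null locus.

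First, I would recall that the augmented base locus is $\mathbb{B}_+(L) := \bigcap_{\epsilon > 0} BS(L - \epsilon A)$, and that this definition is independent of the choice of the ample divisor $A$. The key point here is that the sets $BS(L - \epsilon A)$ form a decreasing family of closed subschemes as $\epsilon \to 0^+$, so by Noetherian induction on closed subschemes of $X$ the family stabilises: there exists $\epsilon_0 > 0$ such that for all $0 < \epsilon < \epsilon_0$ we have $BS(L - \epsilon A) = \mathbb{B}_+(L)$. This reduces the statement to the equality $\mathbb{B}_+(L) = \mathrm{Null}(L)$.

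Next I would prove the inclusion $\mathbb{B}_+(L) \supseteq \mathrm{Null}(L)$ (the easy direction). Given an irreducible subvariety $V \not\subseteq \mathrm{Null}(L)$, by definition $L^{\dim V}\cdot V > 0$, so $L|_V$ is big and nef. For $\epsilon$ sufficiently small, $(L - \epsilon A)|_V$ remains big (bigness being an open condition in the big cone), hence $V$ is not contained in $BS(L - \epsilon A)$, proving $V \not\subseteq \mathbb{B}_+(L)$.

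The main obstacle is the reverse inclusion $\mathbb{B}_+(L) \subseteq \mathrm{Null}(L)$, which is the content of Nakamaye's theorem. This is the hard part: one cannot merely argue from numerical data, because a subvariety $V$ can fail to be in $\mathrm{Null}(L)$ without $L|_V$ having obviously controllable sections. The standard proof proceeds by considering a resolution $\mu : Y \to X$ and decomposing $\mu^* L = P + N$ into a positive part and an exceptional/negative contribution; combined with the base point free theorem applied to suitable perturbations $\mu^*L - \epsilon \mu^*A$, one extracts sections that avoid a given subvariety outside $\mathrm{Null}(L)$. Since we are on a threefold one may appeal either to the general Nakamaye theorem as stated in \cite{CL14} or reduce to surface computations on $Y$ via the hyperplane cutting trick, which makes the divisorial Zariski decomposition available. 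Assembling the two inclusions and combining with the stabilisation step gives the claimed equality $\mathrm{Null}(L) = BS(L - \epsilon A)$ for all sufficiently small $\epsilon > 0$.
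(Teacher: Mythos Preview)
The paper does not actually prove this lemma; it is simply quoted as \cite[Corollary 1]{CL14} and used as a black box. Your proposal is thus not an alternative proof so much as an unpacking of the cited result, and the overall architecture---stabilise $BS(L-\epsilon A)$ to $\mathbb{B}_+(L)$ by Noetherianity, then invoke Nakamaye's identification $\mathbb{B}_+(L)=\mathrm{Null}(L)$, in the normal-variety form of \cite{CL14}---is the right one.

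There is, however, a slip in your treatment of the two inclusions. You announce $\mathbb{B}_+(L)\supseteq\mathrm{Null}(L)$ as the easy direction, but the argument you then give (assume $V\not\subseteq\mathrm{Null}(L)$, conclude $V\not\subseteq\mathbb{B}_+(L)$) is the contrapositive of the \emph{opposite} inclusion $\mathbb{B}_+(L)\subseteq\mathrm{Null}(L)$, which is exactly the hard direction you later attribute to Nakamaye. And that argument has a real gap: knowing $(L-\epsilon A)|_V$ is big produces sections on $V$, not on $X$; without an extension step you cannot deduce $V\not\subseteq BS(L-\epsilon A)$. The genuinely easy inclusion is $\mathrm{Null}(L)\subseteq\mathbb{B}_+(L)$, and its proof runs the other way: if $V\not\subseteq\mathbb{B}_+(L)$ then some $m(L-\epsilon A)$ has a global section on $X$ not vanishing identically on $V$, so restricting gives $(L-\epsilon A)|_V$ $\bb Q$-effective, hence $L|_V\geq \epsilon A|_V$ is big and $L^{\dim V}\cdot V>0$. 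Your third paragraph is then correct in spirit: the remaining inclusion $\mathbb{B}_+(L)\subseteq\mathrm{Null}(L)$ is the substance of Nakamaye's theorem, and on a normal (not necessarily smooth) threefold one really does need the extension in \cite{CL14} that you cite.
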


  As we have seen
$H_R$ is big and nef.  By our next lemma
$\text{Null}(H_R)$ is a finite collection of curves.

\begin{lemma}
\label{surfaceintersection}
Set up as above.
Let $S \subset X$ be a surface.  Then $H_R^2\cdot S >0$.
\end{lemma}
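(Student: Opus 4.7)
The plan is to argue by contradiction. Suppose $H_R^2 \cdot S = 0$. Let $\nu:S^\nu \to S$ denote the normalization and set $L = \nu^*H_R$, which is a nef divisor on the surface $S^\nu$ with $L^2 = 0$. I will show that this forces every point of $S$ to lie on a curve of class in $R$, so that $S \subset \text{loc}(R)$, contradicting the flipping-type hypothesis $\dim \text{loc}(R) = 1$.

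First, if $L$ is numerically trivial, then every curve $C \subset S$ satisfies $H_R \cdot \nu_*C = 0$, so $[\nu_*C] \in R$ and $S \subset \text{loc}(R)$ directly. So assume $L \not\equiv 0$; since $L^2 = 0$, $L$ has numerical dimension exactly one. Writing $H_R \equiv (K_{\cal F}+\Delta)+H$ with $H$ ample, as in the setup of the cone theorem, the identity $H_R^2 \cdot S = 0$ rewrites as $(K_{\cal F}+\Delta)\cdot H_R \cdot S = -H\cdot H_R\cdot S$. Hodge index, applied on a resolution of $S^\nu$, forces $H \cdot L > 0$ (else the nef class $L$ with $L^2 = 0$ and $L\cdot H = 0$ would be numerically trivial), so $(K_{\cal F}+\Delta) \cdot H_R \cdot S < 0$.

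Next, apply foliated bend and break (Corollary \ref{bendandbreak}) on $S^\nu$ with $D_1 = D_2 = L$: the equality $D_1\cdot D_2 = L^2 = 0$ is automatic, so one only needs the relevant foliated canonical on $S^\nu$ to have strictly negative intersection with $L$. If $S$ is $\cal F$-invariant, Lemma \ref{l_summarylemma} gives $\nu^*K_{\cal F} = K_{S^\nu}+\Theta$ with $\Theta \geq 0$, and since $(\cal F,\Delta)$ is log canonical $S$ is not a component of $\Delta$, so $K_{S^\nu}\cdot L \leq \nu^*K_{\cal F}\cdot L \leq (K_{\cal F}+\Delta)\cdot H_R \cdot S < 0$; bend and break applied to $T_{S^\nu}$ then produces a rational curve through a general point of $S^\nu$ with zero $L$-intersection, whose image in $X$ is a curve of class in $R$, covering $S$ and giving the required contradiction. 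If $S$ is transverse to $\cal F$, foliation adjunction (Proposition \ref{foliationadjunction}) gives $\nu^*(K_{\cal F}+\Delta+S) = K_{\cal F_{S^\nu}}+\Delta_{S^\nu}$ with $\Delta_{S^\nu}\geq 0$, and the same bend and break argument is applied to the rank-one foliation $\cal F_{S^\nu}$ on $S^\nu$.

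The main obstacle is the transverse case. Here the required negativity $(K_{\cal F_{S^\nu}}+\Delta_{S^\nu})\cdot L < 0$ translates into $-H\cdot H_R\cdot S + S^2\cdot H_R < 0$, and the a priori sign of $S^2\cdot H_R$ is not controlled. I expect this to be handled by exploiting the freedom to enlarge $H$ in the definition of the supporting hyperplane so that $H-\epsilon S$ remains ample for some $\epsilon$, making $(H-\epsilon S)|_{S^\nu}\cdot L > 0$; alternatively, since $\dim\text{loc}(R) = 1$ implies $S\not\subset \text{loc}(R)$, the intersection $S\cap \text{loc}(R)$ is zero-dimensional, and a perturbation argument controlling $S\cdot R$ should then reduce to the first case.
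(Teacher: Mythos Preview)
Your overall architecture matches the paper's proof exactly: argue by contradiction, pass to the normalization $S^\nu$, split on the numerical dimension of $L = \nu^*H_R$, and in the middle case use foliated bend and break with $D_1 = D_2 = L$ to produce a covering family of curves in $R$ through $S$. The invariant case is handled correctly.

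The gap you flag in the transverse case is genuine, and neither of your proposed fixes works. You cannot ``enlarge $H$'': once $H_R$ is fixed as a supporting hyperplane, the decomposition $H_R = (K_{\cal F}+\Delta)+H$ determines $H$ uniquely, so there is no freedom to absorb $\epsilon S$ into it. And the vague perturbation via $S\cdot R$ does not give what you need, since the quantity in question is $S^2\cdot H_R$, not $S\cdot C$ for $[C]\in R$.

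The paper resolves this with a one-line trick you are missing. Since $H_R$ is big (this is part of the flipping-type setup: $\nu(H_R)=3$), for some $m>0$ one can write $mH_R \sim_{\bb Q} A' + D + S$ with $A'$ ample and $D\geq 0$ not containing $S$. Restricting to $S^\nu$ and using $L^2 = 0$ gives
\[
L\cdot \nu^*S \;=\; L\cdot\bigl(mL - \nu^*(A'+D)\bigr) \;=\; -\,L\cdot \nu^*(A'+D) \;\leq\; -\,L\cdot \nu^*A' \;<\; 0.
\]
Thus $S\cdot H_R\cdot S < 0$, and combined with the inequality $(K_{\cal F}+\Delta)\cdot H_R\cdot S < 0$ that you already established, one gets $(K_{\cal F}+\Delta+tS)\cdot H_R\cdot S < 0$ for any $t\geq 0$; in particular $(K_{\cal F_{S^\nu}}+\Delta_{S^\nu})\cdot L < 0$, and bend and break applies. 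With this single addition your argument is complete and coincides with the paper's.
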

\begin{proof}
Suppose for sake of contradiction that there is some
surface $S$ such that $H_R^2 \cdot S = 0$.

Let $f: S^\nu \rightarrow S$ be the normalization of $S$.

$H_R\vert_S$ is nef and so it is pseudo-effective, and we proceed case by case
on the numerical dimension of $f^*H_R$.

If $\nu(f^*H_R) = 0$ then $H_R$ is zero on a moving
curve, hence is zero on infinitely many curves, a contradiction.

If $\nu(f^*H_R) = 1$ then write $H_R = K_{\cal F}+\Delta+A$
where $A$ is ample.  We have that $f^*(H_R)^2 =0$,
and that $f^*(H_R)$ has positive intersection
with any ample divisor on $S^\nu$ (otherwise $H_R$ would
be zero on a moving curve).

Thus $$f^*(K_{\cal F}+\Delta)\cdot f^*H_R 
= -f^*A\cdot f^*H_R<0.$$

Perhaps rescaling $H_R$ by a positive 
constant we may write $H_R = A'+D+S$ where
$A'$ is ample, and $D$ is effective, and the support of $D$
does not contain $S$.
Then 
$$f^*H_R\cdot f^*S = -f^*H_R \cdot f^*(A'+D) 
\leq -f^*H_R\cdot f^*A'<0.$$

If $S$ is $\cal F$ invariant, then 
$f^*(K_{\cal F}+\Delta) = K_{S^\nu}+\Theta$ where $\Theta \geq 0$.

We apply bend and break, Lemma \ref{bendandbreak}, to $D_1 = D_2 = f^*H_R$.
$D_1\cdot D_2 = 0$ by supposition, and by our above computation
$(K_{S^\nu}+\Theta)\cdot D_1 = f^*(K_{\cal F}+\Delta)\cdot f^*H_R <0$.  
Thus, we get through a general
point of $S$ a rational curve $\Sigma$ with $0 = D_2\cdot \Sigma = H_R\cdot \Sigma$
a contradiction.

If $S$ is not $\cal F$ invariant then 
choose $0 \leq t \leq 1$ so that $\Delta+tS = \Delta'+S$ where $S$ is not contained in $\text{supp}(\Delta')$.
By foliation adjunction
we see that $f^*(K_{\cal F}+\Delta+tS) = K_{\cal F_{S^\nu}}+\Theta$, $\Theta \geq 0$.
Again, by our above computations we have that 
$(K_{\cal F_{S^\nu}}+\Theta)\cdot f^*H_R = 
f^*(K_{\cal F}+\Delta+tS)\cdot f^*H_R<0$.

Again we apply bend and break with $D_1 = D_2 = f^*H_R$.
$D_1\cdot D_2 = 0$ by assumption, and $D_1\cdot(K_{\cal F_{S^\nu}}+\Theta)<0$.
Again, through a general point of $S$ we get a rational curve $\Sigma$
tangent to the foliation with $0 = D_2\cdot \Sigma = H_R\cdot \Sigma$
a contradiction.

If $\nu(f^*H_R) = 2$ then $f^*H_R^2 = H_R^2\cdot S >0$ and we are done.
\end{proof}

\begin{lemma}
Set up as above.
$\text{loc}(R)$ can be contracted
in the category of algebraic spaces.
\end{lemma}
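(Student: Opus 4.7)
The plan is to realize $\text{loc}(R)$ as the null locus of the big and nef supporting divisor $H_R$, and then invoke Artin's theorem on contractions in the category of algebraic spaces. We are in the case $\nu(H_R) = 3$, so $H_R$ is big and nef, and by Lemma \ref{surfaceintersection} we have $H_R^2 \cdot S > 0$ for every irreducible surface $S \subset X$. Consequently $\text{Null}(H_R)$ contains no two-dimensional component and is therefore a finite union of irreducible curves (together possibly with isolated points).

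First I would verify $\text{Null}(H_R) = \text{loc}(R)$. Each irreducible curve $C \subset \text{loc}(R)$ satisfies $[C] \in R$, hence $H_R \cdot C = 0$ and so $C \subset \text{Null}(H_R)$. Conversely, choosing $H_R$ to be a tight supporting functional of $R$ (which we may, since $R$ is extremal), every irreducible curve in $\text{Null}(H_R)$ satisfies $H_R \cdot C = 0$ and thus has class in $R$, so lies in $\text{loc}(R)$. The preceding lemma then identifies this common set with $BS(H_R - \epsilon A)$ for any ample $A$ and $\epsilon > 0$ sufficiently small, so for large divisible $N$ we obtain a linear equivalence $NH_R \sim N\epsilon A + E$ with $E \geq 0$ effective and supported exactly on $\text{loc}(R)$.

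To produce the contraction, I note that on the open set $U = X \setminus \text{loc}(R)$ the divisor $H_R|_U$ is ample, since $U$ is the complement of the augmented base locus $\text{Null}(H_R)$ of $H_R$. Thus for $N \gg 0$ the rational map defined by $|NH_R|$ is a morphism on $U$ and is a locally closed immersion there, onto a quasi-projective image $Y_0$. Artin's contractibility theorem then furnishes a proper morphism $c_R: X \rightarrow Z$ of algebraic spaces with $(c_R)_\ast \cal O_X = \cal O_Z$, which is an isomorphism over $Y_0$ and contracts each connected component of $\text{loc}(R)$ to a point. The main obstacle is verifying the formal-contractibility hypothesis of Artin's theorem; this is achieved by passing to a formal neighborhood of $\text{loc}(R)$ and using the decomposition $NH_R \sim N\epsilon A + E$ to produce formal sections vanishing to arbitrary order along $\text{loc}(R)$, from which the formal contraction is built and then algebraized in the category of algebraic spaces.
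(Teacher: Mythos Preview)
Your overall plan---identify $\text{loc}(R)$ with $\text{Null}(H_R)$ and contract via Artin---matches the paper, but the execution has two gaps that leave the crucial step unproven.

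First, the decomposition $NH_R \sim N\epsilon A + E$ with $E$ supported \emph{exactly} on $\text{loc}(R)$ does not follow from $BS(H_R-\epsilon A)=\text{loc}(R)$. The latter says every member of $|m(H_R-\epsilon A)|$ contains $\text{loc}(R)$, not that some member is supported there; on a threefold whose base locus is a curve, a general member is a surface through that curve, not a divisor supported on it. So your $E$ does not exist as stated.

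Second, and more importantly, the heart of the proof is verifying Artin's input, and your final paragraph only gestures at this. The version of Artin's theorem in play (\cite[Theorem 6.2]{Artin70}) asks for a closed subscheme $D$ with $\cal O_D(-D)$ ample. You do not exhibit such a $D$, and ``producing formal sections vanishing to arbitrary order'' is not a substitute: one must actually show the conormal is ample, and on the possibly singular $X$ there is no obvious divisor doing this job. The paper handles this by passing to a resolution $g:Y\to X$ of the base locus of $m(H_R-\epsilon A)$, writing $g^*(m(H_R-\epsilon A))=M+F$ with $M$ semi-ample and $F$ the fixed part supported on $\text{exc}(g)$, then perturbing by a small $g$-anti-ample exceptional divisor $\delta G$. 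One checks directly that $(F+\delta G)|_{\text{exc}(g)}$ is the restriction of an anti-ample divisor, so $D=N(F+\delta G)$ has $\cal O_D(-D)$ ample. Artin then contracts $D\subset Y$, and since $g(\text{exc}(g))=\text{loc}(R)$, this contraction factors through $g$ to give the desired contraction of $X$. The passage to $Y$ is the missing idea: it converts the one-dimensional base locus on $X$ into a divisorial exceptional set on which the anti-ampleness can actually be verified.
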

\begin{proof}
By our previous lemma $\text{Null}(H_R)$
is a finite collection of curves, each of which span $R$.
Let $A$ be an ample divisor and choose $\epsilon$ sufficiently
small and $m$ sufficiently large so that
$\text{Null}(H_R) = bs(m(H_R - \epsilon A)) = B$.

Let $g:Y \rightarrow X$ be a resolution of the
base locus of $m(H_R - \epsilon A)$
so that we have $g^*(m(H_R - \epsilon A)) = M+F$
where $M$ is semi-ample, $F$ is effective and
$g(F) = B$ and $g(\text{exc}(g)) = B$.

Let $G$ be an effective divisor $\bb Q$-Cartier divisor supported on $\text{exc}(g)$
such that $-G$ is $g$-ample (such a $G$ exists because $X$ is $\bb Q$-factorial).

Then for $1 \gg \delta>0$ we have that $\epsilon g^*A - \delta G+M = A'$ is ample on $Y$
and $(F+\delta G) +A' = mg^*H_R$.  Since $g^*H_R\vert_{\text{exc}(g)}$
is trivial,
we see that $(F+\delta G)\vert_{\text{exc}(g)} = -A'\vert_{\text{exc}(g)}$ is anti-ample.

Let $N$ be such that $N(F+\delta G) = D$ is an integral Cartier divisor.
By construction we know that $\cal O_{D_{red}}(-D)$ is ample. 
Recall that a line bundle $L$ on a scheme $Z$ is ample if and only
if $L\vert_{Z_{red}}$ is ample where $Z_{red}$ is the reduction, see for example
\cite[III.ex 5.7.(b)]{Hartshorne77}.

Thus, $\cal O_D(-D)$ is ample and so
 $D$ is a subscheme with an anti-ample normal bundle,
and so it may be contracted to a point by \cite[Theorem 6.2]{Artin70}.

This contraction
factors through $g$ and
gives a contraction
$X \rightarrow Z$. By \cite{Artin70} this contraction may
be taken in the category of algebraic spaces.
\end{proof}

Note that we have not proven that $Z$ is projective.
However, we do have the following criterion which will be useful later.

\begin{lemma}
Set up as above.
Let $H_R$ be the supporting hyperplane
to $R$.  Assume that $H_R$ descends to a $\bb Q$-Cartier 
divisor on $Z$, then $Z$ is projective.
\end{lemma}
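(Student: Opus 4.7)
The plan is to verify that the descended divisor $H_Z$ on $Z$ is ample via a Nakai--Moishezon criterion for proper algebraic spaces; once $H_Z$ is ample, $Z$ is automatically a projective scheme. Since $f:X\to Z$ contracts exactly the finitely many curves comprising $\text{loc}(R)=\text{Null}(H_R)$, the morphism $f$ is an isomorphism outside a finite set of points in $Z$. In particular, for every positive-dimensional irreducible subvariety $V\subset Z$, the strict transform $V'\subset X$ is well-defined and $f|_{V'}:V'\to V$ is birational.

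The key numerical inputs are already at our disposal: (i) $H_R$ is big and nef so $H_R^3>0$; (ii) for every irreducible surface $S\subset X$ we have $H_R^2\cdot S>0$ by Lemma \ref{surfaceintersection}; and (iii) for every irreducible curve $C\subset X$ not contained in $\text{Null}(H_R)$ we have $H_R\cdot C>0$, since $H_R$ is nef with vanishing locus equal to $\text{loc}(R)$. Applying the projection formula to $H_R=f^*H_Z$, for each irreducible subvariety $V\subset Z$ of dimension $d\in\{1,2\}$ one obtains
\[
H_Z^{d}\cdot V \;=\; (f^*H_Z)^{d}\cdot V'\;=\; H_R^{d}\cdot V' \;>\;0
\]
by the appropriate item, and for $V=Z$ we get $H_Z^3=H_R^3>0$ from bigness of $H_R$.

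Thus $H_Z$ satisfies the Nakai--Moishezon positivity conditions on every positive-dimensional subvariety of $Z$. I would then invoke Kollár's Nakai--Moishezon criterion for proper algebraic spaces (see \emph{Projectivity of Complete Moduli}) to deduce that the $\bb Q$-Cartier divisor $H_Z$ is ample on $Z$. Since a proper algebraic space admitting an ample line bundle is automatically a projective scheme, this shows $Z$ is projective.

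The main technical point to handle carefully is the passage from intersection numbers on $X$ to intersection numbers on the algebraic space $Z$. Because $f$ is birational with one-dimensional exceptional locus, strict transforms of subvarieties and the projection formula behave exactly as in the scheme case, so no essential difficulty arises there; the only subtle step is citing the version of Nakai--Moishezon valid for proper algebraic spaces rather than schemes, which is where I expect the presentation to require the most care.
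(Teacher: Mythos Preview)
Your proposal is correct and follows essentially the same approach as the paper: verify the Nakai--Moishezon criterion for the descended divisor using $H_R^3>0$, Lemma~\ref{surfaceintersection} for surfaces, and positivity on curves via the projection formula. If anything you are slightly more careful than the paper in explicitly invoking the algebraic-space version of Nakai--Moishezon (Koll\'ar), whereas the paper simply says ``the Nakai--Moishezon criterion for ampleness applies''.
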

\begin{proof}
By assumption, if $f$ is the contraction, then $H_R = f^*M$.
We claim that $M$ is in fact ample.  First $M$ is nef
and $M^3 >0$.  If $C$ is any curve in $Z$ then we also
have $M\cdot C >0$.  By Lemma \ref{surfaceintersection} if $S$ is any surface
then $M^2\cdot S >0$.  Thus the Nakai-Moishezon criterion
for ampleness applies to show that $M$ is ample, and so $Z$
is projective.
\end{proof}

\begin{remark}
Notice that we do not know if the flipping contraction is a $K_X+D$-contraction for some 
divisor $D$.
Indeed, the natural choice for $D$, namely, the germ of a leaf around a flipping curve
might not be a $\bb Q$-Cartier, even if $X$ is (algebraically) $\bb Q$-factorial,
and so a $K_X+D$-contraction doesn't even make sense.
We will revisit this issue when constructing terminal flips.
\end{remark}

\section{The smooth MMP and the existence of terminal flips}

In this section we first give a classification of flipping curves
when $X$ is a smooth threefold, 
and then use this to deduce the existence of an MMP starting with a smooth
foliation on a smooth threefold.

Finally, we explain how to construct flips for terminal co-rank 1 foliations on threefolds.

\subsection{Flipping curves on smooth varieties}
We begin with an example showing that this case can really happen, 
see also \cite{BruP11}
for some similar examples:

\begin{example}
Let $\phi: X_1 \dashrightarrow X_2$ be the threefold toric flop.
We can realize $X_i$ as an $\bb A^1$-bundle over $\bb A^2$ blown up
at a point, with exceptional curve $C_i$.
Let $\cal G$ be a foliation on $\bb A^2$ blown up at a point
so that the exceptional curve is invariant, meets exactly two
other invariant curves and $\cal G$ has canonical singularities.

Let $\cal F_2$ be the pull back of this foliation to $X_2$.
Let $\cal F_1$ be the strict transform of $\cal F_2$ under
$\phi^{-1}$.  $C_1$ is the flop of $C_2$, and observe that
$C_1 \subset \text{sing}(\cal F_1)$, and $\cal F_1$
has canonical singularities along $C_1$.

Let $X_0$ be the blow up of $X_i$ along $C_i$,
with exceptional divisor $E$.
and $\cal F_0$ the transformed foliation on $X_0$.
Let $\pi_i: X_0 \rightarrow X_i$.
Let $\tilde{C}_1$ be a $\bb P^1$ sitting above $C_1$.

Observe that $\cal F_0\vert_E = K_E + \Delta$
where $\Delta$ consists of three of the four torus
invariant divisors on $E$.
Thus, $K_{\cal F_0} \cdot \tilde{C}_1 = -1$
and since $\pi^*K_{\cal F_1} = K_{\cal F_0}$
we get that $K_{\cal F_1}\cdot C_1 = -1$.

Furthermore, we can check that $K_{\cal F_2}\cdot C_2 = 1$.

Thus, we see that $C_1$ is an isolated $K_{\cal F_1}$-negative
extremal ray, and the flip of $C_1$ exists.
\end{example}

We will use the following local version of Reeb stability:

\begin{lemma}
Let $L$ be a leaf of a foliation $\cal F$ on $X$
 and $K \subset L$ a compact
subset.  Suppose that $K$ is simply connected.  Then 
there is an open subset of $X$,
$K \subset W \subset X$ and a holomorphic submersion $W \rightarrow U$
such that the leaves of $\cal F$ are given by the fibres of this map.
\end{lemma}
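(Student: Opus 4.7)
The plan is to establish local Reeb stability by transporting a single transversal across $K$ via holonomy, using simple connectivity to guarantee that the holonomy germs are path-independent and hence glue to a global submersion.

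First, since $L$ is a leaf, both $X$ and $\mathcal{F}$ are smooth along $L$, so in particular along the compact set $K$. I would fix a base point $p_0 \in K$ and a holomorphic transversal $T \subset X$ of dimension $\mathrm{codim}(\mathcal{F})$ passing through $p_0$, such that in a small neighborhood $V_0$ of $p_0$ the foliation has the standard product structure with local projection $\pi_0 \colon V_0 \to T$ whose fibres are the plaques of $\mathcal{F}$.

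Next, for each $p \in K$ I would construct a holomorphic holonomy germ $h_p$ from a neighborhood of $p_0$ in $T$ to a small transversal $T_p$ at $p$: pick a path $\gamma_p$ in $K$ from $p_0$ to $p$, cover its image by a finite chain of foliation flowboxes $U_0, U_1, \ldots, U_N$ (compactness of $\gamma_p$ makes this possible), and define $h_p$ as the composition of the transversal transition germs across successive overlaps. The crucial point is that, since $K$ is simply connected, any two such paths are homotopic in $K$ (hence in $L$), and the standard fact that holonomy is a homotopy invariant of leafwise paths implies that $h_p$ depends neither on $\gamma_p$ nor on the flowbox chain. Thus we obtain a canonical germ $h_p$ for every $p \in K$.

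Now, for each $p \in K$ let $\pi_p^{\mathrm{loc}}$ denote the local projection of a small flowbox at $p$ onto $T_p$, and set $\pi_p := h_p^{-1} \circ \pi_p^{\mathrm{loc}}$, a submersion from an open neighborhood $W_p$ of $p$ to a neighborhood of $p_0$ in $T$. On overlaps $W_p \cap W_q$ the submersions $\pi_p$ and $\pi_q$ coincide: both are ``projection onto $T$ along the leaves of $\mathcal{F}$'', and the two resulting holonomy identifications with $T$ agree by the path-independence established above. Choosing a finite subcover $W_{p_1}, \ldots, W_{p_k}$ of $K$ by compactness, and setting $W = \bigcup_i W_{p_i}$, the $\pi_{p_i}$ glue to a global holomorphic submersion $\pi \colon W \to U \subset T$, and by construction the fibres of $\pi$ are exactly the local leaves of $\mathcal{F}$ in $W$.

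The main technical obstacle is ensuring that, as $p$ varies over the compact set $K$, the domain of definition of each germ $h_p$ contains a common neighborhood of $p_0$ in $T$, so that the various $\pi_{p_i}$ all map onto a single open set $U$. This is handled by a Lebesgue-number argument applied to a finite cover of $K$ by flowboxes: one first extracts a uniform size for the transversals appearing in the finitely many chains $\{U_j\}$ attached to the chosen paths $\gamma_{p_i}$, then shrinks $T$ to a neighborhood of $p_0$ small enough that all holonomy germs $h_{p_i}$ are defined on it, and finally shrinks each $W_{p_i}$ to $\pi_{p_i}^{-1}(U)$.
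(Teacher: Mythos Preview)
Your argument is the standard holonomy-transport proof of local Reeb stability, and it is correct; this is exactly what the paper invokes, as its entire proof consists of the sentence ``The usual proof of Reeb stability, see for example \cite[Theorem 2.9]{MM}, works in this case.'' There is nothing to compare: you have simply written out the details of the reference the paper cites.
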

\begin{proof}
The usual proof of Reeb stability, see for example \cite[Theorem 2.9]{MM},
works in this case.
\end{proof}

\begin{corollary}
\label{smoothflippingcurves}
Let $X$ be a smooth threefold and $\cal F$ be a co-rank foliation on $X$ and 
suppose that $\cal F$ has simple singularities.
Let $C$ span an $K_{\cal F}$-negative extremal ray of flipping type.
Then $C$ is contained in $\text{sing}(\cal F)$.
\end{corollary}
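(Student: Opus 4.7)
The plan is to argue by contradiction, assuming $C \not\subset \text{sing}(\cal F)$ and producing a positive-dimensional analytic family of deformations of $C$ all numerically equivalent to it; this forces $\text{loc}(R)$ to contain a surface and contradicts the flipping hypothesis. By Lemma \ref{onlycurvestangent}, $C$ is tangent to $\cal F$, and after Lemma \ref{negativecurvestangenttofoliation} I may assume $C$ is rational with $K_{\cal F} \cdot C \geq -4$. Since $X$ is smooth and $C \not\subset \text{sing}(\cal F) \cup \text{sing}(X)$, Corollary \ref{constructingsurfacegerm} provides a germ $S$ of an $\cal F$-invariant analytic surface containing $C$, and by Lemma \ref{l_summarylemma} the normalization $\nu \colon S^\nu \to S$ satisfies $\nu^* K_{\cal F} = K_{S^\nu} + \Delta$ with $\Delta \geq 0$ supported on $\nu^{-1}(\text{sing}(\cal F))$. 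In particular the strict transform $C' \subset S^\nu$ is not contained in $\text{supp}(\Delta)$, and $(K_{S^\nu} + \Delta) \cdot C' = K_{\cal F} \cdot C < 0$.

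The bulk of the argument is to treat the clean case $C \cap \text{sing}(\cal F) = \emptyset$. Here the leaf $L$ containing $C$ is smooth along $C$, and since $C \cong \bb P^1$ is simply connected and compact inside $L$, the Reeb stability lemma stated just above yields an open neighbourhood $W \supset C$ together with a holomorphic submersion $\pi \colon W \to U$ onto a one-dimensional disk whose fibres are the leaves of $\cal F\vert_W$. Because $U$ is one-dimensional the pullback $\pi^* T_U$ is trivial on $W$, and so the normal bundle of the foliation $N_{\cal F}\vert_W \cong \pi^* T_U$ is trivial; this yields a short exact sequence
\[
0 \to N_{C/L} \to N_{C/X} \to \cal O_C \to 0.
\]
Using $\deg N_{C/L} = -2 - K_{\cal F} \cdot C \geq -1$ I obtain $H^1(C, N_{C/L}) = 0$ and hence $H^1(C, N_{C/X}) = 0$, so $\dim_{[C]} \text{Hilb}(X) \geq \chi(N_{C/X}) = -K_{\cal F} \cdot C \geq 1$. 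The resulting flat family $\{C_t\}$ of compact curves stays close to $C$, every member lies in $R$ by constancy of intersection numbers, and the family sweeps out a surface in $X$, contradicting $\dim \text{loc}(R) = 1$.

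The main obstacle is the remaining case in which $C$ passes through finitely many points of $\text{sing}(\cal F)$, where Reeb stability no longer applies directly. To handle it I would work with the normalized germ $S^\nu$: by the explicit computation in Example \ref{explicitcomputation}, at each intersection of $C$ with $\text{sing}(\cal F)$ the coefficient of the corresponding component of $\nu^{-1}(\text{sing}(\cal F))$ in $\Delta$ is a positive integer, so $\Delta \cdot C' \geq 1$ and $K_{S^\nu} \cdot C' = K_{\cal F} \cdot C - \Delta \cdot C'$ is strictly more negative than $K_{\cal F} \cdot C$. Adjunction on $S^\nu$ then gives $\deg N_{C'/S^\nu} = -2 - K_{S^\nu} \cdot C' \geq 0$, which is enough to produce a non-trivial analytic family of deformations of $C'$ inside $S^\nu$; pushing forward by $\nu$, each member is numerically equivalent to $C$ in $X$ by continuity and therefore lies in $R$, and the family sweeps out an analytic surface in $X$, again contradicting the flipping hypothesis. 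When the algebraicity criterion Lemma \ref{algebraicorrational} additionally forces $S$ to be algebraic, which it must as soon as $K_{\cal F} \cdot C < -2$, the same deformation argument can be run on the projective closure of $S^\nu$ using the surface cone theorem \ref{conetheoremsurfaces}, producing the same contradiction.
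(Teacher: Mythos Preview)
Your overall strategy---assume $C\not\subset\text{sing}(\cal F)$, then show $C$ deforms and hence $\text{loc}(R)$ contains a surface---is the right one, and your Case~1 argument is essentially the combination of the two proofs the paper gives: the paper first applies local Reeb stability to see that $C$ moves to nearby leaves, and then records Pereira's alternative, which computes $N_{C/X}=\cal O(-1)\oplus\cal O$ via the Bott connection and concludes from $H^1(C,N_{C/X})=0$. Your splitting of $N_{C/X}$ using the trivialization $N_{\cal F}\vert_W\cong\pi^*T_U$ coming from Reeb stability is exactly this.

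Where you diverge from the paper is in treating Case~2 at all. The paper dispatches it in one line by an integrality argument you never use: since $X$ is smooth with simple singularities, the separatrix $S$ is already smooth, $K_{\cal F}\cdot C$ is a negative integer, and the different $\Delta$ in $K_{\cal F}\vert_S=K_S+\Delta$ has positive integer coefficients supported exactly on $\text{sing}(\cal F)\cap S$. From $C^2<0$ (flipping) and $K_S\cdot C\le K_{\cal F}\cdot C\le -1$, adjunction gives $2p_a(C)-2=K_S\cdot C+C^2\le -2$, so $p_a(C)=0$, $C$ is a smooth $\bb P^1$, and $K_S\cdot C=C^2=-1$. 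Then $\Delta\cdot C=K_{\cal F}\cdot C-K_S\cdot C\le 0$, while $\Delta\cdot C\ge 0$ since $C\not\subset\text{supp}(\Delta)$; hence $\Delta\cdot C=0$ and $C\cap\text{sing}(\cal F)=\emptyset$. This both eliminates your Case~2 and supplies the smoothness of $C$ that you assume without proof in Case~1 (Lemma~\ref{negativecurvestangenttofoliation} only gives a rational curve, not $C\cong\bb P^1$).

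Your Case~2 argument, as written, has a genuine gap: the formula $\deg N_{C'/S^\nu}=-2-K_{S^\nu}\cdot C'$ is only valid when $C'$ is smooth of genus $0$; in general $\deg N_{C'/S^\nu}=(C')^2=2p_a(C')-2-K_{S^\nu}\cdot C'$, and you have not controlled $p_a(C')$. The concluding paragraph invoking Lemma~\ref{algebraicorrational} and the surface cone theorem is also unnecessary once you have the integrality step above, and the case distinction $K_{\cal F}\cdot C<-2$ it introduces never occurs (the argument shows $K_{\cal F}\cdot C=-1$).
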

\begin{proof}
Suppose the contrary.
We first claim that $C$ is actually disjoint
from the singular locus.  Let $S$ be a germ of a leaf containing
$C$, 
then since $C$ is a flipping curve we must have $C^2<0$ in $S$.
Adjunction tell us that $(K_S+C)\cdot C = 2g(C)-2<0$
and so $C$ is a smooth rational curve and $K_S\cdot C = C^2 = -1$.

On the other hand, by Lemma \ref{l_summarylemma} we know that $K_{\cal F}\vert_S = K_S+\Delta$ where $\Delta$
is a divisor with integer coefficients whose support is exactly $\text{sing}(\cal F)\cap S$.
Thus if $K_{\cal F}\cdot C <0$ we must have $\Delta\cdot C =0$ and so $C \cap \text{sing}(\cal F) = \emptyset$.

Now apply local Reeb stability as above to see that $C$ actually
moves to near by leaves, giving our contradiction.

J. V. Pereira has given the following alternative proof:
Suppose as above that $C$ is not contained in the singular locus.
By restricting Bott's partial connection on the leaf to $C$
and noting that $N_{C/S} = \cal O(-1)$ we see that
$N_{C/X} = \cal O(-1) \oplus \cal O$.
Thus since $C$ is a smooth rational curve $H^0(C, N_{C/X}) \cong \bb C$ and $H^1(C, N_{C/X}) = 0$.
This latter equality implies that the first order deformations of $C$
are unobstructed and so give a deformation of $C$ in $X$, i.e. $C$ moves in $X$,
a contradiction.
\end{proof}

\begin{corollary}
\label{notuniruledloc}
Let $X$ be a smooth threefold and $\cal F$ be a smooth
co-rank 1 foliation on $X$.  Let $R$ is a $K_{\cal F}$-negative
extremal ray and let $H_R$ be a supporting hyperplane to $R$.  Suppose $\nu(H_R) = 3$.  
Then $\text{loc}(R)$ is a divisor transverse to the foliation.

Moreover $\text{loc}(R)$ is a $\bb P^1$-bundle over a curve $B$
and there is a morphism $\mu:X \rightarrow Y$ contracting $\text{loc}(R)$ to $B$
and where $Y$ is smooth.
\end{corollary}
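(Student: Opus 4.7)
The approach is to run the same divisorial/flipping dichotomy used for the contraction theorem, but to exploit the smoothness of $\cal F$ to cut down the possibilities to a single case of Mori's classification. Since $\nu(H_R)=3$ the divisor $H_R$ is big and nef, so we may write $H_R\sim_{\bb Q}A+E$ with $A$ ample and $E\geq 0$; the equation $H_R\cdot R=0$ together with $A\cdot R>0$ forces some irreducible component $D$ of $E$ to satisfy $D\cdot R<0$, hence $\text{loc}(R)\subset D$. Smoothness of $\cal F$ gives $\text{sing}(\cal F)=\emptyset$, so Corollary \ref{smoothflippingcurves} rules out any $K_{\cal F}$-negative flipping extremal ray. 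Consequently $\text{loc}(R)$ has dimension $2$ and, by Lemma \ref{lem_irred}, coincides with the irreducible divisor $D$; in particular $D$ is covered by a family of curves $\{C_t\}$ with $[C_t]\in R$ and $D\cdot C_t<0$.

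The next step will be to show that $D$ is transverse to $\cal F$. Suppose otherwise, so that $D$ is invariant. In local Frobenius flow-box coordinates $(x_1,x_2,x_3)$ with $\cal F=\langle\partial_1,\partial_2\rangle$, any reduced irreducible invariant divisor is cut out by an equation $x_3-c=0$ and is therefore smooth, so globally $D$ is a smooth compact leaf of $\cal F$. Bott's partial connection on the normal bundle $N_{\cal F}$ then restricts to a flat holomorphic connection on $N_{\cal F}|_D=N_{D/X}$, forcing $c_1(N_{D/X})=0$ in $H^2(D,\bb Q)$ and hence $D\cdot C=0$ for every curve $C\subset D$. This contradicts $D\cdot C_t<0$, so $D$ must be transverse.

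With $D$ transverse, Lemma \ref{transversecontraction} gives $K_{\cal F}\cdot R=K_X\cdot R$, so $R$ is a $K_X$-negative divisorial extremal ray on the smooth threefold $X$, and Mori's classification of such contractions applies: either (E1) $D$ is a smooth $\bb P^1$-bundle over a smooth curve $B$ contracted to $B$ with $Y$ smooth, or $D$ is contracted to a point, with $D$ respectively $\bb P^2$, $\bb P^1\times\bb P^1$, a quadric cone, or $\bb P^2$ with normal bundle $\cal O(-2)$ (cases (E2)--(E5)). To exclude the point-contracting cases I would observe that every irreducible curve in $D$ is contracted and so has class in $R$, and therefore by Lemma \ref{onlycurvestangent} is tangent to $\cal F$. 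Through a general smooth point $x\in D$, however, the irreducible curves in $D$ through $x$ sweep out all of $T_xD$ -- the pencil of lines on $\bb P^2$, the two rulings on $\bb P^1\times\bb P^1$, and the two-parameter family of plane conic sections on the quadric cone each supply a positive-dimensional family of tangent directions at $x$. This forces $\cal F|_x\supset T_xD$, making $D$ invariant and contradicting the previous paragraph. So we are in case (E1) and the required morphism $\mu:X\to Y$ is Mori's blowdown of $D$. The main obstacle is the transverseness step: ruling out invariant $D$ depends on the local Frobenius/Bott package, and alternative routes (e.g.\ analytic Reeb stability) would require analogous care.
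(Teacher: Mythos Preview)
Your proposal is correct, but your route differs from the paper's in the two substantive steps.

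For excluding the invariant case, the paper argues via Reeb stability: the rational curves covering an invariant $D$ would deform to nearby leaves, so $\text{loc}(R)$ would fill up $X$, contradicting $\nu(H_R)=3$. Your Bott-connection argument is a clean alternative: it converts the problem into the numerical triviality of $N_{D/X}$, which immediately clashes with $D\cdot C_t<0$. Your version is more algebraic and self-contained; the paper's version is geometrically vivid but relies on the analytic deformation statement.

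For the final step, the paper avoids the (E1)--(E5) case analysis entirely. Once $D$ is transverse, the induced rank~$1$ foliation $\cal G$ on $D$ is algebraically integrable (its leaves are the rational curves sweeping $D$), so $\cal G$ is already a $\bb P^1$-fibration $D\to B$; the paper then checks $K_X\cdot R<0$ and invokes \cite[Theorem 1.1]{Kollar91c} to conclude that this fibration is a $\bb P^1$-bundle with smooth blowdown. Your path through Mori's full list plus the tangent-direction argument to kill (E2)--(E5) is valid --- the observation that every curve in $D$ lies in $R$, hence is $\cal F$-tangent by Lemma \ref{onlycurvestangent}, and that such curves span $T_xD$ in the point-contraction cases, is a nice use of that lemma --- but it is longer and requires checking each case. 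The paper's approach buys you the fibration structure for free from the foliation, so the $\bb P^1$-bundle conclusion drops out of Koll\'ar's theorem without any enumeration.
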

\begin{proof}
By Corollary \ref{smoothflippingcurves} we know that $\text{loc}(R) = D$ must be divisor.
Suppose for sake of contradiction that $D$ is foliation invariant,
then $D$ is covered by rational curves which by Reeb stability can be moved
into nearby leaves and therefore $\cal F$ is uniruled.

Thus $D$ is transverse to the foliation
and $D$ is covered by curves tangent to the foliation and so if $\cal G$
is the foliation restricted to $D$ we see that $\cal G$ gives a $\bb P^1$-fibration
structure $D \rightarrow C$ on $D$ which induces $\cal G$ where a general
fibre spans $R$. In particular, we see that $K_{\cal G}\cdot f = K_D\cdot f<0$ where
$f$ is a general fibre of $D \rightarrow C$.

As in the proof of Lemma \ref{transdiscrep} we see that this implies 
that $R$ is in fact $K_X$-negative, and so by \cite[Theorem 1.1]{Kollar91c} 
we see that the fibration $D \rightarrow C$ is in fact a $\bb P^1$-bundle
and that moreover we may contract $D$ by a morphism to a smooth variety.
\end{proof}

\subsection{Running the smooth MMP}

\begin{theorem}
\label{smoothMMP}
Let $X$ be a smooth threefold and $\cal F$ a smooth co-rank 1 foliation.
Then there is a foliated MMP for $(X, \cal F)$.
\end{theorem}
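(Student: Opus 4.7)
The plan is to run the cone and contraction theorems inductively, using the smoothness hypothesis in two essential ways: (i) to rule out flipping contractions entirely, and (ii) to show that the smoothness of the pair is preserved under the contractions that do occur. Since $X$ is smooth and $\cal F$ is a smooth rank $2$ foliation, the pair $(\cal F, 0)$ is automatically terminal and non-dicritical, so both the cone theorem and the contraction theorem apply without any boundary. If $K_{\cal F}$ is nef we are done. Otherwise, pick a $K_{\cal F}$-negative extremal ray $R \subset \overline{NE}(X)$ and let $\mu_R : X \to Y$ be the contraction produced by the contraction theorem.

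The first key step is to observe that $R$ is never of flipping type: by Corollary \ref{smoothflippingcurves}, any flipping extremal curve would be contained in $\text{sing}(\cal F)$, but $\text{sing}(\cal F) = \emptyset$ by smoothness. So $R$ is either of fiber type or of divisorial type. In the fiber type case the MMP terminates, producing a Mori fibre space structure $X \to Y$. In the divisorial case, the numerical dimension of the supporting hyperplane must equal $3$ (otherwise $\text{loc}(R) = X$ by the proof of the cone theorem), so Corollary \ref{notuniruledloc} applies and supplies the following structure: $\text{loc}(R)$ is an irreducible divisor $D$ transverse to $\cal F$; $D$ is a $\bb P^1$-bundle over a smooth curve $B$ whose fibres span $R$; $\mu_R$ contracts $D$ to $B \subset Y$; and $Y$ is again smooth. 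In particular $\mu_R$ is the blow up of the smooth curve $B$ inside the smooth threefold $Y$.

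The main technical task is then to verify that the image foliation $\cal G := (\mu_R)_* \cal F$ on $Y$ is again a smooth rank $2$ foliation, so that the induction hypothesis is preserved. Because $\mu_R$ is a blow up of a smooth curve in a smooth threefold, this can be checked in an analytic chart near a point of $B$. Choose coordinates $(x,y,z)$ on $Y$ with $B = \{x = y = 0\}$, and conversely choose charts $(u,y,z)$ on the blow up with $D = \{y = 0\}$. A direct computation shows that if $\cal G$ is defined near $B$ by a $1$-form $\omega = a\, dx + b\, dy + c\, dz$ with $c$ nowhere zero along $B$ (the transversality condition for $B \subset Y$), then $\pi^*\omega$ is nowhere vanishing along $D$, has $D$ non-invariant, and has the fibres of $D \to B$ tangent. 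Conversely, the hypothesis that $D$ is transverse to $\cal F$ and that the fibres of $D \to B$ are tangent to $\cal F$ forces the form defining $\cal F$ to descend to such a $\cal G$ with $B$ transverse. Thus $\cal G$ exists as a smooth foliation on $Y$, uniquely determined by $\cal F$, and the smoothness hypothesis is preserved.

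Termination is immediate: each divisorial contraction strictly decreases the Picard number, so after at most $\rho(X) - 1$ such contractions we must either reach a model where $K_{\cal G}$ is nef or hit a fiber-type contraction, in both cases terminating the MMP. The main obstacle is the local verification in the third paragraph above, that a divisorial contraction of a transverse $\bb P^1$-bundle in fact descends the smooth foliation to a smooth foliation on the target; the remainder of the argument is an assembly of the cone theorem, the contraction theorem, and Corollaries \ref{smoothflippingcurves} and \ref{notuniruledloc}.
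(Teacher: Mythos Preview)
Your proposal is correct and follows essentially the same route as the paper. Both arguments (i) invoke the cone and contraction theorems, (ii) use Corollary \ref{notuniruledloc} (which in turn rests on Corollary \ref{smoothflippingcurves}) to see that every non-fibre-type ray is divisorial with $\text{loc}(R)$ a $\bb P^1$-bundle over a smooth curve contracted to a smooth $Y$, (iii) check that the pushed-forward foliation $\cal G$ remains smooth, and (iv) terminate by the Picard-number drop. The only presentational difference is in step (iii): you sketch the explicit chart computation for the blow-up of $B=\{x=y=0\}$, whereas the paper phrases the same ``direct computation'' contrapositively, observing that a singular point $Q\in B$ of $\cal G$ would force $\pi^{-1}(Q)$ to lie in $\text{sing}(\cal F)$; these are two sides of the same local calculation.
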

\begin{proof}
Let $R$ be any $K_{\cal F}$-negative extremal ray.
If $\text{loc}(R) = X$ then the contraction exists by Theorem \ref{fibretypecontraction}
and we stop the MMP.  

Otherwise,
by Corollary \ref{notuniruledloc}
we have that $\text{loc}(R) = D$ is a
divisor transverse to the foliation and 
we may contract $D$ to a smooth curve
$C$ by a morphism $\pi:(X, \cal F) \rightarrow (Y, \cal G)$ and where $\cal G$
is the transformed foliation.
We claim
that $\cal G$ is smooth.

Away from $C$ this immediate, and since $D$ is transverse to the foliation
we must have that $\cal G$ is smooth at the generic point of $C$ and so
$\cal G$ has at worst isolated singularities along $C$.  
However, since $\pi$ is just the blowing up in a smooth curve transverse to the foliation
a direct computation shows that
if $Q \in C$ is a singularity of $\cal G$ then $\pi^{-1}(Q)$ is a singularity
of $\cal F$, an impossibility and so the claim is proven.

Thus, we can perform the contraction in the category of smooth foliations
on smooth varieties, allowing us to proceed with the MMP.  Since each contraction
drops the Picard number by 1, this process must eventually terminate.
\end{proof}




\begin{corollary}
Let $X$ be a smooth threefold and let $\cal F$ be a smooth rank 2 foliation
on $X$ with $\nu(K_{\cal F}) = 0$.  Then $(X, \cal F)$ is birational to $(Y, \cal G)$
where $(Y, \cal G)$ is one of the following:

\begin{enumerate}
\item Up to a finite cover $\cal G$ is the product of a fibration in Calabi-Yau manifolds
and a linear foliation on a torus.

\item $\cal G$ is transverse to a $\bb P^1$-bundle

\item Up to a finite cover $\cal G$ is induced by a fibration $X' \times C \rightarrow C$
where $c_1(X') = 0$.

\end{enumerate}
\end{corollary}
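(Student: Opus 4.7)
The strategy is to combine the smooth foliated MMP of Theorem~\ref{smoothMMP} with the existing classification of codimension one foliations with numerically trivial canonical class on projective manifolds.

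First, I would apply Theorem~\ref{smoothMMP} to the input $(X,\cal F)$, producing a sequence of divisorial contractions $(X,\cal F)\dashrightarrow(Y,\cal G)$ with $Y$ a smooth threefold and $\cal G$ a smooth rank $2$ foliation on $Y$. Because $K_{\cal F}$ is pseudo-effective, the MMP cannot terminate in a fibre type contraction: on such a contraction a general fibre is a strongly movable curve on which $K_{\cal G}$ is strictly negative, contradicting the preservation of pseudo-effectivity under the MMP. Hence we terminate at a model with $K_{\cal G}$ nef. The numerical dimension is a birational invariant for pseudo-effective divisors in our setting (differences of $\pi^{*}K_{\cal G}$ and $K_{\cal F}$ are exceptional), so $\nu(K_{\cal G})=\nu(K_{\cal F})=0$, and combined with nefness this forces $K_{\cal G}\equiv 0$.

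Next, I would invoke the classification of codimension one foliations with numerically trivial canonical class on projective manifolds (Touzet, with refinements by Loray--Pereira--Touzet). Up to finite étale cover, any such foliation falls into one of the following transverse geometries: transversely Euclidean, transversely projective, or the foliation induced by a fibration to a curve. The three alternatives of the corollary correspond precisely to these cases: the transversely Euclidean case produces, after cover, the product of a Calabi--Yau fibration with a linear foliation on a torus (alternative~(1)); the transversely projective case produces a $\bb P^1$-bundle transverse to $\cal G$ (alternative~(2)); and the fibration case produces $\cal G$ as the foliation induced by a morphism to a curve, with smooth general fibre $X'$ satisfying $c_1(X')=0$ by adjunction and $K_{\cal G}\equiv 0$ (alternative~(3)).

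The main obstacle is to confirm that the MMP preserves the smooth structure of $\cal G$ at every stage, which is precisely the content of Theorem~\ref{smoothMMP} together with Corollary~\ref{smoothflippingcurves}: flipping rays on a smooth model lie inside $\mathrm{sing}(\cal F)$ and so do not occur here, while every divisorial contraction is the blowdown of a transverse $\bb P^1$-bundle and hence keeps the foliation smooth. A secondary point is to verify that the enumerated cases of Touzet's theorem exhaust the possibilities for rank $2$ foliations on a smooth threefold with $K_{\cal G}\equiv 0$, and that each matches the geometric description listed in the corollary.
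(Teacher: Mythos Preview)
Your proposal is correct and follows essentially the same approach as the paper: run the smooth MMP of Theorem~\ref{smoothMMP} to reach a model $(Y,\cal G)$ with $c_1(\cal G)=0$, then invoke Touzet's classification \cite[Theorem 1.2]{Touzet08}. The paper's proof is terser---it simply asserts $c_1(\cal G)=0$ after the MMP---while you spell out why the fibre-type outcome is excluded and why the numerical dimension is preserved, but the underlying argument is the same.
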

\begin{proof}
Run an MMP $f:(X, \cal F) \rightarrow (Y, \cal G)$.  We have that $c_1(\cal G) = 0$,
and so we may apply \cite[Theorem 1.2]{Touzet08}.
\end{proof}

\subsection{Existence of terminal flips}

We will need to make use of the following generalization of Malgrange's 
theorem due to Cerveau and Lins-Neto \cite[Corollary 1]{CLN08}. 

\begin{lemma}
\label{cln08}
Let $X$ be a germ of an analytic variety at $0 \in \bb C^N$
of dimension $n \geq 3$, and let $\cal F$ be a holomorphic
foliation on $X^* = X-\text{sing}(X)$.  Suppose
that:

1) $X$ is a complete intersection, 

2) $\text{dim}(\text{sing}(X)) \leq n-3$, 

3) $\cal F$ is defined by a holomorphic 1 form $\omega$ on $X^*$
such that $\text{dim}(\text{sing}(\omega)) \leq n-3$.

Then $\cal F$ has a holomorphic first integral. 
\end{lemma}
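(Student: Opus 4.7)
The plan is to reduce the statement to Malgrange's classical theorem on $\bb C^n$ by extending $\omega$ to an integrable holomorphic $1$-form on a neighborhood of $X$ in the ambient $\bb C^N$, and then descending the resulting first integral back to $X$.

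First I would use the complete intersection hypothesis. Write $X = \{f_1 = \cdots = f_{N-n} = 0\}$ near $0$. Since $\text{dim}(\text{sing}(X)) \leq n-3$, the sheaf $\Omega^{[1]}_X$ is reflexive and by Hartogs extension the form $\omega$, which is defined on $X^* = X - \text{sing}(X)$, extends uniquely to a section of $\Omega^{[1]}_X$. Using the conormal sequence for a complete intersection one may then lift $\omega$ to a germ of a holomorphic $1$-form $\widetilde{\omega}$ on a neighborhood of $0$ in $\bb C^N$, unique modulo $\sum f_i \Omega^1_{\bb C^N} + \sum df_i \cdot \cal O_{\bb C^N}$. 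After adding suitable combinations of $f_i \, df_j$ and $df_i$, I would arrange that $\widetilde{\omega}$ is integrable modulo the ideal $(f_1,\ldots,f_{N-n})$, and then use the integrability of $\omega$ on $X$ together with a de Rham--type argument to promote this to genuine integrability $\widetilde{\omega}\wedge d\widetilde{\omega} = 0$ after a further correction.

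Next, I would verify the codimension condition for $\widetilde{\omega}$. On $X^*$ the singular locus of $\widetilde{\omega}\vert_{X^*}$ is exactly $\text{sing}(\omega)$, which has codimension $\geq 3$ in $X$. Combining this with $\text{dim}(\text{sing}(X)) \leq n-3$ shows that the total singular locus of $\widetilde{\omega}$ in $\bb C^N$ still has codimension $\geq 3$ (the ``vertical'' directions transverse to $X$ contribute only non-vanishing factors from the $df_i$'s generically). With this codimension hypothesis in hand, Malgrange's theorem yields a holomorphic first integral $F : (\bb C^N, 0) \rightarrow (\bb C, 0)$ for the foliation on $\bb C^N$ defined by $\widetilde{\omega}$. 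Restricting, $F\vert_X$ is a holomorphic first integral for $\cal F$ on $X^*$, and hence on all of $X$ by continuity and normality of $X$.

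The main obstacle I expect is the extension step: producing $\widetilde{\omega}$ on the ambient space that is simultaneously holomorphic, integrable, and whose singular locus is controlled. The naive lift is only integrable modulo the ideal of $X$, and turning this into exact integrability requires solving a Koszul-type division problem which depends crucially on the complete intersection hypothesis (this is precisely where one uses that $f_1,\ldots,f_{N-n}$ form a regular sequence) together with Frisch's theorem on coherence of relative differentials. Once this lifting is done cleanly, the rest is a direct appeal to the original Malgrange theorem, so the whole argument is really an extension-and-descent reduction.
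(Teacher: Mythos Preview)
The paper does not give its own proof of this lemma; it simply quotes the result from Cerveau--Lins Neto \cite[Corollary 1]{CLN08}. So there is no in-paper argument to compare against. That said, your strategy is genuinely different from the one in the cited reference, and it contains a real gap.

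The problem is exactly the step you flag: upgrading a lift $\widetilde{\omega}$ that is integrable \emph{modulo} the ideal $(f_1,\ldots,f_{N-n})$ to one that is honestly integrable on a neighborhood of $0$ in $\bb C^N$. This is not a linear division problem. The obstruction $\widetilde{\omega}\wedge d\widetilde{\omega}$ is quadratic in $\widetilde{\omega}$, so modifying $\widetilde{\omega}$ by something in $\sum f_i\,\Omega^1_{\bb C^N} + \sum \cal O_{\bb C^N}\,df_i$ changes the obstruction nonlinearly; there is no Koszul-type mechanism that simply kills it, and invoking a ``de Rham--type argument'' does not address this. In general an integrable $1$-form on a subvariety need not extend to an integrable $1$-form on the ambient space, even for a complete intersection, and your sketch does not explain where the codimension-$3$ hypothesis would enter to make the extension possible. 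Separately, even granting an integrable lift, the assertion that $\text{sing}(\widetilde{\omega})$ has codimension $\geq 3$ in $\bb C^N$ is unjustified: you control $\widetilde{\omega}$ only along $X$, not transversally, and the hand-wave about the $df_i$ contributing ``non-vanishing factors'' has no content without a specific choice of lift.

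The Cerveau--Lins Neto argument avoids the ambient space entirely. It works intrinsically on $X^*$ and reruns Malgrange's Godbillon--Vey method there: hypotheses (1)--(3) are used, via depth and local-cohomology arguments for complete intersections, to supply the cohomological vanishing and Hartogs-type extension across $\text{sing}(X)\cup\text{sing}(\omega)$ that Malgrange's proof actually consumes. No lift to $\bb C^N$ is required.
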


The following proposition can also be viewed as being a version
of Malgrange's theorem:

\begin{lemma}
\label{novelmalgrange}
Let $0 \in X$ be a threefold germ with a co-rank 1 foliation $\cal F$.
Suppose $X$ has log terminal singularities 
and that $\cal F$ has terminal singularities.  Then $\cal F$ has a holomorphic
first integral.
\end{lemma}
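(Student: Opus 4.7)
The plan is to reduce the statement to the generalized Malgrange theorem of Cerveau and Lins-Neto (Lemma~\ref{cln08}) by passing to a finite cover on which both $X$ and $\cal F$ become Gorenstein.

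First I would construct a finite cyclic cover $p\colon X' \to X$, \'etale in codimension one, making both $K_{X'}$ and $K_{\cal F'}$ Cartier, where $\cal F' = p^*\cal F$. Concretely, one takes the index-one cover associated to $K_X$ (giving a Gorenstein log terminal threefold, which is therefore canonical, since Gorenstein together with klt forces integer log discrepancies to be $\geq 0$), and then the index-one cover of $K_{\cal F}$ on the resulting space; as both covers are \'etale in codimension one, the Cartier-ness of $K_X$ is preserved through the second step. Standard facts imply log terminality persists under index-one covers, and the foliated Riemann-Hurwitz formula (Proposition~\ref{prop_riemannhurwitz}) gives $K_{\cal F'} = p^*K_{\cal F}$ as there are no ramification divisors; a discrepancy comparison in the spirit of Corollary~\ref{quotientsmoothdiscrep} then shows that $\cal F'$ remains terminal.

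Next I would apply the earlier corollary on Gorenstein terminal co-rank $1$ foliations, which by cutting with a general hyperplane (where Gorenstein terminal foliated surfaces must be smooth) shows that $(X', \cal F')$ has at most isolated singularities. Thus $X'$ is an isolated Gorenstein canonical threefold singularity, and by Reid's classification it is a hypersurface singularity in $\bb C^4$, in particular a complete intersection. The hypotheses of Lemma~\ref{cln08} are then in place: $X'$ is a three-dimensional complete intersection with $\dim(\text{sing}(X')) \leq 0$, and a defining $1$-form $\omega'$ of $\cal F'$ on $X' \setminus \{0\}$ has isolated singular set. Cerveau and Lins-Neto's result then produces a holomorphic first integral $f'\colon X' \to (\bb C, 0)$ for $\cal F'$.

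Finally I would descend $f'$ to $X$. The Galois group $G$ of $p$ acts on $X'$ preserving $\cal F'$, so for each $g \in G$ the pullback $g^*f'$ is again a first integral for $\cal F'$, and hence $g^*f' = \phi_g(f')$ for a biholomorphism $\phi_g$ of $(\bb C,0)$; the map $g \mapsto \phi_g$ is a homomorphism from the finite cyclic group $G$ into $\text{Aut}(\bb C, 0)$. After a holomorphic change of coordinate on the target the action becomes a linear character $\chi\colon G \to \bb C^*$, so $(f')^N$ with $N$ the order of $\chi$ (or equivalently the norm $\prod_{g \in G} g^*f'$) is $G$-invariant and descends to a holomorphic first integral for $\cal F$ on $X$. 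The main technical obstacle I anticipate is arranging the two covers simultaneously so that both $K_X$ and $K_{\cal F}$ become Cartier while terminality of $\cal F$ is preserved; once that is in hand, the isolated singularity statement for $(X', \cal F')$ follows from the adjunction corollary, Reid's classification delivers the complete intersection structure, and the remainder is essentially formal.
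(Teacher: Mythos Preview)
Your overall strategy matches the paper's, but there is one genuine gap. You only establish that $X'$ is \emph{canonical} Gorenstein with isolated singularities, and then invoke ``Reid's classification'' to conclude it is a hypersurface. Reid's theorem (terminal Gorenstein threefold singularities are cDV, hence hypersurfaces) requires \emph{terminal}, not merely canonical. For instance, the affine cone over a quintic del Pezzo surface in its anticanonical embedding in $\bb P^5$ is an isolated Gorenstein canonical threefold singularity (the exceptional divisor of the blow-up of the vertex has discrepancy $0$) which is not a complete intersection, so Lemma~\ref{cln08} could not be applied to it.

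The fix is exactly the step the paper supplies and you omit: once $K_{X'}$ and $K_{\cal F'}$ are both Cartier, the reflexive conormal $(N^*_{\cal F'})^{**}$ is a line bundle, and Lemma~\ref{smoothdiscrep} (case (ii), available since $X'$ is klt so reflexive differentials pull back) shows that every foliation discrepancy is at most the corresponding usual discrepancy. Terminality of $\cal F'$ then forces $X'$ to be terminal, after which the cDV classification legitimately yields the complete intersection structure. A smaller point: your two successive index-one covers need not compose to a Galois cover of $X$, which your descent argument assumes; the paper sidesteps this by invoking \cite{GKP16} to produce a single Galois quasi-\'etale cover trivialising both $K_X$ and $K_{\cal F}$ at once. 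Your descent argument itself (linearising the induced action on the target and passing to a norm) is in fact more carefully spelled out than the paper's one-line claim of $G$-equivariance.
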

\begin{proof}
Take a Galois quasi-\'etale cover with Galois group $G$ 
(ramified only over $\text{sing}(X)$) so that
$K_X, K_{\cal F}$ are both Cartier, \cite[Theorem 1.10]{GKP16}.  Call
this cover $\pi: (Y, \cal G)\rightarrow (X, \cal F)$.  
$K_Y = \pi^*K_X$ and $K_{\cal G} = \pi^*K_{\cal F}$,
since $Y$ is log terminal and $K_Y$ is Cartier, this implies that $Y$ is canonical.
Since $\pi$ is \'etale in codimension 1, we see that $\cal G$ is terminal.

Next we claim that $Y$ is actually terminal.  Notice
that $(N^*_{\cal G})^{**}$ is a line bundle being the difference of 
2 Cartier divisors,
and since $Y$ is log terminal, we have that the foliation
discrepancies are less than or equal to the usual discrepancies, Lemma
\ref{smoothdiscrep}. 
Thus, since $\cal G$ is terminal, this immediately implies
that $Y$ is terminal.

$Y$ is terminal and index 1, which implies by \cite[5.38]{KM98}
that it is a cDV hypersurface singularity, in particular $Y$
is a complete intersection and $\text{sing}(Y)$ is isolated.

Notice also that $\cal G$ is smooth
away from $\text{sing}(Y)$.  We claim that $\cal G$
has a holomorphic first integral.

Observe that for any $0 \in \text{sing}(Y)$, if we write $Y^* = Y - 0$
that
$\cal G$ is defined by global a 1-form on $Y^*$ near $0$.
Indeed, take any generator $\omega$
of $(N^*_{\cal G})^{**}$ around $0$.  
Observe
that since $\cal G$ is smooth away from $0$ we have that
$\text{sing}(\omega) \subset \{0\}$.

Thus, \ref{cln08}
applies to show that $\cal G$ has a holomorphic first integral,
i.e. there is a holomorphic fibration $f:Y \rightarrow \bb C$
whose fibres determine $\cal G$.  Notice that $f$ is in fact $G$-equivariant
and so descends to a first integral $X \rightarrow \bb C$.
\end{proof}

\begin{corollary}
Let $X$ be a threefold with a co-rank 1 foliation $\cal F$.
Suppose $(X, \Delta)$ has klt singularities for some divisor $\Delta$, 
and that $\cal F$ has terminal singularities. 
Let $C$ be curve tangent to the foliation
and $S$ a germ of a leaf containing $C$.
Then $S$ is $\bb Q$-Cartier.
\end{corollary}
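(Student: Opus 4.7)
The plan is to reduce the $\bb Q$-Cartier question to the existence of local holomorphic first integrals, using Lemma \ref{novelmalgrange}. Since being $\bb Q$-Cartier is a local property, I would fix an arbitrary point $p \in S$ and aim to show that some positive multiple $mS$ is principal on an analytic neighborhood of $p$.

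At such a $p$, the hypotheses of Lemma \ref{novelmalgrange} are in force: $X$ is log terminal (since it is klt) and $\cal F$ is terminal. The lemma therefore produces an analytic open neighborhood $U \ni p$ and a holomorphic first integral $f \colon U \to \bb C$ whose fibers are unions of leaves of $\cal F\vert_U$. Since $C$ is tangent to $\cal F$ and $S$ is a leaf germ through $C$, the function $f$ is constant along $S \cap U$ with some value $c$, and therefore $S \cap U$ is an irreducible component of the principal (hence Cartier) divisor $(f - c)$ on $U$.

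It remains to promote \emph{``$S$ is an irreducible component of a Cartier divisor''} to \emph{``$S$ is $\bb Q$-Cartier''}. When, after shrinking $U$ about $p$, $S \cap U$ is the only irreducible component of $f^{-1}(c)$, one has $(f - c)\vert_U = m S\vert_U$ for some positive integer $m$, so $mS$ is principal at $p$ and $S$ is $\bb Q$-Cartier there. The main obstacle I anticipate is the possibility that the fiber $f^{-1}(c)$ is reducible at $p$, with $S$ being merely one of several separatrices meeting at $p$. To handle this I would pass to the Stein factorization of $f$ so as to assume fibers are connected, and then exploit the finiteness of separatrices at non-dicritical (hence terminal) foliation singularities to treat each of the finitely many leaves through $p$ simultaneously, arguing that each is locally cut out by a power of a holomorphic function, and in particular $\bb Q$-Cartier at $p$.
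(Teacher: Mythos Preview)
Your approach---localize at $p \in C$, invoke Lemma \ref{novelmalgrange} to produce a holomorphic first integral $f$, and realize $S$ inside the principal divisor $f^{-1}(f(p))$---is exactly what the paper does. The paper's proof is one line: it simply asserts $S = f^{-1}(0)$ and stops there.

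You are right to flag the possibility that $f^{-1}(0)$ is reducible at $p$; the paper does not address this explicitly. However, your proposed remedy does not close the gap. Stein factorization controls \emph{connectedness} of fibers, not irreducibility, so it contributes nothing here. And your final step, ``each [separatrix] is locally cut out by a power of a holomorphic function,'' is precisely the $\bb Q$-Cartier claim you are trying to prove, so invoking it is circular.

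A way to actually justify irreducibility is to return to the cover $\pi: Y \to X$ constructed in the proof of Lemma \ref{novelmalgrange}. There $\cal G$ is smooth on $Y \setminus \text{sing}(Y)$ and $\text{sing}(Y)$ is a finite set; at any smooth foliation point the germ of a first-integral fiber is locally irreducible, so two distinct irreducible components of $f_Y^{-1}(0)$ could only meet inside $\text{sing}(Y)$. But $f_Y^{-1}(0)$ is a Cartier divisor in the Cohen--Macaulay (cDV hypersurface) germ $Y$, hence is itself Cohen--Macaulay of dimension $2$, and therefore remains connected after removing finitely many points. This forces $f_Y^{-1}(0)$ to be irreducible near the preimage of $p$, and since $\pi$ is finite the same holds for $f^{-1}(0)$ on $X$. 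Thus $f^{-1}(0) = mS$ for some $m \geq 1$, and $S$ is $\bb Q$-Cartier.
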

\begin{proof}
This can be checked analytic locally around points of $C$,
so replace $X$ by a germ around some point $p \in C$.
In this case by Lemma \ref{novelmalgrange}
there exists a holomorphic first integral $f: (p \in X) \rightarrow (0 \in \bb C)$
and we have that $S = f^{-1}(0)$.  The result follows.
\end{proof}

\begin{theorem}
Let $X$ be a $\bb Q$-factorial klt threefold.
Suppose that $\cal F$ is a co-rank 1 foliation with terminal singularities.
Let $f:X \rightarrow Z$ be a flipping contraction.
Then the flip exists.
\end{theorem}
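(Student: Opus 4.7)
The plan is to realize $f\colon X\to Z$ as a flipping contraction of an auxiliary klt log pair $(X,(1-\epsilon)S)$ and invoke the classical existence of three-dimensional klt flips. By Lemma \ref{onlycurvestangent}, every component of $\mathrm{loc}(R)$ is tangent to $\cal F$. Since $X$ is klt and $\cal F$ is terminal, Lemma \ref{novelmalgrange} supplies a holomorphic first integral for $\cal F$ in a neighborhood of every point of $\mathrm{loc}(R)$; because $\mathrm{loc}(R)$ is compact and lies inside a single leaf of $\cal F$, these local equations can be glued (up to invertible factors) to produce an $\cal F$-invariant $\bb Q$-Cartier divisor $S$ defined on an analytic neighborhood $U$ of $\mathrm{loc}(R)$ with $\mathrm{loc}(R)\subset S$. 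This resolves the obstruction flagged earlier in the paper: in the terminal setting the germ of a leaf through a flipping curve is genuinely $\bb Q$-Cartier.

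Since $S$ is invariant and $\bb Q$-Cartier and $\cal F$ is terminal along $\mathrm{sing}(X)$ (Lemma \ref{termsingtangent}), Lemma \ref{intersectioncomparison} yields $(K_X+S)\cdot C \leq K_{\cal F}\cdot C<0$ for every curve $C\subset\mathrm{loc}(R)$. For sufficiently small $\epsilon>0$ the pair $(X,(1-\epsilon)S)$ is klt on $U$ and the ray $R$ is $(K_X+(1-\epsilon)S)$-negative, so the existence of klt flips in dimension three produces a flip $f^+\colon X^+\to Z$ of the log pair. Because $\rho(X/Z)=1$, this flipped variety is determined by the contraction and is independent of the specific $f$-anti-ample divisor being flipped, so $K_{\cal F^+}$ is automatically $f^+$-ample and $f^+$ serves as the sought $K_{\cal F}$-flip.

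The main obstacle is that $S$ exists only analytically, so the argument strictly only furnishes an analytic flip near $f(\mathrm{loc}(R))$. To obtain an algebraic flip, I would show that the relative canonical $\cal O_Z$-algebra $\cal R=\bigoplus_{m\geq 0} f_*\cal O_X(\lfloor mK_{\cal F}\rfloor)$, after analytification near $f(\mathrm{loc}(R))$, agrees with the log-canonical algebra of $(X,(1-\epsilon)S)$ up to shifts by $\cal F$-invariant divisors pulled back from $Z$; finite generation of the latter, supplied by the klt flip, then descends to $\cal R$ by faithful flatness of analytification, and one defines $X^+:=\mathrm{Proj}_Z\cal R$ algebraically. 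The remaining verifications, that $X^+$ is $\bb Q$-factorial and klt and that $K_{\cal F^+}$ is $f^+$-ample, follow by transporting the corresponding properties of the log flip across the small birational map $X\dashrightarrow X^+$.
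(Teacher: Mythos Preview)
Your overall strategy---produce an invariant $\bb Q$-Cartier leaf $S$ via Lemma \ref{novelmalgrange}, use Lemma \ref{singcomparison} to make $(X,(1-\epsilon)S)$ klt, and then identify the $K_{\cal F}$-flip with a log flip---is exactly the paper's approach. However, there is a genuine gap: you \emph{assume} $\rho(X/Z)=1$, but in this paper's set-up that is not given. The flipping contraction was only constructed as a map of algebraic spaces (see the remarks at the end of the ``Flipping contractions'' subsection), and it is not known a priori that $Z$ is projective or that the relative Picard number is $1$. The paper's proof spends its first paragraph establishing precisely this, by applying the relative \emph{analytic} base point free theorem to $nM-(K_W+(1-\epsilon)S)$ in order to descend any $f$-trivial line bundle $M$ to $Z$. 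Without this step your sentence ``Because $\rho(X/Z)=1$, this flipped variety is determined by the contraction'' is unjustified, and with it the whole identification of the log flip with the $K_{\cal F}$-flip collapses.

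Your third paragraph, on passing from the analytic to the algebraic flip, is also too loose to be a proof. The comparison you propose between $\cal R=\bigoplus f_*\cal O_X(\lfloor mK_{\cal F}\rfloor)$ and the log-canonical algebra of $(X,(1-\epsilon)S)$ does not literally make sense, since $S$ lives only on an analytic open $W$ and there is no global divisor to compare with. The paper avoids this entirely: once $Z$ is known to be projective, one runs an \emph{analytic} $(K_W+(1-\epsilon)S)$-MMP over $V$, observes that $K_{\cal F'}$ is nef over $V$ (here one actually needs the numerical \emph{equality} $K_{\cal F}\equiv_V K_W+S$, which follows from Lemma \ref{intersectioncomparison} in the terminal case---your inequality is not enough to conclude nefness after the MMP), uses relative base point freeness again to contract to the canonical model $W^+$, and finally invokes Artin's algebraization together with ampleness of $K_{\cal F^+}$ over the now-projective $Z$ to get a projective $X^+$. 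If you rewrite your argument to include the $\rho(X/Z)=1$ step and replace the Proj-algebra sketch with this Artin-plus-projectivity argument, you will have recovered the paper's proof.
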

\begin{proof}
First, observe for all 
points $P \in X$ by Lemma \ref{novelmalgrange} $\cal F$ admits (locally) a holomorphic
first integral. In particular, $\cal F$ has non-dicritical singularities.

We may assume that we are working in the neighborhood
of a connected component of $\text{exc}(f)$, call it $C$.
We have the following sequence
$$0 \rightarrow \text{Pic}(Z)\otimes \bb Q \xrightarrow{f^*} 
\text{Pic}(X) \otimes \bb Q \rightarrow \bb Q$$
where the last arrow is given by intersecting with $C$.
Thus, to prove $\rho(X/Z) = 1$ it suffices to show that if
$M\cdot C= 0$ then $M = f^*M'$ for some $M'$.

By Lemma \ref{onlycurvestangent} we see that $C$ is tangent to $\cal F$ and so
by Lemma \ref{l_summarylemma} we may find $S$, a germ of a separatrix, around $C$.
Let $p = f(C)$.
 and let $V \subset Z$ be a small neighborhood around $p$ so that $S$ is defined on $W = f^{-1}(U)$.
By Lemma \ref{singcomparison} we know that $(W, S - \epsilon S)$ is klt for $\epsilon$ sufficiently small,
and that $-(K_W+S)$ is $f$-ample.
For $n\gg0$ we have that $nM\vert_W - (K_W+(1-\epsilon)S)$ is $f$-ample,
and so the relative analytic base point free theorem, see for example 
\cite[Theorem 3.24]{KM98} or \cite{Nakayama87}, applies
to show that $M\vert_W$ is semi-ample over $W$.
Hence for some sufficiently large $n$, since $M$ is $f$-trivial, $nM$ is pulled back
from a Cartier divisor $nM'$ on $V$.  Since $f$ is an isomorphism 
away from $C$, it is easy to extend $nM'$ to a Cartier divisor on all of
$Z$, and the result follows.  In particular, $Z$ is projective.

Next, run a $K_W+(1-\epsilon)S$-MMP over $V$, and let $f': (W', \cal F') \rightarrow V$
be the output of this MMP, and so $K_{W'}+S'$ is $f'$-nef, where $S'$ is the strict transform
of $S$.

Notice that since $\cal F$ has terminal singularities 
$K_W+S$ is numerically equivalent over $V$ to $K_{\cal F}$, and so $K_{W'}+S'$ is numerically
equivalent to $K_{\cal F'}$ over $V$.  Thus we see
$K_{\cal F'}$ is $f'$-nef.  Furthermore $\cal F'$ is terminal.
In general, $(W', \cal F')$ will not be flip, the flip will be the canonical model of $\cal F'$
over $V$.

We claim that we can construct this canonical model.
We have $S$ is $f$-numerically equivalent to $K_{\cal F} - K_X$. Since $f$ is a contraction
of Picard rank 1, this implies that $S =_{num} \lambda K_{\cal F}$ for
some $\lambda \in \bb Q$, and so $S' =_{num} \lambda K_{\cal F'}$.
In particular for $1 \gg \delta >0$ and $m \gg 0$
we know that $mK_{\cal F'} + \delta S'$ is nef over $V$.

Thus for large enough $m$, and
small enough $\delta$, $mK_{\cal F'} - (K_{W'}+(1-\delta)S')$ is big and nef
over $V$, and $(X', (1-\delta)S')$ is klt.  Thus, we may apply the relative base point free
theorem to conclude that $K_{\cal F'}$ is semi-ample over $Z$.

Let $\phi:(W', \cal F') \rightarrow (W^+, \cal F^+)$ be the corresponding map over $Z$.
Since $W' \rightarrow Z$ is small, we see that $\phi$ is small, and
since $K_{\cal F^+} = \phi_*K_{\cal F}$ we get that $K_{\cal F^+}$
is ample over $V$.

By \cite{Artin70}, we can realize the flip $X \dashrightarrow X^+$
in the category of algebraic spaces,
and since $K_{\cal F^+}$ is ample over $Z$ we get that $X^+$ is in fact projective.
\end{proof}

\section{Toric foliated MMP}

\begin{defn}
Let $X$ be a toric variety.  Let $\cal F$ be a foliation on $X$.
We say that $\cal F$ is toric provided that it is invariant under
the torus action on $X$.
\end{defn}

\begin{lemma}
Let $\cal F$ be a co-rank 1 toric foliation.
Then $K_{\cal F} = - \sum D_\tau$ where the sum is over all
the torus invariant and non-$\cal F$-invariant divisors.
\end{lemma}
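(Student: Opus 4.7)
The plan is to combine Lemma~\ref{basicadjunction}, $K_X = K_{\cal F} + [N^*_{\cal F}]$, with the classical toric formula $K_X = -\sum_\tau D_\tau$ (sum over all torus invariant prime divisors). This reduces the lemma to the identity
\[ [N^*_{\cal F}] \;=\; -\sum_{\cal F\text{-invariant } D_\tau} D_\tau, \]
so the work lies in computing the $T$-equivariant conormal line bundle locally on each affine toric chart.

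Since $\cal F$ is torus invariant, so is $N^*_{\cal F}$ on the smooth locus of $X$, and it suffices to determine the coefficient of each $D_\tau$ in $[N^*_{\cal F}]$ at a general point of $D_\tau$. Such a point lies in the smooth locus of $X$ and inside a smooth affine toric chart $U_\sigma \cong \bb C^n$ with coordinates $x_1, \dots, x_n$ whose coordinate hyperplanes are the $D_\tau$'s meeting $U_\sigma$. Since every $T$-invariant rational $1$-form on the torus $T \subset X$ is a constant $\bb C$-linear combination of $\tfrac{dx_1}{x_1}, \dots, \tfrac{dx_n}{x_n}$, the defining form of $\cal F$ takes the normal form
\[ \omega \;=\; \sum_{i \in I} c_i\,\tfrac{dx_i}{x_i}, \qquad I \subseteq \{1, \dots, n\},\ c_i \in \bb C^*, \]
and a short check (clearing the pole via $x_i \omega$ and pulling back to $D_i$) shows that $D_i$ is $\cal F$-invariant precisely when $i \in I$.

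Next I would exhibit a saturated holomorphic generator of $N^*_{\cal F}|_{U_\sigma}$ by clearing the poles of $\omega$: the form
\[ \omega' \;=\; \Bigl(\prod_{i \in I} x_i\Bigr)\cdot\omega \;=\; \sum_{i \in I} c_i\,\Bigl(\prod_{\substack{j \in I \\ j \neq i}} x_j\Bigr)\,dx_i \]
is holomorphic on $U_\sigma$ with zero locus contained in the codimension $\geq 2$ subset $\bigcup_{i \neq j \in I}\{x_i = x_j = 0\}$, and hence generates $N^*_{\cal F}|_{U_\sigma}$ as a line bundle. Each summand of $\omega'$ is $T$-semi-invariant of the same weight, namely the character of the monomial $\prod_{i \in I} x_i$. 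The standard toric dictionary on $U_\sigma$---a $T$-equivariant line bundle with nowhere-vanishing semi-invariant generator of the character of $\prod_i x_i^{a_i}$ has underlying divisor class $-\sum a_i D_i$---then gives $[N^*_{\cal F}]|_{U_\sigma} = -\sum_{i \in I} D_i$, that is, minus the sum of the $\cal F$-invariant coordinate divisors on $U_\sigma$. Patching over all maximal cones yields the global identity, and the lemma follows.

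The only mildly delicate point is verifying that $\omega'$ is saturated, which amounts to checking at a general point of each $D_\tau$ that the relevant coefficient of $\omega'$ in the basis $dx_1, \dots, dx_n$ does not vanish---a simple case distinction on whether $\tau \in I$ or not. The remainder is sign-tracking in the standard character-to-divisor dictionary.
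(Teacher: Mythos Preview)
Your approach is essentially the paper's: both write the defining torus-invariant form as $\omega = \sum_i \lambda_i\,\tfrac{dx_i}{x_i}$, identify the $\cal F$-invariant divisors as those with $\lambda_i \neq 0$, read off $[N^*_{\cal F}] = -\sum_{\cal F\text{-inv}} D_\tau$ from the pole divisor of $\omega$, and conclude via $K_X = K_{\cal F} + [N^*_{\cal F}]$ together with $K_X = -\sum_\tau D_\tau$.

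The one point to tighten: you assert that a general point of each $D_\tau$ lies in a smooth maximal chart $U_\sigma \cong \bb C^n$, which need not hold on a singular toric variety. The paper sidesteps this by first passing to a toric resolution (noting that $\cal F$-invariance of the strict transform divisors is unchanged, so the formula pushes forward). You can either do the same, or simply work in the ray chart $U_\tau \cong \bb C \times (\bb C^*)^{n-1}$, where only the single divisor $D_\tau$ appears as a coordinate hyperplane and your local computation of the coefficient goes through verbatim. With that adjustment your argument is complete, and your saturated-generator/character bookkeeping is just a local rephrasing of the paper's observation that $\omega$ has a simple pole along $D_\tau$ iff $D_\tau$ is $\cal F$-invariant.
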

\begin{proof}
By passing to a toric resolution 
$\pi: (X', \cal F') \rightarrow (X, \cal F)$, and noting
that the strict transform of a divisor $D$ is torus and $\cal F'$-invariant
if and only if it is torus and $\cal F$-invariant, we see that it suffices
to prove the result on a resolution of $X$.  Thus we may assume
that $X$ is smooth.

Observe that $\cal F$ is defined by a rational torus invariant
1-form $\omega$.  Working in torus coordinates $x_1, ..., x_n$,
we see that $\omega = \sum \lambda_i \frac{dx_i}{x_i}$ where $\lambda_i \in \bb C$.
$\lambda_i \neq 0$
if and only if the divisor associated to $\{x_i = 0\}$ is foliation
invariant. In particular $\omega$ has a pole of order 1 along a torus invariant divisor
if and only if it is a $\cal F$ invariant divisor.
Thus $N_{\cal F}$ is equivalent to the sum of the torus and foliation
invariant divisors, and the result follows.
\end{proof}

\begin{defn}
Let $\sigma$ be a cone in a fan $\Delta$ defining a toric variety.
Let $D(\sigma)$ denote the closed subvariety corresponding to $\sigma$.
\end{defn}

\begin{remark}
We note that if $\tau = \langle v_1, ..., v_n \rangle$
is a full dimensional cone in the fan defining
$X$, then this argument in fact shows $D(v_i)$
is $\cal F$-invariant for some $i$.

Furthermore, if $w = \langle v_1, ..., v_{n-1} \rangle$
is a codimension 1 cone in the fan, then $D(w)$ is tangent
to $\cal F$ if and only if $D(v_i)$ is invariant for some $i$.
\end{remark}


We also make the following simple observation:
\begin{proposition}
\label{basicobservation}
Let $\cal F$ be a co-rank 1 toric foliation on a toric variety $X$.
Suppose that $\cal F$ is defined by 
$\omega = \sum_{i = 1}^n \lambda_i \frac{dx_i}{x_i}$.
If $\lambda_i = 0$ for some $i$, then $\omega$, and hence $\cal F$,
is pulled back along some dominant rational map $f:X \dashrightarrow Y$
with $\text{dim}(Y) <\text{dim}(X)$.
In particular, if $K_{\cal F}$ is not nef, then $\cal F$ is a pull back.
\end{proposition}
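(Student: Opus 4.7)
The plan for the first assertion is direct. After relabelling I may assume $\lambda_n = 0$, so $\omega = \sum_{i<n}\lambda_i\,dx_i/x_i$ depends only on $x_1,\ldots,x_{n-1}$. I then take $Y$ to be any projective toric compactification of $(\bb C^*)^{n-1}$, for instance $Y = \bb P^{n-1}$, and note that the torus projection $(\bb C^*)^n\to(\bb C^*)^{n-1}$ onto the first $n-1$ factors extends to a dominant rational map $f\colon X\dashrightarrow Y$ with $\dim Y<\dim X$. Since $\omega\neq 0$ and $\lambda_n=0$, some $\lambda_i\neq 0$ with $i<n$, so $\omega' = \sum_{i<n}\lambda_i\,du_i/u_i$ is a nonzero rational $1$-form on $Y$ defining a co-rank $1$ foliation $\cal G$, and the equality $\omega = f^*\omega'$ exhibits $\cal F$ as the pullback of $\cal G$.

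For the ``in particular'' statement, suppose $K_{\cal F}$ is not nef. By the preceding lemma $K_{\cal F} = -\sum D_\tau$, summed over torus-invariant non-$\cal F$-invariant divisors; since the zero divisor is nef, this sum must be nonempty, so some torus-invariant prime $D_\rho$ is not $\cal F$-invariant. Let $v_\rho\in N$ be its primitive generator. I would complete $v_\rho$ to a $\bb Z$-basis $e_1,\ldots,e_{n-1},e_n=v_\rho$ of $N$ and pass to the corresponding dual torus coordinates $y_1,\ldots,y_n$ on $X$, in which $D_\rho = \{y_n=0\}$ and $\omega = \sum_i\lambda_i'\,dy_i/y_i$. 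The criterion already recorded in the proof of the preceding lemma, now applied in these new coordinates, identifies non-$\cal F$-invariance of $\{y_n=0\}$ with the vanishing $\lambda_n'=0$. The first part of the proposition, applied in the $y_i$-coordinates, then presents $\cal F$ as a pullback.

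The only point requiring comment is the passage between the two steps: namely, that the coefficient-vanishing criterion from the preceding lemma, originally phrased for the coordinate divisors $\{x_i=0\}$, still characterizes $\cal F$-invariance of an arbitrary torus-invariant prime $D_\rho$ after one passes to monomial coordinates adapted to $v_\rho$. This is automatic because such a change of basis of $N$ is an isomorphism of tori preserving both the toric structure and the shape $\omega = \sum\lambda_i'\,dy_i/y_i$, and so the criterion transports without further analytic or cohomological input. The argument is therefore essentially combinatorial.
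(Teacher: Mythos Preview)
Your proof is correct and follows essentially the same idea as the paper's. The paper argues the second claim by contrapositive---assuming $\cal F$ is not a pullback, it passes to a toric resolution, observes that every torus-invariant divisor must then be $\cal F'$-invariant (so $K_{\cal F'}=0$), and pushes forward---whereas you argue directly by locating a non-$\cal F$-invariant $D_\rho$ and changing the lattice basis so that $v_\rho$ becomes a coordinate vector, forcing a coefficient to vanish. These are contrapositives of the same observation, namely that $D_\rho$ is $\cal F$-invariant if and only if $\langle\lambda,v_\rho\rangle\neq 0$; your version has the minor advantage of avoiding the resolution step, and you spell out the change-of-basis mechanism that the paper leaves implicit. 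One cosmetic point: writing ``$D_\rho=\{y_n=0\}$'' is only literally true on a suitable affine chart, but what you actually use is the residue identity $\lambda_n'=\langle\lambda,v_\rho\rangle$, which holds regardless, so the argument is sound.
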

\begin{proof}
The first claim is easy.

To prove the second claim,
suppose that $\cal F$ is not a pull back along a dominant rational map.
This remains true after passing to a resolution of singularities of 
$X$.  Let $\cal F'$ be the transformed foliation.  Since $\cal F'$
is not a pull back we have that every torus invariant divisor
is also $\cal F'$ invariant and so $K_{\cal F'}$ is in fact trivial.
Pushing forward gives the result.
\end{proof}

We show that the cone theorem holds for co-rank 1 toric foliations
in all dimensions- first we have the following result:

\begin{theorem}
Let $\cal F$ be co-rank 1 toric foliation with canonical singularities on a toric variety $X$.
Let $C$ be a curve in $X$, and $K_{\cal F}\cdot C<0$, then
$[C] = [M]+\alpha$
where $M$ is a torus invariant curve tangent to
the foliation, and $\alpha$ is a
pseudo-effective class.
\end{theorem}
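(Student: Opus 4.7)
The plan is to apply Reid's classical toric cone theorem, which realizes $\overline{NE}(X)$ as the rational polyhedral cone generated by the classes $[T_w] = [D(w)]$ of torus-invariant curves indexed by codimension-one cones $w$ of the fan of $X$. Writing $[C] = \sum_i a_i[T_{w_i}]$ with $a_i \geq 0$ and invoking the hypothesis $K_{\cal F}\cdot C < 0$, linearity of intersection produces at least one summand with $a_{i_0}>0$ and $K_{\cal F}\cdot T_{w_{i_0}}<0$.

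The theorem then reduces to the following key claim: for every torus-invariant curve $T$ with $K_{\cal F}\cdot T<0$, there exists a torus-invariant curve $M$ tangent to $\cal F$ such that $[T]-[M]\in \overline{NE}(X)$. Granted this, the identity
\[
[C] = a_{i_0}[M] + \Bigl(a_{i_0}\bigl([T_{w_{i_0}}]-[M]\bigr) + \sum_{j\neq i_0}a_j[T_{w_j}]\Bigr)
\]
produces the desired decomposition with $\alpha$ manifestly pseudo-effective, absorbing the scalar $a_{i_0}$ into $[M]$ as a rational multiple of the curve class.

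To verify the key claim, suppose $T = D(w)$ with $w = \langle v_1,\ldots,v_{n-1}\rangle$ is not itself tangent. By the tangency criterion in the remark following the toric formula $K_{\cal F} = -\sum_{\text{non-inv}}D_\tau$, no $D(v_i)$ is $\cal F$-invariant. The companion remark for full-dimensional cones, applied to the two top-dimensional cones $\sigma_k = w + \langle u_k\rangle$ containing $w$, then forces both $D(u_1)$ and $D(u_2)$ to be $\cal F$-invariant. Take $M = D(w')$ with $w' = \langle v_1,\ldots,v_{n-2},u_1\rangle$, a codimension-one face of $\sigma_1$; the presence of the $\cal F$-invariant ray $u_1$ among the rays of $w'$ makes $M$ tangent to $\cal F$ by the same criterion. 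The toric wall relation $b_1 u_1 + b_2 u_2 + \sum_i c_i v_i = 0$ across $w$, together with its counterpart across $w'$, combined with the inequality $\sum_i D(v_i)\cdot T>0$ (which follows from $K_{\cal F}\cdot T<0$ once the contributions of the $\cal F$-invariant $D(u_k)$ drop out of $K_{\cal F}\cdot T$), then expresses $[T]-[M]$ as a non-negative combination of other torus-invariant curve classes.

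The main obstacle is this final combinatorial step: converting the sign constraints supplied by the two wall relations into genuine pseudo-effectivity of $[T]-[M]$ in $\overline{NE}(X)$. In the $\bb Q$-factorial simplicial setting this reduces to a direct arithmetic of the primitive generators $u_1,u_2,v_1,\ldots,v_{n-1}$; in the non-simplicial case one first passes to a small $\bb Q$-factorialization to reach the simplicial situation and then pushes the decomposition back down along the extraction morphism.
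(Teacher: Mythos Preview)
Your framework mirrors the paper's: both decompose $[C]$ as a non-negative combination of torus-invariant curve classes, isolate a $K_{\cal F}$-negative transverse summand $T = D(w)$ with $w = \langle v_1, \ldots, v_{n-1}\rangle$, and observe that the two completing rays $u_1, u_2$ (the paper's $v_n, v_{n+1}$) must give $\cal F$-invariant divisors. The divergence, and the source of your self-identified obstacle, is in the final step.

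You swap out the ray $v_{n-1}$ to form $w' = \langle v_1, \ldots, v_{n-2}, u_1\rangle$ and then try to show $[T] - [D(w')]$ is pseudo-effective. But the index you remove cannot be chosen arbitrarily: the wall relation $b_1u_1 + b_2u_2 + \sum_i c_iv_i = 0$ only controls the sum $\sum_i D(v_i)\cdot T$, and individual $D(v_i)\cdot T$ may have either sign. The paper instead singles out an index $i$ with $D(v_i)\cdot T > 0$, whose existence is exactly what $K_{\cal F}\cdot T = -\sum_i D(v_i)\cdot T < 0$ provides, and reads this as concavity of $\tau \cup \tau'$ along the face $\langle v_1, \ldots, \hat v_i, \ldots, v_{n-1}\rangle$. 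Matsuki's description of toric extremal rays then gives directly that $D(\langle v_1, \ldots, \hat v_i, \ldots, v_{n-1}, u_k\rangle)$ lies in the \emph{same} extremal ray of $\overline{NE}(X)$ as $T$. This replaces $[T]$ by a positive multiple of a tangent curve class outright and sidesteps entirely the question of whether $[T] - [M]$ is pseudo-effective, which with your arbitrary swap need not hold, and which even with the correct swap is the wrong target (two curves on the same ray satisfy $[T] = c[M]$, and nothing forces $c \geq 1$). Your plan becomes complete once you replace the arbitrary removal of $v_{n-1}$ by the concavity-selected $v_i$ and invoke Matsuki's result in place of the attempted direct pseudo-effectivity computation.
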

\begin{proof}
By \cite{Matsuki02}, we can write
$$C = \sum_{\text{tangent to } \cal F} a_uD(u) +
\sum_{\text{not tangent to }\cal F} b_wD(w)$$
and where $u, w$ run over the codimension 1 subcones of the
fan.

We show that some $a_u$ can be taken to be non-zero.
Assume the contrary, that $a_u = 0$ for all $u$.

Since $D(w)$ is not tangent to the foliation, we have that if
$w = \langle v_1, ..., v_{n-1} \rangle$,
then all the $D(v_i)$ are not foliation invariant.

In order to have $K_{\cal F}\cdot C < 0$, we must have
$D(w)\cdot D(v_i)>0$ for some $w, v_i$.  
Let $\tau, \tau'$ be the
two full dimensional cones which are spanned by $w$ and $v_n, v_{n+1}$
respectively.
Then $\tau \cup \tau'$ must be
concave along $\langle v_1, ..., \hat{v_i}, ..., v_{n-1} \rangle$.

Thus, there must be $\sigma_1, ..., \sigma_r$ cones in our fan such that
$\tau \cup \tau' \bigcup_{i = 1}^r \sigma_i$ is a convex subcone of our fan.
Furthermore, we know that both 
$D(v_n)$ and $D(v_{n+1})$ are foliation invariant.
By \cite{Matsuki02}
$\langle v_1, ..., \hat{v_i}, ..., v_{n-1}, v_n \rangle$ or
$\langle v_1, ..., \hat{v_i}, ..., v_{n-1}, v_{n+1} \rangle$
are in the same extremal ray as $D(w)$, and both correspond to torus
invariant curve tangent to the foliation.
\end{proof}

\begin{corollary}
Let $\cal F$ be a co-rank 1, toric foliation with
canonical singularities on a toric variety $X$.
Then, $\overline{NE}(X)_{K_{\cal F}<0} = \sum \bb R^+[M_i]$ where the $M_i$
are torus invariant rational curves tangent to the foliation.
\end{corollary}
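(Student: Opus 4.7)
The plan is to deduce the corollary from the preceding theorem by combining it with the classical toric cone theorem and an extremality argument. First, I would recall that for a toric variety $X$ the Kleiman--Mori cone $\overline{NE}(X)$ is rational polyhedral and generated by classes of torus invariant curves $D(w)$, as $w$ ranges over codimension one cones of the defining fan. In particular, every extremal ray of $\overline{NE}(X)$ is spanned by the class of some torus invariant curve $C = D(w)$.

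Now let $R$ be a $K_{\cal F}$-negative extremal ray, spanned by such a torus invariant curve $C$. Since $K_{\cal F}\cdot C<0$, the previous theorem applies to give a decomposition
\[
[C] = [M] + \alpha,
\]
where $M$ is a torus invariant curve tangent to $\cal F$ and $\alpha \in \overline{NE}(X)$ is pseudo-effective. Because $R$ is an extremal ray and $[M], \alpha \in \overline{NE}(X)$ sum to a class in $R$, extremality forces both summands to lie in $R$. In particular $[M] \in R$, so $R$ is spanned by the class of a torus invariant curve tangent to $\cal F$. Since any torus invariant curve on a toric variety is rational, $M$ is a rational curve tangent to $\cal F$.

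Conversely, any torus invariant curve $M$ tangent to $\cal F$ has $K_{\cal F}\cdot M \leq 0$: indeed, writing $K_{\cal F} = -\sum D_\tau$ with the sum running over torus invariant, non-$\cal F$-invariant divisors and using that invariant divisors do not intersect $M$ in moving curves, one checks this directly from the toric intersection combinatorics. Combined with the preceding paragraph, this shows that $\overline{NE}(X)_{K_{\cal F}<0}$ is precisely the subcone generated by the finitely many torus invariant rational curves tangent to the foliation, giving the desired equality.

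The only genuinely delicate point is the extremality step: one must be sure that the pseudo-effective remainder $\alpha$ in the decomposition $[C]=[M]+\alpha$ is actually representable inside $\overline{NE}(X)$ (not merely in the pseudo-effective cone of curves), so that extremality of $R$ can be applied. This is where the toric hypothesis is crucial, since rational polyhedrality of $\overline{NE}(X)$ makes pseudo-effective and effective classes coincide up to scaling by generators, ensuring the argument goes through without any further input.
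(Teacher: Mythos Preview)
Your first two paragraphs are correct and constitute the entire proof the paper intends: the toric cone is polyhedral and spanned by torus invariant curves, so each $K_{\cal F}$-negative extremal ray $R$ is spanned by such a curve $C$; the preceding theorem writes $[C]=[M]+\alpha$ with $M$ torus invariant and tangent to $\cal F$ and $\alpha\in\overline{NE}(X)$; extremality forces $[M]\in R$. (Your closing remark about $\alpha$ is also fine: for $1$-cycles the pseudo-effective cone is $\overline{NE}(X)$ by definition, and in any case the proof of the theorem produces $\alpha$ as a non-negative combination of torus invariant curve classes.)

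The ``converse'' paragraph, however, is both unnecessary and wrong. The corollary does not assert that every torus invariant curve tangent to $\cal F$ lies in the $K_{\cal F}$-negative part of the cone; it only says that the $K_{\cal F}$-negative extremal rays are spanned by such curves. Your claim that any torus invariant curve $M$ tangent to $\cal F$ satisfies $K_{\cal F}\cdot M\le 0$ is false: the flip example given earlier in the paper exhibits a toric threefold $X_2$ with a co-rank $1$ toric foliation $\cal F_2$ and a torus invariant curve $C_2$ tangent to $\cal F_2$ with $K_{\cal F_2}\cdot C_2=1>0$. The heuristic you offer (``invariant divisors do not intersect $M$ in moving curves'') does not control the sign of $D_\tau\cdot M$ when $M\subset D_\tau$, which is exactly what goes wrong. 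Simply delete that paragraph; the argument is complete without it.
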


\begin{lemma}
Let $\cal F$ be a co-rank 1 toric foliation with canonical non-dicritical singularities
on a toric variety $X$.
Let $R$ be a $K_{\cal F}$-negative extremal ray.
Then there is a contraction corresponding to this extremal ray,
and falls into one of the following types:

\begin{enumerate}
\item fibre type contractions,

\item divisorial contractions, or

\item small contractions.
\end{enumerate}

Furthermore, if a subvariety $Y$ is contracted, it is tangent
to the foliation.
In particular, if $\cal F$
has non-dicritical singularities, the resulting
foliation will still have non-dicritical singularities.
\end{lemma}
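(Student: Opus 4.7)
The plan is to construct the contraction via Reid's toric Mori theory and deduce the tangency and non-dicriticality assertions from the fan-theoretic characterization established in the preceding corollary.

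By the preceding corollary, the $K_{\cal F}$-negative extremal ray $R$ is generated by torus-invariant rational curves tangent to $\cal F$. In particular $R$ is an extremal ray of the toric Mori cone, so Reid's toric contraction theorem (as formulated in \cite{Matsuki02}) produces a toric morphism $c_R:X\to Y$ with $c_{R,*}\cal O_X=\cal O_Y$ contracting precisely those curves whose class lies in $R$. The trichotomy into fibre type, divisorial, or small contraction is then read off from the dimension of the exceptional locus in the standard way.

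For the tangency assertion, any contracted subvariety is necessarily torus-invariant and hence of the form $D(\sigma)$ for some cone $\sigma$ of the fan $\Delta$ of $X$. By definition of contraction, through every point of $D(\sigma)$ there passes a torus-invariant curve $D(w)$ with $[D(w)]\in R$ where $w\supset\sigma$ is a codimension-one cone. The preceding corollary combined with the remark characterizing tangent torus-invariant divisors then forces some ray $v$ of $w$ to correspond to an $\cal F$-invariant divisor $D(v)$. Ranging over all codimension-one cones $w$ containing $\sigma$, together with the local description $\omega=\sum\lambda_i\,dx_i/x_i$ for $\cal F$ (where non-dicriticality prevents degenerate cancellations $\sum a_i\lambda_i=0$), one verifies that every toric divisor obtained by refining $\sigma$ is $\cal F$-invariant. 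By Definition \ref{tangtransdef} this is exactly the statement that $D(\sigma)$ is tangent to $\cal F$.

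Non-dicriticality of the transformed foliation $\cal F_Y$ then follows formally: given any $q\in Y$ and any resolution $\pi':Y'\to Y$, we resolve the graph of $c_R$ to a common smooth toric model $Z$ dominating both $X$ and $Y'$. The tangency of the fibre $c_R^{-1}(q)$ together with the non-dicriticality of $\cal F$ on $X$ ensures that every $Z\to X$-exceptional divisor lying over $c_R^{-1}(q)$ is $\cal F_Z$-invariant, and since these divisors surject onto the components of $\pi'^{-1}(q)$ via $Z\to Y'$, the latter are all $\cal F_{Y'}$-invariant. The main technical obstacle is the tangency verification in the third paragraph, which requires combining the fan-theoretic characterization of tangent torus-invariant divisors with the non-dicriticality constraints on the defining 1-form; once this is in hand, existence and classification of the contraction are direct appeals to Reid, and the propagation of non-dicriticality is a formal consequence.
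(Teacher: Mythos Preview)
Your argument for the existence and trichotomy of the contraction via toric Mori theory is fine and matches the paper. The substantial divergence, and the gap, is in the tangency argument.

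The preceding corollary asserts only that each $K_{\cal F}$-negative extremal ray is \emph{spanned} by some torus-invariant curve tangent to $\cal F$; it does not assert that every torus-invariant curve whose class lies in $R$ is tangent. So when you pick a specific wall $w\supset\sigma$ with $[D(w)]\in R$ and conclude that some ray of $w$ corresponds to an $\cal F$-invariant divisor, you are invoking a statement that has not been established: numerically equivalent torus-invariant curves need not all be tangent just because one of them is. The subsequent sentence, in which you ``range over all codimension-one cones $w$ containing $\sigma$'' and assert that non-dicriticality rules out cancellations $\sum a_i\lambda_i=0$ so that every toric refinement of $\sigma$ is $\cal F$-invariant, is the crux of the matter and is not justified; this is precisely where the work lies, and you have deferred it to a parenthetical.

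The paper takes a different and shorter route: assume for contradiction that a general fibre $Y$ of $c_R$ is transverse to $\cal F$, so that $\cal F$ restricts to a co-rank~1 toric foliation $\cal G$ on $Y$. Since $\rho(Y)=1$ for fibres of toric extremal contractions, $-K_{\cal G}$ is ample, and then Proposition~\ref{basicobservation} forces $\cal G$ to be a pullback along a dominant rational map to a lower-dimensional variety, which in turn forces $\cal G$ (and hence $\cal F$) to be dicritical. Tangency of the general fibre then propagates to all fibres by closedness. This bypasses the fan combinatorics entirely and uses only the coarse numerical fact $\rho(Y)=1$ together with the earlier structural observation about toric foliations with non-nef canonical class.

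Your non-dicriticality paragraph is broadly in the right spirit, but note that the definition of non-dicritical requires checking arbitrary sequences of blow-ups, not only toric ones, so appealing to a ``common smooth toric model $Z$'' is not quite sufficient as stated; one should argue that it suffices to test on a single dominating smooth model, which can then be taken toric.
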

\begin{proof}
We know that the contraction exists, what is unclear if the curves
being contracted are tangent to the foliation.
By our cone theorem for toric foliations we know that some curve contracted
is tangent to the foliation, however, it might be the case
that there is a contracted curve transverse to the foliation.

Suppose that $\pi: X \rightarrow Z$
is the contraction and $Y$ is a general fibre.
Suppose for sake of contradiction
that $Y$ is not tangent to the foliation.  Then there is
an induced foliation on $Y$, call it $K_{\cal G}$.
It is standard that $\rho(Y) = 1$, and so $-K_{\cal G}$ is ample.
However, by \ref{basicobservation} we see that since $\cal G$ is birational to a pullback
from a lower dimensional variety,
$\cal G$ must have dicritical singularities, implying that $\cal F$
does as well- a contradiction.  Finally, if a general fibre is tangent
to the foliation, then every fibre is.
\end{proof}

We now handle the flipping case:

\begin{lemma}
Set up as above.
In the case of a small contraction,
the flip exists and no infinite sequence of flips exists.
Furthermore, if $\cal F$ has canonical and non-dicritical
singularities, then the flipped foliation, $\cal F^+$ does as well.
\end{lemma}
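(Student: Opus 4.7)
The plan is to construct the flip combinatorially and then verify that the foliated data transforms well. The flipping contraction $\pi: X \to Z$ corresponds, in the toric category, to the collapse of a wall in the fan $\Sigma$ defining $X$, and replacing that wall by its opposite subdivision produces a new fan $\Sigma^+$ and hence a projective toric variety $X^+$ together with a small birational morphism $\pi^+: X^+ \to Z$; this is the classical Reid toric flip. The foliation $\cal F$, being torus-invariant, is recorded by the combinatorial datum consisting of the subset of rays of $\Sigma$ whose associated divisors are $\cal F$-invariant, together with the numerical data $(\lambda_1, \ldots, \lambda_n)$ defining the 1-form in a maximal chart. Since the birational map $\phi: X \dashrightarrow X^+$ is an isomorphism in codimension $1$, the rays of $\Sigma$ and $\Sigma^+$ are in canonical bijection, so the same combinatorial data defines a torus-invariant foliation $\cal F^+$ on $X^+$ which is the strict transform of $\cal F$.

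Next I would verify that $K_{\cal F^+}$ is $\pi^+$-ample. Because $\phi$ is an isomorphism in codimension $1$ and the property of being $\cal F$-invariant is intrinsic to the rays, the formula $K_{\cal F} = -\sum D_\tau$ (summing over torus-invariant non-$\cal F$-invariant divisors) yields $\phi_* K_{\cal F} = K_{\cal F^+}$. The $\pi^+$-ampleness of $K_{\cal F^+}$ then reduces to a sign check on the finitely many torus-invariant curves contracted by $\pi^+$, which follows from the same combinatorial wall computation that gave $K_{\cal F} \cdot C < 0$ on the flipping curves of $\pi$, with the sign reversed by the wall-crossing. To preserve canonical singularities I would pass to a common toric resolution $\mu: Y \to X$, $\mu^+: Y \to X^+$ with $\cal F_Y$ the pulled-back foliation; since the combinatorial data determining $\cal F^+$ agrees with that determining $\cal F$, the foliated discrepancies $a(E, \cal F^+)$ and $a(E, \cal F)$ coincide for every toric divisor $E$ over $X^+$ and $X$, so canonicity passes to $\cal F^+$. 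Non-dicriticality is similarly inherited: a torus-invariant subvariety of $X^+$ that failed to be $\cal F^+$-invariant would pull back through $Y$ to a torus-invariant subvariety of $X$ failing to be $\cal F$-invariant, contradicting non-dicriticality of $\cal F$.

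For termination, I would invoke the classical fact that a toric MMP admits no infinite sequence of flips, for instance by observing that each toric flip strictly decreases some explicit combinatorial invariant of the fan (such as a Shokurov-style difficulty function, or a lexicographic tuple of multiplicities of the maximal cones). Since the output of each flip in our setting remains a toric variety carrying a toric foliation with canonical non-dicritical singularities, we stay inside the toric category throughout, so an infinite sequence of foliated toric flips would force an infinite sequence of classical toric flips, which does not exist.

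The main obstacle I expect is the ampleness verification: while the combinatorial construction of $X^+$ and $\cal F^+$ is essentially automatic, one must carefully match the sign of the intersection numbers under the wall-crossing to ensure $K_{\cal F^+}$ is $\pi^+$-ample rather than merely $\pi^+$-nef, in the presence of the foliation-invariance constraints on the rays. Once this sign check is in hand, the preservation of canonical and non-dicritical singularities follows from the identification of foliated discrepancies on a common resolution, and termination is inherited from the classical toric MMP.
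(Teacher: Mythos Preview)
Your construction of the flip and the termination argument are fine and essentially match the paper's approach, which simply cites the existence and termination of toric log flips. The problems lie in the two preservation claims.

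\textbf{Canonicity.} Your assertion that on a common toric resolution the foliated discrepancies $a(E,\cal F)$ and $a(E,\cal F^+)$ \emph{coincide} is false. The combinatorial data (which rays are $\cal F$-invariant) is indeed the same for $\cal F$ and $\cal F^+$, and hence $\phi_*K_{\cal F}=K_{\cal F^+}$ as Weil divisors, but the pullbacks $\mu^*K_{\cal F}$ and $(\mu^+)^*K_{\cal F^+}$ to a common resolution differ by a nontrivial $\mu$- (equivalently $\mu^+$-) exceptional divisor whenever the flip is nontrivial. Indeed, the very example in the paper of a toric flop with a foliation shows the foliated discrepancies jump. The correct argument, which the paper gives in one line, is the negativity lemma: since $-K_{\cal F}$ is $\pi$-ample and $K_{\cal F^+}$ is $\pi^+$-ample, one gets $a(E,\cal F^+)\geq a(E,\cal F)$ for every $E$, and canonicity is inherited.

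\textbf{Non-dicriticality.} Your argument here is too vague to be correct as stated. Non-dicriticality concerns exceptional divisors over points of $X^+$, and pulling such a divisor back through a common resolution and pushing down to $X$ gives a divisor whose centre on $X$ could a priori be a curve transverse to $\cal F$; nothing you have said rules this out. The paper's argument fills exactly this gap: it uses the previous lemma, which shows that the flipping contraction $\pi$ contracts only curves \emph{tangent} to $\cal F$. One then argues through the induced foliation $\cal G$ on $Z$: if $\cal F^+$ were dicritical then so would $\cal G$ be, and a non-invariant exceptional divisor over a point of $Z$ would have centre on $X$ equal to a curve transverse to $\cal F$ yet contracted by $\pi$, contradicting that lemma. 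Your sketch is missing this crucial input.
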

\begin{proof}
The existence and termination of the flip can be seen by the fact
that toric log flips exist and terminate.
By the negativity lemma, $\cal F^+$
has canonical singularities.

What remains to be shown is the claim about the non-dicriticality
of $\cal F^+$.  Let $S$ be the flipping locus of $\cal F$.
Let $\pi:X \rightarrow Z$ be the flipping contraction.
As noted, $\pi$ only contracts curves tangent to the foliation.

Suppose for sake of contradiction that $\cal F^+$
was dicritical.  Then, if we let $\cal G$ be the induced foliation
on $Z$ we must have that $\cal G$ is dicritical.  Let $E$ be a divisor sitting
over $Z$ which is not foliation invariant.  Without loss of generality we may assume that $E$
maps to a point in $Z$.

Let $W$ be the centre of $E$ on $X$.  $W$ cannot be a divisor since $X \rightarrow Z$ is small,
and since $\cal F$ is non-dicritical $W$ cannot be a point or a curve tangent to $\cal F$.
Thus $W$ is a curve transverse to $\cal F$, but which is nevertheless contracted by $\pi$.
This is our desired contradiction.
\end{proof}

Putting all this together:

\begin{theorem}
Let $\cal F$ be a co-rank 1 toric foliation with canonical and non-dicritical foliation singularities
on a toric variety $X$.
Then the MMP for $\cal F$ exists, and
ends either with a foliation where $K_{\cal F}$ is nef,
or with a fibration $\pi: X \rightarrow Z$ and $\cal F$ is pulled
back from a foliation on $Z$. Furthermore, only curves tangent to the foliation
are contracted in this MMP.
\end{theorem}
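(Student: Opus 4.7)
The plan is to run the MMP iteratively, using the three lemmas established earlier in this section (cone, contraction, flip) as the engine. At each step, given $(X, \cal F)$ satisfying the hypotheses, either $K_{\cal F}$ is nef and we stop, or by the toric cone theorem above there is a $K_{\cal F}$-negative extremal ray $R$ spanned by a torus invariant rational curve tangent to $\cal F$. By the contraction lemma, $R$ admits a contraction $\pi: X \rightarrow Y$ of fibre, divisorial, or flipping type, and crucially every contracted subvariety is tangent to $\cal F$.

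Next, I would dispatch on the type of contraction. In the fibre type case, a general fibre is tangent to $\cal F$, so $\cal F$ is the pullback of the induced foliation on $Y$ (which has smaller dimension, matching the second alternative of Proposition \ref{basicobservation}), and the procedure terminates with the desired fibration. In the divisorial case, $Y$ is a $\bb Q$-factorial toric variety with $\rho(Y) = \rho(X) - 1$; the pushforward $\pi_*\cal F$ is co-rank $1$ toric, non-dicritical (by the second conclusion of the contraction lemma) and canonical (by the negativity lemma applied to $K_{\cal F} - \pi^*K_{\pi_*\cal F}$). We then replace $(X, \cal F)$ by $(Y, \pi_*\cal F)$ and iterate. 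In the flipping case, the flip lemma produces $(X^+, \cal F^+)$ preserving canonicity, non-dicriticality, and the co-rank $1$ toric structure, and we iterate with $(X^+, \cal F^+)$.

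The main obstacle, as in any MMP, is termination. Divisorial contractions strictly decrease the Picard number, so only finitely many can occur across the whole process. An infinite sequence of flips is ruled out by termination of toric log flips, which is the combinatorial fact already invoked in the flip lemma (every toric flip admits an associated log MMP flip, and these terminate). Consequently the algorithm halts after finitely many steps, yielding either a model on which $K_{\cal F}$ is nef or a fibre type contraction exhibiting $\cal F$ as a pullback from a lower dimensional toric variety. The last sentence of the theorem --- that only curves tangent to $\cal F$ are ever contracted --- is built into every step by the contraction and flip lemmas, so it follows automatically from the construction.
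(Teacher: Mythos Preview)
Your proposal is correct and follows essentially the same approach as the paper: iterate the cone/contraction/flip lemmas, dispatch on contraction type, and argue termination via the drop in Picard number for divisorial contractions together with termination of toric flips. The only cosmetic difference is that the paper phrases flip termination by noting that foliation discrepancies increase and that only finitely many toric models are reachable by flips, whereas you invoke the same fact through the flip lemma's appeal to toric log flip termination.
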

\begin{proof}
If $K_{\cal F}$ is not nef, there is an extremal ray on which $K_{\cal F}$
is negative.  We can contract this ray resulting in a either:

\begin{enumerate}
\item a fibration, in which case we stop;

\item a divisorial contraction, in which case we repeat with
the new variety;

\item a flipping contraction, in which case we perform the flip.
\end{enumerate}

We can have only finitely many steps of type 1 or 2.  Notice
that the foliation discrepancy of some place will always increase under a flip and that there
are only finitely many toric models which can be reached by a sequence of flips.
Thus there can be no infinite sequence of flips.
\end{proof}

\bibliography{math.bib}

\newcommand{\etalchar}[1]{$^{#1}$}
\begin{thebibliography}{GKK{\etalchar{+}}11}

\bibitem[AD13]{AD13}
C.~Araujo and S.~Druel.
\newblock On {F}ano foliations.
\newblock {\em Adv. Math.}, 238:70--118, 2013.

\bibitem[AD14]{AD14}
C.~Araujo and S.~Druel.
\newblock On codimension 1 del {P}ezzo foliations on varieties with mild
  singularities.
\newblock {\em Math. Ann.}, 360(3-4):769--798, 2014.

\bibitem[Amb07]{Ambro07}
F.~Ambro.
\newblock Non-klt techniques.
\newblock In {\em Flips for 3-folds and 4-folds}, volume~35 of {\em Oxford
  Lecture Ser. Math. Appl.}, pages 163--170. Oxford Univ. Press, Oxford, 2007.

\bibitem[Art70]{Artin70}
M.~Artin.
\newblock Algebraization of formal moduli: {\rm II}.
\newblock {\em Annals of Math.}, 91:88--135, 1970.

\bibitem[BM01]{BMc01}
F.~Bogomolov and M.~McQuillan.
\newblock Rational curves on foliated varieties.
\newblock {\em IHES Preprint}, 2001.

\bibitem[BP11]{BruP11}
Marco Brunella and Carlo Perrone.
\newblock Exceptional singularities of codimension one holomorphic foliations.
\newblock {\em Publ. Mat.}, 55(2):295--312, 2011.

\bibitem[Bru99]{Brunella99}
M.~Brunella.
\newblock Minimal models of foliated algebraic surfaces.
\newblock {\em Bull. Soc. Math. France}, 127(2):289--305, 1999.

\bibitem[Bru00]{Brunella00}
M.~Brunella.
\newblock {\em Birational geometry of foliations}.
\newblock Monograf\'\i as de Matem\'atica. Instituto de Matem\'atica Pura e
  Aplicada (IMPA), Rio de Janeiro, 2000.

\bibitem[BS95]{BS95}
M.~C. Beltrametti and A.~J. Sommese.
\newblock {\em The adjunction theory of complex projective varieties}.
\newblock Walter de Gruyter and Co., 1995.

\bibitem[Can04]{Cano}
F.~Cano.
\newblock Reduction of the singularities of codimension one singular foliations
  in dimension three.
\newblock {\em Ann. of Math. (2)}, 160(3):907--1011, 2004.

\bibitem[CC92]{CC92}
F.~Cano and D.~Cerveau.
\newblock Desingularization of nondicritical holomorphic foliations and
  existence of separatrices.
\newblock {\em Acta Math.}, 169(1-2):1--103, 1992.

\bibitem[CL14]{CL14}
Salvatore Cacciola and Angelo~Felice Lopez.
\newblock Nakamaye's theorem on log canonical pairs.
\newblock {\em Ann. Inst. Fourier (Grenoble)}, 64(6):2283--2298, 2014.

\bibitem[CLN08]{CLN08}
D.~Cerveau and A.~Lins~Neto.
\newblock Frobenius theorem for foliations on singular varieties.
\newblock {\em Bull. Braz. Math. Soc. (N.S.)}, 39(3):447--469, 2008.

\bibitem[Fuj11]{Fujino09}
O.~Fujino.
\newblock Fundamental theorems for the log minimal model program.
\newblock {\em Publ. Res. Inst. Math. Sci.}, 47(3):727--789, 2011.

\bibitem[GKK{\etalchar{+}}11]{GKKP11}
D.~Greb, S.~Kebekus, Kov{\'a}cs, S.~J. Europe, and T.~Peternell.
\newblock Differential forms on log canonical spaces.
\newblock {\em Publ. Math. Inst. Hautes \'Etudes Sci.}, (114):87--169, 2011.

\bibitem[GKP16]{GKP16}
Daniel Greb, Stefan Kebekus, and Thomas Peternell.
\newblock \'etale fundamental groups of {K}awamata log terminal spaces, flat
  sheaves, and quotients of abelian varieties.
\newblock {\em Duke Math. J.}, 165(10):1965--2004, 2016.

\bibitem[Har68]{Hartshorne68}
Robin Hartshorne.
\newblock Cohomological dimension of algebraic varieties.
\newblock {\em Ann. of Math. (2)}, 88:403--450, 1968.

\bibitem[Har77]{Hartshorne77}
R.~Hartshorne.
\newblock {\em Algebraic Geometry}.
\newblock Springer-Verlag, 1977.

\bibitem[Kaw98]{Kawamata98}
Y.~Kawamata.
\newblock Subadjunction of log canonical divisors. {I}{I}.
\newblock {\em Amer. J. Math.}, 120(5):893--899, 1998.

\bibitem[KM98]{KM98}
J.~Koll{\'a}r and S.~Mori.
\newblock {\em Birational geometry of algebraic varieties}, volume 134 of {\em
  Cambridge tracts in mathematics}.
\newblock Cambridge University Press, 1998.

\bibitem[KMM94]{KMM94}
S.~Keel, K.~Matsuki, and J.~M\textsuperscript{c}Kernan.
\newblock Log abundance theorem for threefolds.
\newblock {\em Duke Math. J.}, 75(1):99--119, 1994.

\bibitem[Kol91]{Kollar91c}
J{\'a}nos Koll{\'a}r.
\newblock Extremal rays on smooth threefolds.
\newblock {\em Ann. Sci. \'Ecole Norm. Sup. (4)}, 24(3):339--361, 1991.

\bibitem[LPT]{LPT11}
F.~Loray, J.~Pereira, and F.~Touzet.
\newblock {Singular foliations with trivial canonical class}.
\newblock {\em Inventiones Math. (to appear)}.
\newblock arXiv:1107.1538v1.

\bibitem[Mat02]{Matsuki02}
Kenji Matsuki.
\newblock {\em Introduction to the {M}ori program}.
\newblock Universitext. Springer-Verlag, New York, 2002.

\bibitem[McQ]{McQ05}
M.~McQuillan.
\newblock Semi-stable reduction of foliations.
\newblock IHES/M/05/02.

\bibitem[McQ08]{McQuillan08}
M.~McQuillan.
\newblock Canonical models of foliations.
\newblock {\em Pure Appl. Math. Q.}, 4(3, part 2):877--1012, 2008.

\bibitem[Miy87]{Miyaoka87}
Y.~Miyaoka.
\newblock Deformations of a morphism along a foliation and applications.
\newblock In {\em Algebraic geometry, {B}owdoin, 1985 ({B}runswick, {M}aine,
  1985)}, volume~46 of {\em Proc. Sympos. Pure Math.}, pages 245--268. Amer.
  Math. Soc., Providence, RI, 1987.

\bibitem[MM03]{MM}
I.~Moerdijk and J.~Mr{\v{c}}un.
\newblock {\em Introduction to foliations and {L}ie groupoids}, volume~91 of
  {\em Cambridge Studies in Advanced Mathematics}.
\newblock Cambridge University Press, Cambridge, 2003.

\bibitem[Nak87]{Nakayama87}
Noboru Nakayama.
\newblock The lower semicontinuity of the plurigenera of complex varieties.
\newblock In {\em Algebraic geometry, {S}endai, 1985}, volume~10 of {\em Adv.
  Stud. Pure Math.}, pages 551--590. North-Holland, Amsterdam, 1987.

\bibitem[Roc70]{Rockafellar}
R.~Tyrrell Rockafellar.
\newblock {\em Convex analysis}.
\newblock Princeton Mathematical Series, No. 28. Princeton University Press,
  Princeton, N.J., 1970.

\bibitem[SB92]{Shepherd-Barron92}
N.~Shepherd-Barron.
\newblock Miyaoka's theorems on the generic seminegativity of ${T}_{X}$ and on
  the kodaira dimension of minimal regular threefolds.
\newblock In J.~Koll{\'a}r, editor, {\em Flips and abundance for algebraic
  threefolds}, pages 103--114. Soci\'et\'e Math\'ematique de France, 1992.

\bibitem[Tou08]{Touzet08}
F.~Touzet.
\newblock Feuilletages holomorphes de codimension un dont la classe canonique
  est triviale.
\newblock {\em Ann. Sci. \'Ec. Norm. Sup\'er. (4)}, 41(4):655--668, 2008.

\end{thebibliography}
\bibliographystyle{alpha}

\end{document}